\numberwithin{equation}{section}
\newtheorem{Theorem}{Theorem}[section]
\newtheorem{Conjecture}[Theorem]{Conjecture}
\newtheorem{Lemma}[Theorem]{Lemma}
\newtheorem{Proposition}[Theorem]{Proposition}
 { \theoremstyle{definition}
\newtheorem{Definition}[Theorem]{Definition}

\newtheorem{Remark}[Theorem]{Remark} }
\newcommand{\fonc}[5]{ % command for a map
 \begin{array}{crll}#1\colon & #2 & \rightarrow & #3 \\ %
 &#4 &\mapsto & #5 %
 \end{array}}
\newcommand{\exposantGauche}[2]{{\vphantom{#2}}^{#1}#2} % command for well-placed left exponents
\DeclareMathOperator{\End}{End}
\DeclareMathOperator{\SLF}{SLF}
\DeclareMathOperator{\vect}{vect}
\begin{document}

\allowdisplaybreaks

\newcommand{\arXivNumber}{1805.00924}

\renewcommand{\PaperNumber}{077}

\FirstPageHeading

\ShortArticleName{Modular Group Representations in Combinatorial Quantization}

\ArticleName{Modular Group Representations in Combinatorial\\ Quantization with Non-Semisimple Hopf Algebras}

\Author{Matthieu FAITG}

\AuthorNameForHeading{M.~Faitg}

\Address{IMAG, Univ Montpellier, CNRS, Montpellier, France}
\Email{\href{mailto:matthieu.faitg@gmail.com}{matthieu.faitg@gmail.com}}

\ArticleDates{Received February 02, 2019, in final form September 24, 2019; Published online October 03, 2019}

\Abstract{Let $\Sigma_{g,n}$ be a compact oriented surface of genus $g$ with $n$ open disks removed. The algebra $\mathcal{L}_{g,n}(H)$ was introduced by Alekseev--Grosse--Schomerus and Buffenoir--Roche and is a combinatorial quantization of the moduli space of flat connections on $\Sigma_{g,n}$. Here we focus on the two building blocks $\mathcal{L}_{0,1}(H)$ and $\mathcal{L}_{1,0}(H)$ under the assumption that the gauge Hopf algebra $H$ is finite-dimensional, factorizable and ribbon, but not necessarily semisimple. We construct a projective representation of $\mathrm{SL}_2(\mathbb{Z})$, the mapping class group of the torus, based on $\mathcal{L}_{1,0}(H)$ and we study it explicitly for $H = \overline{U}_q(\mathfrak{sl}(2))$. We also show that it is equivalent to the representation constructed by Lyubashenko and Majid.}

\Keywords{combinatorial quantization; factorizable Hopf algebra; modular group; restricted quantum group}

\Classification{16T05; 81R05}

\section{Introduction}

Let $\Sigma_{g,n}$ be a compact oriented surface of genus $g$ with $n$ open disks removed and let $G$ be a connected, simply-connected Lie group. Fock and Rosly defined a Poisson structure on the space of flat $G$-connections on $\Sigma_{g,n} {\setminus} D$ (where $D$ is an open disk) in a combinatorial way, by using a description of the surface as a ribbon graph, see \cite{FR, FockRosly}. This construction is another formulation of the Atiyah--Bott--Goldman Poisson structure on the character variety \cite{AB, goldman}. The algebra $\mathcal{L}_{g,n}$ is an associative non-commutative algebra which is a combinatorial quantization of the algebra of functions on the space of flat $G$-connections, by deformation of the Fock--Rosly Poisson structure.

These algebras were introduced and studied by Alekseev--Grosse--Schomerus \cite{alekseev, AGS, AGS2, AS} and Buffenoir--Roche \cite{BR, BR2}. They replaced the Lie group $G$ by the quantum group $U_q(\mathfrak{g})$, with $\mathfrak{g} = \mathrm{Lie}(G)$, and defined non-commutative relations between functions on connections (also called gauge fields) \textit{via} matrix equations involving the $R$-matrix of $U_q(\mathfrak{g})$. More precisely, in the Fock--Rosly graph description, a flat connection is described by the collection of its holonomies along the edges of the graph. For instance, with the canonical graph shown in Fig.~\ref{figureSurfaces}, a flat $G$-connection on $\Sigma_{1,0} {\setminus} D$ is represented by $(x_b, x_a) \in G \times G$. The gauge group $G$ acts on connections by the adjoint action and dually on gauge fields (on the right). The algebra of gauge fields is generated by matrix coefficients $\overset{V}{A}{^i_j}$, $\overset{V}{B}{^i_j}$ defined by $\overset{V}{A}{^i_j}(x_b, x_a) = e^i(x_a \cdot e_j)$ and $\overset{V}{B}{^i_j}(x_b, x_a) = e^i(x_b \cdot e_j)$, where $V$ is a $G$-module with basis $(e_i)$ and dual basis $(e^i)$. Then the algebra $\mathcal{L}_{1,0}$ is generated by the coefficients of the matrices $\overset{I}{A}$, $\overset{I}{B}$, where $I$ is now a representation of $U_q(\mathfrak{g})$, modulo certain matrix relations involving the $R$-matrix, see Definition~\ref{definitionL10} (also see Remark~\ref{remarqueProductGaugeFields} for an explicit formula). A~connection is now $x_b \otimes x_a \in U_q(\mathfrak{g})^{\otimes 2}$. The gauge algebra $U_q(\mathfrak{g})$ acts on gauge fields on the right by the adjoint action. These relations give a~non-commutative $U_q(\mathfrak{g})$-equivariant multiplication between gauge fields.

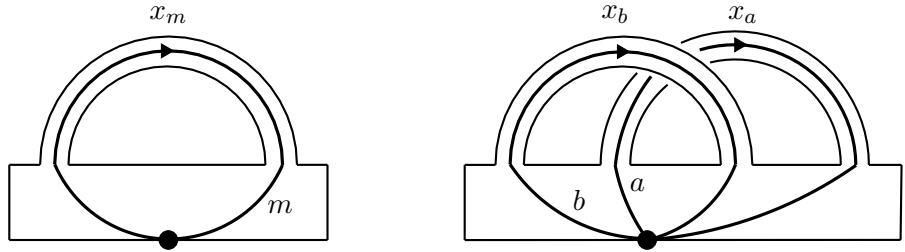
\begin{figure}[h]\centering
\begin{tikzpicture} % The following code has been generated by a software and shall not be modified.
\draw [shift={(7.116832178429767,6)},line width=0.8pt] plot[domain=0:3.141592653589793,variable=\t]({1*1.7075037526311627*cos(\t r)+0*1.7075037526311627*sin(\t r)},{0*1.7075037526311627*cos(\t r)+1*1.7075037526311627*sin(\t r)});
\draw [shift={(7.095787711044769,6)},line width=0.8pt] plot[domain=0:3.141592653589793,variable=\t]({1*1.309575195744216*cos(\t r)+0*1.309575195744216*sin(\t r)},{0*1.309575195744216*cos(\t r)+1*1.309575195744216*sin(\t r)});
\draw [line width=0.8pt] (5,6)-- (5,5);
\draw [line width=0.8pt] (5,5)-- (9.23,5);
\draw [line width=0.8pt] (5,6)-- (5.409328425798604,6);
\draw [line width=0.8pt] (8.82433593106093,6)-- (9.23,6);
\draw [line width=0.8pt] (5.786212515300553,6)-- (8.405362906788985,6);
\draw [line width=0.8pt] (9.23,6)-- (9.23,5);
\draw [shift={(7.119918427226924,6.666445939168958)},line width=1.2pt] plot[domain=3.551793132488248:4.709437541739214,variable=\t]({1*1.6536205027507724*cos(\t r)+0*1.6536205027507724*sin(\t r)},{0*1.6536205027507724*cos(\t r)+1*1.6536205027507724*sin(\t r)});
\draw [shift={(7.11991842722693,6.666445939168957)},line width=1.2pt] plot[domain=3.551793132488248:4.709437541739214,variable=\t]({-0.9999825780704314*1.6536205027507724*cos(\t r)+0.005902843011073883*1.6536205027507724*sin(\t r)},{0.005902843011073883*1.6536205027507724*cos(\t r)+0.9999825780704314*1.6536205027507724*sin(\t r)});
\draw [shift={(7.117958895052926,6.0025234526962805)},line width=1.2pt] plot[domain=-0.002951438645475868:3.1386412149443177,variable=\t]({1*1.5144846844083053*cos(\t r)+0*1.5144846844083053*sin(\t r)},{0*1.5144846844083053*cos(\t r)+1*1.5144846844083053*sin(\t r)});
\draw [fill=black] (7.115,5) circle (3.5pt);
\draw [color=black] (7.115,8) node {$x_m$};
\draw[color=black] (8.6, 5.44051180844409) node {$m$};
\draw [fill=black,shift={(7.074992707952773,7.5163985354139395)},rotate=270] (0,0) ++(0 pt,3pt) -- ++(2.598076211353316pt,-4.5pt)--++(-5.196152422706632pt,0 pt) -- ++(2.598076211353316pt,4.5pt);
\end{tikzpicture} ~~~~~~~~~~~~
\begin{tikzpicture}
\draw [shift={(7.116832178429767,6)},line width=0.8pt] plot[domain=0:3.141592653589793,variable=\t]({1*1.7075037526311627*cos(\t r)+0*1.7075037526311627*sin(\t r)},{0*1.7075037526311627*cos(\t r)+1*1.7075037526311627*sin(\t r)});
\draw [shift={(7.095787711044769,6)},line width=0.8pt] plot[domain=0:3.141592653589793,variable=\t]({1*1.309575195744216*cos(\t r)+0*1.309575195744216*sin(\t r)},{0*1.309575195744216*cos(\t r)+1*1.309575195744216*sin(\t r)});\draw [shift={(8.596936063235535,6)},line width=0.8pt] plot[domain=0:1.8118338027760237,variable=\t]({1*1.4030639367644646*cos(\t r)+0*1.4030639367644646*sin(\t r)},{0*1.4030639367644646*cos(\t r)+1*1.4030639367644646*sin(\t r)});
\draw [shift={(8.596936063235535,6)},line width=0.8pt] plot[domain=0:1.9894438844649243,variable=\t]({1*1.776642756206547*cos(\t r)+0*1.776642756206547*sin(\t r)},{0*1.776642756206547*cos(\t r)+1*1.776642756206547*sin(\t r)});\draw [shift={(8.596936063235535,6)},line width=0.8pt] plot[domain=2.2704846801922587:3.141592653589793,variable=\t]({1*1.3953711574291767*cos(\t r)+0*1.3953711574291767*sin(\t r)},{0*1.3953711574291767*cos(\t r)+1*1.3953711574291767*sin(\t r)});
\draw [shift={(8.596936063235535,6)},line width=0.8pt] plot[domain=2.3877511379579315:3.141592653589793,variable=\t]({1*1.789237366642512*cos(\t r)+0*1.789237366642512*sin(\t r)},{0*1.789237366642512*cos(\t r)+1*1.789237366642512*sin(\t r)});\draw [line width=0.8pt] (5,6)-- (5,5);
\draw [line width=0.8pt] (5,5)-- (10.794708954246598,5.00203777401749);
\draw [line width=0.8pt] (10.80786927095924,6)-- (10.794708954246598,5.00203777401749);
\draw [line width=0.8pt] (5,6)-- (5.409328425798604,6);
\draw [line width=0.8pt] (5.786212515300553,6)-- (6.8080282517522,6);
\draw [line width=0.8pt] (7.200815462007147,6)-- (8.405362906788985,6);
\draw [line width=0.8pt] (8.82433593106093,6)-- (10,6);
\draw [line width=0.8pt] (10.373578819442082,6)-- (10.80786927095924,6);
\draw [shift={(7.449580695168613,7.237726613631109)},line width=1.2pt] plot[domain=3.729595257137362:4.701699287460754,variable=\t]({1*2.2187359355090455*cos(\t r)+0*2.2187359355090455*sin(\t r)},{0*2.2187359355090455*cos(\t r)+1*2.2187359355090455*sin(\t r)});
\draw [shift={(7.1026186594434115,6.006993354832272)},line width=1.2pt] plot[domain=0:3.141592653589793,variable=\t]({1*1.4991378524730479*cos(\t r)+0*1.4991378524730479*sin(\t r)},{0*1.4991378524730479*cos(\t r)+1*1.4991378524730479*sin(\t r)});
\draw [shift={(9.412966893148852,6.4539788135790666)},line width=1.2pt] plot[domain=3.327560085751379:3.7729570768128284,variable=\t]({1*2.455301608888626*cos(\t r)+0*2.455301608888626*sin(\t r)},{0*2.455301608888626*cos(\t r)+1*2.455301608888626*sin(\t r)});
\draw [shift={(9.33211204597329,5.747220848195546)},line width=1.2pt] plot[domain=2.484254690839589:3.0336235085501446,variable=\t]({1*2.3423271124786536*cos(\t r)+0*2.3423271124786536*sin(\t r)},{0*2.3423271124786536*cos(\t r)+1*2.3423271124786536*sin(\t r)});
\draw [shift={(7.055663711022826,6.6244133629535815)},line width=1.2pt] plot[domain=4.936459140732812:5.903246179167029,variable=\t]({1*1.6651881573061873*cos(\t r)+0*1.6651881573061873*sin(\t r)},{0*1.6651881573061873*cos(\t r)+1*1.6651881573061873*sin(\t r)});
\draw [shift={(8.59002279079864,5.988921534198891)},line width=1.2pt] plot[domain=0.006881023462768673:1.8643557340986399,variable=\t]({1*1.6100153249433073*cos(\t r)+0*1.6100153249433073*sin(\t r)},{0*1.6100153249433073*cos(\t r)+1*1.6100153249433073*sin(\t r)});
\draw [shift={(7.024070995031567,10.499990747896216)},line width=1.2pt] plot[domain=4.785288706191459:5.326972569147958,variable=\t]({1*5.513782387193336*cos(\t r)+0*5.513782387193336*sin(\t r)},{0*5.513782387193336*cos(\t r)+1*5.513782387193336*sin(\t r)});
\draw [fill=black] (7.425668292444792,5.00085301330238) circle (3.5pt);
\draw[color=black] (6.520955110598566,5.535917869663414) node {$b$};
\draw[color=black] (7.296935596721761,5.733440175220822) node {$a$};
\draw[color=black] (7,8) node {$x_b$};
\draw[color=black] (8.7,8) node {$x_a$};
\draw [fill=black,shift={(7.083362211564692,7.506007527524644)},rotate=270] (0,0) ++(0 pt,3pt) -- ++(2.598076211353316pt,-4.5pt)--++(-5.196152422706632pt,0 pt) -- ++(2.598076211353316pt,4.5pt);
\draw [fill=black,shift={(8.643159148919558,7.598059773753824)},rotate=270] (0,0) ++(0 pt,3pt) -- ++(2.598076211353316pt,-4.5pt)--++(-5.196152422706632pt,0 pt) -- ++(2.598076211353316pt,4.5pt);
\end{tikzpicture}
\caption{Surfaces $\Sigma_{0,1} {\setminus} D$ and $\Sigma_{1,0}{\setminus} D$, with canonical curves and discrete connections.}
\label{figureSurfaces}
\end{figure}

These quantized algebras of functions $\mathcal{L}_{g,n}$ and their generalizations appear in various works of mathematics and mathematical physics. Let us indicate papers which are related to this work. In \cite{BFKB}, they introduce multitangles (which are diagrams encoding transformations of the graph and of the discrete connections) and use them to define a comultiplication on the discrete connections (dual to the product in $\mathcal{L}_{g,n}$), the holonomy of a path in the graph and the Wilson loops. In \cite{MW}, the axiomatic formulation of lattice gauge theory with Hopf algebras is given and it generalizes the foundational works cited above. Their formalism does not require semi-simplicity of the gauge algebra (except for some properties about the holonomy); however, contrarily to the present work, they do not use matrix coefficients, they work with an arbitrary graph filling the surface and the purpose of their paper is different (they do not discuss representations of mapping class groups). In~\cite{BZBJ}, the algebras~$\mathcal{L}_{g,n}$ are recovered in a categorical setting based on factorization homology; the case of the punctured torus is studied in~\cite{BJ}, recovering a version of~$\mathcal{L}_{1,0}$. In \cite{AGPS}, they study $\mathcal{L}_{1,0}$ with the super Hopf algebra~$\mathfrak{gl}(1|1)$ as well as the associated~$\mathrm{SL}_2(\mathbb{Z})$ action and they conjecture that our result on the equivalence with the Lyubashenko--Majid representation (Theorem~\ref{EquivalenceLMandSLF}) remains true with super Hopf algebras. \cite{CMR} also discuss related subjects.

The definition of $\mathcal{L}_{g,n}$ being purely algebraic, we can replace the gauge algebra $U_q(\mathfrak{g})$ by any ribbon Hopf algebra~$H$, thus obtaining an algebra $\mathcal{L}_{g,n}(H)$. In the foundational papers on combinatorial quantization, it is always assumed that $H$ is either $U_q(\mathfrak{g})$ with $q$ generic or a~semisimple truncation of $U_q(\mathfrak{g})$ at a root of unity (defined in the context of weak quasi-Hopf algebras). The moduli algebra, introduced in \cite{AGS, AGS2, AS}, is only defined when $H$ is modular since its definition uses the particular properties of the $S$-matrix in this case. Moreover, in~\cite{AS}, the representation theory of $\mathcal{L}_{g,n}(H)$ is investigated and a projective representation of the mapping class group of~$\Sigma_{g,n}$ on the moduli algebra is constructed, under the assumption that~$H$ is a~modular Hopf algebra (and in particular semisimple).

In this paper we assume that $H$ is a finite-dimensional, factorizable, ribbon Hopf algebra which is not necessarily semisimple, the guiding example being the restricted quantum group $H=\overline{U}_q(\mathfrak{sl}(2))$. We consider $\mathcal{L}_{0,1}(H)$, $\mathcal{L}_{1,0}(H)$ and we generalize to this setting the projective representation of the mapping class group of \cite{AS}; in this way we obtain a projective representation of $\mathrm{SL}_2(\mathbb{Z})$. The algebras $\mathcal{L}_{0,1}(H)$ and $\mathcal{L}_{1,0}(H)$ deserve particular interest because they are the building blocks of the theory, thanks to the Alekseev isomorphism,
\[ \mathcal{L}_{g,n}(H) \cong \mathcal{L}_{1,0}(H)^{\otimes g} \otimes \mathcal{L}_{0,1}(H)^{\otimes n}. \]
This isomorphism was stated in \cite{alekseev} for $H = U_q(\mathfrak{g})$, $q$ generic, but it can be generalized to our assumptions on $H$ (see \cite[Proposition~3.5]{F2}).

After having recalled the definition of $\mathcal{L}_{0,1}(H)$ and $\mathcal{L}_{1,0}(H)$ and the $H$-action on them, we prove that $\mathcal{L}_{0,1}(H) \cong H$ (Theorem~\ref{IsoRSD}) and that $\mathcal{L}_{1,0}(H)$ is isomorphic to the Heisenberg double of the dual Hopf algebra $\mathcal{O}(H)$ (Theorem~\ref{isoL10Heisenberg}). In particular, it implies that $\mathcal{L}_{1,0}(H)$ is (isomorphic to) a matrix algebra. We then define in Section~\ref{sectionRepInvariants} a representation of the algebra of invariants, $\mathcal{L}^{\mathrm{inv}}_{1,0}(H)$, on the space of symmetric linear forms $\SLF(H)$ (Theorem~\ref{repInv}). We will use this representation of the invariant elements to define a projective representation of $\mathrm{SL}_2(\mathbb{Z})$ on $\SLF(H)$.

The main ingredient of the construction of the mapping class group representation is the natural action of the mapping class group $\mathrm{MCG}(\Sigma_{1,0} {\setminus} D)$ on $\mathcal{L}_{1,0}(H)$, obtained by considering the action of $\mathrm{MCG}(\Sigma_{1,0} {\setminus} D)$ on $\pi_1(\Sigma_{1,0} {\setminus} D)$ and by replacing the loops representing elements in $\pi_1(\Sigma_{1,0} {\setminus} D)$ by the corresponding matrices in $\mathcal{L}_{1,0}(H)$, up to some normalization \cite{AS}. It turns out that $\mathrm{MCG}(\Sigma_{1,0} {\setminus} D)$ acts by automorphisms on $\mathcal{L}_{1,0}(H)$, and thus, since it is a matrix algebra, we get elements which implements these automorphisms by conjugation. Representing these elements on $\SLF(H)$, we get a projective representation of $\mathrm{SL}_2(\mathbb{Z})$; this is explained in Section~\ref{sectionModulaire}, with detailed proofs under our assumptions on $H$.

We show (Theorem~\ref{EquivalenceLMandSLF}) that this projective representation is equivalent to the Lyubashenko--Majid representation \cite{LM}. This gives a natural and geometrical interpretation of the latter, which was constructed by categorical methods.

Section~\ref{exempleUq} is devoted to the example of $\overline{U}_q = \overline{U}_q(\mathfrak{sl}(2))$ as gauge algebra. All the preliminary facts about $\overline{U}_q$ and the GTA basis, which is a suitable basis of $\SLF\big(\overline{U}_q\big)$, are available in \cite{F}. In Theorem~\ref{actionSL2ZArike} we give the explicit formulas for the action of $\mathrm{SL}_2(\mathbb{Z})$ on the GTA basis of~$\SLF\big(\overline{U}_q\big)$. The multiplication formulas in this basis (see \cite[Section~5]{F}, \cite{GT}), are the crucial tool to obtain the result. The structure of $\SLF\big(\overline{U}_q\big)$ under the action of $\mathrm{SL}_2(\mathbb{Z})$ is determined. Thanks to the equivalence with the Lyubashenko--Majid representation and the work of \cite{FGST}, the representation obtained in Section~\ref{SL2ZUq} is equivalent to the one studied in \cite{FGST}, which comes from logarithmic conformal field theory. Finally, in Section~\ref{sectionConjecture}, we formulate a conjecture about the structure of~$\SLF\big(\overline{U}_q\big)$ as a $\mathcal{L}^{\mathrm{inv}}_{1,0}\big(\overline{U}_q\big)$-module.

To sum up, the main results of this paper are:
\begin{itemize}\itemsep=0pt
\item[--] the construction of a projective representation of $\text{SL}_2(\mathbb{Z})$ (the mapping class group of the torus) on the space of symmetric linear forms on $H$ (Theorem~\ref{mainResult}),
\item[--] the equivalence of this representation to the one found by Lyubashenko and Majid in \cite{LM} (Theorem~\ref{EquivalenceLMandSLF}),
\item[--] in the case of $H = \overline{U}_q(\mathfrak{sl}(2))$, an explicit realization of this representation and its structure (Theorem~\ref{actionSL2ZArike} and~Theorem~\ref{thDecRep}).
\end{itemize}

In \cite{F2}, we generalize Theorems~\ref{mainResult} and~\ref{EquivalenceLMandSLF} to higher genus. The projective representation of the mapping class group is shown to be equivalent to that constructed by Lyubashenko using the coend of a ribbon category~\cite{lyu}. The advantage of combinatorial quantization is that the construction of the projective representation of the mapping class group is very explicit and that the particular features of the algebra $\mathcal{L}_{g,n}(H)$ are helpful to compute the formulas of the representation for a given~$H$ (see Section~\ref{SL2ZUq}). In particular, for $H = \overline{U}_q(\mathfrak{sl}(2))$, one can hope that the correspondence with logarithmic conformal field theory mentioned aboved still exists in higher genus. Let us also mention that the subalgebra of invariant elements $\mathcal{L}_{g,n}^{\mathrm{inv}}(H)$ is known to be related to skein theory~\cite{BFKB}; in fact, combinatorial quantization provides new representations of skein algebras (work in progress).

\textbf{Notations.} \label{Notations} If $A$ is an algebra, $V$ is a finite-dimensional $A$-module and $x \in A$, we denote by $\overset{V}{x} \in \End_{\mathbb{C}}(V)$ the representation of $x$ on the module $V$. More generally, if $X \in A^{\otimes n}$ and if $V_1, \dots , V_n$ are $A$-modules, we denote by $\overset{V_1 \dots V_n}{X}$ the representation of $X$ on $V_1 \otimes \dots \otimes V_n$. As in~\cite{CR}, we will use the abbreviation PIM for principal indecomposable module. Here we consider only finite-dimensional representations.

Let $M \in \mathrm{Mat}_m(\mathbb{C}) \otimes A=\mathrm{Mat}_m(A)$, namely $M$ is a matrix with coefficients in $A$. It can be written as $M = \sum_{i,j} E^i_j \otimes M^i_j$, where $E^i_j$ is the matrix with $1$ at the intersection of the $i$-th row and the $j$-th column and~$0$ elsewhere. More generally, every $L \in \mathrm{Mat}_{m_1}(\mathbb{C}) \otimes \dots \otimes \mathrm{Mat}_{m_l}(\mathbb{C}) \otimes A$ can be written as
\[ L = \sum_{i_1,j_1, \dots, i_l, j_l} E^{i_1}_{j_1} \otimes \dots \otimes E^{i_l}_{j_l} \otimes L^{i_1 \dots i_l}_{j_1 \dots j_l} \]
and the elements $L^{i_1 \dots i_l}_{j_1 \dots j_l} \in A$ are called the coefficients of~$L$.
If $f\colon A \to B$ is a morphism of algebras, then we define $f(L) \in \mathrm{Mat}_{m_1}(\mathbb{C}) \otimes \dots \otimes \mathrm{Mat}_{m_l}(\mathbb{C}) \otimes B$ by
\[ f(L) = \sum_{i_1,j_1, \dots, i_l, j_l} E^{i_1}_{j_1} \otimes \dots \otimes E^{i_l}_{j_l} \otimes f\bigl(L^{i_1 \dots i_l}_{j_1 \dots j_l}\bigr) \]
or equivalently $f(L)^{i_1 \dots i_l}_{j_1 \dots j_l} = f\bigl(L^{i_1 \dots i_l}_{j_1 \dots j_l}\bigr)$.

Let $M \in \mathrm{Mat}_m(\mathbb{C}) \otimes A$, $N \in \mathrm{Mat}_n(\mathbb{C}) \otimes A$. We embed $M$, $N$ in $\mathrm{Mat}_m(\mathbb{C}) \otimes \mathrm{Mat}_n(\mathbb{C}) \otimes A$ by
\[ M_1 = \sum_{i,j}E^i_j \otimes \mathbb{I}_n \otimes M^i_j, \qquad N_2 = \sum_{i,j} \mathbb{I}_m \otimes E^i_j \otimes N^i_j, \]
where $\mathbb{I}_k$ is the identity matrix of size $k$. In other words, $M_1 = M \otimes \mathbb{I}_n$, $N_2 = \mathbb{I}_m \otimes N$, $\otimes$~being the Kronecker product. The coefficients of $M_1$ and $N_2$ are respectively $(M_1)^{ik}_{jl} = M^i_j \delta^k_l$, $(N_2)^{ik}_{jl} = \delta^i_j N^k_l$ where $\delta^a_b$ is the Kronecker delta. Observe that $M_1N_2$ (resp. $N_2M_1$) contains all the possible products of coefficients of $M$ (resp.\ of~$N$) by coefficients of $N$ (resp.\ of~$M$): $(M_1N_2)^{ik}_{j\ell} = M^i_jN^k_{\ell}$ (resp. $(N_2M_1)^{ik}_{j\ell} = N^k_{\ell}M^i_j$). In particular, $M_1N_2 = N_2M_1$ if and only if the coefficients of $M$ commute with those of $N$. Now let $Q \in \mathrm{Mat}_{m}(\mathbb{C}) \otimes \mathrm{Mat}_{n}(\mathbb{C}) \otimes A$; then we denote $Q_{12} = Q$ and
\[
Q_{21} = \sum_{i,j,k,l} E^k_l \otimes E^i_j \otimes Q^{ik}_{jl} \in \mathrm{Mat}_n(\mathbb{C}) \otimes \mathrm{Mat}_m(\mathbb{C}) \otimes A.
\]
so that $(Q_{12})^{ac}_{bd} = Q^{ac}_{bd}$, $(Q_{21})^{ac}_{bd} = Q^{ca}_{db}$. Such notations are obviously generalized to bigger tensors.

In order to simplify notations, we will use implicit summations. First, we use Einstein's notation for the computations involving indices: when an index variable appears twice, one time in upper position and one time in lower position, it implicitly means summation over all the values of the index. For instance if $M \in \mathrm{Mat}_m(\mathbb{C})\otimes A$ and $Q \in \mathrm{Mat}_{m}(\mathbb{C}) \otimes \mathrm{Mat}_{n}(\mathbb{C}) \otimes A$, then
$(M_1Q_{12})^{ac}_{bd} = M^a_i Q^{ic}_{bd}$; note that the repeated subscript $1$ corresponds to matrix multiplication in the first space.
Second, we use Sweedler's notation (see \cite[Notation~III.1.6]{kassel}) without summation sign for the coproducts, that is we write
\[ \Delta(x) = x' \otimes x'', \qquad (\Delta \otimes \mathrm{id}) \circ \Delta(x) = (\mathrm{id} \otimes \Delta)\circ \Delta(x) = x' \otimes x'' \otimes x''', \qquad \text{and so on.} \]
Finally, we write the components of the $R$-matrix as $R = a_i \otimes b_i$ with implicit summation on $i$, and we define $R' = b_i \otimes a_i$.

For $q \in \mathbb{C}{\setminus}\{-1,0,1\}$, we define the $q$-integer $[n]$ (with $n \in \mathbb{Z})$ by
\[ [n] = \frac{q^n - q^{-n}}{q-q^{-1}}. \]
We will denote $\hat q = q-q^{-1}$ to shorten formulas. Observe that if $q$ is a $2p$-root of unity, then $[p]=0$ and $[p-n] = [n]$.

As usual $\delta_{s,t}$ is the Kronecker symbol and $\mathbb{I}_n$ is the identity matrix of size $n$.

\section{Some basic facts}\label{SomeBasicFacts}
We refer to \cite[Chapters~IV and VIII]{CR} for background material about representation theory.

\subsection{Dual of a finite-dimensional algebra}\label{dualAlg}
Let $A$ be a finite-dimensional $\mathbb{C}$-algebra,
 $V$ be a finite-dimensional $A$-module and $\overset{V}{\rho}\colon A \to \mathrm{End}_{\mathbb{C}}(V)$ be the representation associated to $V$. In other words, $\overset{V}{\rho}$ is an element of $\mathrm{End}_{\mathbb{C}}(V) \otimes A^*$ and if we choose a basis of $V$, $\overset{V}{\rho}$ becomes an element, denoted $\overset{V}{T}$, of $\mathrm{Mat}_{n}(\mathbb{C}) \otimes A^* = \mathrm{Mat}_{n}(A^*)$. In other words $\overset{V}{T}$ is a matrix whose coefficients are linear forms on~$A$. The linear form $\overset{V}{T}{^i_j}$ is called a matrix coefficient of~$V$. We will always consider the representation $\overset{V}{x} \in \mathrm{End}_{\mathbb{C}}(V)$ of~$x$ on~$V$ as the matrix $\overset{V}{T}(x) \in \mathrm{Mat}_{\dim(V)}(\mathbb{C})$: $\overset{V}{x}{^i_j} = \overset{V}{T}{^i_j}(x)$. Note that if a $A$-module $V$ is a~sub\-module or a~quotient of a $A$-module~$W$, then the matrix coefficients of~$W$ contain those of~$V$ since $\overset{V}{T}$ is a~sub\-matrix of $\overset{W}{T}$.

Since $A^*$ is finite-dimensional, it is generated as a vector space by the matrix coefficients of the regular representation $_AA$. Indeed, let $\{ x_1, \dots, x_n \}$ be a basis of $A$ with $x_1 = 1$ and let $\big\{ x^1, \dots, x^n \big\} \subset A^*$ be the dual basis; it is readily seen that $\overset{_AA}{T}{^i_1}(x_j) = \delta_{i,j}$ and thus $\overset{_AA}{T}{^i_1} = x^i$. This implies that in the sequel one might restrict everywhere to the regular representation; however, when studying examples it is very relevant to consider smaller representations in order to greatly reduce the number of generators for the algebras that we will consider in the sequel (like the fundamental representation $\mathcal{X}^+(2)$ for $A = \overline{U}_q(\mathfrak{sl}(2))$, see Section~\ref{exempleUq}).

Recall that the PIMs of $A$ are the indecomposable projective $A$-modules. They are isomorphic to the direct summands of the regular representation $_AA$. In particular, the matrix coefficients of $_AA$ are exactly those of the PIMs and it follows that the matrix coefficients of the PIMs span~$A^*$. Note however that the matrix coefficients of the PIMs (or equivalently of~$_AA$) do not form a basis of~$A^*$ in general. Indeed, even if we fix a family $(P_{\alpha})$ representing each isomorphism class of PIMs, it is possible for $P_{\alpha}$ and $P_{\beta}$ to have a composition factor $S$ in common. In this case, both $\overset{P_{\alpha}}{T}$ and $\overset{P_{\beta}}{T}$ contain $\overset{S}{T}$ as submatrix. This is what happens for $H = \overline{U}_q(\mathfrak{sl}(2))$, see, e.g., \cite[Section~3]{F}. In the semisimple case this phenomenon does not occur.

\subsection{Braided Hopf algebras, factorizability, ribbon element}\label{braidedHopfAlgebras}
Let $H$ be a braided Hopf algebra with universal $R$-matrix $R = a_i \otimes b_i$ (see, e.g., \cite[Chapter~VIII]{kassel}). Recall that
\begin{gather}
(\Delta \otimes \mathrm{id})(R) = R_{13}R_{23}, \qquad (\mathrm{id} \otimes \Delta)(R) = R_{13}R_{12}, \label{deltaR}\\
(S \otimes \mathrm{id})(R) = \big(\mathrm{id} \otimes S^{-1}\big)(R) = R^{-1}, \qquad (S \otimes S)(R) = R, \label{propSR} \\
R_{12}R_{13}R_{23} = R_{23}R_{13}R_{12} \label{YangBaxter}
\end{gather}
with $R_{12} = a_i \otimes b_i \otimes 1$, $R_{13} = a_i \otimes 1 \otimes b_i$, $R_{23} = 1 \otimes a_i \otimes b_i \in H^{\otimes 3}$. The relation \eqref{YangBaxter} is called the (quantum) Yang--Baxter equation.

Consider
\[ \fonc{\Psi}{H^*}{H,}{\beta}{(\beta \otimes \mathrm{id})(RR'),} \]
where $R' = b_i \otimes a_i$. $\Psi$ is called the Reshetikhin--Semenov-Tian-Shansky map \cite{RS}, or the Drinfeld map~\cite{drinfeld}. We say that $H$ is factorizable if $\Psi$ is an isomorphism of vector spaces. By the remarks above, we can restrict $\beta$ to be a matrix coefficient of some~$I$.

Define $R^{(+)} = R$, $R^{(-)} = (R')^{-1}$, and let
\begin{gather}\label{L}
\overset{I}{L}{^{(\pm)}} = \big(\overset{I}{T} \otimes \mathrm{id}\big)\big(R^{(\pm)}\big) = \bigl(\overset{I}{a_i^{(\pm)}}\bigr) b_i^{(\pm)} \in \mathrm{Mat}_{\dim(I)}(H)
\end{gather}
with $R^{(\pm)} = a_i^{(\pm)} \otimes b_i^{(\pm)}$ (note that $\bigl(\overset{I}{a_i^{(\pm)}}\bigr) b_i^{(\pm)}$ is the matrix obtained by multiplying each coefficient (which is a scalar) of the matrix $\bigl(\overset{I}{a_i^{(\pm)}}\bigr)$ by the element $b_i \in H$). Recall that $R^{(-)}$ is also a universal $R$-matrix and in particular it satisfies the properties \eqref{deltaR}--\eqref{YangBaxter} above. We use the letters $I,J, \dots$ for modules over Hopf algebras. If $H$ is factorizable, the coefficients of the matrices $\overset{I}{L} \!^{(\pm)}$ generate $H$ as an algebra. These matrices satisfy nice relations which are consequences of \eqref{deltaR} and \eqref{YangBaxter}:
\begin{gather}
\overset{\!\!\!\!\!I\otimes J}{L^{(\epsilon)}_{12}} = \overset{I}{L}{^{(\epsilon)}_1} \overset{J}{L}{^{(\epsilon)}_2},\nonumber\\
\overset{IJ}{R}{^{(\epsilon)}_{12}} \overset{I}{L}{^{(\epsilon)}_1} \overset{J}{L}{^{(\sigma)}_2} = \overset{J}{L}{^{(\sigma)}_2} \overset{I}{L}{^{(\epsilon)}_1} \overset{IJ}{R}{^{(\epsilon)}_{12}} \qquad \forall\, \epsilon, \sigma \in \{\pm\},\nonumber\\
\overset{IJ}{R}{^{(\epsilon)}_{12}} \overset{I}{L}{^{(\sigma)}_1} \overset{J}{L}{^{(\sigma)}_2} = \overset{J}{L}{^{(\sigma)}_2} \overset{I}{L}{^{(\sigma)}_1} \overset{IJ}{R}{^{(\epsilon)}_{12}} \qquad\forall\, \epsilon, \sigma \in \{\pm\},\nonumber\\
\Delta(\overset{I}{L}{^{(\epsilon)} \!^a_b}) = \overset{I}{L}{^{(\epsilon)} \!^i_b} \otimes \overset{I}{L}{^{(\epsilon)} \!^a_i}.\label{propertiesL}
\end{gather}
For instance, here is a proof of the first equality with $\epsilon=+$:
\[ \overset{\!\!\!\!\!I\otimes J}{L^{(+)}_{12}} = \bigl(\overset{I \otimes J}{a_i}\bigr)_{\!12} b_i = \bigl(\overset{I}{a_i} \otimes \overset{J}{a_j} \bigr)_{\!12} b_i b_j = \bigl(\overset{I}{a_i}\bigr)_{\!1} \bigl(\overset{J}{a_j}\bigr)_{\!2} b_i b_j = \bigl(\overset{I}{a_i}\bigr)_{\!1} b_i \bigl(\overset{J}{a_j}\bigr)_{\!2} b_j = \overset{I}{L} \!^{(+)}_1\overset{J}{L} \!^{(+)}_2, \]
where we used \eqref{deltaR}; note that these are equalities between matrices in
\[
\mathrm{Mat}_{\dim(I)}(\mathbb{C}) \otimes \mathrm{Mat}_{\dim(J)}(\mathbb{C}) \otimes H,\]
 which imply equalities among the coefficients. If the representations $I$ and $J$ are fixed and arbitrary, we will simply write these relations as
\[ L^{(\epsilon)}_{12} = L^{(\epsilon)}_{1}L^{(\epsilon)}_{2}, \qquad R^{(\epsilon)}_{12} L^{(\epsilon)}_1 L^{(\sigma)}_2 = L^{(\sigma)}_2 L^{(\epsilon)}_1 R^{(\epsilon)}_{12}, \qquad
R^{(\epsilon)}_{12} L^{(\sigma)}_1 L \!^{(\sigma)}_2 = L^{(\sigma)}_2 L^{(\sigma)}_1 R^{(\epsilon)}_{12}, \]
the subscript $1$ (resp.~$2$) corresponding implicitly to evaluation in the representation $I$ (resp.~$J$).

Recall that the {\em Drinfeld element} $u$ and its inverse are
\begin{gather}\label{elementDrinfeld}
u = S(b_i)a_i = b_iS^{-1}(a_i) \qquad \text{and} \qquad u^{-1} = S^{-2}(b_i)a_i = S^{-1}(b_i)S(a_i) = b_iS^2(a_i).
\end{gather}
We assume that $H$ contains a ribbon element $v$. It satisfies
\begin{gather}\label{ribbon}
v \text{ is central and invertible}, \qquad \Delta(v) = (R'R)^{-1}v \otimes v, \nonumber\\
S(v) = v, \qquad \varepsilon(v)=1, \qquad v^2 = uS(u).
\end{gather}
The two last equalities can be deduced easily from the others. A ribbon element is in general not unique. A ribbon Hopf algebra $(H,R,v)$ is a braided Hopf algebra $(H,R)$ together with a~ribbon element $v$.

We say that $g \in H$ is a {\em pivotal element} if
\begin{gather}\label{pivot}
\Delta(g) = g \otimes g \qquad \text{ and } \qquad \forall\, x \in H, \qquad S^2(x) = gxg^{-1}.
\end{gather}
Note that $g$ is invertible because it is grouplike, and hence $S(g)=g^{-1}$. A pivotal element is in general not unique. But in a ribbon Hopf algebra $(H,R,v)$ there is a canonical choice
\begin{gather}\label{pivotCan}
g = uv^{-1}.
\end{gather}
We will always take this canonical pivotal element $g$ in the sequel.

\subsection[Dual Hopf algebra $\mathcal{O}(H)$]{Dual Hopf algebra $\boldsymbol{\mathcal{O}(H)}$}
The canonical Hopf algebra structure on $H^*$ is defined by
\begin{gather*} (\psi \cdot \varphi)(x) = (\psi \otimes \varphi)(\Delta(x)), \qquad \eta(1)=\varepsilon, \\
 \Delta(\psi)(x \otimes y) = \psi(xy), \qquad \varepsilon(\psi) = \psi(1), \qquad S(\psi) = \psi \circ S \end{gather*}
with $\psi, \varphi \in H^*$ and $x,y \in H$. When it is endowed with this structure, $H^*$ is called dual Hopf algebra, and is denoted by $\mathcal{O}(H)$ in the sequel. In terms of matrix coefficients, we have
\begin{gather}
\overset{I \otimes J}{T}\!\!\!_{12} = \overset{I}{T_1}\overset{J}{T_2}, \qquad \eta(1) = \overset{\mathbb{C}}{T}, \qquad \Delta(\overset{I}{T^{ a}_{ b}}) = \overset{I}{T^{ a}_{ i}} \otimes \overset{I}{T^{ i}_{ b}}, \nonumber\\ \varepsilon(\overset{I}{T}) = \mathbb{I}_{\dim(I)}, \qquad S(\overset{I}{T}) = \overset{I}{T}{^{-1}},\label{dualHopf}
\end{gather}
where $\mathbb{C}$ is the trivial representation. For instance, here are proofs of the first and last equalities (with $h \in H$):
\begin{gather*}
 \bigl(\overset{I \otimes J}{T}\!_{\!\!12}\bigr)(h) = \bigl(\overset{I \otimes J}{h}\bigr)_{12} = \bigl( \overset{I}{h'} \otimes \overset{J}{h''} \bigr)_{12} = \bigl(\overset{I}{h'}\bigr)_1 \bigl(\overset{J}{h''}\bigr)_2 = \overset{I}{T_1}(h') \overset{J}{T_2}(h'') = \bigl( \overset{I}{T_1} \overset{J}{T_2} \bigr)(h),\\
 \bigl(S(\overset{I}{T}) \overset{I}{T}\bigr)(h) = S(\overset{I}{T})(h') \overset{I}{T}(h'') = \overset{I}{T}\bigl(S(h')\bigr) \overset{I}{T}(h'') = \overset{I}{T}\bigl( S(h')h'' \bigr) = \varepsilon(h) \overset{I}{T}(1) = \varepsilon(h) \mathbb{I}_{\dim(I)}.
\end{gather*}
By definition of the action on the dual $I^*$, it holds
\begin{gather}\label{antipodeT}
S\big(\overset{I}{T}\big) = {} ^t \overset{\:I^*}{T},
\end{gather}
where $^t$ is the transpose. Also recall the well-known exchange relation
\begin{gather}\label{FRT}
\overset{IJ}{R}_{12} \overset{I}{T_1} \overset{J}{T_2} = \overset{J}{T_2} \overset{I}{T_1} \overset{IJ}{R}_{12}.
\end{gather}
As before, if the representations $I$ and $J$ are fixed and arbitrary, we will simply write $T_{12} = T_1 T_2$ and $R_{12} T_1 T_2 = T_2 T_1 R_{12}$.

\section[The loop algebra $\mathcal{L}_{0,1}(H)$]{The loop algebra $\boldsymbol{\mathcal{L}_{0,1}(H)}$}\label{sectionLoopL01}
We assume that $H$ is a finite-dimensional factorizable Hopf algebra. The ribbon assumption is not needed in this section.

\subsection[Definition of $\mathcal{L}_{0,1}(H)$ and $H$-module-algebra structure]{Definition of $\boldsymbol{\mathcal{L}_{0,1}(H)}$ and $\boldsymbol{H}$-module-algebra structure}
Let $\mathrm{T}(H^*)$ be the tensor algebra of $H^*$, which by definition is linearly spanned by all the formal products $\varphi_1 \cdots \varphi_n$ (with $n \geq 0$ and $\varphi_i \in H^*$) modulo the obvious multilinear relations. There is a canonical injection $j \colon H^* \to \mathrm{T}(H^*)$ and we let $\overset{I}{M} \in \mathrm{Mat}_{\dim(I)}\bigl( \mathrm{T}(H^*) \bigr)$ be the matrix defined by $\overset{I}{M} = j\bigl(\overset{I}{T}\bigr)$, that is $\overset{I}{M}{^a_b} = j\bigl(\overset{I}{T}{^a_b}\bigr)$.

\begin{Definition}\label{defL01} The loop algebra $\mathcal{L}_{0,1}(H)$ is the quotient of $\mathrm{T}(H^*)$ by the following fusion relations
\[ \overset{I \otimes J}{M}\!_{12} = \overset{I}{M}_1\overset{IJ}{(R')}_{12}\overset{J}{M}_2\overset{IJ}{\big(R'^{-1}\big)}_{12} \]
for all finite-dimensional $H$-modules~$I$, $J$.
\end{Definition}

The right hand-side of the fusion relation in Definition \ref{defL01} is the one of \cite[Definition~1]{BNR}; the one of \cite[Definition~12]{AGS2} and \cite[equation~(3.11)]{AS} is different, due to different choices of the action of~$H$ on~$\mathcal{L}_{0,1}(H)$ and to particular normalization of Clebsch--Gordan operators. Moreover in the papers \cite{AGS2, AS, BNR} the matrix $\overset{I \otimes J}{M}$ did not appeared, instead it was always decomposed as a sum of the matrices of the irreducible direct summands of $I \otimes J$ thanks to Clebsch--Gordan operators, which is relevant in the semisimple case only; in~\cite{S}, $I$ and $J$ are restricted to the regular representation and the matrix $\overset{I \otimes J}{M}$ is thus denoted by $\Delta_a(M)$. In the semisimple setting, the algebras resulting from each of these definitions are isomorphic.

Note that the fusion relation is a relation between matrices in $\mathrm{Mat}_{\dim(I)}(\mathbb{C}) \otimes \mathrm{Mat}_{\dim(J)}(\mathbb{C}) \otimes \mathcal{L}_{0,1}(H)$ (for all finite-dimensional $I$,~$J$) which implies relations among elements of $\mathcal{L}_{0,1}(H)$ (the coefficients of these matrices). Explicitly, in terms of matrix coefficients it is written as
\[ \forall\, I,J,a,b,c,d, \qquad \overset{I \otimes J}{M}{^{ac}_{bd}} = \overset{I}{M}{^a_i} \overset{IJ}{(R')}{^{ic}_{jk}} \overset{J}{M}{^k_l} \overset{IJ}{(R'^{-1})}{^{jl}_{bd}}, \]
see the definition of the subscripts $1$ and $2$ in the Notations at p.~\pageref{Notations}. If the two representations~$I$ and~$J$ are fixed and arbitrary, we will simply write
\begin{gather}\label{fusionL01}
M_{12} = M_1 R_{21} M_2 R_{21}^{-1}
\end{gather}
the subscript $1$ (resp.~$2$) corresponding implicitly to evaluation in the representation~$I$ (resp.~$J$). Finally, note that if $f\colon I \to J$ is a morphism, it holds
\begin{gather}\label{naturalite01}
f\overset{I}{M} = \overset{J}{M}f,
\end{gather}
where we identify $f$ with its matrix. Indeed, $f\overset{I}{M} = f \bar j\bigl(\overset{I}{T}\bigr) = \bar j\bigl(f\overset{I}{T}\bigr) = \bar j\bigl(\overset{J}{T}f\bigr) = \bar j\bigl(\overset{J}{T}\bigr) f = \overset{J}{M}f $ where $\bar j$ is the linear map $H^* \to \mathrm{T}(H^*) \to \mathcal{L}_{0,1}(H)$.
\begin{Remark}
One can check that $\overset{(I \otimes J)\otimes K}{M} = \overset{I \otimes (J \otimes K)}{M}$ holds thanks to the Yang--Baxter equation.
\end{Remark}

We have an useful analogue of relation \eqref{FRT}.

\begin{Proposition}\label{EqRef}The following exchange relations hold in $\mathcal{L}_{0,1}(H)$:
\[ \overset{IJ}{R}_{12}\overset{I}{M}_1\overset{IJ}{(R')}_{12}\overset{J}{M}_2 = \overset{J}{M}_2\overset{IJ}{R}_{12}\overset{I}{M}_1\overset{IJ}{(R')}_{12}. \]
\end{Proposition}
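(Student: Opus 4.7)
The plan is to derive the exchange relation from naturality of $\overset{I}{M}$ (equation~\eqref{naturalite01}) applied to the braiding $c_{I,J}\colon I \otimes J \to J \otimes I$ of $H$-modules, combined with the fusion relations of Definition~\ref{defL01} applied to both $I \otimes J$ and $J \otimes I$.

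First, I would derive an explicit formula for $\overset{J \otimes I}{M}_{21}$ as an element of $\mathrm{Mat}_{\dim I}(\mathbb{C}) \otimes \mathrm{Mat}_{\dim J}(\mathbb{C}) \otimes \mathcal{L}_{0,1}(H)$. Applying the fusion relation to $J \otimes I$ with $J$ in matrix slot~$1$ and $I$ in slot~$2$ and then applying the $_{21}$ swap, and using the identity $\overset{JI}{R'}_{21} = \overset{IJ}{R}_{12}$ (which follows from $R' = b_i \otimes a_i$ together with the definition of the subscript swap), one obtains
\[
\overset{J \otimes I}{M}_{21} = \overset{J}{M}_2 \,\overset{IJ}{R}_{12}\, \overset{I}{M}_1 \bigl(\overset{IJ}{R}_{12}\bigr)^{-1}.
\]

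Next, I would invoke naturality with respect to the braiding $c_{I,J} = \tau_{I,J} \circ \overset{IJ}{R}_{12}$, where $\tau_{I,J}$ is the flip of tensor factors; this is an $H$-module isomorphism by quasi-cocommutativity of $R$. Equation~\eqref{naturalite01} then gives $c_{I,J}\,\overset{I \otimes J}{M} = \overset{J \otimes I}{M}\, c_{I,J}$, and moving the flip past $\overset{J \otimes I}{M}$ converts the natural indexing (with $J$ in slot~$1$) into the swapped indexing (with $I$ in slot~$1$), producing the matrix identity
\[
\overset{IJ}{R}_{12}\, \overset{I \otimes J}{M}_{12} = \overset{J \otimes I}{M}_{21}\,\overset{IJ}{R}_{12}.
\]
Substituting the fusion relation for $\overset{I \otimes J}{M}_{12}$ on the left and the formula derived in the first step for $\overset{J \otimes I}{M}_{21}$ on the right, the factors $(\overset{IJ}{R}_{12})^{-1}\overset{IJ}{R}_{12}$ on the right-hand side cancel, and multiplying both sides on the right by $\overset{IJ}{R'}_{12}$ yields exactly the exchange relation of the proposition.

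The main obstacle will be the middle step: translating the abstract naturality statement $c_{I,J} M = M c_{I,J}$ into the clean matrix equation $\overset{IJ}{R}_{12} M_{12} = M_{21} \overset{IJ}{R}_{12}$. One has to carefully track how the flip $\tau_{I,J}$ interacts with the matrix-space indices of the two $M$'s, verifying that conjugation by $\tau$ intertwines $\overset{J \otimes I}{M}$ viewed in $\mathrm{Mat}_{\dim J}(\mathbb{C}) \otimes \mathrm{Mat}_{\dim I}(\mathbb{C})$ with its transport $\overset{J \otimes I}{M}_{21}$ into $\mathrm{Mat}_{\dim I}(\mathbb{C}) \otimes \mathrm{Mat}_{\dim J}(\mathbb{C})$. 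Once this bookkeeping is in place, the rest is a short rearrangement using the fusion relation of Definition~\ref{defL01}.
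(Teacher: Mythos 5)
Your proposal is correct and follows essentially the same route as the paper: the paper's proof also applies the naturality relation~\eqref{naturalite01} to the braiding isomorphism $P_{IJ}\overset{IJ}{R}\colon I \otimes J \to J \otimes I$, expands both $\overset{I\otimes J}{M}$ and $\overset{J\otimes I}{M}$ via the fusion relation, and uses $\overset{JI}{(R')}_{21} = \overset{IJ}{R}_{12}$ to cancel the inverse factors. The only cosmetic difference is that you cancel the flip $\tau_{I,J}$ first to get the clean identity $\overset{IJ}{R}_{12}\overset{I\otimes J}{M}_{12} = \overset{J\otimes I}{M}_{21}\overset{IJ}{R}_{12}$, whereas the paper carries $(P_{IJ})_{12}$ through the computation and strips it off at the end.
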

\noindent This relation is called the {\em reflection equation}. It can be written in a shortened way if the representations $I$ and $J$ are fixed and arbitrary:
\begin{gather}\label{reflection}
R_{12}M_1R_{21}M_2 = M_2R_{12}M_1R_{21}.
\end{gather}
\begin{proof}
Let $P_{I,J}\colon I \otimes J \to J \otimes I$ be the flip map $P_{IJ}(v \otimes w) = w \otimes v$. We have the isomorphism $P_{IJ}\overset{IJ}{R}\colon I \otimes J \to J \otimes I$, hence $P_{IJ}\overset{IJ}{R} \overset{I \otimes J}{M} = \overset{J \otimes I}{M}P_{IJ}\overset{IJ}{R}$ thanks to \eqref{naturalite01}. This gives
\begin{align*}
(P_{IJ})_{12}\overset{IJ}{R}_{12}\overset{I}{M}_1\overset{IJ}{(R')}_{12}\overset{J}{M}_2\overset{IJ}{(R'^{-1})}_{12}& = \overset{J}{M}_1\overset{JI}{(R')}_{12}\overset{I}{M}_2\overset{JI}{(R'^{-1})}_{12}(P_{IJ})_{12}\overset{IJ}{R}_{12} \\
&=(P_{IJ})_{12} \overset{J}{M}_2\overset{JI}{(R')}_{21}\overset{I}{M}_1\overset{JI}{(R'^{-1})}_{21}\overset{IJ}{R}_{12} = (P_{IJ})_{12} \overset{J}{M}_2\overset{IJ}{R}_{12}\overset{I}{M}_1
\end{align*}
as desired.
\end{proof}

Consider the following right action $\cdot$ of $H$ on $\mathcal{L}_{0,1}(H)$, which is the analogue of the right action of the gauge group on the gauge fields
\begin{gather}\label{actionHsurL01}
\overset{I}{M} \cdot h = \overset{I}{h'} \overset{I}{M} \overset{I}{S(h'')}
\end{gather}
or more explicitly $\overset{I}{M}{^a_b} \cdot h = (\overset{I}{h'}){^a_i} \overset{I}{M}{^i_j} \overset{I}{S(h'')}{^j_b}$. As in \cite{BR}, one can equivalently work with the corresponding left coaction $\Omega \colon \mathcal{L}_{0,1}(H) \to \mathcal{O}(H) \otimes \mathcal{L}_{0,1}(H)$ defined by
\[ \Omega(\overset{I}{M}{^a_b}) = \overset{I}{T}{^a_i}S\big(\overset{I}{T}{^j_b}\big) \otimes \overset{I}{M}{^i_j}, \]
so that we recover $\cdot$ by evaluation: $x \cdot h = (\langle ?, h \rangle \otimes \mathrm{id}) \circ \Omega(x)$. If we view $\mathcal{O}(H)$ and $\mathcal{L}_{0,1}(H)$ as subalgebras of $\mathcal{O}(H) \otimes \mathcal{L}_{0,1}(H)$ in the canonical way, then $\Omega$ is simply written as $\Omega(\overset{I}{M}) = \overset{I}{T}\overset{I}{M}S(\overset{I}{T})$.

\begin{Proposition}\label{L01moduleAlgebra}
The right action $\cdot$ is a $H$-module-algebra structure on $\mathcal{L}_{0,1}(H)$. Equivalently, $\Omega$ is a left $\mathcal{O}(H)$-comodule-algebra structure on $\mathcal{L}_{0,1}(H)$.
\end{Proposition}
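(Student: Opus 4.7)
I plan to use the equivalent coaction formulation. First lift $\Omega$ to the free tensor algebra: define $\widetilde\Omega\colon \mathrm{T}(H^*) \to \mathcal{O}(H) \otimes \mathrm{T}(H^*)$ as the unique algebra homomorphism with $\widetilde\Omega(\overset{I}{M}) = \overset{I}{T}\overset{I}{M}S(\overset{I}{T})$ on matrix-coefficient generators. The comodule axioms $(\Delta \otimes \mathrm{id})\widetilde\Omega = (\mathrm{id} \otimes \widetilde\Omega)\widetilde\Omega$ and $(\varepsilon \otimes \mathrm{id})\widetilde\Omega = \mathrm{id}$ are equalities of algebra homomorphisms on $\mathrm{T}(H^*)$, so it suffices to check them on the generators; there they reduce immediately to the identities $\Delta(\overset{I}{T}{^a_b}) = \overset{I}{T}{^a_i} \otimes \overset{I}{T}{^i_b}$, $\varepsilon(\overset{I}{T}) = \mathbb{I}$, and $S(\overset{I}{T})\overset{I}{T} = \mathbb{I}$ from \eqref{dualHopf}.

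The crux is to show that $\widetilde\Omega$ descends to $\mathcal{L}_{0,1}(H) = \mathrm{T}(H^*)/\mathcal{J}$, where $\mathcal{J}$ is the fusion ideal. Set $X = \overset{I\otimes J}{M}_{12}$ and $Y = \overset{I}{M}_1 \overset{IJ}{(R')}_{12} \overset{J}{M}_2 \overset{IJ}{(R'^{-1})}_{12}$, so that $X-Y$ generates $\mathcal{J}$. Using $\overset{I\otimes J}{T}_{12} = \overset{I}{T}_1\overset{J}{T}_2$ and antimultiplicativity of $S$ yields immediately
\[ \widetilde\Omega(X) = \overset{I}{T}_1 \overset{J}{T}_2 \, X \, S(\overset{J}{T}_2)S(\overset{I}{T}_1). \]
The key matching identity is
\[ \widetilde\Omega(Y) = \overset{I}{T}_1 \overset{J}{T}_2 \, Y \, S(\overset{J}{T}_2)S(\overset{I}{T}_1), \]
for then $\widetilde\Omega(X-Y) = \overset{I}{T}_1\overset{J}{T}_2(X-Y)S(\overset{J}{T}_2)S(\overset{I}{T}_1) \in \mathcal{O}(H) \otimes \mathcal{J}$ and the descent follows. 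To prove it, I apply $\widetilde\Omega$ factor-by-factor to $Y$ (noting it fixes the scalar matrices $\overset{IJ}{(R'^{\pm 1})}_{12}$) and then use the exchange relation $\overset{IJ}{(R')}_{12}\overset{J}{T}_2\overset{I}{T}_1 = \overset{I}{T}_1\overset{J}{T}_2\overset{IJ}{(R')}_{12}$---the analog of \eqref{FRT} for $R'$, itself a direct consequence of $R'\Delta^{\mathrm{op}}(h) = \Delta(h)R'$---together with its inverse form, the identity $S(\overset{I}{T}) = \overset{I}{T}^{-1}$ from \eqref{dualHopf}, and the commutation of matrices living in disjoint slots, to shuttle the inner $T$-factors past the $R'^{\pm 1}$'s and collapse the expression into the claimed form.

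The comodule-algebra property of the descended $\Omega$ is then automatic since $\widetilde\Omega$ was defined as an algebra map; this is equivalent, by the action/coaction duality, to the module-algebra property of $\cdot$. The main obstacle is the matrix-level bookkeeping in the shuttling step: one must carefully distinguish commutation of entries (automatic in the tensor product $\mathcal{O}(H) \otimes \mathrm{T}(H^*)$), commutation of matrix products across disjoint slots (automatic), and non-commutativity of matrix products within the same slot (controlled precisely by the exchange relation above).
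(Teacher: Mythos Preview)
Your proposal is correct and is essentially the same argument as the paper's: the heart of both is the matrix identity obtained by shuttling $T$- and $S(T)$-factors past $R'_{12}$ and $R'^{-1}_{12}$ using the FRT-type relation (and its consequences for $S(T)$), together with commutation across disjoint slots. Your framing via an explicit lift $\widetilde\Omega$ on $\mathrm{T}(H^*)$ and descent through the fusion ideal is slightly more explicit about the logic than the paper's direct verification that $\Omega(M)_{12} = \Omega(M)_1 R_{21}\Omega(M)_2 R_{21}^{-1}$, but the computation is the same.
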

\begin{proof}One must show for instance that $\Omega$ is a morphism of algebras, as in \cite{BR}. For the sake of completeness we display the computation, where we use the shortened notation explained before:
 \begin{align*}
\Omega(M)_{12} &= T_{12} M_{12} S(T_{12}) &&\quad \text{(definition)}\\
&= T_1 T_2 M_1 R_{21} M_2 R^{-1}_{21} S(T)_2 S(T)_1 &&\quad \text{(equations~\eqref{dualHopf} and \eqref{fusionL01})}\\
&= T_1 M_1 T_2 R_{21} M_2 R^{-1}_{21} S(T)_2 S(T)_1 &&\quad \text{(commuting coefficients)}\\
&= T_1 M_1 T_2 R_{21} M_2 S(T)_1 S(T)_2 R_{21}^{-1} &&\quad \text{(equation~\eqref{FRT})}\\
&= T_1 M_1 T_2 R_{21} S(T)_1 M_2 S(T)_2 R_{21}^{-1} &&\quad \text{(commuting coefficients)}\\
&= T_1 M_1 S(T)_1 R_{21} T_2 M_2 S(T)_2 R_{21}^{-1} &&\quad \text{(equation~\eqref{FRT})}\\
&= \Omega(M)_1 R_{21} \Omega(M)_2 R_{21}^{-1} &&\quad \text{(definition)}.\tag*{\qed}
\end{align*}\renewcommand{\qed}{}
\end{proof}

We say that an element $x \in \mathcal{L}_{0,1}(H)$ is invariant if for all $h \in H$, $x \cdot h = \varepsilon(h)x$ (or equivalently, $\Omega(x) = \varepsilon \otimes x$). For instance, for every $\Phi \in \End_H(I)$, the element $\mathrm{tr}(\overset{I}{g}\Phi\overset{I}{M})$ is invariant:
\[ \mathrm{tr}\big(\overset{I}{g}\Phi\overset{I}{M}\big) \cdot h = \mathrm{tr}\big(\overset{I}{g}\Phi \overset{I}{h'} \overset{I}{M} \overset{I}{S(h'')}\big) = \mathrm{tr}\big(\overset{I}{g}\Phi \overset{I}{S^{-1}(h'')}\overset{I}{h'} \overset{I}{M}\big) = \varepsilon(h)\mathrm{tr}\big(\overset{I}{g}\Phi\overset{I}{M}\big). \]
We denote by $\mathcal{L}_{0,1}^{\text{inv}}(H)$ the subalgebra of invariants elements of $\mathcal{L}_{0,1}(H)$.

\subsection[Isomorphism $\mathcal{L}_{0,1}(H) \cong H$]{Isomorphism $\boldsymbol{\mathcal{L}_{0,1}(H) \cong H}$}\label{isoL01H}
Recall the right adjoint action of $H$ on itself defined by $a \cdot h = S(h')ah''$ with $a,h \in H$, whose invariant elements are the central elements of $H$.
\begin{Proposition}\label{reshetikhin} If we endow $H$ with the right adjoint action, the following map is a morphism of $($right$)$ $H$-module-algebras
\[ \fonc{\Psi_{0,1}}{\mathcal{L}_{0,1}(H)}{H,}{\overset{I}{M}}{\big(\overset{I}{T} \otimes \text{\rm id}\big)(RR') = \overset{I}{L}{^{(+)}}\overset{I}{L}{^{(-)-1}}.} \]
Hence, $\Psi_{0,1}$ brings invariant elements to central elements.
\end{Proposition}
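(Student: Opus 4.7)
The plan is to construct $\Psi_{0,1}$ in two steps: first as an algebra morphism on the free tensor algebra $\mathrm{T}(H^*)$, sending each generator $\overset{I}{M}{^a_b}$ to the $(a,b)$-entry of $\overset{I}{L^{(+)}}\overset{I}{L^{(-)-1}}\in \mathrm{Mat}_{\dim(I)}(H)$; and second, verify that this factors through $\mathcal{L}_{0,1}(H)$ by respecting the fusion relation of Definition~\ref{defL01}. Equivariance with respect to the right gauge action on $\mathcal{L}_{0,1}(H)$ and the right adjoint action on $H$ is then deduced, and the statement about invariants becoming central follows immediately.

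The main computation is the fusion compatibility. Using $\overset{I\otimes J}{L^{(\pm)}_{12}} = \overset{I}{L^{(\pm)}_1}\overset{J}{L^{(\pm)}_2}$ from \eqref{propertiesL}, the image of the left-hand side $\overset{I\otimes J}{M_{12}}$ under $\Psi_{0,1}$ is $\overset{I}{L^{(+)}_1}\overset{J}{L^{(+)}_2}\overset{J}{L^{(-)-1}_2}\overset{I}{L^{(-)-1}_1}$, while the image of the right-hand side is $\overset{I}{L^{(+)}_1}\overset{I}{L^{(-)-1}_1}\overset{IJ}{(R')}_{12}\overset{J}{L^{(+)}_2}\overset{J}{L^{(-)-1}_2}\overset{IJ}{(R'^{-1})}_{12}$. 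After cancelling $\overset{I}{L^{(+)}_1}$ on the left, one verifies the resulting equality by two applications of the exchange relations of \eqref{propertiesL}, noting that $R^{(-)}_{12}=\overset{IJ}{(R'^{-1})}_{12}$. The key instance is the $(\epsilon,\sigma)=(-,+)$ relation $\overset{IJ}{(R'^{-1})}_{12}\overset{I}{L^{(-)}_1}\overset{J}{L^{(+)}_2} = \overset{J}{L^{(+)}_2}\overset{I}{L^{(-)}_1}\overset{IJ}{(R'^{-1})}_{12}$, which lets $\overset{I}{L^{(-)}_1}$ pass through the $J$-block, and the $(\epsilon,\sigma)=(-,-)$ relation handles the $\overset{I}{L^{(-)-1}_1}$ versus $\overset{J}{L^{(-)-1}_2}$ piece up to conjugation by $R'$. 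The specific combination $L^{(+)}L^{(-)-1}$ (as opposed to some other product of $L^{(\pm)}$'s) is precisely the one for which these commutations close up.

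For equivariance I would proceed in two stages. Restricted to matrix coefficients $\overset{I}{M}{^a_b}\in H^*\subset\mathcal{L}_{0,1}(H)$, the map $\Psi_{0,1}$ coincides with the Reshetikhin--Semenov-Tian-Shansky/Drinfeld map $\Psi(\beta)=(\beta\otimes\mathrm{id})(RR')$. The gauge action on $\overset{I}{M}{^a_b}$ is dual to the left adjoint action on $H$, and it is classical that $\Psi$ intertwines this coadjoint action on $\mathcal{O}(H)$ with the right adjoint action on $H$ --- a consequence of the fact that $RR'$ commutes with the image of $\Delta$ (which itself follows from $R\Delta(h)=\Delta^{\mathrm{op}}(h)R$ applied twice, once for $R$ and once for $R'$). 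Hence equivariance holds on generators. Since $\Psi_{0,1}$ is an algebra morphism and both sides carry module-algebra structures (Proposition~\ref{L01moduleAlgebra} on the source, $H$ with right adjoint on the target), equivariance propagates from generators to all of $\mathcal{L}_{0,1}(H)$ by the usual argument.

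The main obstacle is the fusion check: conceptually straightforward but mechanically fiddly, since one must track which sign pair $(\epsilon,\sigma)\in\{\pm\}^2$ is being used at each step. The equivariance argument is, by contrast, conceptually clean and reduces to well-known properties of the Drinfeld map. The final assertion is then immediate: invariants under the gauge action map to elements fixed by the right adjoint action on $H$, whose fixed subspace is precisely the centre $Z(H)$.
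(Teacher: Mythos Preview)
Your proof is correct and follows essentially the same approach as the paper. The fusion check is identical in substance: the paper computes $\Psi_{0,1}(M)_1 R_{21}\Psi_{0,1}(M)_2 R_{21}^{-1} = L^{(+)}_1L^{(-)-1}_1 R_{21} L^{(+)}_2L^{(-)-1}_2 R_{21}^{-1}$ and reduces it to $L^{(+)}_{12}L^{(-)-1}_{12}$ via exactly the two exchange relations you single out from \eqref{propertiesL}. For equivariance the paper does the direct computation $(\overset{I}{T}\otimes\mathrm{id})\bigl((h'\otimes 1)RR'(S(h'')\otimes 1)\bigr) = S(h')\Psi_{0,1}(\overset{I}{M})h''$, which is precisely the statement that $RR'$ commutes with $\Delta(h)$ --- the ``classical'' fact you invoke; your two-stage packaging (check on generators via the Drinfeld map, then propagate using the module-algebra axiom) is a valid rephrasing of the same computation.
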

\noindent We will call $\Psi_{0,1}$ the {\em Reshetikhin--Semenov-Tian-Shansky--Drinfeld morphism} (RSD morphism for short). The difference with the morphism $\Psi$ of Section~\ref{braidedHopfAlgebras} is that the source spaces are different.
\begin{proof}
Thanks to the relations of \eqref{propertiesL}, we check that $\Psi_{0,1}$ preserves the relation of Definition~\ref{defL01}:
\begin{align*}
\Psi_{0,1}(M)_1R_{21}\Psi_{0,1}(M)_2R^{-1}_{21} &= L^{(+)}_1L^{(-)-1}_1 R_{21} L^{(+)}_2L^{(-)-1}_2R_{21}^{-1}\\
& = L^{(+)}_1 L^{(+)}_2 R_{21} L^{(-)-1}_1 L^{(-)-1}_2R_{21}^{-1}= L^{(+)}_1 L^{(+)}_2 L^{(-)-1}_2 L^{(-)-1}_1\\
& = L^{(+)}_{12}L^{(-)-1}_{12} = \Psi_{0,1}(M)_{12}.
\end{align*}
For the $H$-linearity
\begin{align*}
\Psi_{0,1}\big(\overset{I}{h'}\overset{I}{M}\overset{I}{S(h'')}\big) &= \big(\overset{I}{T} \otimes \mathrm{id}\big) (h' \otimes 1 RR' S(h'') \otimes 1 )\\
&= \big(\overset{I}{T} \otimes \mathrm{id}\big) (h' \otimes 1 RR' S(h)''' \otimes S(h)''h'''' )\\
&= \big(\overset{I}{T} \otimes \mathrm{id}\big) (h'S(h)''' \otimes S(h)'' RR' 1 \otimes h'''' )\\
&= \big(\overset{I}{T} \otimes \mathrm{id}\big) (1 \otimes S(h') RR' 1 \otimes h'' ) = S(h') \Psi_{0,1}(\overset{I}{M})h''.
\end{align*}
We used the basic properties of $S$ and the fact that $\Delta^{\mathrm{op}}R = R\Delta$, with $\Delta^{\mathrm{op}}(h) = h'' \otimes h'$.
\end{proof}

Write $\mathrm{T}(H^*) = \bigoplus_{ n \in \mathbb{N}} \mathrm{T}_n(H^*)$, where $\mathrm{T}_n(H^*)$ is the subspace generated by all the products $\psi_1 \cdots \psi_n$, with $\psi_i \in H^*$ for each $i$.

\begin{Lemma}\label{lemmeDegre}
Each element of $\mathrm{T}(H^*)$ is equivalent modulo the fusion relation of $\mathcal{L}_{0,1}(H)$ to an element of $\mathrm{T}_1(H^*)$. It follows that $\dim (\mathcal{L}_{0,1}(H) ) \leq \dim (H^* )$.
\end{Lemma}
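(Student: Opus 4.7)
The plan is to establish by induction on $n \geq 0$ that every element of $\mathrm{T}_n(H^*)$ is equivalent, modulo the fusion relations, to an element of $\mathrm{T}_1(H^*)$. Granted this claim, the linear map $\bar{j}\colon H^* = \mathrm{T}_1(H^*) \to \mathcal{L}_{0,1}(H)$ is surjective, which gives at once the bound $\dim \mathcal{L}_{0,1}(H) \leq \dim H^*$.

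The cases $n \leq 1$ are immediate. For $n \geq 2$, I write an element of $\mathrm{T}_n(H^*)$ as $\varphi \cdot \omega$ with $\varphi \in H^*$ and $\omega \in \mathrm{T}_{n-1}(H^*)$; by the inductive hypothesis, $\omega$ is equivalent to some $\psi \in H^*$, so we reduce to showing that a product $\varphi \cdot \psi \in \mathrm{T}_2(H^*)$ is equivalent to an element of $\mathrm{T}_1(H^*)$. Since matrix coefficients of finite-dimensional $H$-modules span $H^*$ (Section~\ref{dualAlg}), we may further assume $\varphi = \overset{I}{T}{^a_b}$ and $\psi = \overset{J}{T}{^c_d}$, so that the product in $\mathcal{L}_{0,1}(H)$ reads $\overset{I}{M}{^a_b} \cdot \overset{J}{M}{^c_d}$.

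To reduce such a product, I rewrite the fusion relation of Definition~\ref{defL01} as $\overset{I}{M}_1 \overset{IJ}{(R')}_{12} \overset{J}{M}_2 = \overset{I\otimes J}{M}_{12}\, \overset{IJ}{(R')}_{12}$. The right-hand side has all its entries in $\bar{j}(H^*)$, since they are scalar (in fact $R'$-weighted) combinations of matrix coefficients of $I \otimes J$. Hence so do the entries of the left-hand side, which are themselves explicit $\overset{IJ}{(R')}$-weighted linear combinations of the products $\overset{I}{M}{^a_g} \cdot \overset{J}{M}{^f_d}$. Using the invertibility of $\overset{IJ}{(R')}$ in $\mathrm{End}(I \otimes J)$, one can in principle solve this system of linear identities for the individual products $\overset{I}{M}{^a_b}\cdot \overset{J}{M}{^c_d}$, expressing each as a linear combination of entries of $\overset{I\otimes J}{M}$, which belong to $\bar{j}(H^*)$.

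The main obstacle I anticipate is this very inversion step: extracting a particular product from the $R'$-weighted sum amounts to inverting the ``partial transpose'' of $\overset{IJ}{(R')}$ in one of its tensor slots, which is not automatic even though $\overset{IJ}{(R')}$ itself is invertible. Circumventing this will likely require either enlisting the reflection equation (Proposition~\ref{EqRef}) to produce a complementary family of identities, or exploiting the freedom to vary the representations $I$ and $J$ (for example, taking $J = {}_H H$, whose matrix coefficients already form a basis of $H^*$, as observed in Section~\ref{dualAlg}) in order to assemble enough linearly independent relations to isolate each product.
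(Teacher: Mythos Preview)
Your inductive framework is exactly right, and you have correctly identified the only non-trivial step: inverting the ``partial transpose'' of $\overset{IJ}{(R')}$ to solve the system
\[
\overset{I}{M}_1\,\overset{IJ}{(R')}_{12}\,\overset{J}{M}_2 \;=\; \overset{I\otimes J}{M}_{12}\,\overset{IJ}{(R')}_{12}
\]
for the individual products $\overset{I}{M}{^a_b}\,\overset{J}{M}{^c_d}$. But you stop at this obstacle and propose workarounds (the reflection equation, or varying $J$) that are not needed and would not obviously succeed.

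The missing idea is that this partial transpose \emph{is} invertible, and the inversion is explicit from the Hopf-algebraic properties of $R$. Write $R' = b_i \otimes a_i$; then in the product above the scalar factors live in different tensor slots than the $M$-matrix they neighbour, so they slide past freely:
\[
\overset{I}{M}_1\,\overset{IJ}{(R')}_{12}\,\overset{J}{M}_2
= \overset{I}{M}_1\,(\overset{I}{b_i})_1 (\overset{J}{a_i})_2\,\overset{J}{M}_2
= (\overset{J}{a_i})_2\,\overset{I}{M}_1\,\overset{J}{M}_2\,(\overset{I}{b_i})_1.
\]
This factorises the troublesome transform as an independent left-multiplication in slot~$2$ and right-multiplication in slot~$1$. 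Each factor is undone using the antipode identity $(S\otimes\mathrm{id})(R)=R^{-1}$ of \eqref{propSR}, which gives $S^{-1}(a_j)a_i \otimes b_i b_j = 1 \otimes 1$; multiplying on the left by $(\overset{J}{S^{-1}(a_j)})_2$ and on the right by $(\overset{I}{b_j})_1$ yields
\[
\overset{I}{M}_1\,\overset{J}{M}_2 \;=\; (\overset{J}{S^{-1}(a_j)})_2\,\overset{I\otimes J}{M}_{12}\,\overset{IJ}{(R')}_{12}\,(\overset{I}{b_j})_1,
\]
whose entries all lie in $\bar{j}(H^*)$. This closes your gap in one line; neither the reflection equation nor any special choice of $J$ is required.
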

\begin{proof}
It suffices to show that the product of two elements of $T_1(H^*)$ is equivalent to a linear combination of elements of $\mathrm{T}_1(H^*)$, and the result follows by induction. We can restrict to matrix coefficients since they linearly span $H^*$. If we write $R = a_i \otimes b_i$, then $R' = b_i \otimes a_i$ and the fusion relation is rewritten as
\[ \overset{I \otimes J}{M}\!\!_{12} (\overset{IJ}{R'})_{12} = \overset{I}{M}\!_1 (\overset{IJ}{R'})_{12} \overset{J}{M}\!_2 = \overset{I}{M}\!_1 \big(\overset{I}{b_i}\big)_1 \big(\overset{J}{a_i}\big)_2 \overset{J}{M}\!_2 = \big(\overset{J}{a_i}\big)_2 \overset{I}{M}\!_1 \overset{J}{M}\!_2 \big(\overset{I}{b_i}\big)_1. \]
Using $S^{-1}(a_j)a_i \otimes b_ib_j = 1 \otimes 1$, we get
\begin{gather}\label{eqFusionInverse}
\overset{I}{M}\!_1 \overset{J}{M}\!_2 = \overset{J}{S^{-1}(a_i)}_2\overset{I \otimes J}{M}\!\!_{12} \big(\overset{IJ}{R'}\big)_{12} \big(\overset{I}{b_i}\big)_1
\end{gather}
and this give the result since $\overset{I}{M}_1 \overset{J}{M}_2$ contains all the possible products between the coefficients of $\overset{I}{M}$ and those of $\overset{J}{M}$.
\end{proof}

\begin{Theorem}\label{IsoRSD}Recall that we assume that $H$ is a finite-dimensional factorizable Hopf algebra. Then the RSD morphism $\Psi_{0,1}$ gives an isomorphism of $H$-module-algebras $\mathcal{L}_{0,1}(H) \cong H$. It follows that $\mathcal{L}_{0,1}^{\mathrm{inv}}(H) \cong \mathcal{Z}(H)$.
\end{Theorem}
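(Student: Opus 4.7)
The map $\Psi_{0,1}$ is already known to be a morphism of $H$-module-algebras by Proposition~\ref{reshetikhin}, so the task is purely to show it is bijective; the statement about invariants will then follow automatically, since a module-algebra isomorphism sends invariants to invariants, and the invariants of $H$ under the right adjoint action are exactly $\mathcal{Z}(H)$.

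My plan is to sandwich $\Psi_{0,1}$ between two maps whose behavior we control. Let $\bar j \colon H^* \to \mathcal{L}_{0,1}(H)$ denote the linear map induced by the canonical inclusion $H^* = \mathrm{T}_1(H^*) \hookrightarrow \mathrm{T}(H^*)$ followed by the quotient by the fusion ideal, so that $\bar j\bigl(\overset{I}{T}{^a_b}\bigr) = \overset{I}{M}{^a_b}$. Directly from the definitions in Section~\ref{braidedHopfAlgebras}, one reads off that
\[ \Psi_{0,1} \circ \bar j = \Psi, \]
where $\Psi\colon H^* \to H$, $\beta \mapsto (\beta \otimes \mathrm{id})(RR')$ is the Drinfeld map, because both sides send $\overset{I}{T}{^a_b}$ to $(\overset{I}{T}{^a_b} \otimes \mathrm{id})(RR')$, and matrix coefficients span $H^*$.

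Now the two inputs combine cleanly. First, the factorizability assumption is precisely that $\Psi$ is a linear isomorphism, which forces $\Psi_{0,1}$ to be surjective and $\bar j$ to be injective. Second, Lemma~\ref{lemmeDegre} gives that $\bar j$ is surjective, because it says every class in $\mathcal{L}_{0,1}(H)$ has a representative in $\mathrm{T}_1(H^*)$. Hence $\bar j$ is a bijection, and then
\[ \Psi_{0,1} = \Psi \circ \bar j^{-1} \]
is an isomorphism of vector spaces; combined with Proposition~\ref{reshetikhin} this makes it an isomorphism of $H$-module-algebras.

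Finally, for the invariants: because $\Psi_{0,1}$ is a morphism of right $H$-modules, it restricts to a linear isomorphism $\mathcal{L}_{0,1}^{\mathrm{inv}}(H) \xrightarrow{\sim} H^{\mathrm{inv}} = \{a \in H \mid S(h')ah'' = \varepsilon(h)a\ \forall h\} = \mathcal{Z}(H)$, and since $\Psi_{0,1}$ is an algebra morphism the restriction is an algebra isomorphism. The only real work in the whole argument is already done, namely Lemma~\ref{lemmeDegre}; the rest is bookkeeping, and the one thing to verify carefully is the identity $\Psi_{0,1} \circ \bar j = \Psi$ so that factorizability can be invoked.
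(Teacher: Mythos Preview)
Your proof is correct and follows essentially the same route as the paper: both use Lemma~\ref{lemmeDegre} together with factorizability of $\Psi$ to pin down $\mathcal{L}_{0,1}(H)$. The only cosmetic difference is that the paper phrases the conclusion as a dimension count (surjectivity of $\Psi_{0,1}$ gives $\dim \mathcal{L}_{0,1}(H) \geq \dim H$, while Lemma~\ref{lemmeDegre} gives $\leq$), whereas you make the factorization $\Psi_{0,1}\circ\bar j = \Psi$ explicit and deduce directly that $\bar j$ is a bijection; the underlying argument is the same.
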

\begin{proof}Since $H$ is factorizable, $\Psi_{0,1}$ is surjective. Hence $\dim(\mathcal{L}_{0,1}(H)) \geq \dim(H)$. But by Lemma~\ref{lemmeDegre}, $\dim(\mathcal{L}_{0,1}(H)) \leq \dim(H^*) = \dim(H)$. Thus $\dim(\mathcal{L}_{0,1}(H)) = \dim(H)$.
\end{proof}

Let us point out obvious consequences. First, by comparing the dimensions, we see that the canonical map $H^* \hookrightarrow \mathrm{T}(H^*) \twoheadrightarrow \mathcal{L}_{0,1}(H)$ is an isomorphism of vector spaces. Second, this shows that the matrices $\overset{I}{M}$ are invertible since $RR'$ is invertible. More importantly, this theorem allows us to identify $\mathcal{L}_{0,1}(H)$ with~$H$ via $\overset{I}{M} = \overset{I}{L} \!^{(+)}\overset{I}{L} \!^{(-)-1}$, where the matrices $L^{(\pm)}$ are defined in $\eqref{L}$. We will always work with this identification in the sequel.

\begin{Remark}Thanks to the isomorphism of vector spaces $\overset{I}{T} \to \overset{I}{M}$ and the relation~\eqref{eqFusionInverse}, we can see $\mathcal{L}_{0,1}(H)$ as $H^*$ endowed with a product $\ast$ defined by
\begin{gather}\label{productGaugeFields01}
\varphi \ast \psi = \varphi\!\left( ? b_j b_i \right) \psi\big(S^{-1}(a_i) ? a_j\big), \qquad x_m \mapsto \varphi ( x_m' b_j b_i ) \psi\big(S^{-1}(a_i) x_m'' a_j\big).
\end{gather}
This is the product of the gauge fields $\varphi, \psi$ and its evaluation on the connection which assigns $x_m \in H$ to the loop~$m$, see Fig.~\ref{figureSurfaces}. The action of the gauge algebra $H$ on a gauge field $\varphi$ is $\varphi \cdot h = \varphi (h' ? S(h'') )$ and $\ast$ is $H$-equivariant.
\end{Remark}

We denote by $\SLF(H)$ the space of symmetric linear forms on $H$:
\[ \SLF(H) = \{ \psi \in H^* \,|\, \forall\, x,y \in H, \, \psi(xy) = \psi(yx) \}. \]
$\SLF(H)$ is obviously a subalgebra of $\mathcal{O}(H)$. Consider the following variant of the map $\Psi$ of Section~\ref{braidedHopfAlgebras}, which will be useful in what follows
\begin{gather}\label{morphismeDrinfeld}
\fonc{\mathcal{D}}{\mathcal{O}(H)}{H,}{\psi}{\left(\psi \otimes \text{id}\right)\left( (g \otimes 1) RR'\right),}
\end{gather}
where $g$ is the pivotal element~\eqref{pivotCan}. Since $H$ is factorizable, $\mathcal{D}$ is an isomorphism of vector spaces. A computation similar to that of the proof of Proposition~\ref{reshetikhin} shows that $\mathcal{D}$ brings symmetric linear forms to central elements. Moreover, it is not difficult to show that it induces an isomorphism of algebras $\SLF(H) \cong \mathcal{Z}(H) = \mathcal{L}^{\mathrm{inv}}_{0,1}(H)$.

Let us fix a notation. Every $\psi \in H^*$ can be written as $\psi = \sum_{i,j,I}\lambda_{ij}^I\overset{I}{T} \!^i_j$ with $\lambda_{ij}^I \in \mathbb{C}$. In order to avoid the indices, define for each $I$ a matrix $\Lambda_I \in \mathrm{Mat}_{\dim(I)}(\mathbb{C})$ by $(\Lambda_I)^i_j = \lambda^I_{ji}$. Then~$\psi$ can be expressed as $\psi = \sum_I \mathrm{tr}\big(\Lambda_I\overset{I}{T}\big)$. We record these observations as a lemma.

\begin{Lemma}\label{writingInvariants}Every $x \in \mathcal{L}_{0,1}(H)$ can be expressed as
\[ x = \sum_I \mathrm{tr}\big(\Lambda_I\overset{I}{g}\overset{I}{M}\big) \]
such that $\mathcal{D}^{-1}(x) = \sum_I \mathrm{tr}\big(\Lambda_I\overset{I}{T}\big)$. Moreover, if $x \in \mathcal{L}^{\mathrm{inv}}_{0,1}(H)$, then $\mathcal{D}^{-1}(x) \in \SLF(H)$.
\end{Lemma}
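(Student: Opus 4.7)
The plan is essentially to invert $\mathcal{D}$ and translate the basis-of-matrix-coefficients expansion of a linear form on $H$ into an expansion of the corresponding element of $\mathcal{L}_{0,1}(H)=H$.

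First, given $x\in\mathcal{L}_{0,1}(H)$, I would use the fact — already recorded just before the lemma — that $\mathcal{D}\colon\mathcal{O}(H)\to H$ is a vector space isomorphism (this is where the factorizability hypothesis enters). Hence there is a unique $\psi\in H^*$ such that $\mathcal{D}(\psi)=x$. Using the expansion procedure described in the paragraph preceding the lemma, write $\psi=\sum_I\mathrm{tr}\bigl(\Lambda_I\overset{I}{T}\bigr)$ for suitable matrices $\Lambda_I$.

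The key identity to verify is the matrix equality
\[ \mathcal{D}\bigl(\overset{I}{T}\bigr)=\overset{I}{g}\,\overset{I}{M} \]
in $\mathrm{Mat}_{\dim(I)}(H)$, under the identification $\mathcal{L}_{0,1}(H)\cong H$ furnished by Theorem~\ref{IsoRSD}. This is a direct calculation: by definition of $\mathcal{D}$ and linearity,
\[ \mathcal{D}\bigl(\overset{I}{T}\bigr)=\bigl(\overset{I}{T}\otimes\mathrm{id}\bigr)\bigl((g\otimes 1)RR'\bigr)=\overset{I}{g}\,\bigl(\overset{I}{T}\otimes\mathrm{id}\bigr)(RR'), \]
and by Proposition~\ref{reshetikhin} the last factor is precisely $\overset{I}{L}{^{(+)}}\overset{I}{L}{^{(-)-1}}=\overset{I}{M}$. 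Applying this to the expansion of $\psi$ and taking traces then yields
\[ x=\mathcal{D}(\psi)=\sum_I\mathrm{tr}\bigl(\Lambda_I\mathcal{D}(\overset{I}{T})\bigr)=\sum_I\mathrm{tr}\bigl(\Lambda_I\overset{I}{g}\,\overset{I}{M}\bigr), \]
which is the desired expansion, and by construction $\mathcal{D}^{-1}(x)=\psi=\sum_I\mathrm{tr}\bigl(\Lambda_I\overset{I}{T}\bigr)$.

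For the last assertion, if $x\in\mathcal{L}^{\mathrm{inv}}_{0,1}(H)$, then under the isomorphism $\mathcal{L}_{0,1}(H)\cong H$ the element $x$ is central, and the statement recalled just before the lemma — that $\mathcal{D}$ restricts to an isomorphism $\SLF(H)\cong\mathcal{Z}(H)$ — immediately gives $\mathcal{D}^{-1}(x)\in\SLF(H)$.

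There is no real obstacle here: the entire content of the lemma is a bookkeeping translation between the two isomorphisms $\Psi_{0,1}$ and $\mathcal{D}$, and the single non-formal step is the matrix identity $\mathcal{D}(\overset{I}{T})=\overset{I}{g}\,\overset{I}{M}$, whose verification reduces to recognizing $(\overset{I}{T}\otimes\mathrm{id})(RR')$ as $\Psi_{0,1}(\overset{I}{M})$.
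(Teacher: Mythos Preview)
Your proof is correct and is exactly the argument the paper intends: the lemma is stated in the paper without proof, as a recording of the observations made in the paragraph immediately preceding it, and your key identity $\mathcal{D}(\overset{I}{T})=\overset{I}{g}\,\overset{I}{M}$ is precisely the content that makes those observations work. You have simply made explicit what the paper left implicit.
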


\begin{Remark}\label{remarkWriting}Let us stress that, due to non-semi-simplicity, this way of writing elements of $\mathcal{L}_{0,1}(H)$ and of $\SLF(H)$ is in general not unique, see the comments in Section~\ref{dualAlg}.
\end{Remark}

\section[The handle algebra $\mathcal{L}_{1,0}(H)$]{The handle algebra $\boldsymbol{\mathcal{L}_{1,0}(H)}$}\label{sectionHandleL10}
We assume that $H$ is a finite-dimensional factorizable ribbon Hopf algebra. Note however that the ribbon assumption is not needed in Sections~\ref{defL10} and~\ref{sectionIsoL10Heisenberg}.

\subsection[Definition of $\mathcal{L}_{1,0}(H)$ and $H$-module-algebra structure]{Definition of $\boldsymbol{\mathcal{L}_{1,0}(H)}$ and $\boldsymbol{H}$-module-algebra structure}\label{defL10}
Consider the free product $\mathcal{L}_{0,1}(H) \ast \mathcal{L}_{0,1}(H)$, and let $j_1$ (resp.~$j_2$) be the canonical injection in the first (resp.\ second) copy of $\mathcal{L}_{0,1}(H)$. We define $\overset{I}{B} = j_1(\overset{I}{M})$ and $\overset{I}{A} = j_2(\overset{I}{M})$, that is $\overset{I}{B}{^a_b} = j_1(\overset{I}{M}{^a_b})$, $\overset{I}{A}{^a_b} = j_2(\overset{I}{M}{^a_b})$.

\begin{Definition}\label{definitionL10}The handle algebra $\mathcal{L}_{1,0}(H)$ is the quotient of $\mathcal{L}_{0,1}(H) \ast \mathcal{L}_{0,1}(H)$ by the following exchange relations:
\[ \overset{IJ}{R}_{12}\overset{I}{B}_1\overset{IJ}{(R')}_{12}\overset{J}{A}_2 = \overset{J}{A}_2\overset{IJ}{R}_{12}\overset{I}{B}_1\overset{IJ}{R}{^{-1}_{12}} \]
for all finite-dimensional $H$-modules $I$, $J$.
\end{Definition}
\noindent The exchange relation above is the same as in \cite[Definition~1]{BNR} except that $A$ and $B$ are switched; the one of \cite[Definition~12]{AGS2} and \cite[equation~(3.14)]{AS} is different, due to a different choice of the action of $H$ on $\mathcal{L}_{1,0}(H)$. In the semisimple setting, the algebras resulting from each of these definitions are isomorphic. This is a relation between matrices in $\mathrm{Mat}_{\dim(I)}(\mathbb{C}) \otimes \mathrm{Mat}_{\dim(J)}(\mathbb{C}) \otimes \mathcal{L}_{1,0}(H)$ (for all finite-dimensional $I$,~$J$) which implies relations among elements of $\mathcal{L}_{1,0}(H)$, namely
\[ \forall\, I,J,a,b,c,d, \qquad \overset{IJ}{R}{^{ac}_{ij}} \overset{I}{B}{^i_k } (\overset{IJ}{R'})^{kj}_{bl} \overset{J}{A}{^l_d} = \overset{J}{A}{^c_i} \overset{IJ}{R}{^{ai}_{jk}} \overset{I}{B}{^j_l} \big(\overset{IJ}{R}{^{-1}}\big)^{lk}_{bd}. \]
\noindent Like the other relations before, we can write the $\mathcal{L}_{1,0}(H)$-exchange relation more simply as:
\begin{gather}\label{echangeL10}
R_{12} B_1 R_{21} A_2 = A_2 R_{12} B_1 R_{12}^{-1}.
\end{gather}
It immediately follows from \eqref{naturalite01} that if $f\colon I \to J$ is a morphism, it holds
\[ f \overset{I}{B} = \overset{J}{B} f, \qquad f \overset{I}{A} = \overset{J}{A} f, \]
where we identify $f$ with its matrix.

Similarly to $\mathcal{L}_{0,1}(H)$, consider the following right action of $H$ on $\mathcal{L}_{1,0}(H)$, which is the analogue of the action of the gauge group on the gauge fields:
\begin{gather}\label{actionHsurL10}
\overset{I}{B} \cdot h = \overset{I}{h'} \overset{I}{B} \overset{I}{S(h'')}, \qquad \overset{I}{A} \cdot h = \overset{I}{h'} \overset{I}{A} \overset{I}{S(h'')}.
\end{gather}
As above, it is equivalent to work with the corresponding left coaction $\Omega\colon \mathcal{L}_{1,0}(H) \to \mathcal{O}(H) \otimes \mathcal{L}_{1,0}(H)$ defined by
\[ \Omega(\overset{I}{B}) = \overset{I}{T}\overset{I}{B}S(\overset{I}{T}), \qquad \Omega(\overset{I}{A}) = \overset{I}{T}\overset{I}{A}S(\overset{I}{T}). \]

\begin{Proposition}\label{L10moduleAlgebra}
The right action $\cdot$ is a $H$-module-algebra structure on $\mathcal{L}_{1,0}(H)$. Equivalently, $\Omega$ is a left $\mathcal{O}(H)$-comodule-algebra structure on $\mathcal{L}_{1,0}(H)$.
\end{Proposition}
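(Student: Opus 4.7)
The plan is to verify that $\Omega$ is a well-defined morphism of algebras; once this is in hand, the remaining axioms for a comodule-algebra structure (coassociativity and counitality) are immediate from the formulas on the generators $\overset{I}{B},\overset{I}{A}$, and the equivalence with the right $H$-module-algebra statement is the same duality argument used in Proposition~\ref{L01moduleAlgebra}. Since $\Omega$ is defined a priori on the free product $\mathcal{L}_{0,1}(H)\ast\mathcal{L}_{0,1}(H)$, the task reduces to checking that it descends to the quotient, i.e.\ that it preserves the three families of defining relations of $\mathcal{L}_{1,0}(H)$: the fusion relation for $B$, the fusion relation for $A$, and the exchange relation \eqref{echangeL10} between them.

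The fusion relations for $B$ and for $A$ come for free. The restriction of $\Omega$ to the subalgebra generated by $B$ (respectively by $A$) is identical to the coaction of Proposition~\ref{L01moduleAlgebra}, where preservation of the fusion relation was already verified. Everything therefore reduces to the exchange relation.

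To check the latter I would substitute $\Omega(B)=T B S(T)$ and $\Omega(A)=T A S(T)$ (in shortened matrix notation) and show that
\[
R_{12}\,T_1 B_1 S(T)_1\,R_{21}\,T_2 A_2 S(T)_2 \;=\; T_2 A_2 S(T)_2\,R_{12}\,T_1 B_1 S(T)_1\,R_{12}^{-1}.
\]
The transformation follows the same pattern as the proof of Proposition~\ref{L01moduleAlgebra}: one freely commutes the $\mathcal{O}(H)$-valued factors in matrix position~$1$ (namely $T_1,S(T)_1$) past the $\mathcal{L}_{1,0}(H)$-valued factors in matrix position~$2$ (namely $A_2$), and symmetrically; one uses \eqref{FRT} in the forms $R_{12}T_1T_2=T_2T_1R_{12}$ and its $S(T)$-analogue $R_{12}S(T)_1S(T)_2=S(T)_2S(T)_1R_{12}$ (which follows from $S(T)=T^{-1}$) to transport $R_{21}$ through the pair $T_2 S(T)_1$; and finally one invokes the exchange relation \eqref{echangeL10} on the inner block $R_{12}B_1 R_{21} A_2$ to replace it with $A_2 R_{12} B_1 R_{12}^{-1}$. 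Reassembling the $T,S(T)$ factors around the transformed core yields the right-hand side.

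The main obstacle is bookkeeping rather than conceptual: the expression mixes scalar-valued $R$-matrices, $\mathcal{O}(H)$-valued matrices $T,S(T)$, and $\mathcal{L}_{1,0}(H)$-valued matrices $A,B$, all distributed across two matrix positions. At every step one must scrupulously distinguish free commutations (different matrix positions, where the tensor structure suffices even though matrix multiplication in a single position is nontrivial) from braided commutations (same matrix position, where an $R$-matrix identity must be invoked). Apart from this careful accounting, no new ingredient is needed beyond those already used for $\mathcal{L}_{0,1}(H)$, so I expect the verification to be routine once the order of moves is laid out.
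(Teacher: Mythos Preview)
Your proposal is correct and follows exactly the same approach as the paper: reduce to showing that $\Omega$ is an algebra morphism, observe that the fusion relations for $A$ and $B$ are already handled by Proposition~\ref{L01moduleAlgebra}, and verify compatibility with the exchange relation by a computation of the same style. The paper in fact gives fewer details than you do, merely pointing to \cite{BR} and noting that the check is ``similar to the proof of Proposition~\ref{L01moduleAlgebra}''.
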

\begin{proof}One must show that $\Omega$ is an algebra morphism, as in~\cite{BR}. This amounts to check that $\Omega$ is compatible with the exchange relation, which is similar to the proof of Proposition~\ref{L01moduleAlgebra}.
\end{proof}

We denote by $\mathcal{L}_{1,0}^{\text{inv}}(H)$ the subalgebra of invariant elements of $\mathcal{L}_{1,0}(H)$. For instance, the elements{\samepage
\begin{gather}\label{FamilleInvL10}
\mathrm{tr}_{12}\Big(\overset{I \otimes J}{g}\!\!\!_{12}\Phi\overset{I}{A}_1\overset{IJ}{(R')}_{12}\overset{J}{B}_2\overset{IJ}{R}_{12}\Big)
\end{gather}
with $\Phi \in \End_H(I \otimes J)$ and $\mathrm{tr}_{12} = \mathrm{tr} \otimes \mathrm{tr}$, are invariant.}

Finally, we describe a wide family of maps $\mathcal{L}_{0,1}(H) \to \mathcal{L}_{1,0}(H)$. For $w \in \mathcal{L}^{\mathrm{inv}}_{0,1}(H) = \mathcal{Z}(H)$ and $m_1, n_1, \dots, m_k, n_k \in \mathbb{Z}$, define
\[ \fonc{j_{wB^{m_1}A^{n_1} \dots B^{m_k}A^{n_k}}}{\mathcal{L}_{0,1}(H)}{\mathcal{L}_{1,0}(H),}{\overset{I}{M}}{\overset{I}{w}\overset{I}{B}{^{m_1}}\overset{I}{A}{^{n_1}}\cdots \overset{I}{B}{^{m_k}}\overset{I}{A}{^{n_k}}.} \]
It is clear that these maps are morphisms of $H$-modules, but not of algebras in general. Hence the restriction satisfies $j_{wB^{m_1}A^{n_1} \dots B^{m_k}A^{n_k}} \colon \mathcal{L}^{\mathrm{inv}}_{0,1}(H) \to \mathcal{L}^{\mathrm{inv}}_{1,0}(H)$. This gives a particular type of invariants in $\mathcal{L}_{1,0}(H)$. We will more shortly write
\begin{gather}\label{coinvParticuliers}
x_{wB^{m_1}A^{n_1} \dots B^{m_k}A^{n_k}} = j_{wB^{m_1}A^{n_1} \dots B^{m_k}A^{n_k}}(x).
\end{gather}
We also use this notation for $x \in H$, thanks to the identification $H = \mathcal{L}_{0,1}(H)$.

\begin{Remark}Recall from Remark~\ref{remarkWriting} that the matrix coefficients do not form a basis of $\mathcal{L}_{0,1}(H)$. They just linearly span this space. However, the maps $j_{wA^{m_1}B^{n_1} \dots A^{m_k}B^{n_k}}$ are well-defined. Indeed, first observe that
\begin{gather*}
j_B \colon \ \mathcal{L}_{0,1}(H) \overset{j_1}{\longhookrightarrow} \mathcal{L}_{0,1}(H) \ast \mathcal{L}_{0,1}(H) \overset{\pi}{\longrightarrow\!\!\!\!\!\rightarrow} \mathcal{L}_{1,0}(H),\\
j_A \colon \ \mathcal{L}_{0,1}(H) \overset{j_2}{\longhookrightarrow} \mathcal{L}_{0,1}(H) \ast \mathcal{L}_{0,1}(H) \overset{\pi}{\longrightarrow\!\!\!\!\!\rightarrow} \mathcal{L}_{1,0}(H)
\end{gather*}
are well-defined. Let us show for instance that the map $j_{A^{-1}B^{-1}A}$ is well-defined. Assume that $\lambda^a_b\overset{I}{T}{^b_a} = 0$. Applying the coproduct in $\mathcal{O}(H)$ twice and tensoring with $\mathrm{id}_H$, we get
\[ \lambda^a_b \overset{I}{T}{^b_k} \otimes \mathrm{id}_H \otimes \overset{I}{T}{^k_l} \otimes \mathrm{id}_H \otimes \overset{I}{T}{^l_a} \otimes \mathrm{id}_H = 0. \]
We evaluate this on $\left(RR'\right)^{-1} \otimes \left(RR'\right)^{-1} \otimes RR'$:
\[ \lambda^a_b \big(\overset{I}{M}{^{-1}}\big){^b_k} \otimes \big(\overset{I}{M}{^{-1}}\big){^k_l} \otimes \overset{I}{M}{^l_a} = 0. \]
Finally, we apply the map $j_A \otimes j_B \otimes j_A$ and multiplication in $\mathcal{L}_{1,0}(H)$:
\[ \lambda^a_b \big(\overset{I}{A}{^{-1}}\overset{I}{B}{^{-1}}\overset{I}{A}\big){^b_a} = 0 \]
as desired. A similar proof can be used to show that all the other maps defined by means of matrix coefficients (like $\Psi_{1,0}$ or $\alpha$, $\beta$ below etc..) are well-defined.
\end{Remark}

\subsection[Isomorphism $\mathcal{L}_{1,0}(H) \cong \mathcal{H}(\mathcal{O}(H))$]{Isomorphism $\boldsymbol{\mathcal{L}_{1,0}(H) \cong \mathcal{H}(\mathcal{O}(H))}$}\label{sectionIsoL10Heisenberg}

Let us begin by recalling the following definition (see for instance \cite[Example~4.1.10]{Mon}).
\begin{Definition}\label{defHeisenberg}
Let $H$ be a Hopf algebra. The {Heisenberg double} of $\mathcal{O}(H)$, $\mathcal{H}(\mathcal{O}(H))$, is the vector space $\mathcal{O}(H) \otimes H$ endowed with the algebra structure defined by the following multiplication rules:
\begin{itemize}\itemsep=0pt
\item The canonical injections $H, \mathcal{O}(H) \to \mathcal{O}(H) \otimes H$ are algebra morphisms. Thus we identify~$\mathcal{O}(H)$ (resp.~$H$) with $\mathcal{O}(H) \otimes 1 \subset \mathcal{H}(\mathcal{O}(H))$ (resp.\ with $1 \otimes H \subset \mathcal{H}(\mathcal{O}(H))$).
\item Under this identification, we have the exchange relation
\[ \forall\, \psi \in \mathcal{O}(H), \qquad \forall\, h \in H, \qquad h\psi = \psi(? h')h'' = \psi''(h')\psi'h''. \]
\end{itemize}
\end{Definition}

There is a representation of $\mathcal{H}(\mathcal{O}(H))$ on $\mathcal{O}(H)$ defined by
\begin{gather}\label{repHeisenberg}
h \triangleright \psi = \psi(?h) = \psi''(h)\psi', \qquad \varphi \triangleright \psi = \varphi\psi \qquad h \in H, \qquad \psi, \varphi \in \mathcal{O}(H).
\end{gather}
This representation is faithful (see \cite[Lemma~9.4.2]{Mon}). Hence, if $H$ is finite-dimensional, it follows that $\mathcal{H}(\mathcal{O}(H))$ is a matrix algebra
\begin{gather}\label{HeisenbergMat}
\mathcal{H}(\mathcal{O}(H)) \cong \End_{\mathbb{C}}(H^*).
\end{gather}

Under our assumptions on $H$, a natural set of generators for $\mathcal{H}(\mathcal{O}(H))$ consists of the matrix coefficients of $\overset{I}{T}$ and of $\overset{I}{L} \!^{(\pm)}$.
\begin{Lemma}\label{echangeHeisenberg}
With these generators, the exchange relation of $\mathcal{H}(\mathcal{O}(H))$ is
\[ \overset{I}{L} \!^{(\pm)}_1 \overset{J}{T_2} = \overset{J}{T_2} \overset{I}{L} \!^{(\pm)}_1 \overset{IJ}{R} \!^{(\pm)}_{12} \]
and the representation $\triangleright$ is
\[ \overset{I}{L} \!^{(\pm)}_1 \triangleright \overset{J}{T_2} = \overset{J}{T_2}\overset{IJ}{R} \!^{(\pm)}_{12}, \qquad \overset{I}{T_1}\triangleright \overset{J}{T_2} = \overset{I \otimes J}{T}\!\!\!_{12}.\]
\end{Lemma}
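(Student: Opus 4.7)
The plan is to compute everything coefficient-by-coefficient, exploiting the fact that $L^{(\pm)} = \bigl(\overset{I}{a_i^{(\pm)}}\bigr) b_i^{(\pm)}$ splits into a scalar (representation) factor and an $H$-factor, so that only the $H$-factor interacts nontrivially with $T$ via the Heisenberg exchange relation of Definition~\ref{defHeisenberg}.

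\smallskip

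First I would translate the Heisenberg exchange $h\psi = \psi(?h')h''$ into matrix-coefficient language: using $\Delta\big(\overset{J}{T}{}^c_d\big) = \overset{J}{T}{}^c_e \otimes \overset{J}{T}{}^e_d$ from \eqref{dualHopf}, it reads
\[ h \, \overset{J}{T}{}^c_d = \overset{J}{T}{}^e_d(h') \, \overset{J}{T}{}^c_e \, h''. \]
Applied with $h = b_i^{(\pm)}$ and multiplied on the left by the scalar $\bigl(\overset{I}{a_i^{(\pm)}}\bigr)^a_b$, this yields
\[ \overset{I}{L}{}^{(\pm)}{}^a_b \, \overset{J}{T}{}^c_d \;=\; \overset{J}{T}{}^c_e \cdot \bigl(\overset{I}{a_i^{(\pm)}}\bigr)^a_b \, \bigl(\overset{J}{b_i^{(\pm)}{'}}\bigr)^e_d \, b_i^{(\pm)}{''}, \]
where I have commuted the scalar $\overset{J}{T}{}^e_d(b_i^{(\pm)}{'}) = \bigl(\overset{J}{b_i^{(\pm)}{'}}\bigr)^e_d$ in front of $\overset{J}{T}{}^c_e$.

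\smallskip

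Next I would apply the coproduct identity $(\mathrm{id}\otimes \Delta)(R^{(\pm)}) = R^{(\pm)}_{13}R^{(\pm)}_{12}$ from \eqref{deltaR}, rewritten as
\[ a_i^{(\pm)} \otimes b_i^{(\pm)}{'} \otimes b_i^{(\pm)}{''} \;=\; a_i^{(\pm)} a_j^{(\pm)} \otimes b_j^{(\pm)} \otimes b_i^{(\pm)}. \]
Substituting this in the above computation collapses the middle factor to a matrix coefficient of $R^{(\pm)}$ in $I\otimes J$:
\[ \bigl(\overset{I}{a_i^{(\pm)}} \overset{I}{a_j^{(\pm)}}\bigr)^a_b \, \bigl(\overset{J}{b_j^{(\pm)}}\bigr)^e_d \, b_i^{(\pm)} \;=\; \bigl(\overset{I}{a_i^{(\pm)}}\bigr)^a_k \, \overset{IJ}{R}{}^{(\pm)}{}^{ke}_{bd} \, b_i^{(\pm)} \;=\; \overset{I}{L}{}^{(\pm)}{}^a_k \, \overset{IJ}{R}{}^{(\pm)}{}^{ke}_{bd}. \]
Putting it together gives exactly $\bigl(\overset{J}{T}_2 \, \overset{I}{L}{}^{(\pm)}_1 \, \overset{IJ}{R}{}^{(\pm)}_{12}\bigr)^{ac}_{bd}$, proving the exchange relation.

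\smallskip

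For the representation formulas, the same mechanism applies but more directly. Using $h \triangleright \psi = \psi(?h) = \psi''(h)\psi'$ with $h = b_i^{(\pm)}$, one gets $b_i^{(\pm)} \triangleright \overset{J}{T}{}^c_d = \overset{J}{T}{}^e_d(b_i^{(\pm)}) \overset{J}{T}{}^c_e$, and hence
\[ \overset{I}{L}{}^{(\pm)}{}^a_b \triangleright \overset{J}{T}{}^c_d \;=\; \overset{J}{T}{}^c_e \cdot \bigl(\overset{I}{a_i^{(\pm)}}\bigr)^a_b \bigl(\overset{J}{b_i^{(\pm)}}\bigr)^e_d \;=\; \overset{J}{T}{}^c_e \, \overset{IJ}{R}{}^{(\pm)}{}^{ae}_{bd}, \]
which is the stated formula for $\overset{I}{L}{}^{(\pm)}_1 \triangleright \overset{J}{T}_2$. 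Here, in contrast to the exchange-relation computation, no coproduct on $b_i^{(\pm)}$ is needed because $\triangleright$ does not produce a leftover factor in $H$. Finally, $\overset{I}{T}_1 \triangleright \overset{J}{T}_2 = \overset{I}{T}_1 \overset{J}{T}_2$ is just the product in $\mathcal{O}(H)$, which by the first equation of \eqref{dualHopf} equals $\overset{I\otimes J}{T}{}_{12}$.

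\smallskip

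The main obstacle is purely bookkeeping: keeping straight the three tensor slots (two matrix spaces and one copy of $H$ or $\mathcal{H}(\mathcal{O}(H))$), the Sweedler indices on $\Delta(b_i^{(\pm)})$, and the reindexing that replaces $a_i^{(\pm)}\otimes \Delta(b_i^{(\pm)})$ by $a_i^{(\pm)}a_j^{(\pm)}\otimes b_j^{(\pm)}\otimes b_i^{(\pm)}$. No deeper structural fact is required beyond the defining Heisenberg exchange, the coproduct identity \eqref{deltaR} for $R^{(\pm)}$, and the matrix-coefficient form of $\Delta$ on $\mathcal{O}(H)$.
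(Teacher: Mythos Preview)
Your proof is correct and follows essentially the same approach as the paper's: both use the defining Heisenberg exchange relation together with $(\mathrm{id}\otimes\Delta)(R^{(\pm)}) = R^{(\pm)}_{13}R^{(\pm)}_{12}$ from \eqref{deltaR} to prove the exchange relation, and the definition of $\triangleright$ (plus \eqref{dualHopf}) for the representation formulas. The only difference is presentational: you work coefficient-by-coefficient, whereas the paper writes the same computation in compact matrix notation.
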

\begin{proof}
For the exchange relation, we apply the defining relation of $\mathcal{H}(\mathcal{O}(H))$ together with the definition \eqref{L} of $\overset{I}{L}{^{(\pm)}}$ and~\eqref{deltaR}:
\begin{align*}
\overset{I}{L}{^{(+)}_1} \overset{J}{T_2} &= \big(\overset{I}{a_i}\big)_1 b_i \overset{J}{T_2} = \bigl(\overset{I}{a_i}\bigr)_1 \overset{J}{T}(? b'_i)_2 b_i'' = \bigl(\overset{I}{a_i}\bigr)_1 \overset{J}{T_2} \bigl(\overset{J}{b_i'}\bigr)_2 b_i'' = \bigl( \overset{I}{a_i} \overset{I}{a_j} \bigr)_1 \overset{J}{T_2} \bigl(\overset{J}{b_j}\bigr)_2 b_i\\
& = \overset{J}{T_2} \bigl( \overset{I}{a_i}\bigr)_1 b_i \bigl(\overset{I}{a_j} \bigr)_1 \bigl(\overset{J}{b_j}\bigr)_2 = \overset{J}{T_2} \overset{I}{L}{^{(+)}_1} \overset{IJ}{R}_{12}.
\end{align*}
We used that $\overset{J}{T}(?x) = \overset{J}{T} \overset{J}{x}$ (which just follows from the fact that $\overset{J}{T}$ is a morphism from $H$ to $\mathrm{Mat}_{\dim(J)}(\mathbb{C})$). The proof for $\overset{I}{L}{^{(-)}}$ is exactly the same since $R^{(-)}$ is also a universal $R$-matrix. For the second formula we just apply the definition of $\triangleright$:
\[ \overset{I}{L}{^{(+)}_1} \triangleright \overset{J}{T_2} = \bigl(\overset{I}{a_i}\bigr)_1 b_i \triangleright \overset{J}{T_2} = \bigl(\overset{I}{a_i}\bigr)_1 \overset{J}{T}(? b_i)_2 = \bigl(\overset{I}{a_i}\bigr)_1 \overset{J}{T_2} \bigl(\overset{J}{b_i}\bigr)_2 = \overset{J}{T_2} \bigl(\overset{I}{a_i}\bigr)_1 \bigl(\overset{J}{b_i}\bigr)_2 = \overset{J}{T_2} \overset{IJ}{R}_{12}. \]
The proof for $\overset{I}{L}{^{(-)}}$ is exactly the same. The last formula follows from \eqref{dualHopf}.
\end{proof}

Thanks to \cite{alekseev}, we know that there is a morphism from $\mathcal{L}_{1,0}(H)$ to $\mathcal{H}(\mathcal{O}(H))$:

\begin{Proposition}The following map is a morphism of algebras:
\[ \begin{array}{@{}crll}
\Psi_{1,0}\colon & \mathcal{L}_{1,0}(H) & \rightarrow & \mathcal{H}(\mathcal{O}(H)), \\
 & \overset{I}{B} &\mapsto & \overset{I}{L} \!^{(+)}\overset{I}{T}\overset{I}{L} \!^{(-)-1},\\
 & \overset{I}{A} &\mapsto & \overset{I}{L} \!^{(+)}\overset{I}{L} \!^{(-)-1}.
\end{array} \]
\end{Proposition}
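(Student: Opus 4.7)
The plan is to verify that $\Psi_{1,0}$ respects the three families of defining relations of $\mathcal{L}_{1,0}(H)$: the $\mathcal{L}_{0,1}$-fusion relation for $\overset{I}{A}$, the analogous relation for $\overset{I}{B}$, and the $A$-$B$ exchange relation~\eqref{echangeL10}. The available identities in $\mathcal{H}(\mathcal{O}(H))$ are the $L$-$L$ commutations~\eqref{propertiesL}, the Heisenberg-double exchange $L^{(\pm)}_1 T_2 = T_2 L^{(\pm)}_1 R^{(\pm)}_{12}$ of Lemma~\ref{echangeHeisenberg}, the FRT relation~\eqref{FRT}, and the derived commutation $L^{(-)-1}_1 R_{21} L^{(+)}_2 = L^{(+)}_2 R_{21} L^{(-)-1}_1$ that already arises inside the proof of Proposition~\ref{reshetikhin}. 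The $A$-relation is then immediate: $\Psi_{1,0}(\overset{I}{A}) = \overset{I}{L}{^{(+)}}\overset{I}{L}{^{(-)-1}}$ coincides with $\Psi_{0,1}(\overset{I}{M})$, so its fusion relation holds inside $H \subset \mathcal{H}(\mathcal{O}(H))$ by that proposition.

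For the $B$-relation, I would set $\widetilde{B} = L^{(+)} T L^{(-)-1}$ and expand $\widetilde B_1 R_{21} \widetilde B_2 R_{21}^{-1}$ in four steps. First, the auxiliary identity pulls $L^{(-)-1}_1$ across $R_{21} L^{(+)}_2$. Second, Lemma~\ref{echangeHeisenberg} in the form $T_1 L^{(+)}_2 = L^{(+)}_2 T_1 R_{21}^{-1}$ pushes $L^{(+)}_2$ further to the left and cancels a resulting $R_{21}^{-1} R_{21}$ pair. Third, Lemma~\ref{echangeHeisenberg} in the form $L^{(-)-1}_1 T_2 = T_2 R_{21} L^{(-)-1}_1$ moves $L^{(-)-1}_1$ past $T_2$. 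Fourth, the $L^{(-)}$-instance of~\eqref{propertiesL} yields $R_{21} L^{(-)-1}_1 L^{(-)-1}_2 R_{21}^{-1} = L^{(-)-1}_2 L^{(-)-1}_1$. The outcome is $L^{(+)}_1 L^{(+)}_2 T_1 T_2 L^{(-)-1}_2 L^{(-)-1}_1 = \widetilde B_{12}$, using the multiplicativity $L^{(\pm)}_{12} = L^{(\pm)}_1 L^{(\pm)}_2$ and $T_{12} = T_1 T_2$.

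For the $A$-$B$ exchange, I would bring both sides of $R_{12}\widetilde B_1 R_{21} \widetilde A_2 = \widetilde A_2 R_{12} \widetilde B_1 R_{12}^{-1}$ to the common normal form $L^{(+)}_2 L^{(+)}_1 R_{12} T_1 L^{(-)-1}_1 L^{(-)-1}_2$. The left side uses the first two steps of the $B$-calculation, followed by the $L^{(+)}$-commutation $R_{12} L^{(+)}_1 L^{(+)}_2 = L^{(+)}_2 L^{(+)}_1 R_{12}$ from~\eqref{propertiesL}. The right side uses the mirror identities $L^{(-)-1}_2 R_{12} L^{(+)}_1 = L^{(+)}_1 R_{12} L^{(-)-1}_2$ and $L^{(-)-1}_2 T_1 = T_1 R_{12} L^{(-)-1}_2$, together with $R_{12} L^{(-)-1}_2 L^{(-)-1}_1 R_{12}^{-1} = L^{(-)-1}_1 L^{(-)-1}_2$. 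The main obstacle is purely bookkeeping: at each step one must select the correct signs $\epsilon,\sigma$ and pair of matrix slots so that the intermediate products rearrange cleanly. No identities beyond those listed are required.
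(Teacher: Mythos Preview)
Your proposal is correct and follows essentially the same approach as the paper. The only cosmetic difference is direction: the paper starts from $\Psi_{1,0}(B)_{12} = L^{(+)}_{12}T_{12}L^{(-)-1}_{12}$ and unwinds to $\Psi_{1,0}(B)_1 R_{21}\Psi_{1,0}(B)_2 R_{21}^{-1}$, whereas you run the same chain of identities in reverse; the paper also leaves the $A$-$B$ exchange relation as ``the same kind of computation'', which you spell out explicitly and correctly.
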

\begin{proof}
One must check that the fusion and exchange relations are compatible with $\Psi_{1,0}$. Observe that the restriction of $\Psi_{1,0}$ to the first copy of $\mathcal{L}_{0,1}(H) \subset \mathcal{L}_{1,0}(H)$ is just the RSD morphism~$\Psi_{0,1}$, thus $\Psi_{1,0}$ is compatible with the fusion relation over~$A$. For the fusion relation over~$B$, we have
\begin{align*}
\Psi_{1,0}(B)_{12} &= L^{(+)}_{12} T_{12} L^{(-)-1}_{12} &&\quad \text{(definition)}\\
&= L^{(+)}_1 L^{(+)}_2 T_1 T_2 L^{(-)-1}_2 L^{(-)-1}_1 &&\quad \text{(equations~\eqref{propertiesL} and \eqref{dualHopf})}\\
&= L^{(+)}_1 T_1 L^{(+)}_2 R_{21} T_2 L^{(-)-1}_2 L^{(-)-1}_1 &&\quad \text{(Lemma~\ref{echangeHeisenberg})}\\
&= L^{(+)}_1 T_1 L^{(+)}_2 R_{21} T_2 R_{21} L^{(-)-1}_1 L^{(-)-1}_2 R_{21}^{-1} &&\quad \text{(equation~\eqref{propertiesL})}\\
&= L^{(+)}_1 T_1 L^{(+)}_2 R_{21} L^{(-)-1}_1 T_2 L^{(-)-1}_2 R_{21}^{-1} &&\quad \text{(Lemma~\ref{echangeHeisenberg})}\\
&= L^{(+)}_1 T_1 L^{(-)-1}_1 R_{21} L^{(+)}_2 T_2 L^{(-)-1}_2 R_{21}^{-1} &&\quad \text{(equation~\eqref{propertiesL})}\\
&= \Psi_{1,0}(B)_1 R_{21} \Psi_{1,0}(B)_2 R_{21}^{-1} &&\quad \text{(definition)}.
\end{align*}
The same kind of computation allows one to show that $\Psi_{1,0}$ is compatible with the $\mathcal{L}_{1,0}$-exchange relation.
\end{proof}

We will now show that $\Psi_{1,0}$ is an isomorphism under our assumptions on $H$.
\begin{Lemma}\label{lemmeDimL10}Every element in $\mathcal{L}_{1,0}(H)$ can be written as $\sum_i (x_i)_B (y_i)_A$ with $x_i, y_i \in \mathcal{L}_{0,1}(H)$. It follows that $\dim (\mathcal{L}_{1,0}(H) ) \leq \dim (\mathcal{L}_{0,1}(H) )^2 = \dim(H)^2$.
\end{Lemma}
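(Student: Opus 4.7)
The plan is to establish a normal form: every element of $\mathcal{L}_{1,0}(H)$ can be written as a sum of products $(x)_B(y)_A$ with $x,y \in \mathcal{L}_{0,1}(H)$. The dimension bound will then follow at once from $\dim \mathcal{L}_{0,1}(H) = \dim(H)$ (Theorem~\ref{IsoRSD}).

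Set $V_B := j_B(\mathcal{L}_{0,1}(H))$ and $V_A := j_A(\mathcal{L}_{0,1}(H))$. By the free-product construction of Definition~\ref{definitionL10}, $\mathcal{L}_{1,0}(H)$ is generated as an algebra by $V_B \cup V_A$, and since $j_B,j_A$ are algebra morphisms, both $V_B$ and $V_A$ are themselves subalgebras containing the unit. The identity $(x_B y_A)(x'_B y'_A) = x_B\,(y_A x'_B)\,y'_A$ shows that the linear subspace $V_B V_A$ is closed under multiplication precisely when $V_A V_B \subseteq V_B V_A$; in that case $V_B V_A$ is a subalgebra containing the generating set $V_B \cup V_A$, hence equal to $\mathcal{L}_{1,0}(H)$. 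The proof therefore reduces to establishing this single inclusion.

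To prove $V_A V_B \subseteq V_B V_A$, I would use the exchange relation of Definition~\ref{definitionL10}, written as $A_2 R_{12} B_1 R_{12}^{-1} = R_{12} B_1 R_{21} A_2$. Right-multiplying by $R_{12}$ (in the matrix sense) gives
\[ A_2 R_{12} B_1 = R_{12} B_1 R_{21} A_2 R_{12}, \]
whose right-hand side is a product of scalar $R$-matrices with a matrix whose coefficients lie in $V_B$ followed by a matrix whose coefficients lie in $V_A$; in particular all its entries lie in $V_B V_A$. Expanding the $(a,b,c,d)$-entry of the left-hand side gives
\[ \sum_{f,g} R^{af}_{gd}\; \overset{J}{A}{}^c_f\, \overset{I}{B}{}^g_b \in V_B V_A \]
for all indices $a,b,c,d$ and all finite-dimensional $H$-modules $I,J$. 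Using the invertibility of $R|_{I\otimes J}$ together with the factorizability hypothesis, one then inverts this linear system in the unknowns $\overset{J}{A}{}^c_f \overset{I}{B}{}^g_b$ to conclude that each individual product belongs to $V_B V_A$.

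The main obstacle is precisely this inversion step: the coefficient tensor $R^{af}_{gd}$, regarded as a square matrix with rows indexed by $(a,d)$ and columns by $(f,g)$, is a nontrivial rearrangement of the entries of $R|_{I\otimes J}$, and its invertibility does not follow formally from the invertibility of $R$; justifying it is where the factorizability of $H$ enters in an essential way, possibly by combining relations coming from different choices of $I$ and $J$. Once the inversion is justified, the multiplication map $\mathcal{L}_{0,1}(H)\otimes\mathcal{L}_{0,1}(H)\to\mathcal{L}_{1,0}(H)$, $x\otimes y\mapsto (x)_B(y)_A$, is surjective, giving $\dim\mathcal{L}_{1,0}(H)\le\dim\mathcal{L}_{0,1}(H)^2=\dim(H)^2$.
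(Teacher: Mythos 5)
Your reduction of the lemma to the single inclusion $V_AV_B\subseteq V_BV_A$ is exactly the paper's strategy (the paper says: it suffices to rewrite $y_Ax_B$ as $\sum_i(x_i)_B(y_i)_A$, the rest following by induction as in Lemma~\ref{lemmeDegre}). But the step you flag as ``the main obstacle'' is a genuine gap, and the tool you reach for is the wrong one. The coefficient tensor $R^{aj}_{kd}$, with rows indexed by $(a,d)$ and columns by $(j,k)$, is a partial transpose of $\overset{IJ}{R}$, and partial transposes of invertible matrices need not be invertible; moreover factorizability --- a statement about the Drinfeld map $\beta\mapsto(\beta\otimes\mathrm{id})(RR')$ --- has no bearing on this question and is not used in the paper's proof of this lemma at all (it only enters afterwards, for the surjectivity of $\Psi_{1,0}$ and hence the lower bound on $\dim\mathcal{L}_{1,0}(H)$).

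What actually inverts your linear system is the Hopf-algebraic structure of $R$, not matrix algebra. Write the conjugating factor as the image of the universal $R$-matrix, e.g.\ $R_{21}=\overset{I}{(b_i)}_1\overset{J}{(a_i)}_2$ with $R=a_i\otimes b_i$. Its two legs have scalar entries and occupy complementary matrix slots, so in a product such as $\overset{I}{A}_1R_{21}\overset{J}{B}_2$ they can be pushed to the two outer ends:
\begin{equation*}
\overset{I}{A}_1R_{21}\overset{J}{B}_2 \;=\; \overset{J}{(a_i)}_2\,\overset{I}{A}_1\overset{J}{B}_2\,\overset{I}{(b_i)}_1 .
\end{equation*}
This sandwiching operator is then cancelled exactly by applying $\sum_j\overset{J}{S^{-1}(a_j)}_2\,(\,\cdot\,)\,\overset{I}{(b_j)}_1$ and using the identity $S^{-1}(a_j)a_i\otimes b_ib_j=1\otimes1$, which follows from \eqref{propSR}; this is precisely the trick already used to obtain \eqref{eqFusionInverse}. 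Combining it with the exchange relation \eqref{echangeL10} yields the closed formula \eqref{echangeL10Inverse}, whose right-hand side has every coefficient manifestly in $V_BV_A$ and whose left-hand side $\overset{I}{A}_1\overset{J}{B}_2$ contains all products of a coefficient of $\overset{I}{A}$ by a coefficient of $\overset{J}{B}$. So your system is indeed invertible, but the inverse has to be produced from the antipode identities for the universal $R$-matrix rather than deduced from the invertibility of $\overset{IJ}{R}$ as a numerical matrix; with that substitution your argument becomes the paper's proof.
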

\begin{proof}This is the same proof as in Lemma~\ref{lemmeDegre}. It suffices to show that an element like $y_A x_B$ can be expressed as $\sum_i (x_i)_B (y_i)_A$. The exchange relation can be rewritten as
\begin{gather}\label{echangeL10Inverse}
\overset{I}{A}_1\overset{J}{B}_2 = \overset{I}{S^{-1}(a_i)}_2\overset{IJ}{(R')}_{12}\overset{J}{B}_2\overset{IJ}{R}_{12}\overset{I}{A}_1\overset{IJ}{(R')}_{12}\overset{J}{(b_i)}_1,
\end{gather}
and the result follows since $\overset{I}{A}_1 \overset{J}{B}_2$ contains all the possible products between the coefficients of~$\overset{I}{A}$ and those of~$\overset{J}{B}$.
\end{proof}

\begin{Theorem}\label{isoL10Heisenberg}Recall that we assume that $H$ is a finite-dimensional factorizable Hopf algebra. $\Psi_{1,0}$ gives an isomorphism of algebras $\mathcal{L}_{1,0}(H) \cong \mathcal{H}(\mathcal{O}(H))$. It follows that $\mathcal{L}_{1,0}(H)$ is a matrix algebra: $\mathcal{L}_{1,0}(H) \cong \mathrm{Mat}_{\dim(H)}(\mathbb{C})$.
\end{Theorem}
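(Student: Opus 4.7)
The plan is a dimension count combining the upper bound from Lemma~\ref{lemmeDimL10}, namely $\dim \mathcal{L}_{1,0}(H) \leq \dim(H)^2$, with surjectivity of $\Psi_{1,0}$ and the identification $\mathcal{H}(\mathcal{O}(H)) \cong \End_{\mathbb{C}}(H^*)$ of equation~\eqref{HeisenbergMat}, which has dimension $\dim(H)^2$.

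The key step is surjectivity, and I would proceed in two stages. First, observe that the restriction of $\Psi_{1,0}$ to the $A$-subalgebra sends $\overset{I}{A} \mapsto \overset{I}{L}^{(+)}\overset{I}{L}^{(-)-1} = (\overset{I}{T} \otimes \mathrm{id})(RR')$, which is precisely the RSD morphism $\Psi_{0,1}$ of Proposition~\ref{reshetikhin} composed with the inclusion $H \hookrightarrow \mathcal{H}(\mathcal{O}(H))$. By Theorem~\ref{IsoRSD}, this restriction is surjective onto the copy of $H$ sitting inside $\mathcal{H}(\mathcal{O}(H))$; in particular, every matrix coefficient of $\overset{I}{L}^{(\pm)}$ lies in $\mathrm{Im}(\Psi_{1,0})$, since under our assumptions $H$ is generated as an algebra by such coefficients. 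Second, from $\Psi_{1,0}(\overset{I}{B}) = \overset{I}{L}^{(+)}\overset{I}{T}\overset{I}{L}^{(-)-1}$ we can solve
\[
\overset{I}{T} = \bigl(\overset{I}{L}^{(+)}\bigr)^{-1}\Psi_{1,0}(\overset{I}{B})\,\overset{I}{L}^{(-)},
\]
so every coefficient of $\overset{I}{T}$ is in the image as well. Since, by Definition~\ref{defHeisenberg}, $\mathcal{H}(\mathcal{O}(H))$ is generated by its two subalgebras $H$ and $\mathcal{O}(H)$, which are spanned respectively by the coefficients of the $\overset{I}{L}^{(\pm)}$ and of the $\overset{I}{T}$ (the latter by Section~\ref{dualAlg}), surjectivity of $\Psi_{1,0}$ follows.

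Combining surjectivity with $\dim \mathcal{L}_{1,0}(H) \leq \dim(H)^2 = \dim \mathcal{H}(\mathcal{O}(H))$ forces equality of dimensions, so $\Psi_{1,0}$ is bijective. The final statement $\mathcal{L}_{1,0}(H) \cong \mathrm{Mat}_{\dim(H)}(\mathbb{C})$ is then immediate from \eqref{HeisenbergMat}. The only delicate point in the argument is the claim that the coefficients of $\overset{I}{L}^{(\pm)}$ already lie in the image of $\Psi_{1,0}$; this is precisely where factorizability enters in an essential way, and it is handled by the previously established Theorem~\ref{IsoRSD}. Otherwise the proof is a clean algebraic bookkeeping with no genuine obstacle.
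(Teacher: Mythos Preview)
Your proof is correct and follows essentially the same approach as the paper: surjectivity of $\Psi_{1,0}$ via Theorem~\ref{IsoRSD} to hit $H\subset\mathcal{H}(\mathcal{O}(H))$, then solving for $\overset{I}{T}$ from $\Psi_{1,0}(\overset{I}{B})$, combined with the dimension bound of Lemma~\ref{lemmeDimL10}. One small wording slip: the reason the coefficients of $\overset{I}{L}{^{(\pm)}}$ lie in $\mathrm{Im}(\Psi_{1,0})$ is simply that they are elements of $H$ and you have already shown $H\subset\mathrm{Im}(\Psi_{1,0})$; the fact that they \emph{generate} $H$ is not what is used at that step.
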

\begin{proof}
Observe that $\Psi_{1,0} \circ j_A = i_H \circ \Psi_{0,1}$ where $i_H\colon H \to \mathcal{H}(\mathcal{O}(H))$ is the canonical inclusion. Since $\Psi_{0,1}$ is an isomorphism, there exist matrices $\overset{I}{A} \!^{(\pm)}$ such that
\[ \Psi_{1,0}\big(\overset{I}{A} \!^{(\pm)}\big) = \overset{I}{L} \!^{(\pm)} \in \mathrm{Mat}_{\dim(I)}(\mathcal{H}(\mathcal{O}(H))). \]
Moreover, we have
\[ \Psi_{1,0}\big(\overset{I}{A} \!^{(+)-1}\overset{I}{B}\overset{I}{A} \!^{(-)}\big) = \overset{I}{T} \in \mathrm{Mat}_{\dim(I)}(\mathcal{H}(\mathcal{O}(H))). \]
Thus \looseness=1 $\Psi_{1,0}$ is surjective, and hence $\dim (\mathcal{L}_{1,0}(H) ) \geq \dim (\mathcal{H}(\mathcal{O}(H)) ) = \dim(H)^2$. This together with Lemma~\ref{lemmeDimL10} gives $\dim (\mathcal{L}_{1,0}(H) ) = \dim (\mathcal{H}(\mathcal{O}(H)) )$. The last claim is a general fact, see~\eqref{HeisenbergMat}.
\end{proof}

\begin{Remark}\label{remarqueProductGaugeFields}Due to Theorem~\ref{isoL10Heisenberg}, there is an isomorphism of vector spaces $\mathcal{L}_{1,0}(H) \to H^* \otimes H^*$ given by $\overset{I}{B}{^i_j} \overset{J}{A}{^k_l} \mapsto \overset{I}{T}{^i_j} \otimes \overset{J}{T}{^k_l}$. This defines a product $\ast$ on $H^* \otimes H^*$; due to~\eqref{echangeL10Inverse}, we get that it satisfies
\[ (\varepsilon \otimes \psi) \ast (\varphi \otimes \varepsilon) = \varphi\big( S^{-1}(a_i) a_j ? b_k a_l \big) \otimes \psi\big( \vphantom{S^{-1}}b_j a_k ? b_l b_i \big). \]
Combining this with \eqref{productGaugeFields01}, we obtain the general formula
 \begin{gather*}
 (\varphi_1 \otimes \psi_1) \ast (\varphi_2 \otimes \psi_2) = \varphi_1(? b_m b_n) \varphi_2\big( S^{-1}(a_i) a_j S^{-1}(a_n) ? a_m b_k a_l \big)\\
 \hphantom{(\varphi_1 \otimes \psi_1) \ast (\varphi_2 \otimes \psi_2) =}{} \otimes \psi_1\big( \vphantom{S^{-1}}b_j a_k ? b_o b_p b_l b_i \big) \psi_2\big(S^{-1}(a_p) ? a_o\big), \\
 x_b \otimes x_a \mapsto \varphi_1(x_b' b_m b_n) \varphi_2\big( S^{-1}(a_i) a_j S^{-1}(a_n) x_b'' a_m b_k a_l \big) \\
 \hphantom{x_b \otimes x_a \mapsto}{}\times \psi_1\big( \vphantom{S^{-1}}b_j a_k x_a' b_o b_p b_l b_i \big) \psi_2\big(S^{-1}(a_p) x_a'' a_o\big).
\end{gather*}
This is the product of the gauge fields $\varphi_1 \otimes \psi_1$, $\varphi_2 \otimes \psi_2$ and its evaluation on the connection which assigns $x_b$ to the loop $b$ and $x_a$ to the loop $a$, see Fig.~\ref{figureSurfaces}. The action of the gauge algebra~$H$ on a gauge field $\varphi \otimes \psi$ is $(\varphi \otimes \psi) \cdot h = \varphi (h' ? S(h'') ) \otimes \psi (h''' ? S(h'''') )$ and $\ast$ is $H$-equivariant.
\end{Remark}

\subsection[Representation of $\mathcal{L}_{1,0}^{\mathrm{inv}}(H)$ on $\SLF(H)$]{Representation of $\boldsymbol{\mathcal{L}_{1,0}^{\mathrm{inv}}(H)}$ on $\SLF(H)$}\label{sectionRepInvariants}

Recall from \eqref{repHeisenberg} that there is a faithful representation $\triangleright$ of $\mathcal{H}(\mathcal{O}(H))$ on $\mathcal{O}(H)$. Using the isomorphism $\Psi_{1,0}$, we get a representation of $\mathcal{L}_{1,0}(H)$ on $\mathcal{O}(H)$, still denoted~$\triangleright$:
\[ \forall\, x \in \mathcal{L}_{1,0}(H), \qquad \forall\, \psi \in \mathcal{O}(H), \qquad x \triangleright \psi = \Psi_{1,0}(x) \triangleright \psi. \]
Thanks to Lemma~\ref{echangeHeisenberg}, it is easy to get
\begin{gather}
\overset{I}{A}_1 \triangleright \overset{J}{T}\!_2 = \overset{J}{T}\!_2 \overset{IJ}{(RR')}_{12}, \nonumber\\
\overset{I}{B}_1 \triangleright \overset{J}{T}\!_2 = \overset{I}{(a_i)}_1\overset{I \otimes J}{T}\!\!\!_{12} \overset{I \otimes J}{(b_i)}_{12}\overset{IJ}{(R')}_{12} = \overset{I}{(a_ia_j)}_1\overset{I \otimes J}{T}\!\!\!_{12} \overset{I}{(b_j)}_{1}\overset{J}{(b_i)}_{2}\overset{IJ}{(R')}_{12},\label{actionL10}
\end{gather}
where as usual $R = a_i \otimes b_i$ and the last equality is obtained using~\eqref{deltaR}.

In \cite[Theorem~5]{alekseev} (which is stated in the case of $H = U_q(\mathfrak{g})$, $q$ generic), there is a representation of $\mathcal{L}_{1,0}^{\text{inv}}(H)$ on a subspace of invariants in $H^*$. This can be generalized to our assumptions.

\begin{Theorem}\label{repInv}The restriction of $\triangleright$ to $\mathcal{L}_{1,0}^{\text{\em inv}}(H)$ leaves the subspace $\SLF(H) \subset H^*$ stable
\[ \forall\, x \in \mathcal{L}^{\text{\em inv}}_{1,0}(H), \qquad \forall\, \psi \in \SLF(H), \qquad x \triangleright \psi \in \SLF(H). \]
Hence, we have a representation of $\mathcal{L}_{1,0}^{\text{\rm inv}}(H)$ on $\SLF(H)$. We denote it $\rho_{\mathrm{SLF}}$.
\end{Theorem}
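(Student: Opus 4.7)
The plan is to exhibit $\triangleright$ as an $H$-equivariant representation and then deduce the stability of $\SLF(H)$. First, equip $\mathcal{O}(H)$ with the right $H$-action $\psi \cdot h := \psi(h' ? S(h''))$ dual to the right adjoint action of $H$ on itself. A direct computation using the antipode axiom, $\psi(h' x S(h'')) = \psi(S(h'')h' x) = \varepsilon(h)\psi(x)$ for any symmetric $\psi$, shows that every $\psi \in \SLF(H)$ is $H$-invariant; conversely, an invariant $\psi$ satisfies $\psi(xy) = \psi(yx)$ upon testing on $h=y$. Hence $\SLF(H) = \mathcal{O}(H)^H$.

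The central step is to prove the equivariance identity
\[ (x \triangleright \psi) \cdot h = (x \cdot h') \triangleright (\psi \cdot h'') \]
for $x \in \mathcal{L}_{1,0}(H)$, $\psi \in \mathcal{O}(H)$, $h \in H$. Granting this, for $x \in \mathcal{L}_{1,0}^{\mathrm{inv}}(H)$ and $\psi \in \SLF(H)$ we have $x \cdot h' = \varepsilon(h')x$ and $\psi \cdot h'' = \varepsilon(h'')\psi$, so
\[ (x \triangleright \psi) \cdot h = \varepsilon(h')\varepsilon(h'')(x \triangleright \psi) = \varepsilon(h)(x \triangleright \psi), \]
and therefore $x \triangleright \psi \in \mathcal{O}(H)^H = \SLF(H)$, as required.

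To establish the equivariance identity, I reduce to a check on generators in both arguments. Linearity in $\psi$ handles the reduction to $\psi = \overset{J}{T}{^c_d}$. For the $x$-variable, a standard induction shows that if the identity holds for $x_1$ and $x_2$, it holds for $x_1 x_2$: applying the identity twice and then the module-algebra axiom $(x_1 x_2) \cdot h = (x_1 \cdot h')(x_2 \cdot h'')$ from Proposition~\ref{L10moduleAlgebra} together with coassociativity yields the required form. It therefore suffices to verify the identity for $x = \overset{I}{A}{^a_b}$ and $x = \overset{I}{B}{^a_b}$ acting on $\psi = \overset{J}{T}{^c_d}$. Combining the explicit action formulas~\eqref{actionL10}, the $H$-action~\eqref{actionHsurL10}, and the transformation law $\overset{J}{T} \cdot h = \overset{J}{h'}\overset{J}{T}\overset{J}{S(h'')}$, this reduces to a matrix identity in $\mathrm{Mat}_{\dim(I)}(\mathbb{C}) \otimes \mathrm{Mat}_{\dim(J)}(\mathbb{C}) \otimes \mathcal{O}(H)$, which is manipulated via the braiding relations $R\Delta(h) = \Delta^{\mathrm{op}}(h)R$ and $R'\Delta^{\mathrm{op}}(h) = \Delta(h)R'$ together with the factorization~\eqref{deltaR}.

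The main obstacle will be the verification for the $\overset{I}{B}$ generator, whose action involves the product $\overset{I}{(a_i a_j)}_1 \overset{I \otimes J}{T}_{12}\overset{I}{(b_j)}_1 \overset{J}{(b_i)}_2 \overset{IJ}{(R')}_{12}$. On the right-hand side of the equivariance identity, the full four-fold iterated coproduct of $h$ then appears, and each of its Sweedler components must be pushed past the corresponding $R^{\pm 1}$ factor using the above braiding relations; once this alignment is carried out, the intermediate pairs of the form $h_{(i)} S(h_{(j)})$ collapse via the antipode axiom, and the two sides of the identity coincide.
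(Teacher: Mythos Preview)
Your overall strategy---show that $\triangleright$ is an $H$-equivariant map and then pass to invariants---is natural and genuinely different from the paper's approach. The paper instead constructs explicit elements $\overset{I}{C}{^{(\pm)}} \in \mathcal{L}_{1,0}(H)$ (built from $L^{(\pm)}$ and analogous matrices $\widetilde{L}^{(\pm)}$) and proves a lemma with two parts: (i)~$x$ is invariant iff it commutes with all the $C^{(\pm)}$, and (ii)~$\psi$ is symmetric iff all the $C^{(\pm)}$ act trivially on it. Stability of $\SLF(H)$ then follows in one line. This is more constructive, and the $C^{(\pm)}$ reappear later in the paper.

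However, your argument has a concrete error. Your right $H$-action on $\mathcal{O}(H)$ is $\psi\cdot h=\psi(h'\,?\,S(h''))$, and you claim that for $\psi$ symmetric one gets $\psi(h'xS(h''))=\psi(S(h'')h'x)=\varepsilon(h)\psi(x)$. The first equality is fine, but the second uses $S(h'')h'=\varepsilon(h)1$, which is \emph{false} in general: the antipode axioms give $S(h')h''=\varepsilon(h)1$ and $h'S(h'')=\varepsilon(h)1$, but $S(h'')h'=\varepsilon(h)1$ only when $S^2=\mathrm{id}$. In fact the invariants of your action are the $q$-characters $\{\psi:\psi(ab)=\psi(bS^2(a))\}$, not $\SLF(H)$. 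The action you need is $\psi\cdot h=\psi(h''\,?\,S(h'))$ (equivalently, the paper's left action $h\diamond\psi=\psi(S^{-1}(h')\,?\,h'')$ converted to a right action); with this one the antipode collapse $S(h')h''=\varepsilon(h)$ does apply and $\SLF(H)$ is the invariant subspace. With the corrected action the equivariance for the $A$-generators reduces to the clean identity $h''\otimes S(h')\otimes 1 = h_{(4)}\otimes S(h_{(1)})\otimes h_{(2)}S(h_{(3)})$, which holds by $h_{(2)}S(h_{(3)})=\varepsilon$-collapse; with your original action the analogous identity is $h'\otimes S(h'')\otimes 1 = h_{(3)}\otimes S(h_{(1)})\otimes h_{(2)}S(h_{(4)})$, which already fails for a skew-primitive generator such as $E$ in $U_q(\mathfrak{sl}_2)$.

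A second, lesser issue: you acknowledge that the $\overset{I}{B}$-case is ``the main obstacle'' and only describe in words how the four-fold Sweedler legs should be pushed through the $R$-factors. Given that the action formula for $B$ in~\eqref{actionL10} mixes $\overset{I\otimes J}{T}$ with several $R$-factors on both tensor slots, this is the heart of the proof and needs to be written out; once you correct the action, you should carry this computation through explicitly rather than asserting it.
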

\begin{proof}For $h \in H$, define $\widetilde{h} \in \mathcal{H}(\mathcal{O}(H))$ by $\widetilde{h} \triangleright \varphi = \varphi\bigl( S^{-1}(h) ? \bigr)$ for all $\varphi \in H^*$ (since the representation $\triangleright$ is faithful, this entirely defines $\widetilde{h}$). It is easy to see that
\[
\forall\, g \in H, \qquad \forall\, \psi \in \mathcal{O}(H), \qquad \widetilde{g}\widetilde{h} = \widetilde{gh}, \qquad g\widetilde{h} = \widetilde{h}g, \qquad \widetilde{h}\psi = \psi\big(S^{-1}(h'')?\big)\widetilde{h'}.
\]
We define matrices
\begin{gather*}
\overset{I}{\widetilde{L}}{^{(\pm)}} = \bigl( \overset{I}{a_i^{(\pm)}} \bigr) \widetilde{b_i^{(\pm)}} \in \mathrm{Mat}_{\dim(I)}\bigl(\mathcal{H}(\mathcal{O}(H))\bigr)
\end{gather*}
with $R^{(\pm)} = a_i^{(\pm)} \otimes b_i^{(\pm)}$. By definition and \eqref{propSR}, they satisfy $\overset{I}{\widetilde{L}}{^{(\pm)}_1} \triangleright \overset{J}{T}_2 = \overset{IJ}{R}{^{(\pm)-1}_{12}}\overset{J}{T}_2$. It is not difficult to show the following commutation rules
\begin{gather}\label{LTilde}
\overset{I}{\widetilde{L}}{^{(\epsilon)}_1} \overset{J}{\widetilde{L}}{^{(\epsilon)}_2} = \overset{I\otimes J}{\widetilde{L}}\!\!{^{(\epsilon)}_{12}}, \qquad \overset{I}{\widetilde{L}}{^{(\epsilon)}_1}\overset{J}{L}{^{(\sigma)}_2} = \overset{J}{L}{^{(\sigma)}_2}\overset{I}{\widetilde{L}}{^{(\epsilon)}_1}, \qquad \overset{IJ}{R}{^{(\epsilon)}_{12}} \overset{I}{\widetilde{L}}{^{(\epsilon)}_1}\overset{J}{T}_2 = \overset{J}{T}_2\overset{I}{\widetilde{L}}{^{(\epsilon)}_1},\\
\overset{IJ}{R} \!^{(\epsilon)}_{12} \overset{I}{\widetilde{L}} \!^{(\epsilon)}_1 \overset{J}{\widetilde{L}} \!^{(\sigma)}_2 = \overset{J}{\widetilde{L}} \!^{(\sigma)}_2 \overset{I}{\widetilde{L}} \!^{(\epsilon)}_1 \overset{IJ}{R} \!^{(\epsilon)}_{12} \!\qquad \forall\, \epsilon, \sigma \in \{\pm\}, \qquad \overset{IJ}{R} \!^{(\epsilon)}_{12} \overset{I}{\widetilde{L}} \!^{(\sigma)}_1 \overset{J}{\widetilde{L}} \!^{(\sigma)}_2 = \overset{J}{\widetilde{L}} \!^{(\sigma)}_2 \overset{I}{\widetilde{L}} \!^{(\sigma)}_1 \overset{IJ}{R} \!^{(\epsilon)}_{12} \!\qquad \forall\, \epsilon, \sigma \in \{\pm\}.\nonumber
\end{gather}
For instance, here is a proof of the third equality with $\epsilon=+$:
\begin{align*}
\overset{IJ}{R}{^{(+)}_{12}} \overset{I}{\widetilde{L}}{^{(+)}_1} \overset{J}{T}_2 &= \overset{IJ}{R}{^{(+)}_{12}} \bigl( \overset{I}{a_i} \bigr)_1 \widetilde{b_i} \overset{J}{T_2} = \overset{IJ}{R}{^{(+)}_{12}} \bigl( \overset{I}{a_i} \bigr)_1 \overset{J}{T}\bigl( S^{-1}(b_i'') ? \bigr)_2 \widetilde{b_i'} = \overset{IJ}{R}{^{(+)}_{12}} \bigl( \overset{I}{a_i} \overset{I}{a_j} \bigr)_1 \bigl( \overset{J}{S^{-1}(b_i)} \bigr)_2 \overset{J}{T_2} \widetilde{b_j}\\
& = \overset{IJ}{R}{^{(+)}_{12}} \bigl( \overset{I}{a_i} \bigr)_1 \bigl( \overset{J}{S^{-1}(b_i)} \bigr)_2 \overset{J}{T_2} \bigl( \overset{I}{a_j} \bigr)\widetilde{b_j} = \overset{J}{T_2} \overset{I}{\widetilde{L}}{^{(+)}_1},
\end{align*}
since $a_i \otimes S^{-1}(b_i) = R^{(+)-1}$. Now, let
\[ \overset{I}{C}{^{(\pm)}} = \Psi_{1,0}^{-1}\big(\overset{I}{L}{^{(\pm)}} \overset{I}{\widetilde{L}}{^{(\pm)}}\big) \in \mathrm{Mat}_{\dim(I)}\!\left(\mathcal{L}_{1,0}(H)\right). \]
Thanks to~\eqref{propertiesL} and~\eqref{LTilde}, it is easy to see that $\overset{I \otimes J}{C}\!{^{(\pm)}_{12}} = \overset{I}{C}{^{(\pm)}_1} \overset{J}{C}{^{(\pm)}_2}$.
\begin{Lemma}\quad\begin{enumerate}\itemsep=0pt
\item[$1)$] An element $x \in \mathcal{L}_{1,0}(H)$ is invariant under the right action of $H$ if, and only if, $x\overset{I}{C}{^{(\pm)}}= \overset{I}{C}{^{(\pm)}}x$ for all $I$.
\item[$2)$] A linear form $\psi \in H^*$ is symmetric if, and only if, $\overset{I}{C}{^{(\pm)}} \triangleright \psi = \psi \mathbb{I}_{\dim(I)}$ for all~$I$.
\end{enumerate}
\end{Lemma}
\begin{proof}1) Let $U = A$ or $B$, then by \eqref{L}, \eqref{actionHsurL10} and \eqref{deltaR} we get
\begin{align*}
\overset{J}{U}_2 \cdot S^{-1}\bigl(\overset{I}{L}{^{(\pm)}_1}\bigr) &= \overset{J}{U}_2 \cdot S^{-1}\big(b_i^{(\pm)}\big) \bigl( \overset{I}{a_i^{(\pm)}} \bigr)_1 = \overset{J}{S^{-1}\big(\left.b_i^{(\pm)}\right.''\big)}_2 \overset{J}{U}_2 \bigl(\overset{J}{\left. b_i^{(\pm)}\right.'}\bigr)_2 \bigl(\overset{I}{a_i^{(\pm)}} \bigr)_1\\
& = \overset{J}{S^{-1}\bigl(b_i^{(\pm)}\bigr)}_2 \overset{J}{U}_2 \bigl(\overset{J}{b_j^{(\pm)}}\bigr)_2 \bigl(\overset{I}{a_i^{(\pm)}} \overset{I}{a_j^{(\pm)}} \bigr)_1
= \bigl(\overset{I}{a_i^{(\pm)}}\bigr)_1 \overset{J}{S^{-1}\bigl(b_i^{(\pm)}\bigr)}_2 \overset{J}{U}_2 \bigl(\overset{I}{a_j^{(\pm)}} \bigr)_1 \bigl(\overset{J}{b_j^{(\pm)}}\bigr)_2\\
& = \overset{IJ}{R}{^{(\pm)-1}_{12}} \overset{J}{U}_2 \overset{IJ}{R}{^{(\pm)}_{12}}
\end{align*}
with $R^{(\pm)} = a_i^{(\pm)} \otimes b_i^{(\pm)}$. Second, using \eqref{propertiesL} and \eqref{LTilde} we get
\[ \overset{I}{C}{^{(\pm)}_1} \overset{J}{U}_2 \overset{I}{C}{^{(\pm)-1}_1} = \overset{IJ}{R}{^{(\pm)-1}_{12}} \overset{J}{U}_2 \overset{IJ}{R}{^{(\pm)}_{12}}. \]
For instance (with the shortened notation)
\begin{align*}
\Psi_{1,0}\big(C^{(+)}_1 A_2 C^{(+)-1}_1\big) &= L^{(+)}_1 \widetilde{L}^{(+)}_1 L^{(+)}_2 L^{(-)-1}_2 \widetilde{L}^{(+)-1}_1 L^{(+)-1}_1 = L^{(+)}_1 L^{(+)}_2 L^{(-)-1}_2 L^{(+)-1}_1\\
&= R_{12}^{-1} L^{(+)}_2 L^{(+)}_1 R_{12} L^{(-)-1}_2 L^{(+)-1}_1 = R_{12}^{-1} L^{(+)}_2 L^{(-)-1}_2 R_{12}\\
& = \Psi_{1,0}\big(R_{12}^{-1} A_2 R_{12}\big)
\end{align*}
and we have equality since $\Psi_{1,0}$ is an isomorphism; the others cases are similar. It follows that $\overset{J}{U}_2 \cdot S^{-1}\bigl(\overset{I}{L}{^{(\pm)}_1}\bigr) = \overset{I}{C}{^{(\pm)}_1} \overset{J}{U}_2 \overset{I}{C}{^{(\pm)-1}_1}$ or in other words $\overset{J}{U}{^c_d} \cdot S^{-1}\bigl(\overset{I}{L}{^{(\pm)}}{^a_b}\bigr) = \overset{I}{C}{^{(\pm)}}{^a_i} \overset{J}{U}{^c_d} \overset{I}{C}{^{(\pm)-1}}{^i_b}$, which means that $\overset{J}{U}{^c_d}$ is invariant under the action of $S^{-1}\bigl(\overset{I}{L}{^{(\pm)}}{^a_b}\bigr)$ if, and only if, it commutes with~$\overset{I}{C}{^{(\pm)}}{^a_b}$. Since the elements $\overset{J}{U}{^c_d}$ (resp.~$S^{-1}\bigl(\overset{I}{L}{^{(\pm)}}{^a_b}\bigr)$) generate $\mathcal{L}_{1,0}(H)$ (resp.~$H$) as an algebra, we get that an element is invariant if, and only if, it commutes with all the coefficients of the matrices $\overset{I}{C}{^{(\pm)}}$, as desired.

2) Consider the left action $\diamond$ of $H$ on $H^*$ given by $h \diamond \psi = \psi\big(S^{-1}(h') ? h''\big)$. It is easy to see that $\psi$ is symmetric if, and only if, it is invariant under $\diamond$ (namely $h \diamond \psi = \varepsilon(h)\psi$ for all $h \in H$). The definitions and \eqref{deltaR} yields
\begin{align*} \overset{I}{C}{^{(\pm)}} \triangleright \psi & = \bigl( \overset{I}{a_i^{(\pm)}} \overset{I}{a_j^{(\pm)}} \bigr) b_i \widetilde{b_j} \triangleright \psi = \bigl( \overset{I}{a_i^{(\pm)}} \overset{I}{a_j^{(\pm)}} \bigr) \psi\bigl( S^{-1}(b_j) ? b_i \bigr)\\
& = \bigl( \overset{I}{a_i^{(\pm)}} \bigr) \psi\bigl( S^{-1}(b_i') ? b_i'' \bigr) = \overset{I}{L}{^{(\pm)}} \diamond \psi. \end{align*}
Since the coefficients of the matrices $\overset{I}{L}{^{(\pm)}}$ generate $H$ as an algebra, we get the result.
\end{proof}

\textbf{End of the proof of Theorem~\ref{repInv}.} Let $x \in \mathcal{L}_{1,0}^{\text{inv}}(H)$ and $\psi \in \SLF(H)$. We apply the previous lemma
\[ \overset{J}{C}{^{(\pm)}} \triangleright (x \triangleright \psi ) = \big(\overset{J}{C}{^{(\pm)}} x \big) \triangleright \psi = \big(x\overset{J}{C}{^{(\pm)}} \big) \triangleright \psi = x \triangleright \big(\overset{J}{C}{^{(\pm)}} \triangleright \psi\big) = (x \triangleright \psi) \mathbb{I}_{\dim(J)}.\]
Then $x \triangleright \psi \in \SLF(H)$, as desired.
\end{proof}

\begin{Remark}\label{remarqueC}Let $\overset{I}{C} = \overset{I}{v^2}\overset{I}{B}\overset{I}{A}{^{-1}}\overset{I}{B}{^{-1}}\overset{I}{A}$. It can be shown that $\overset{I}{C}$ satisfies the fusion relation of~$\mathcal{L}_{0,1}(H)$, that $\overset{I}{C} = \overset{I}{C}{^{(+)}}\overset{I}{C}{^{(-)-1}}$, that $x \in \mathcal{L}_{1,0}(H)$ is invariant if, and only if, $x \overset{I}{C} = \overset{I}{C}x$ and that $\psi \in H^*$ is symmetric if, and only if, $\overset{I}{C} \triangleright \psi = \psi \mathbb{I}_{\dim(I)}$. The details in our general setting are given in \cite{F2} for arbitrary genus. Observe that geometrically, $\overset{I}{C}$ is the boundary of the sur\-face~$\Sigma_{1,0} {\setminus} D$, see Fig.~\ref{figureSurfaces}.
\end{Remark}

We now need to determine explicit formulas for the representation of particular types of invariants that will appear in the proof of the modular identities in Section~\ref{sectionModulaire}. If $\psi \in H^*$ and $a \in H$, we define
\[ \psi^a = \psi(a?), \]
where $\psi(a?)\colon x \mapsto \psi(ax)$. This defines a right representation of~$H$ on~$H^*$. Obviously, if $z \in \mathcal{Z}(H)$ and $\psi \in \SLF(H)$ then $\psi^z \in \SLF(H)$.

Recall that $z_A = j_A(z)$ (resp.~$z_B = j_B(z)$) is the image of $z \in \mathcal{L}_{0,1}(H)$ by the map $j_A(\overset{I}{M}) = \overset{I}{A}$ (resp.~$j_B(\overset{I}{M}) = \overset{I}{B}$). See~\eqref{coinvParticuliers} for the general definition.
\begin{Proposition}\label{actionAB}
Let $z \in \mathcal{L}_{0,1}^{\mathrm{inv}}(H) = \mathcal{Z}(H)$ and let $\psi \in \mathrm{SLF}(H)$. Then
\[ z_A \triangleright \psi = \psi^z \qquad \text{and} \qquad z_B \triangleright \psi = \big(\mathcal{D}^{-1}(z)\psi^v\big)^{v^{-1}}, \]
where $\mathcal{D}$ is the isomorphism defined in~\eqref{morphismeDrinfeld}.
\end{Proposition}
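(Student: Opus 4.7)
The plan is to pass to the Heisenberg realisation of Theorem~\ref{isoL10Heisenberg} and compute the action $\triangleright$ using the two basic rules $h\triangleright\psi=\psi(?\,h)$ and $\varphi\triangleright\psi=\varphi\psi$ for $h\in H$ and $\varphi,\psi\in\mathcal{O}(H)$.

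The first identity is essentially formal. Since $\Psi_{1,0}\circ j_A$ coincides with the canonical inclusion $H\hookrightarrow\mathcal{H}(\mathcal{O}(H))$ precomposed with the RSD isomorphism $\Psi_{0,1}$ (as noted in the proof of Theorem~\ref{isoL10Heisenberg}), the element $z_A$ acts on $\psi$ as the central element $z\in H$ itself, giving $(z_A\triangleright\psi)(x)=\psi(xz)$. Centrality of $z$ combined with the symmetry of $\psi$ yields $\psi(xz)=\psi(zx)=\psi^z(x)$, which is the first claim.

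For the second identity, I would use Lemma~\ref{writingInvariants} to write $z=\sum_I\mathrm{tr}\bigl(\Lambda_I\,\overset{I}{g}\,\overset{I}{M}\bigr)$ with $\chi:=\mathcal{D}^{-1}(z)=\sum_I\mathrm{tr}\bigl(\Lambda_I\,\overset{I}{T}\bigr)$, so that $z_B=\sum_I\mathrm{tr}\bigl(\Lambda_I\,\overset{I}{g}\,\overset{I}{B}\bigr)$. Applying the explicit formula for $\overset{I}{B}_1\triangleright\overset{J}{T}_2$ from~\eqref{actionL10}, tracing against $\Lambda_I\,\overset{I}{g}$ and extending by linearity from matrix coefficients of $J$ to arbitrary $\psi\in\SLF(H)$, one obtains an expression of the schematic form
\[
(z_B\triangleright\psi)(x) \;=\; \chi\bigl(g\,a_i\,x'\,b_i'\,b_s\bigr)\,\psi\bigl(x''\,b_i''\,a_s\bigr),
\]
where $R=a_i\otimes b_i$, $R'=b_s\otimes a_s$ and $\Delta(b_i)=b_i'\otimes b_i''$. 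On the other hand, expanding the right-hand side $(\chi\psi^v)^{v^{-1}}(x)$ via the ribbon identity $\Delta(v^{-1})=(v^{-1}\otimes v^{-1})\,R'R$ together with centrality of $v$ gives
\[
(\chi\psi^v)^{v^{-1}}(x) \;=\; \chi\bigl(b_j\,a_i\,v^{-1}\,x'\bigr)\,\psi\bigl(a_j\,b_i\,x''\bigr),
\]
where now $R=a_i\otimes b_i$ and $R'=b_j\otimes a_j$.

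The main obstacle is matching these two expressions, which is a careful but purely formal $R$-matrix bookkeeping exercise. One can split $\Delta(b_i)$ in the left-hand side via $(\mathrm{id}\otimes\Delta)(R)=R_{13}R_{12}$ to expose two independent copies of $R$, substitute the pivot formula $g=uv^{-1}$ with Drinfeld element $u=S(b_k)a_k$ from~\eqref{elementDrinfeld} so that $\overset{I}{g}$ contributes the required $v^{-1}$ factor inside $\chi$, and then apply the quasitriangular identity $R\Delta=\Delta^{\mathrm{op}}R$ together with the symmetry of $\chi$ to cycle the surplus factors and collapse the extra copies of $R$; the Drinfeld element $u$ ultimately cancels through its characterising relations. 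Conceptually, the single $v^{-1}$ hidden in the pivot $g$, together with the two $R$-factors inherited from $L^{(+)}$ in $\Psi_{1,0}(B)$ and the $R'$ from the fusion structure, reorganise via the ribbon axiom into exactly the $v^{-1}$-conjugation and the extra $\psi^v$ displayed on the right-hand side.
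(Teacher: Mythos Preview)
Your approach is essentially the same as the paper's: reduce the first identity to the inclusion $H\hookrightarrow\mathcal{H}(\mathcal{O}(H))$, and for the second identity use Lemma~\ref{writingInvariants} to write $z_B=\sum_I\mathrm{tr}(\Lambda_I\overset{I}{g}\overset{I}{B})$, apply~\eqref{actionL10}, and then simplify. Your setup is correct and leads to the same intermediate expression as the paper, namely
\[
z_B\triangleright\psi \;=\; \mathcal{D}^{-1}(z)\bigl(g\,a_ia_j\,?\,b_jb_k\bigr)\,\psi\bigl(?\,b_ia_k\bigr)
\]
(after expanding $\Delta(b_i)$ via~\eqref{deltaR}).

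The gap is in the final matching step, which you describe only as ``formal $R$-matrix bookkeeping'' and where you list $R\Delta=\Delta^{\mathrm{op}}R$ and the Drinfeld-element relations as the relevant tools. In fact the paper's proof hinges on a different identity that you do not mention: after using the symmetry of $\chi=\mathcal{D}^{-1}(z)$ and the pivot property to rewrite the first factor as $\chi(?\,b_jb_k\,S^2(a_ia_j)\,g)$, the key step is the \emph{Yang--Baxter equation}~\eqref{YangBaxter} in the form
\[
b_jb_k\otimes a_ia_j\otimes b_ia_k \;=\; R_{23}R_{21}R_{31} \;=\; R_{31}R_{21}R_{23} \;=\; b_ib_j\otimes a_ja_k\otimes a_ib_k,
\]
which reorders the three $R$-factors so that $b_jS^2(a_j)=u^{-1}$ (from~\eqref{elementDrinfeld}) can collapse against $g=uv^{-1}$ to produce $v^{-1}$. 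Without this reordering, the Drinfeld element does not ``cancel through its characterising relations'' in any obvious way, and the quasitriangular identity $R\Delta=\Delta^{\mathrm{op}}R$ alone is not enough. Your proposal would go through once you insert this Yang--Baxter step; everything else is as in the paper.
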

\begin{proof}The first relation is obvious. For the second formula, we write $z_B = \sum_I \mathrm{tr}\big(\Lambda_I\overset{I}{g}\overset{I}{B}\big)$ with $\mathcal{D}^{-1}(z) = \sum_I \mathrm{tr}\big(\Lambda_I\overset{I}{T}\big) \in \SLF(H)$ by Lemma~\ref{writingInvariants}. We also write $\psi = \sum_J \mathrm{tr}\big(\Theta_J\overset{J}{T}\big)$. Then, thanks to~\eqref{actionL10}:
 \begin{align*}
z_B \triangleright \psi &= \sum_{I,J} \mathrm{tr}_{12}\big((\Lambda_I)_1(\Theta_J)_2 \overset{I}{g}_1\overset{I}{B}_1 \triangleright \overset{J}{T}_2\big)\\
&=\sum_{I,J} \mathrm{tr}_{12}\big((\Lambda_I)_1(\Theta_J)_2 \overset{I}{g}_1\overset{I}{(a_ia_j)}_1\overset{I \otimes J}{T}\!\!\!_{12} \overset{I}{(b_j)}_{1}\overset{J}{(b_i)}_{2}\overset{IJ}{(R')}_{12}\big)\\
&= \mathcal{D}^{-1}(z) ( g a_i a_j ? b_j b_k ) \psi ( ?b_ia_k ) = \mathcal{D}^{-1}(z)\big( ? b_j b_k S^2(a_ia_j) g\big) \psi ( ?b_ia_k )
\end{align*}
with $\mathrm{tr}_{12} = \mathrm{tr} \otimes \mathrm{tr}$, $R = a_i \otimes b_i$. Thanks to the Yang--Baxter equation, we have
\[ b_j b_k \otimes a_ia_j \otimes b_ia_k = R_{23} R_{21} R_{31} = R_{31} R_{21} R_{23} = b_ib_j \otimes a_ja_k \otimes a_ib_k. \]
It follows that
 \begin{align*}
z_B \triangleright \psi &= \mathcal{D}^{-1}(z)\big( ? b_i b_j S^2(a_j a_k) g\big) \psi ( ? a_ib_k )\\
&= \mathcal{D}^{-1}(z)\big(? v^{-1} b_i a_k\big) \psi ( ? a_ib_k ) = \mathcal{D}^{-1}(z)\big(? \big(v^{-1}\big)'\big) \psi\big( ? v \big(v^{-1}\big)'' \big),
\end{align*}
where we used \eqref{elementDrinfeld}, \eqref{pivotCan} and \eqref{ribbon}. Hence for $x \in H$:
\begin{gather*} (z_B \triangleright \psi ) (x) = \mathcal{D}^{-1}(z)\big(\big(v^{-1}\big)'x'\big) \psi\big(v\big(v^{-1}\big)''x''\big) = \big(\mathcal{D}^{-1}(z) \psi^v\big)\big(v^{-1}x\big) = \big(\mathcal{D}^{-1}(z) \psi^v\big)^{v^{-1}} (x) \end{gather*}
as desired.
\end{proof}

\begin{Lemma}\label{lemmeBMoinsUn}
Let $z \in \mathcal{L}_{0,1}^{\mathrm{inv}}(H) = \mathcal{Z}(H)$ and let $\psi \in \SLF(H)$. Then
\[ z_{B^{-1}} \triangleright \psi = \big( S\big(\mathcal{D}^{-1}(z)\big) \psi^v \big)^{v^{-1}}. \]
It follows that if $S(\psi) = \psi$ for all $\psi \in \SLF(H)$, then $\rho_{\mathrm{SLF}}(z_{B^{-1}}) = \rho_{\mathrm{SLF}}(z_B)$.
\end{Lemma}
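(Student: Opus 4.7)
The plan is to mimic the proof of Proposition~\ref{actionAB}, substituting $B^{-1}$ for $B$ throughout. Using Lemma~\ref{writingInvariants}, write $z = \sum_I \mathrm{tr}\bigl(\Lambda_I \overset{I}{g}\overset{I}{M}\bigr)$ with $\mathcal{D}^{-1}(z) = \sum_I \mathrm{tr}\bigl(\Lambda_I \overset{I}{T}\bigr) \in \SLF(H)$, so that $z_{B^{-1}} = \sum_I \mathrm{tr}\bigl(\Lambda_I \overset{I}{g}\overset{I}{B}^{-1}\bigr)$, and expand $\psi = \sum_J \mathrm{tr}\bigl(\Theta_J \overset{J}{T}\bigr)$. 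Then
\[ z_{B^{-1}} \triangleright \psi = \sum_{I,J} \mathrm{tr}_{12}\Bigl( (\Lambda_I)_1 (\Theta_J)_2 \, \overset{I}{g}_1\, \overset{I}{B}^{-1}_1 \triangleright \overset{J}{T}_2 \Bigr), \]
so the core of the argument is to compute $\overset{I}{B}^{-1}_1 \triangleright \overset{J}{T}_2$.

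I would do this via $\Psi_{1,0}$: Theorem~\ref{isoL10Heisenberg} yields $\overset{I}{B}^{-1} = \overset{I}{L}{}^{(-)} S\bigl(\overset{I}{T}\bigr)\overset{I}{L}{}^{(+)-1}$ in $\mathcal{H}(\mathcal{O}(H))$, and one applies this to $\overset{J}{T}_2$ in three stages. First, Lemma~\ref{echangeHeisenberg} gives $\overset{I}{L}{}^{(+)-1}_1 \triangleright \overset{J}{T}_2 = \overset{J}{T}_2 \overset{IJ}{R}{}^{-1}_{12}$. Second, $S\bigl(\overset{I}{T}\bigr)_1$ acts by left multiplication in $\mathcal{O}(H)$ on the result. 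Third, one acts by $\overset{I}{L}{}^{(-)}_1 = \bigl(\overset{I}{b_i}\bigr) S(a_i)$, expanding $\Delta\bigl(S(a_i)\bigr)$ via the analog of~\eqref{deltaR} for $R^{(-)}$; this is the step where the scalar part and the $H$-part of $L^{(-)}$ must be carefully tracked on a product of two $\mathcal{O}(H)$-elements. The outcome is an explicit expression for $\overset{I}{B}^{-1}_1 \triangleright \overset{J}{T}_2$ directly parallel to~\eqref{actionL10}, with the crucial difference that an antipode sits on the $T$-factor and that inverse $R$-matrices appear in place of some of the $R$'s.

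The third step is to substitute back into the expression above for $z_{B^{-1}} \triangleright \psi$ and simplify, using the Yang--Baxter equation to permute the $R$-matrix factors (exactly as at the end of the proof of Proposition~\ref{actionAB}) and the identities $g = uv^{-1}$, $v^2 = uS(u)$, together with the centrality of $v$, to collect the $H$-factors. Evaluation at $x \in H$ should ultimately produce $\Delta(v^{-1}x)$ as in Proposition~\ref{actionAB}. The decisive difference is that the antipode sitting on $\overset{I}{T}$, after tracing against $\Lambda_I$, becomes an antipode on $\mathcal{D}^{-1}(z)$, yielding the announced formula $z_{B^{-1}} \triangleright \psi = \bigl(S(\mathcal{D}^{-1}(z))\, \psi^v\bigr)^{v^{-1}}$.

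The second claim is then immediate: when $S$ fixes $\SLF(H)$ pointwise, $S(\mathcal{D}^{-1}(z)) = \mathcal{D}^{-1}(z)$ since $\mathcal{D}^{-1}(z) \in \SLF(H)$ by Lemma~\ref{writingInvariants}, so comparison with Proposition~\ref{actionAB} gives $\rho_{\SLF}(z_{B^{-1}}) = \rho_{\SLF}(z_B)$. The main obstacle I expect lies in the second stage: carefully computing $\overset{I}{L}{}^{(-)}_1 \triangleright \bigl(S(\overset{I}{T})_1 \overset{J}{T}_2 \overset{IJ}{R}{}^{-1}_{12}\bigr)$ without misplacing antipodes, and then recognizing, after the Yang--Baxter rearrangement, that the resulting expression differs from the one obtained for $z_B$ precisely by an application of $S$ to the $T$-factor (equivalently, to $\mathcal{D}^{-1}(z)$ after tracing).
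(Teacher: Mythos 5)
Your proposal follows essentially the same route as the paper: write $z_{B^{-1}}=\sum_I\mathrm{tr}\big(\Lambda_I\overset{I}{g}\overset{I}{B}{}^{-1}\big)$ via Lemma~\ref{writingInvariants}, compute $\overset{I}{B}{}^{-1}_1\triangleright\overset{J}{T}_2$ through $\Psi_{1,0}\big(\overset{I}{B}{}^{-1}\big)=\overset{I}{L}{}^{(-)}S\big(\overset{I}{T}\big)\overset{I}{L}{}^{(+)-1}$ and Lemma~\ref{echangeHeisenberg} (the paper records the result using \eqref{antipodeT} to rewrite $S\big(\overset{I}{T}\big)$ as a transposed matrix of the dual module $I^*$, which is exactly your ``antipode on the $T$-factor''), then trace against $\Lambda_I\overset{I}{g}$ so that the antipode lands on $\mathcal{D}^{-1}(z)$, and finish with \eqref{elementDrinfeld}, \eqref{pivotCan}, \eqref{ribbon} to produce $\big(S\big(\mathcal{D}^{-1}(z)\big)\psi^v\big)^{v^{-1}}$. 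The plan and all key steps match the paper's proof, with the second claim handled identically by comparison with Proposition~\ref{actionAB}.
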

\begin{proof}This proof is quite similar to that of the previous proposition. Due to the fact that $\Psi_{1,0}\big(\overset{I}{B} \!^{-1}\big) = \overset{I}{L} \!^{(-)} S(\overset{I}{T}) \overset{I}{L} \!^{(+)-1} $ together with Lemma~\ref{echangeHeisenberg} and formulas \eqref{antipodeT}, \eqref{propSR} and \eqref{deltaR}, it is not too difficult to show that
\[ \overset{I}{B} \!^{-1}_1 \triangleright \overset{J}{T}\!_2 = \exposantGauche{t \otimes \mathrm{id}\!\!}{\big( \overset{I^*}{(a_i)}_1 \overset{I^* \otimes J}{T}\!\!\!\!_{12} \overset{I^*}{\big(a_jS^{-2}(b_jb_k)\big)}_1\overset{J}{(a_kb_i)}_2 \big)}, \]
where $^{t \otimes \mathrm{id}}$ means transpose on the first tensorand. Write $z_{B^{-1}} = \sum_I \mathrm{tr}\big(\Lambda_I\overset{I}{g}\overset{I}{B} \!^{-1}\big)$ with $\mathcal{D}^{-1}(z) = \sum_I \mathrm{tr}\big(\Lambda_I\overset{I}{T}\big) \in \SLF(H)$ by Lemma~\ref{writingInvariants}, and $\psi = \sum_J \mathrm{tr}\big(\Theta_J\overset{J}{T}\big)$. Thanks to~\eqref{antipodeT}, observe that
\[ S\big(\mathcal{D}^{-1}(z)\big) = \sum_I \mathrm{tr}\big( \Lambda_I S(\overset{I}{T})\big) = \sum_I \mathrm{tr}\big( \exposantGauche{t}{\Lambda_I} \overset{I^*}{T}\big). \]
Using the fact that $S(g) = g^{-1}$ and~\eqref{antipodeT}, we thus get
 \begin{align*}
z_{B^{-1}} \triangleright \psi &= \sum_{I,J} \mathrm{tr}_{12}\big( (\Lambda_I\overset{I}{g})_1(\Theta_J)_2 \exposantGauche{t \otimes \mathrm{id}\!\!}{\big( \overset{I^*}{(a_i)}_1 \overset{I^* \otimes J}{T}\!\!\!\!_{12} \overset{I^*}{(a_jS^{-2}(b_jb_k))}_1\overset{J}{(a_kb_i)}_2 \big)} \big)\\
&= \sum_{I,J} \mathrm{tr}_{12}\big( (\exposantGauche{t}{\Lambda_I})_1(\Theta_J)_2 \overset{I^*}{(a_i)}_1 \overset{I^* \otimes J}{T}\!\!\!\!_{12} \overset{I^*}{(a_jS^{-2}(b_jb_k)g^{-1})}_1\overset{J}{(a_kb_i)}_2 \big)\\
&= S\big(\mathcal{D}^{-1}(z)\big)\big( a_i ? a_jS^{-2}(b_jb_k)g^{-1} \big) \psi (? a_kb_i )\\
&= S\big(\mathcal{D}^{-1}(z)\big)\big( ? a_jS^{-2}(b_jb_k)g^{-1}a_i \big) \psi (? a_kb_i ) = S\big(\mathcal{D}^{-1}(z)\big)\big( ?\big(v^{-1}\big)'\big) \psi\big(? v \big(v^{-1}\big)''\big).
\end{align*}
For the last equality we used \eqref{elementDrinfeld}, \eqref{pivotCan} and \eqref{ribbon}. Hence we get as in the previous proof $ z_{B^{-1}} \triangleright \psi = \big(S\big(\mathcal{D}^{-1}(z)\big)\psi^v\big)^{v^{-1}}$.
\end{proof}

\section[Projective representation of $\mathrm{SL}_2(\mathbb{Z})$]{Projective representation of $\boldsymbol{\mathrm{SL}_2(\mathbb{Z})}$}\label{sectionModulaire}
As previously, $H$ is a finite-dimensional factorizable ribbon Hopf algebra.

\subsection{Mapping class group of the torus}
Let $\Sigma_{g,n}$ be a compact oriented surface of genus $g$ with $n$ open disks removed. Recall that the mapping class group $\mathrm{MCG}(\Sigma_{g,n})$ is the group of all isotopy classes of orientation-preserving homeomorphisms which fix the boundary pointwise, see~\cite{FM}.

Let us put the base point of $\pi_1(\Sigma_{g,n}{\setminus} D)$ on the boundary circle $c$. Since $c$ is pointwise fixed, we can consider the action of $\mathrm{MCG}(\Sigma_{g,n}{\setminus} D)$ on $\pi_1(\Sigma_{g,n}{\setminus} D)$, obviously defined by{\samepage
\[ \forall\, [f] \in\mathrm{MCG}(\Sigma_{g,n}), \qquad \forall\, [\gamma] \in \pi_1(\Sigma_{g,n} {\setminus} D), \qquad [f]\cdot[\gamma] = f_*([\gamma]) = [f \circ \gamma]. \]
Until now, we identify $f$ with its isotopy class $[f]$ and $\gamma$ with its homotopy class $[\gamma]$.}

Here we focus on the torus $\Sigma_{1,0} = S^1 \times S^1$. Consider $\Sigma_{1,0} {\setminus} D$, where $D$ is an embedded open disk. The surface $\Sigma_{1,0} {\setminus} D$ is represented as a ribbon graph in Fig.~\ref{figureSurfaces} together with the canonical curves $a$ and $b$. The groups $\mathrm{MCG}(\Sigma_{1,0} {\setminus} D)$ and $\mathrm{MCG}(\Sigma_{1,0})$ are generated by the Dehn twists~$\tau_a$,~$\tau_b$ about the free homotopy class of the curves~$a$ and~$b$. It is well-known (see~\cite{FM}) that
\[ \mathrm{MCG}(\Sigma_{1,0}) = \mathrm{SL}_2(\mathbb{Z}) = \big\langle \tau_a, \tau_b \,|\, \tau_a\tau_b\tau_a = \tau_b\tau_a\tau_b, \, (\tau_a\tau_b)^6=1 \big\rangle. \]
This presentation is not the usual one of $\mathrm{SL}_2(\mathbb{Z})$, which is
\[ \mathrm{SL}_2(\mathbb{Z}) = \big\langle s, t \,|\, (st)^3=s^2, \, s^4=1 \big\rangle. \]
The link between the two presentations is $s = \tau_a^{-1}\tau_b^{-1}\tau_a^{-1}$, $t = \tau_a$.

The action of the Dehn twists $\tau_a$ and $\tau_b$ on $\pi_1(\Sigma_{1,0}{\setminus} D)$ is given by
\begin{gather}\label{actionMCG}
(\tau_a)_*(a) = a, \qquad (\tau_a)_*(b) = ba \qquad \text{and} \qquad (\tau_b)_*(a) = b^{-1}a, \qquad (\tau_b)_*(b) = b.
\end{gather}
For instance, the action of $\tau_a$ on $b$ is depicted by
\begin{center}
\begin{tabular}{ccc}
\begin{tikzpicture} % The following code has been generated by a software and shall not be modified.
\draw [shift={(7.116832178429767,6)},line width=0.8pt] plot[domain=0:3.141592653589793,variable=\t]({1*1.7075037526311627*cos(\t r)+0*1.7075037526311627*sin(\t r)},{0*1.7075037526311627*cos(\t r)+1*1.7075037526311627*sin(\t r)});\draw [shift={(7.095787711044769,6)},line width=0.8pt] plot[domain=0:3.141592653589793,variable=\t]({1*1.309575195744216*cos(\t r)+0*1.309575195744216*sin(\t r)},{0*1.309575195744216*cos(\t r)+1*1.309575195744216*sin(\t r)});\draw [shift={(8.596936063235535,6)},line width=0.8pt] plot[domain=0:1.8118338027760237,variable=\t]({1*1.4030639367644646*cos(\t r)+0*1.4030639367644646*sin(\t r)},{0*1.4030639367644646*cos(\t r)+1*1.4030639367644646*sin(\t r)});\draw [shift={(8.596936063235535,6)},line width=0.8pt] plot[domain=0:1.9894438844649243,variable=\t]({1*1.776642756206547*cos(\t r)+0*1.776642756206547*sin(\t r)},{0*1.776642756206547*cos(\t r)+1*1.776642756206547*sin(\t r)});\draw [shift={(8.596936063235535,6)},line width=0.8pt] plot[domain=2.2704846801922587:3.141592653589793,variable=\t]({1*1.3953711574291767*cos(\t r)+0*1.3953711574291767*sin(\t r)},{0*1.3953711574291767*cos(\t r)+1*1.3953711574291767*sin(\t r)});\draw [shift={(8.596936063235535,6)},line width=0.8pt] plot[domain=2.3877511379579315:3.141592653589793,variable=\t]({1*1.789237366642512*cos(\t r)+0*1.789237366642512*sin(\t r)},{0*1.789237366642512*cos(\t r)+1*1.789237366642512*sin(\t r)});\draw [line width=0.8pt] (5,6)-- (5,5);\draw [line width=0.8pt] (5,5)-- (10.794708954246598,5.00203777401749);\draw [line width=0.8pt] (10.80786927095924,6)-- (10.794708954246598,5.00203777401749);\draw [line width=0.8pt] (5,6)-- (5.409328425798604,6);\draw [line width=0.8pt] (5.786212515300553,6)-- (6.8080282517522,6);\draw [line width=0.8pt] (7.200815462007147,6)-- (8.405362906788985,6);\draw [line width=0.8pt] (8.82433593106093,6)-- (10,6);\draw [line width=0.8pt] (10.373578819442082,6)-- (10.80786927095924,6);\draw [shift={(7.449580695168613,7.237726613631109)},line width=1.2pt] plot[domain=3.729595257137362:4.701699287460754,variable=\t]({1*2.2187359355090455*cos(\t r)+0*2.2187359355090455*sin(\t r)},{0*2.2187359355090455*cos(\t r)+1*2.2187359355090455*sin(\t r)});\draw [shift={(7.1026186594434115,6.006993354832272)},line width=1.2pt] plot[domain=0:3.141592653589793,variable=\t]({1*1.4991378524730479*cos(\t r)+0*1.4991378524730479*sin(\t r)},{0*1.4991378524730479*cos(\t r)+1*1.4991378524730479*sin(\t r)});\draw [shift={(9.33211204597329,5.747220848195546)},line width=1.2pt,dash pattern=on 1pt off 1pt] plot[domain=2.484254690839589:3.0336235085501446,variable=\t]({1*2.3423271124786536*cos(\t r)+0*2.3423271124786536*sin(\t r)},{0*2.3423271124786536*cos(\t r)+1*2.3423271124786536*sin(\t r)});\draw [shift={(7.055663711022826,6.6244133629535815)},line width=1.2pt] plot[domain=4.936459140732812:5.903246179167029,variable=\t]({1*1.6651881573061873*cos(\t r)+0*1.6651881573061873*sin(\t r)},{0*1.6651881573061873*cos(\t r)+1*1.6651881573061873*sin(\t r)});\draw [shift={(8.59002279079864,5.988921534198891)},line width=1.2pt,dash pattern=on 1pt off 1pt] plot[domain=0.006881023462768673:1.8643557340986399,variable=\t]({1*1.6100153249433073*cos(\t r)+0*1.6100153249433073*sin(\t r)},{0*1.6100153249433073*cos(\t r)+1*1.6100153249433073*sin(\t r)});\draw [shift={(8.59812891194914,7.333623827966784)},line width=1.2pt,dash pattern=on 1pt off 1pt] plot[domain=3.8382059511494124:5.588914646331431,variable=\t]({1*2.079092437930187*cos(\t r)+0*2.079092437930187*sin(\t r)},{0*2.079092437930187*cos(\t r)+1*2.079092437930187*sin(\t r)});

\draw [fill=black] (7.425668292444792,5.00085301330238) circle (3.5pt);\draw[color=black] (6.5129532506793275,5.519243771113981) node {$b$};
\draw [fill=black,shift={(7.083362211564692,7.506007527524644)},rotate=270] (0,0) ++(0 pt,3pt) -- ++(2.598076211353316pt,-4.5pt)--++(-5.196152422706632pt,0 pt) -- ++(2.598076211353316pt,4.5pt);

\end{tikzpicture}
&
\begin{tikzpicture}
\draw [line width=0.6pt] (4.998800483268871,4.8997262304289775)-- (4.998800483268871,4.728481035341609);\draw [line width=0.6pt] (4.998800483268871,4.814103632885294)-- (5.994612639366005,4.803256746369281);\draw [line width=0.6pt] (5.994612639366005,4.803256746369281)-- (5.679357331774752,4.939867379658825);\draw [line width=0.6pt] (5.676456420546011,4.6735455497886065)-- (5.994612639366005,4.803256746369281);\begin{scriptsize}\draw [color=white] (5.4146305482354515,4.0174688214917404) circle (0.5pt);\end{scriptsize}
\end{tikzpicture}
&
\begin{tikzpicture}
\draw [shift={(7.116832178429767,6)},line width=0.8pt] plot[domain=0:3.141592653589793,variable=\t]({1*1.7075037526311627*cos(\t r)+0*1.7075037526311627*sin(\t r)},{0*1.7075037526311627*cos(\t r)+1*1.7075037526311627*sin(\t r)});\draw [shift={(7.095787711044769,6)},line width=0.8pt] plot[domain=0:3.141592653589793,variable=\t]({1*1.309575195744216*cos(\t r)+0*1.309575195744216*sin(\t r)},{0*1.309575195744216*cos(\t r)+1*1.309575195744216*sin(\t r)});\draw [shift={(8.596936063235535,6)},line width=0.8pt] plot[domain=0:1.8118338027760237,variable=\t]({1*1.4030639367644646*cos(\t r)+0*1.4030639367644646*sin(\t r)},{0*1.4030639367644646*cos(\t r)+1*1.4030639367644646*sin(\t r)});\draw [shift={(8.596936063235535,6)},line width=0.8pt] plot[domain=0:1.9894438844649243,variable=\t]({1*1.776642756206547*cos(\t r)+0*1.776642756206547*sin(\t r)},{0*1.776642756206547*cos(\t r)+1*1.776642756206547*sin(\t r)});\draw [shift={(8.596936063235535,6)},line width=0.8pt] plot[domain=2.2704846801922587:3.141592653589793,variable=\t]({1*1.3953711574291767*cos(\t r)+0*1.3953711574291767*sin(\t r)},{0*1.3953711574291767*cos(\t r)+1*1.3953711574291767*sin(\t r)});\draw [shift={(8.596936063235535,6)},line width=0.8pt] plot[domain=2.3877511379579315:3.141592653589793,variable=\t]({1*1.789237366642512*cos(\t r)+0*1.789237366642512*sin(\t r)},{0*1.789237366642512*cos(\t r)+1*1.789237366642512*sin(\t r)});\draw [line width=0.8pt] (5,6)-- (5,5);\draw [line width=0.8pt] (5,5)-- (10.794708954246598,5.00203777401749);\draw [line width=0.8pt] (10.80786927095924,6)-- (10.794708954246598,5.00203777401749);\draw [line width=0.8pt] (5,6)-- (5.409328425798604,6);\draw [line width=0.8pt] (5.786212515300553,6)-- (6.8080282517522,6);\draw [line width=0.8pt] (7.200815462007147,6)-- (8.405362906788985,6);\draw [line width=0.8pt] (8.82433593106093,6)-- (10,6);\draw [line width=0.8pt] (10.373578819442082,6)-- (10.80786927095924,6);\draw [shift={(7.449580695168613,7.237726613631109)},line width=1.2pt] plot[domain=3.729595257137362:4.701699287460754,variable=\t]({1*2.2187359355090455*cos(\t r)+0*2.2187359355090455*sin(\t r)},{0*2.2187359355090455*cos(\t r)+1*2.2187359355090455*sin(\t r)});\draw [shift={(7.1026186594434115,6.006993354832272)},line width=1.2pt] plot[domain=0:3.141592653589793,variable=\t]({1*1.4991378524730479*cos(\t r)+0*1.4991378524730479*sin(\t r)},{0*1.4991378524730479*cos(\t r)+1*1.4991378524730479*sin(\t r)});\draw [shift={(9.33211204597329,5.747220848195546)},line width=1.2pt] plot[domain=2.484254690839589:3.0336235085501446,variable=\t]({1*2.3423271124786536*cos(\t r)+0*2.3423271124786536*sin(\t r)},{0*2.3423271124786536*cos(\t r)+1*2.3423271124786536*sin(\t r)});\draw [shift={(8.59002279079864,5.988921534198891)},line width=1.2pt] plot[domain=0.006881023462768673:1.8643557340986399,variable=\t]({1*1.6100153249433073*cos(\t r)+0*1.6100153249433073*sin(\t r)},{0*1.6100153249433073*cos(\t r)+1*1.6100153249433073*sin(\t r)});\draw [shift={(7.024070995031567,10.499990747896216)},line width=1.2pt] plot[domain=4.785288706191459:5.326972569147958,variable=\t]({1*5.513782387193336*cos(\t r)+0*5.513782387193336*sin(\t r)},{0*5.513782387193336*cos(\t r)+1*5.513782387193336*sin(\t r)});\draw [shift={(7.801761172798637,6.2729192667570235)},line width=1.2pt] plot[domain=3.471413503397752:5.962266589276761,variable=\t]({1*0.8438183366159638*cos(\t r)+0*0.8438183366159638*sin(\t r)},{0*0.8438183366159638*cos(\t r)+1*0.8438183366159638*sin(\t r)});

\draw [fill=black] (7.425668292444792,5.00085301330238) circle (3.5pt);
\draw[color=black] (6.5897929090383816,5.560096979379848) node {$\:ba$};
\draw [fill=black,shift={(7.083362211564692,7.506007527524644)},rotate=270] (0,0) ++(0 pt,3pt) -- ++(2.598076211353316pt,-4.5pt)--++(-5.196152422706632pt,0 pt) -- ++(2.598076211353316pt,4.5pt);
\end{tikzpicture}
\end{tabular}
\end{center}

\subsection[Automorphisms $\alpha$ and $\beta$]{Automorphisms $\boldsymbol{\alpha}$ and $\boldsymbol{\beta}$}
The fundamental idea, proposed in~\cite{AS} and~\cite{AS2}, is to mimic the action of the Dehn twists of $\Sigma_{g,n} {\setminus} D$ on $\pi_1(\Sigma_{g,n}{\setminus} D)$ at the level of the algebra $\mathcal{L}_{g,n}(H)$. Let us be more precise. We focus on the case~$(g,n) = (1,0)$.

In $\pi_1(\Sigma_{1,0}{\setminus} D)$ we have the two canonical curves $a$ and $b$, while in $\mathcal{L}_{1,0}(H)$ we have the matri\-ces~$\overset{I}{A}$ and~$\overset{I}{B}$. In view of~\eqref{actionMCG}, let us try to define two morphisms $\widetilde{\tau}_a, \widetilde{\tau}_b \colon \mathcal{L}_{1,0}(H) \to \mathcal{L}_{1,0}(H)$ by the same formulas
\begin{alignat*}{3}
& \widetilde{\tau}_a\big(\overset{I}{A}\big) = \overset{I}{A}, \qquad && \widetilde{\tau}_a\big(\overset{I}{B}\big) = \overset{I}{B}\overset{I}{A}, & \\
& \widetilde{\tau}_b\big(\overset{I}{A}\big) = \overset{I}{B} \!^{-1}\overset{I}{A}, \qquad && \widetilde{\tau}_b\big(\overset{I}{B}\big) = \overset{I}{B}.&
\end{alignat*}
Let us see the behavior of these mappings under the fusion and exchange relations. For the exchange relation, no problem arises
\begin{align*}
R_{12} \widetilde{\tau}_a(B)_1 R_{21} \widetilde{\tau}_a(A)_2 &= R_{12} B_1 A_1 R_{21} A_2 &&\quad \text{(definition)}\\
&= R_{12} B_1 R_{21} A_2 R_{12} A_1 R_{12}^{-1} &&\quad\text{(equation \eqref{reflection})}\\
&= A_2 R_{12} B_1 A_1 R_{12}^{-1} &&\quad \text{(equation\eqref{echangeL10})}\\
&=\widetilde{\tau}_a(A)_2 R_{12} \widetilde{\tau}_a(B)_1 R_{12}^{-1} &&\quad \text{(definition)},
\end{align*}
and a similar computation holds for $\widetilde{\tau}_b$. The fusion relation is almost satisfied
\begin{align*}
\widetilde{\tau}_a(B)_{12}&= B_{12} A_{12} &&\quad \text{(definition)}\\
&= \Delta(v)_{12} B_{12} v_1^{-1} v_2^{-1} R_{21} R_{12} A_{12} &&\quad \text{(trick based on \eqref{ribbon})}\\
&= \Delta(v)_{12} B_1 R_{21} B_2 R_{21}^{-1} v_1^{-1} v_2^{-1} R_{21} R_{12} A_1 R_{21} A_2 R_{21}^{-1} &&\quad \text{(equation~\eqref{fusionL01})}\\
&= \Delta(v)_{12} v_1^{-1} v_2^{-1} B_1 R_{21} B_2 R_{12} A_1 R_{21} A_2 R_{21}^{-1} &&\quad \text{($v$ is central)}\\
&= \Delta(v)_{12} v_1^{-1} v_2^{-1} B_1 A_1 R_{21} B_2 A_2 R_{21}^{-1} &&\quad \text{(equation~\eqref{echangeL10})}\\
&= \Delta(v)_{12} v_1^{-1} v_2^{-1} \widetilde{\tau}_a(B)_1 R_{21} \widetilde{\tau}_a(B)_2 R_{21}^{-1} &&\quad \text{(definition)},
\end{align*}
and we get similarly
\[ \widetilde{\tau}_b(A)_{12} = B^{-1}_{12} A_{12} = \Delta\big(v^{-1}\big)_{12} v_1 v_2 \widetilde{\tau}_b(A)_1 R_{21} \widetilde{\tau}_b(A)_2 R_{21}^{-1}. \]
From this we conclude that the elements $\overset{I}{v} \!^{-1}\overset{I}{B}\overset{I}{A}$ and $\overset{I}{v}\overset{I}{B} \!^{-1}\overset{I}{A}$ satisfy the relation \eqref{fusionL01}. Since $v$ is central, we see that the exchange relation still holds with these elements. We thus have found the morphisms which mimic $\tau_a$ and $\tau_b$. We denote them by $\alpha$ and $\beta$ respectively, and we have the following proposition (the maps $\alpha, \beta$ and the fact that they are automorphisms were already in \cite[Lemma~2]{AS} and \cite[equations~(4.1) and~(4.2)]{AS2}).

\begin{Proposition}\label{autoAlphaBeta}We have two automorphisms $\alpha$, $\beta$ of $\mathcal{L}_{1,0}(H)$ defined by
\begin{alignat*}{3}
& \alpha(\overset{I}{A}) = \overset{I}{A}, \qquad && \alpha(\overset{I}{B}) = \overset{I}{v} \!^{-1}\overset{I}{B}\overset{I}{A},& \\
& \beta(\overset{I}{A}) = \overset{I}{v} \overset{I}{B} \!^{-1}\overset{I}{A}, \qquad && \beta(\overset{I}{B}) = \overset{I}{B}.&
\end{alignat*}

Moreover, these automorphisms are inner: there exist $\widehat{\alpha}, \widehat{\beta} \in \mathcal{L}_{1,0}(H)$ unique up to scalar such that
\[ \forall\, x \in \mathcal{L}_{1,0}(H), \qquad \alpha(x) = \widehat{\alpha}x\widehat{\alpha}^{-1}, \qquad \beta(x) = \widehat{\beta}x\widehat{\beta}^{-1}. \]
\end{Proposition}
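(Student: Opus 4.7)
The plan splits into three stages.

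First I would verify that $\alpha$ and $\beta$ are well-defined algebra endomorphisms of $\mathcal{L}_{1,0}(H)$. The discussion immediately preceding the statement has essentially carried out the key fusion computation, showing $(BA)_{12} = \Delta(v)_{12}\, v_1^{-1} v_2^{-1}\, (BA)_1 R_{21} (BA)_2 R_{21}^{-1}$ and analogously $(B^{-1}A)_{12} = \Delta(v^{-1})_{12}\, v_1 v_2\, (B^{-1}A)_1 R_{21} (B^{-1}A)_2 R_{21}^{-1}$. Multiplying by the compensating scalar factor $\Delta(v^{-1})_{12}$ (respectively $\Delta(v)_{12}$) and using centrality of $v$ to absorb $v_1^{\pm 1} v_2^{\pm 1}$ into the inner factors shows that $\alpha(\overset{I}{B}) = \overset{I}{v}\!^{-1}\overset{I}{B}\overset{I}{A}$ and $\beta(\overset{I}{A}) = \overset{I}{v}\,\overset{I}{B}\!^{-1}\overset{I}{A}$ satisfy the fusion relation \eqref{fusionL01}. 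The fusion relation for the unchanged generator is trivial. The $\mathcal{L}_{1,0}$-exchange relation \eqref{echangeL10} is also preserved, since the inserted scalars $v^{\pm 1}$ are central and the underlying manipulation for $(BA, A)$ and $(B, B^{-1}A)$ is exactly the one displayed in the motivating text.

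Second, I would upgrade the endomorphisms to automorphisms using Theorem \ref{isoL10Heisenberg}, which identifies $\mathcal{L}_{1,0}(H)$ with the matrix algebra $\mathrm{Mat}_{\dim(H)}(\mathbb{C})$. Since this algebra is simple and finite-dimensional, any unital algebra endomorphism has trivial kernel (the only proper two-sided ideal is $\{0\}$), hence is injective, and then surjective by a dimension count. One could alternatively write down candidate inverses directly, e.g.\ $\alpha^{-1}(\overset{I}{B}) = \overset{I}{v}\,\overset{I}{B}\,\overset{I}{A}\!^{-1}$ (and analogously for $\beta$) and repeat the Step~1 verification, but the abstract argument is cleaner.

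Third, the existence of inner implementers is the Skolem--Noether theorem applied to the matrix algebra $\mathcal{L}_{1,0}(H)$: every algebra automorphism of $\mathrm{Mat}_N(\mathbb{C})$ is of the form $x \mapsto \widehat{\alpha}\, x\, \widehat{\alpha}^{-1}$ for some invertible $\widehat{\alpha}$. Uniqueness up to scalar is immediate from the fact that the center of $\mathrm{Mat}_N(\mathbb{C})$ is $\mathbb{C}\,\mathbb{I}_N$: if $\widehat{\alpha}_1$ and $\widehat{\alpha}_2$ both implement $\alpha$, then $\widehat{\alpha}_1^{-1}\widehat{\alpha}_2$ commutes with everything in the algebra, hence is scalar.

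The main obstacle is the bookkeeping in Step 1, specifically the exchange-relation check: to establish $R_{12}\,\alpha(B)_1 R_{21}\, \alpha(A)_2 = \alpha(A)_2\, R_{12}\, \alpha(B)_1 R_{12}^{-1}$ one has to slide the extra $A$-factor inside $\alpha(B)_1 = v_1^{-1} B_1 A_1$ past $A_2$ using the reflection equation \eqref{reflection} for $A$ alone, and the mirror computation using the fusion relation for $B$ is needed for $\beta$. Once the role of the centrality of $v$ and the exact form of the defect term $\Delta(v)\, v_1^{-1} v_2^{-1}$ are understood, no deeper algebraic input is required beyond what is already in the excerpt.
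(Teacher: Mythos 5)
Your proposal is correct and follows essentially the same route as the paper: the well-definedness of $\alpha$ and $\beta$ is exactly the computation carried out in the text preceding the statement, and the innerness plus uniqueness up to scalar come from Skolem--Noether and the triviality of the center of a matrix algebra, which is precisely the paper's (one-line) proof. The only point you add is an explicit argument for bijectivity (injectivity from simplicity of $\mathrm{Mat}_{\dim(H)}(\mathbb{C})$, then a dimension count), which the paper instead attributes to the earlier references of Alekseev--Schomerus; your argument is standard and fills that small gap cleanly.
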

\begin{proof}
By Theorem~\ref{isoL10Heisenberg}, $\mathcal{L}_{1,0}(H)$ is a matrix algebra. Hence, by the Skolem--Noether theorem, every automorphism of $\mathcal{L}_{1,0}(H)$ is inner.
\end{proof}

A natural question is then to find explicitly the elements $\widehat{\alpha}$, $\widehat{\beta}$. The answer is amazingly simple (it has been provided in \cite{AS} and \cite{S} for the semisimple case). Recall the notation~\eqref{coinvParticuliers}.

\begin{Theorem}\label{valueHatAlphaBeta}Up to scalar, $ \widehat{\alpha} = v_A^{-1}$ and $\widehat{\beta} = v_B^{-1}$.
\end{Theorem}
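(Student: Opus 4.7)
The plan is to verify both identities through the Heisenberg-double isomorphism $\Psi_{1,0}\colon \mathcal{L}_{1,0}(H) \xrightarrow{\sim} \mathcal{H}(\mathcal{O}(H))$ of Theorem~\ref{isoL10Heisenberg}, reducing everything to explicit calculations in $\mathcal{H}(\mathcal{O}(H))$. The crucial starting observation, already recorded in the proof of Theorem~\ref{isoL10Heisenberg}, is that $\Psi_{1,0}\circ j_A = i_H \circ \Psi_{0,1}$; under the identification $\mathcal{L}_{0,1}(H)=H$ this gives $\Psi_{1,0}(v_A) = v \in H \subset \mathcal{H}(\mathcal{O}(H))$, so the image of $v_A$ is simply the ribbon element. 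It therefore suffices to check $\alpha(x)=v_A^{-1}xv_A$ and $\beta(x)=v_B^{-1}xv_B$ on the generators $\overset{I}{A}$, $\overset{I}{B}$.

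For $\widehat{\alpha} = v_A^{-1}$, the relation $v_A^{-1}\overset{I}{A}v_A = \overset{I}{A}$ is immediate: centrality of $v$ in $H$ makes it commute with every entry of $\overset{I}{L}^{(\pm)}$, hence with $\Psi_{1,0}(\overset{I}{A}) = \overset{I}{L}^{(+)}\overset{I}{L}^{(-)-1}$. For the nontrivial identity $v_A^{-1}\overset{I}{B}v_A = \overset{I}{v}^{-1}\overset{I}{B}\overset{I}{A}$, I will compute $v^{-1}\overset{I}{T}v$ in $\mathcal{H}(\mathcal{O}(H))$ by applying the exchange rule $h\psi=\psi(?h')h''$ together with the coproduct formula $\Delta(v^{-1}) = (v^{-1}\otimes v^{-1})(R'R)$ coming from \eqref{ribbon}. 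A careful Sweedler-notation computation yields $v^{-1}\overset{I}{T}v = \overset{I}{T}\overset{I}{v}^{-1}\Xi$, where $\Xi := (\overset{I}{T}\otimes\mathrm{id})(R'R) = \overset{I}{L}^{(-)-1}\overset{I}{L}^{(+)}$. The decisive identity to invoke is that the scalar matrix $\overset{I}{v}$ commutes with $\overset{I}{T}$ inside $\mathrm{Mat}_{\dim(I)}(\mathcal{O}(H))$: evaluated at any $x\in H$, both sides become $\overset{I}{v}\overset{I}{x}=\overset{I}{x}\overset{I}{v}$, which holds by centrality. The same argument shows $\overset{I}{v}$ commutes with $\overset{I}{L}^{(+)}$. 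Sandwiching $v^{-1}\overset{I}{T}v$ between $\overset{I}{L}^{(+)}$ and $\overset{I}{L}^{(-)-1}$ and substituting $\Xi = \overset{I}{L}^{(-)-1}\overset{I}{L}^{(+)}$ then telescopes to $\overset{I}{v}^{-1}\Psi_{1,0}(\overset{I}{B})\Psi_{1,0}(\overset{I}{A}) = \Psi_{1,0}(\overset{I}{v}^{-1}\overset{I}{B}\overset{I}{A})$.

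For $\widehat{\beta} = v_B^{-1}$, the relation $v_B^{-1}\overset{I}{B}v_B = \overset{I}{B}$ is again immediate, because $j_B$ is an algebra morphism and $v$ is central in $\mathcal{L}_{0,1}(H)\cong H$. The remaining identity $v_B^{-1}\overset{I}{A}v_B = \overset{I}{v}\overset{I}{B}^{-1}\overset{I}{A}$ is genuinely harder, since $\Psi_{1,0}(v_B)$ has no elementary form. My plan here is to stay inside $\mathcal{L}_{1,0}(H)$: write $v_B = \sum_J \mathrm{tr}\bigl(\Lambda_J\overset{J}{g}\overset{J}{B}\bigr)$ via Lemma~\ref{writingInvariants}, use the rewritten exchange relation \eqref{echangeL10Inverse} to commute $\overset{I}{A}$ past each $\overset{J}{B}$ under the trace, and finally invoke the ribbon identity $\Delta(v) = (R'R)^{-1}(v\otimes v)$ to collapse the accumulated $R$-matrix contributions into the factor $\overset{I}{v}\overset{I}{B}^{-1}$. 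The main obstacle is precisely this $\beta$ half: the slick Heisenberg calculation used for $\alpha$ worked because $\Psi_{1,0}(v_A)=v$ is a bare element of $H$ interacting with $\overset{I}{T}$ via the simple Heisenberg exchange, whereas for $v_B$ no such shortcut exists and one must carry out the index-heavy bookkeeping with the $A$--$B$ exchange and the Yang--Baxter equation by hand, invoking the ribbon identity at exactly the right place.
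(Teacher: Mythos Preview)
Your treatment of the $\alpha$ half is correct and matches the paper's argument essentially verbatim: both push the question through $\Psi_{1,0}$, use that $\Psi_{1,0}(v_A^{-1})=v^{-1}\in H\subset\mathcal{H}(\mathcal{O}(H))$, and reduce to the Heisenberg exchange $v^{-1}\overset{I}{T}=\overset{I}{T}(?(v^{-1})')\,(v^{-1})''$ together with $\Delta(v^{-1})=(v^{-1}\otimes v^{-1})R'R$. Your identification $\Xi=\overset{I}{L}{^{(-)-1}}\overset{I}{L}{^{(+)}}$ and the telescoping are exactly what the paper does.

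The $\beta$ half, however, is only a plan in your proposal, and the plan you outline (express $v_B^{-1}$ via Lemma~\ref{writingInvariants}, push $\overset{I}{A}$ through each $\overset{J}{B}$ with \eqref{echangeL10Inverse}, and hope the $R$-matrix debris collapses via the ribbon coproduct) would be painful to execute and you do not actually carry it out. The paper avoids all of this with a one-line trick: having already proved $v_A^{-1}\overset{I}{B}=\overset{I}{v}^{-1}\overset{I}{B}\overset{I}{A}\,v_A^{-1}$, apply the automorphism $\alpha^{-1}\circ\beta^{-1}$ to both sides. One checks directly from the definitions that $\alpha^{-1}\circ\beta^{-1}$ sends $\overset{I}{A}\mapsto\overset{I}{B}$ and $\overset{I}{B}\mapsto\overset{I}{v}\overset{I}{B}\overset{I}{A}^{-1}$; in particular $\alpha^{-1}\circ\beta^{-1}\circ j_A=j_B$, so $v_A^{-1}\mapsto v_B^{-1}$. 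The transformed identity reads $\overset{I}{v}\,v_B^{-1}\overset{I}{B}\overset{I}{A}^{-1}=\overset{I}{B}\overset{I}{A}^{-1}\overset{I}{B}\,v_B^{-1}$, and after using that $v_B^{-1}$ commutes with $\overset{I}{B}$ and rearranging one obtains $v_B^{-1}\overset{I}{A}\,v_B=\overset{I}{v}\overset{I}{B}^{-1}\overset{I}{A}$. This transfers the whole $\beta$ claim back to the already-established $\alpha$ claim for free, with no new Heisenberg or exchange computations.
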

\begin{proof}We must show that
\[
v^{-1}_A\overset{I}{A} = \overset{I}{A} v^{-1}_A, \qquad v^{-1}_A \overset{I}{B} = \overset{I}{v} \!^{-1} \overset{I}{B}\overset{I}{A} v^{-1}_A \qquad \text{and} \qquad v^{-1}_B \overset{I}{A} = \overset{I}{v} \overset{I}{B} \!^{-1}\overset{I}{A}v^{-1}_B, \qquad v^{-1}_B \overset{I}{B} = \overset{I}{B} v^{-1}_B.
\]
It is obvious that $v_A^{-1}$ (resp.~$v_B^{-1}$) commutes with the matrices $\overset{I}{A}$ (resp.~$\overset{I}{B}$) since it is central in $j_A(\mathcal{L}_{0,1}(H))$ (resp.\ in $j_B(\mathcal{L}_{0,1}(H))$). Let us show the other commutation relation for~$v_A^{-1}$. We use the isomorphism~$\Psi_{1,0}$. Observe that $\Psi_{1,0}(x_A) = x$ for all $x \in H$. Hence, using the exchange relation of Definition~\ref{defHeisenberg} and~\eqref{ribbon}, we have
\begin{align*}
\Psi_{1,0}\big(v_A^{-1}\overset{I}{B}\big) & = \overset{I}{L} \!^{(+)} v^{-1} \overset{I}{T} \overset{I}{L} \!^{(-)-1} = \overset{I}{L} \!^{(+)} \overset{I}{T}\big(? v'^{-1}\big) v''^{-1} \overset{I}{L} \!^{(-)-1} = \overset{I}{L} \!^{(+)} \overset{I}{T} \overset{I}{(v')}{^{-1}} v''^{-1} \overset{I}{L} \!^{(-)-1}\\
& = \overset{I}{L} \!^{(+)} \overset{I}{T} \overset{I}{v}{^{-1}} \overset{I}{b_i} \overset{I}{a_j} v^{-1} a_i b_j \overset{I}{L} \!^{(-)-1} = \overset{I}{v}{^{-1}} \overset{I}{L} \!^{(+)} \overset{I}{T} \overset{I}{b_i}a_i \overset{I}{a_j}b_j \overset{I}{L} \!^{(-)-1} v^{-1}\\
& = \overset{I}{v}{^{-1}} \overset{I}{L} \!^{(+)} \overset{I}{T} \overset{I}{L} \!^{(-)-1} \overset{I}{L} \!^{(+)} \overset{I}{L} \!^{(-)-1} v^{-1} = \Psi_{1,0}\big(\overset{I}{v} \!^{-1} \overset{I}{B}\overset{I}{A} v^{-1}_A\big)
\end{align*}
as desired. We now apply the morphism $\alpha^{-1} \circ \beta^{-1}$ to the equality $v_A^{-1}\overset{I}{B} = \overset{I}{v} \!^{-1} \overset{I}{B}\overset{I}{A} v_A^{-1}$:
\[ \alpha^{-1} \circ \beta^{-1}\big(v_A^{-1}\overset{I}{B}\big) = \overset{I}{v} v_B^{-1} \overset{I}{B}\overset{I}{A}{^{-1}} = \alpha^{-1} \circ \beta^{-1}\big(\overset{I}{v}^{-1}\overset{I}{B}\overset{I}{A}v_A^{-1}\big) = \overset{I}{B} \overset{I}{A}{^{-1}} \overset{I}{B} v_B^{-1}. \]
Using that $v_B$ and $\overset{I}{B}$ commute, we easily get the desired equality.
\end{proof}

\subsection[Projective representation of $\mathrm{SL}_2(\mathbb{Z})$ on $\SLF(H)$]{Projective representation of $\boldsymbol{\mathrm{SL}_2(\mathbb{Z})}$ on $\boldsymbol{\SLF(H)}$}

We now show that the elements $v_A^{-1}$, $v_B^{-1}$ give rise to a projective representation of $\mathrm{MCG}(\Sigma_{1,0})$ on $\SLF(H)$ via the following assignment
\[ \tau_a \mapsto \rho_{\mathrm{SLF}}\big(v_A^{-1}\big), \qquad \tau_b \mapsto \rho_{\mathrm{SLF}}\big(v_B^{-1}\big). \]
We must then check that
\[ \rho_{\mathrm{SLF}}\big( v_A^{-1} v_B^{-1} v_A^{-1} \big) \sim \rho_{\mathrm{SLF}}\big( v_B^{-1} v_A^{-1} v_B^{-1} \big), \qquad \rho_{\mathrm{SLF}}\big( v_A^{-1} v_B^{-1} \big)^6 \sim 1, \]
where $\sim$ means equality up to scalar. As we will see, it turns out that the braid relation holds in the algebra $\mathcal{L}_{1,0}(H)$ itself (the scalar being $1$), while the relation $\big(v_A^{-1} v_B^{-1}\big)^6 \sim 1$ only holds in the representation $\SLF(H)$. This is because the relation $(\tau_a)_*(\tau_b)_*(\tau_a)_* = (\tau_b)_*(\tau_a)_*(\tau_b)_*$ holds on $\pi_1(\Sigma_{1,0} {\setminus} D)$, while the relation $((\tau_a)_*(\tau_b)_*)^6=1$ only holds on $\pi_1(\Sigma_{1,0})$. As pointed out in Remark~\ref{remarqueC}, the matrices $\overset{I}{C}$ corresponding to the boundary circle vanishes on $\mathrm{SLF}(H)$. Thus applying $\rho_{\mathrm{SLF}}$ amounts to gluing back the disk~$D$. See \cite{F2} for the generalization of this fact to higher genus.

Integrals on $H$ will play an important role. Recall that a {\em left integral} (resp.\ {\em right integral}) is a non-zero linear form $\mu^l$ (resp.~$\mu^r$) on $H$ which satisfies
\begin{gather}\label{integrale}
\forall\, x \in H, \qquad \big(\mathrm{id} \otimes \mu^l\big) \circ \Delta(x) = \mu^l(x)1 \qquad \big(\text{resp.} \ \big(\mu^r \otimes \mathrm{id}\big) \circ \Delta(x) = \mu^r(x)1\big).
\end{gather}
Since $H$ is finite-dimensional, this is equivalent to:
\begin{gather}\label{integrale2}
\forall\, \psi \in \mathcal{O}(H), \qquad \psi\mu^l = \varepsilon(\psi)\mu^l \qquad \big(\text{resp.} \ \mu^r\psi = \varepsilon(\psi)\mu^r\big).
\end{gather}
It is well-known that left and right integrals always exist if~$H$ is finite-dimensional. Moreover, they are unique up to scalar. We fix $\mu^l$. Then $\mu^l \circ S^{-1}$ is a right integral, and we choose
\begin{gather}\label{muR}
\mu^r = \mu^l \circ S^{-1}.
\end{gather}

The following proposition is important for the sequel (points 2 and 3 are well-known and can be deduced from results of Radford, see~\cite{radford}).
\begin{Proposition}\label{propVIntegrale}
Let $\varphi_v, \varphi_{v^{-1}} \in H^*$ defined by
\[ \varphi_v = \mu^l\big(v^{-1}\big)^{-1}\mu^l\big( g^{-1}v^{-1} ? \big), \qquad \varphi_{v^{-1}} = \mu^l(v)^{-1}\mu^l\big(g^{-1}v ?\big). \]
Then
\[ \mathcal{D}(\varphi_v) = v, \qquad \mathcal{D}\big(\varphi_{v^{-1}}\big) = v^{-1}. \]
It follows that
\begin{enumerate}\itemsep=0pt
\item[$1)$] $\varphi_{v}, \varphi_{v^{-1}} \in \SLF(H)$,
\item[$2)$] $\mu^l\big(g^{-1} ?\big), \mu^r (g ? ) \in \SLF(H)$,
\item[$3)$] $ \forall\, x, y \in H$, $\mu^l(xy) = \mu^l\big(yS^{2}(x)\big)$, $\mu^r(xy) = \mu^r\big(S^2(y)x\big)$.
\end{enumerate}
\end{Proposition}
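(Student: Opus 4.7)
The plan is to establish the two identities $\mathcal{D}(\varphi_v) = v$ and $\mathcal{D}(\varphi_{v^{-1}}) = v^{-1}$ first, and then deduce items~(1)--(3) from them.

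For the main identity, I would unfold the definition of $\mathcal{D}$ and use centrality of $v$ (so that $v^{-1}$ commutes with $g$):
\[
\mathcal{D}(\varphi_v) = \mu^l(v^{-1})^{-1}\,\mu^l\bigl(g^{-1}v^{-1}g\,a_i b_j\bigr)\,b_i a_j = \mu^l(v^{-1})^{-1}\,\mu^l\bigl(v^{-1} a_i b_j\bigr)\,b_i a_j,
\]
reducing the task to the key identity $\mu^l(v^{-1}a_i b_j)\,b_i a_j = \mu^l(v^{-1})\,v$ (and analogously for $v^{-1}$). My strategy is to first show that the left-hand side is a central element of $H$: for any $h\in H$, the quasi-triangularity rules $(\Delta\otimes\mathrm{id})(R) = R_{13}R_{23}$ and $(\mathrm{id}\otimes\Delta)(R) = R_{13}R_{12}$ (with analogues for $R'$), combined with left-invariance of $\mu^l$, allow one to slide $h$ through the expression and verify $h\cdot\mathrm{LHS} = \mathrm{LHS}\cdot h$. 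Both sides are then in $\mathcal{Z}(H)$, and to identify the scalar relating them one invokes the isomorphism $\mathcal{D}\colon\SLF(H)\xrightarrow{\sim}\mathcal{Z}(H)$ noted after Lemma~\ref{writingInvariants} together with Radford's explicit formulas for integrals in factorizable ribbon Hopf algebras cited in the statement. This is the main obstacle of the proof, since it is the only step requiring genuine calculation with the interaction of $\mu^l$ and the $R$-matrix; the remaining three items are short consequences.

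For point~(1), $v, v^{-1}\in\mathcal{Z}(H)$ and the bijection $\mathcal{D}\colon\SLF(H)\cong\mathcal{Z}(H)$ force the unique preimages $\varphi_v, \varphi_{v^{-1}}$ to lie in $\SLF(H)$. For point~(2), start from $\varphi_v\in\SLF(H)$, i.e.\ $\mu^l(g^{-1}v^{-1}xy) = \mu^l(g^{-1}v^{-1}yx)$ for all $x,y$; centrality of $v^{-1}$ and the bijective substitution $x\mapsto vx'$ cancel the $v^{-1}$ factors and give $\mu^l(g^{-1}\cdot)\in\SLF(H)$. For $\mu^r(g\cdot)$, I would rewrite it using $\mu^r = \mu^l\circ S^{-1}$ and $S(g) = g^{-1}$, reducing the claim to cyclicity of $\mu^l(\cdot g^{-1})$. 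That cyclicity in turn follows from the cyclicity of $\mu^l(g^{-1}\cdot)$ by plugging $a = gx,\ b = g^{-1}$ into $\mu^l(g^{-1}ab) = \mu^l(g^{-1}ba)$, which yields the identity $\mu^l(xg^{-1}) = \mu^l(g^{-1}x)$ and hence $\mu^l(\cdot g^{-1}) = \mu^l(g^{-1}\cdot)\in\SLF(H)$.

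For point~(3), apply cyclicity of $\mu^l(g^{-1}\cdot)$ with $X = gx,\ Y = y$:
\[
\mu^l(xy) = \mu^l(g^{-1}XY) = \mu^l(g^{-1}YX) = \mu^l(g^{-1}y\,gx) = \mu^l\bigl(S^{-2}(y)\,x\bigr),
\]
using the pivotal identity $g^{-1}yg = S^{-2}(y)$. Substituting $y\mapsto S^2(y)$ and swapping $x\leftrightarrow y$ then yields the claimed $\mu^l(xy) = \mu^l(yS^2(x))$; the statement for $\mu^r$ follows from $\mu^r(g\cdot)\in\SLF(H)$ by an entirely parallel argument.
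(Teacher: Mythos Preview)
Your deductions of items (1)--(3) from the main identities are correct and essentially match the paper. The gap is in the main identity itself.

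You propose to prove $\mu^l(v^{-1}a_ib_j)\,b_ia_j = \mu^l(v^{-1})\,v$ by first showing the left-hand side is central and then ``identifying the scalar'' via $\mathcal{D}\colon\SLF(H)\xrightarrow{\sim}\mathcal{Z}(H)$ and unspecified Radford formulas. But $\mathcal{Z}(H)$ is not one-dimensional, so centrality alone gives no scalar to identify: you still have to show that this particular central element equals $\mu^l(v^{-1})v$ and not some other central element. The appeal to ``Radford's explicit formulas'' is not a substitute for that computation; you have not said which formula, nor how it would single out~$v$. The same vagueness affects the $\varphi_{v^{-1}}$ case, which you dismiss as ``analogous'' but which in fact needs the $\mu^r$-properties already established from the $\varphi_v$ case.

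The paper's argument avoids this entirely by a one-line trick you are missing: the ribbon relation $\Delta(v)=(R'R)^{-1}(v\otimes v)$ from \eqref{ribbon} gives $RR'=(v\otimes v)\,\Delta^{\mathrm{op}}(v^{-1})$, so
\[
\mathcal{D}\bigl(\mu^l(g^{-1}v^{-1}?)\bigr)=\mu^l\bigl(g^{-1}v^{-1}\cdot gv(v^{-1})''\bigr)\,v(v^{-1})'=\mu^l\bigl((v^{-1})''\bigr)\,v(v^{-1})'=\mu^l(v^{-1})\,v,
\]
the last equality being exactly the defining property \eqref{integrale} of $\mu^l$. No centrality argument or external integral formula is needed; the ribbon coproduct does all the work. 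For $\varphi_{v^{-1}}$ one passes to $\mu^r$ via \eqref{muR}, uses the quasi-cyclicity (3) just obtained, and applies the same trick with $(R'R)^{-1}=\Delta(v)(v^{-1}\otimes v^{-1})$.
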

\begin{proof}Consider the following computation, where we use \eqref{ribbon} and \eqref{integrale}:
 \begin{align*}
\mathcal{D}\big(\mu^l\big( g^{-1}v^{-1} ? \big)\big) &= \big\langle \mu^l\big( g^{-1}v^{-1} ? \big) \otimes \mathrm{id}, (g \otimes 1) RR'\big\rangle \\
&= \big\langle \mu^l\big( g^{-1}v^{-1} ? \big) \otimes \mathrm{id}, gv\big(v^{-1}\big)'' \otimes v\big(v^{-1}\big)' \big\rangle\\
& = \mu^l\big(\big(v^{-1}\big)''\big) v\big(v^{-1}\big)' = \mu^l\big(v^{-1}\big)v.
\end{align*}
Since $H$ is factorizable, the map $\mathcal{D}$ is an isomorphism of vector spaces. The left integral $\mu^l$ is non-zero, so $\mu^l\big( g^{-1}v^{-1} ? \big)$ is non-zero either. Since~$\mathcal{D}$ is an isomorphism, it follows that $\mathcal{D}\big(\mu^l\big( g^{-1}v^{-1} ? \big)\big) = \mu^l\big(v^{-1}\big)v \neq 0$, and thus $\mu^l\big(v^{-1}\big) \neq 0$. Hence the formula for $\varphi_v$ is well defined. Moreover, we have the restriction $\mathcal{D}\colon \SLF(H) \overset{\sim}{\rightarrow} \mathcal{Z}(H)$, so since $v \in \mathcal{Z}(H)$, we get that $\varphi_v \in \SLF(H)$. This allows us to deduce the properties stated about~$\mu^l$. Using~\eqref{muR}, we obtain the properties~1),~2) and~3) for~$\mu^r$. We can now proceed with the computation for~$\varphi_{v^{-1}}$:
 \begin{align*}
\mathcal{D}\big( \mu^l\big(g^{-1}v ?\big) \big) &= \big\langle \mu^l\big(g^{-1}v ?\big) \otimes \mathrm{id}, (g \otimes 1) RR'\big\rangle = \mu^l ( va_ib_j ) b_i a_j\\
&= \mu^r ( v S(b_j)S(a_i) ) b_i a_j = \mu^r\big( vS(a_i)S^{-1}(b_j)\big)b_ia_j \\
&= \big\langle \mu^r \otimes \mathrm{id}, (v \otimes 1) (R'R)^{-1} \big\rangle = \mu^r ( v' )v''v^{-1} = \mu^r(v)v^{-1} = \mu^l(v)v^{-1},
\end{align*}
where we used \eqref{muR}, the property~3) previously shown and~\eqref{ribbon}. We conclude as before.
\end{proof}

\noindent Since $\mathcal{D}$ is an isomorphism of algebras, we have $\varphi_{v^{-1}}^{-1} = \varphi_v$, and thus
\begin{gather}\label{phiVcarre}
\varphi_{v^{-1}}\varphi_{v^{-1}}^{v^{-2}} = \frac{\mu^l\big(v^{-1}\big)}{\mu^l(v)}\varepsilon.
\end{gather}
By Proposition~\ref{actionAB}, the actions of $v_A^{-1}$ and $v_B^{-1}$ on $\SLF(H)$ are
\begin{gather}\label{actionsV}
\forall\, \psi \in \SLF(H), \qquad v_A^{-1} \triangleright \psi = \psi^{v^{-1}} = \psi\big(v^{-1}?\big) \qquad \text{and} \qquad v_B^{-1} \triangleright \psi = \big(\varphi_{v^{-1}}\psi^v\big)^{v^{-1}}.
\end{gather}

\begin{Lemma}\label{phiPhiV}
$\varphi_{v^{-1}} \varphi_{v^{-1}}^{v^{-1}} = \varphi_{v^{-1}}^{v^{-1}}$.
\end{Lemma}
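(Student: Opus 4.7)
The plan is to reduce the identity to a one-step application of the left integral absorption property \eqref{integrale2}, after a change of variable that repositions the pivot $g^{-1}$. The first observation is that $\varphi_{v^{-1}}^{v^{-1}}$ admits a much simpler description than $\varphi_{v^{-1}}$ itself: from $\psi^a = \psi(a?)$ and the definition of $\varphi_{v^{-1}}$ one gets immediately
$\varphi_{v^{-1}}^{v^{-1}}(x) = \varphi_{v^{-1}}(v^{-1}x) = \mu^l(v)^{-1}\mu^l(g^{-1}v\cdot v^{-1}x) = \mu^l(v)^{-1}\mu^l(g^{-1}x)$,
so that $\varphi_{v^{-1}}^{v^{-1}} = \mu^l(v)^{-1}\chi$ with $\chi := \mu^l(g^{-1}?) \in \SLF(H)$ (property~2 of Proposition~\ref{propVIntegrale}). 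Cancelling the common scalar, the lemma reduces to $\varphi_{v^{-1}}\cdot \chi = \chi$ in $\mathcal{O}(H)$; unwinding the product gives the pointwise identity $\mu^l(g^{-1}vx')\mu^l(g^{-1}x'') = \mu^l(v)\mu^l(g^{-1}x)$ for every $x \in H$.

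To establish this, I would apply property~3 of Proposition~\ref{propVIntegrale} to each factor on the left-hand side, using $S^2(g^{-1}) = g^{-1}$ (a consequence of $S^2(y)=gyg^{-1}$) and $S^2(v) = v$ (since $S(v)=v$ by~\eqref{ribbon}). This brings $g^{-1}$ to the right and rewrites the left-hand side as $\mu^l(x'g^{-1}v)\mu^l(x''g^{-1})$. Now $g^{-1}$ is group-like, hence $\Delta(xg^{-1}) = x'g^{-1} \otimes x''g^{-1}$; setting $y := xg^{-1}$, the expression becomes $\mu^l(y'v)\mu^l(y'') = (\psi \cdot \mu^l)(y)$ with $\psi := \mu^l(?v) \in \mathcal{O}(H)$.

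Equation~\eqref{integrale2} then collapses the convolution: $\psi \mu^l = \varepsilon(\psi)\mu^l = \psi(1)\mu^l = \mu^l(v)\mu^l$, so the left-hand side equals $\mu^l(v)\mu^l(xg^{-1})$, and a last use of property~3 moves $g^{-1}$ back to the left, giving $\mu^l(v)\mu^l(g^{-1}x)$ as required. There is no real obstacle in the argument: the whole identity is an instance of the absorbing property of the left integral, the only slightly delicate point being the preparatory use of $\mu^l(ab)=\mu^l(bS^2(a))$ to place $g^{-1}$ on the appropriate side so that the change of variable $y=xg^{-1}$ exhibits the left-hand side as a standard product with $\mu^l$ in $\mathcal{O}(H)$.
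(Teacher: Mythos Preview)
Your proof is correct and rests on the same idea as the paper's: the absorption property~\eqref{integrale2} of the left integral applied to the product $\mu^l(v?)\cdot\mu^l$. The paper's argument is simply more direct: since $g^{-1}$ is grouplike, $\Delta(g^{-1}x)=g^{-1}x'\otimes g^{-1}x''$, so one recognises $\mu^l(vg^{-1}x')\mu^l(g^{-1}x'') = \langle \mu^l(v?)\mu^l,\, g^{-1}x\rangle$ immediately and applies~\eqref{integrale2}; your detour via property~3 of Proposition~\ref{propVIntegrale} to shuttle $g^{-1}$ to the right and back again is valid but unnecessary.
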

\begin{proof}
For $x \in H$
 \begin{align*}
\big\langle \varphi_{v^{-1}} \varphi_{v^{-1}}^{v^{-1}}, x \big\rangle &= \mu^l (v )^{-2} \mu^l\big(vg^{-1}x'\big)\mu^l\big(g^{-1}x''\big) = \mu^l (v )^{-2} \big\langle \mu^l (v? )\mu^l, g^{-1}x \big\rangle\\
&= \mu^l(v)^{-1} \mu^l(g^{-1}x) = \varphi_{v^{-1}}^{v^{-1}}(x).
\end{align*}
We simply used \eqref{integrale2}.
\end{proof}

This lemma has an important consequence.

\begin{Proposition}\label{braidV}The following braid relation holds in $\mathcal{L}_{1,0}(H)$:
\[ v_A^{-1} v_B^{-1} v_A^{-1} = v_B^{-1} v_A^{-1} v_B^{-1}. \]
\end{Proposition}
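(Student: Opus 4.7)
The strategy is to lift the braid relation from the automorphism level to the algebra. By Proposition~\ref{autoAlphaBeta} and Theorem~\ref{valueHatAlphaBeta}, conjugation by $v_A^{-1}$ (respectively $v_B^{-1}$) implements the automorphism $\alpha$ (respectively $\beta$), so it suffices to show $\alpha\beta\alpha = \beta\alpha\beta$ and then analyse the remaining scalar ambiguity between the two implementers. I would first verify the automorphism identity by direct computation on the generators $\overset{I}{A}$, $\overset{I}{B}$: using centrality of $\overset{I}{v}$, a short matrix calculation (mirroring the $\pi_1$-level identity recorded in \eqref{actionMCG}, with the central $v$-corrections falling cleanly into place) yields
\[
\alpha\beta\alpha(\overset{I}{A}) = \overset{I}{v} \!^{2}\overset{I}{A} \!^{-1}\overset{I}{B} \!^{-1}\overset{I}{A} = \beta\alpha\beta(\overset{I}{A}), \qquad \alpha\beta\alpha(\overset{I}{B}) = \overset{I}{A} = \beta\alpha\beta(\overset{I}{B}).
\]

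Setting $X := v_A^{-1}v_B^{-1}v_A^{-1}$ and $Y := v_B^{-1}v_A^{-1}v_B^{-1}$, both elements implement the same automorphism, so $Y^{-1}X$ lies in the center of $\mathcal{L}_{1,0}(H)$. By Theorem~\ref{isoL10Heisenberg}, $\mathcal{L}_{1,0}(H) \cong \mathrm{Mat}_{\dim(H)}(\mathbb{C})$ has center $\mathbb{C}$, so $X = \lambda Y$ for a unique $\lambda \in \mathbb{C}^{\times}$. To pin down $\lambda$, I would apply both sides to the counit $\varepsilon \in \SLF(H)$ via the representation $\rho_{\SLF}$. The action formulas \eqref{actionsV}, together with $\varepsilon^v = \varepsilon$ (as $\varepsilon$ is a character), give after three steps
\[
X \triangleright \varepsilon = v_A^{-1} \triangleright \bigl(v_B^{-1} \triangleright \varepsilon\bigr) = v_A^{-1} \triangleright \varphi_{v^{-1}}^{v^{-1}} = \varphi_{v^{-1}}^{v^{-2}},
\]
while the analogous computation for $Y$ produces $\bigl(\varphi_{v^{-1}}\varphi_{v^{-1}}^{v^{-1}}\bigr)^{v^{-1}}$ at the last step. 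Invoking Lemma~\ref{phiPhiV}, this simplifies to $\bigl(\varphi_{v^{-1}}^{v^{-1}}\bigr)^{v^{-1}} = \varphi_{v^{-1}}^{v^{-2}}$ as well, so $X \triangleright \varepsilon = Y \triangleright \varepsilon$. Because $\varphi_{v^{-1}}$ is non-zero (it corresponds to $v^{-1}$ under the isomorphism $\mathcal{D}$) and right translation by $v^{-2}$ is invertible, $\varphi_{v^{-1}}^{v^{-2}} \neq 0$, forcing $\lambda = 1$.

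The main obstacle is the third step of the $Y$-side evaluation: without Lemma~\ref{phiPhiV} collapsing $\varphi_{v^{-1}}\varphi_{v^{-1}}^{v^{-1}}$ to $\varphi_{v^{-1}}^{v^{-1}}$, one would be left with an extra $\varphi_{v^{-1}}$ factor and a non-trivial scalar~$\lambda$. Everything else is bookkeeping with central elements, the automorphism identity on two generators, and the explicit action formulas already in hand.
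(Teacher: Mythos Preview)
Your proof is correct and follows essentially the same route as the paper: reduce to a scalar via $\alpha\beta\alpha=\beta\alpha\beta$ and the triviality of the center of the matrix algebra $\mathcal{L}_{1,0}(H)$, then determine the scalar by evaluating both sides on $\varepsilon$ using \eqref{actionsV} and Lemma~\ref{phiPhiV}. The only difference is that you spell out the verification of $\alpha\beta\alpha=\beta\alpha\beta$ on generators and the non-vanishing of $\varphi_{v^{-1}}^{v^{-2}}$, which the paper leaves implicit.
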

\begin{proof}The morphisms $\alpha$ and $\beta$ satisfy the braid relation $\alpha\beta\alpha = \beta\alpha\beta$. Hence by Theorem~\ref{valueHatAlphaBeta} and since $\mathcal{Z}(\mathcal{L}_{1,0}(H)) \cong \mathbb{C}$, we have $\lambda v_A^{-1} v_B^{-1} v_A^{-1} = v_B^{-1} v_A^{-1} v_B^{-1}$ for some $\lambda \in \mathbb{C}$. We evaluate on the counit
 \begin{gather*}
\lambda v_A^{-1} v_B^{-1} v_A^{-1} \triangleright \varepsilon = \lambda v_A^{-1} v_B^{-1} \triangleright \varepsilon = \lambda v_A^{-1} \triangleright \varphi_{v^{-1}}^{v^{-1}} = \lambda \varphi_{v^{-1}}^{v^{-2}},\\
v_B^{-1} v_A^{-1} v_B^{-1} \triangleright \varepsilon = v_B^{-1} v_A^{-1} \triangleright \varphi_{v^{-1}}^{v^{-1}} = v_B^{-1} \triangleright \varphi_{v^{-1}}^{v^{-2}} = \big(\varphi_{v^{-1}}\varphi_{v^{-1}}^{v^{-1}}\big)^{v^{-1}} = \big(\varphi_{v^{-1}}^{v^{-1}}\big)^{v^{-1}} = \varphi_{v^{-1}}^{v^{-2}}.
\end{gather*}
We used $\varepsilon\big(v^{-1}?\big) = \varepsilon\big(v^{-1}\big)\varepsilon = \varepsilon$ and Lemma~\ref{phiPhiV}. It follows that $\lambda=1$.
\end{proof}

Observe that $(\alpha\beta)^6 \neq \mathrm{id}$, thus the other relation of $\mathrm{MCG}(\Sigma_{1,0})$ does not hold in $\mathcal{L}_{1,0}(H)$. In order to show it in the representation, we begin with a technical lemma, in which we use the notation of~\eqref{coinvParticuliers}.
\begin{Lemma}\label{lemmeTechniquePourSimplifierOmegaSurLesInvariants} For all $z \in \mathcal{L}_{0,1}^{\mathrm{inv}}(H) = \mathcal{Z}(H)$, we have $z_{v^2A^{-1}B^{-1}A} = z_{B^{-1}}$.
\end{Lemma}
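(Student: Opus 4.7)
The plan is to reduce the identity to the property (stated in Remark~\ref{remarqueC}) that $\overset{I}{C}$ acts as the identity matrix on $\SLF(H)$, after first rewriting $\overset{I}{v^2}\overset{I}{A^{-1}}\overset{I}{B^{-1}}\overset{I}{A}$ so that a factor of $\overset{I}{C}$ appears explicitly.

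First, since $v^2\in\mathcal{Z}(H)$, the scalar matrix $\overset{I}{v^2}$ is an endomorphism of $I$ as an $H$-module, so by the naturality relation~\eqref{naturalite01} it commutes with both $\overset{I}{A}$ and $\overset{I}{B}$ (as matrices with entries in $\mathcal{L}_{1,0}(H)$). Combined with the formula $\overset{I}{C}=\overset{I}{v^2}\overset{I}{B}\overset{I}{A^{-1}}\overset{I}{B^{-1}}\overset{I}{A}$ from Remark~\ref{remarqueC}, this gives the clean factorization
\[
\overset{I}{v^2}\overset{I}{A^{-1}}\overset{I}{B^{-1}}\overset{I}{A}\;=\;\overset{I}{B^{-1}}\bigl(\overset{I}{v^2}\overset{I}{B}\overset{I}{A^{-1}}\overset{I}{B^{-1}}\overset{I}{A}\bigr)\;=\;\overset{I}{B^{-1}}\overset{I}{C}.
\]
Using Lemma~\ref{writingInvariants}, I then write $z=\sum_I\mathrm{tr}(\Lambda_I\overset{I}{g}\overset{I}{M})$ with $\psi:=\mathcal{D}^{-1}(z)=\sum_I\mathrm{tr}(\Lambda_I\overset{I}{T})\in\SLF(H)$; applying the linear map $j_{v^2A^{-1}B^{-1}A}$ and the factorization above yields
\[
z_{v^2A^{-1}B^{-1}A}\;=\;\sum_I\mathrm{tr}\bigl(\Lambda_I\overset{I}{g}\overset{I}{B^{-1}}\overset{I}{C}\bigr).
\]

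To conclude, I would compare both sides via the representation $\triangleright$ of Theorem~\ref{repInv}. For arbitrary $\eta\in\SLF(H)$, Remark~\ref{remarqueC} gives $\overset{I}{C}{}^d_a\triangleright\eta=\delta^d_a\,\eta$, and the associativity of the action together with Einstein summation gives
\[
\rho_{\mathrm{SLF}}(z_{v^2A^{-1}B^{-1}A})(\eta)\;=\;\sum_I\Lambda_I{}^a_b\,\overset{I}{g}{}^b_c\,\overset{I}{B^{-1}}{}^c_d\triangleright\bigl(\overset{I}{C}{}^d_a\triangleright\eta\bigr)\;=\;\sum_I\Lambda_I{}^a_b\,\overset{I}{g}{}^b_c\,\overset{I}{B^{-1}}{}^c_a\triangleright\eta\;=\;\rho_{\mathrm{SLF}}(z_{B^{-1}})(\eta).
\]
Hence the two invariant elements $z_{v^2A^{-1}B^{-1}A}$ and $z_{B^{-1}}$ of $\mathcal{L}_{1,0}^{\mathrm{inv}}(H)$ act identically on $\SLF(H)$, which is the form in which the identity is used in the subsequent proof of the modular relation $\bigl(v_A^{-1}v_B^{-1}\bigr)^6\sim 1$ (and promotes to an equality in $\mathcal{L}_{1,0}(H)$ itself as soon as $\rho_{\mathrm{SLF}}$ is faithful on $\mathcal{L}_{1,0}^{\mathrm{inv}}(H)$).

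The main obstacle is the input from Remark~\ref{remarqueC}: the property $\overset{I}{C}\triangleright\eta=\eta\,\mathbb{I}_{\dim I}$ for $\eta\in\SLF(H)$ is the substantive technical fact, proved in full generality in~\cite{F2}. Given this, the rest of the argument is just the commutation of $\overset{I}{v^2}$ with the handle matrices (naturality), Lemma~\ref{writingInvariants}, and bookkeeping in the faithful Heisenberg representation.
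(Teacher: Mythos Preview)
Your argument is sound and conceptually clean, but it proves a strictly weaker statement than the lemma claims. The lemma asserts the equality $z_{v^2A^{-1}B^{-1}A} = z_{B^{-1}}$ in $\mathcal{L}_{1,0}(H)$; what you establish is only $\rho_{\mathrm{SLF}}(z_{v^2A^{-1}B^{-1}A}) = \rho_{\mathrm{SLF}}(z_{B^{-1}})$. You are right that this suffices for the application in Lemma~\ref{keyLemmeOmega}: there one only needs $\widehat{\omega}^2 z_B \triangleright \varepsilon = z_{B^{-1}}\widehat{\omega}^2 \triangleright \varepsilon$, and since $\widehat{\omega}^2\triangleright\varepsilon\in\SLF(H)$, equality in $\rho_{\mathrm{SLF}}$ is enough. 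The faithfulness of $\rho_{\mathrm{SLF}}$ on $\mathcal{L}_{1,0}^{\mathrm{inv}}(H)$, however, is not proved anywhere in the paper, so you cannot invoke it to upgrade to the statement as written.

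The paper takes an entirely different route and proves the strict equality. It shows directly in the Heisenberg double that $\Psi_{1,0}(z_{v^{-1}AB^{-1}}) = S(\mathcal{D}^{-1}(z)) = \Psi_{1,0}(z_{vB^{-1}A})$ (both land in $\mathcal{O}(H)\otimes 1$), hence $z_{v^{-1}AB^{-1}} = z_{vB^{-1}A}$ in $\mathcal{L}_{1,0}(H)$; then it applies the automorphism $\alpha$ to get $z_{B^{-1}} = z_{v^2A^{-1}B^{-1}A}$. This is self-contained within the paper. Your approach is shorter and makes the geometric content transparent via the factorization $\overset{I}{v^2}\overset{I}{A^{-1}}\overset{I}{B^{-1}}\overset{I}{A}=\overset{I}{B^{-1}}\overset{I}{C}$, but its substantive input, the identity $\overset{I}{C}\triangleright\eta=\eta\,\mathbb{I}_{\dim I}$ for $\eta\in\SLF(H)$, is only stated in Remark~\ref{remarqueC} with the proof (in particular the decomposition $\overset{I}{C}=\overset{I}{C}{^{(+)}}\overset{I}{C}{^{(-)-1}}$) deferred to~\cite{F2}. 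So your route trades a self-contained Heisenberg computation for a cleaner argument that leans on external work.
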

\begin{proof}Write as usual $z = \sum_I \mathrm{tr}\big(\Lambda_I\overset{I}{g}\overset{I}{M}\big)$ (Lemma~\ref{writingInvariants}). We first show that $z_{v^{-1}AB^{-1}} = z_{vB^{-1}A}$ in the Heisenberg double. We have
\[ \Psi_{1,0}\big(z_{v^{-1}AB^{-1}}\big) = \sum_I \mathrm{tr}\big( \Lambda_I \overset{I}{g} \overset{I}{v} \!^{-1} \overset{I}{L} \!^{(+)} \overset{I}{T} \!^{-1} \overset{I}{L} \!^{(+)-1} \big)
= \sum_I \mathrm{tr}\big( \Lambda_I \overset{I}{g} \overset{I}{v} \!^{-1} \overset{I}{a_i} b_i S(\overset{I}{T}) \overset{I}{S(a_j)} b_j \big). \]
Using the defining relation of $\mathcal{H}(\mathcal{O}(H))$ together with~\eqref{deltaR} and~\eqref{elementDrinfeld}, we get
\[ \overset{I}{a_i} b_i S\big(\overset{I}{T}\big)
= \big(\overset{I}{a_i} \overset{I}{a_k} \overset{I}{S(b_k)}\big) S\big(\overset{I}{T}\big) b_i = \big(\overset{I}{a_i} \overset{I}{g}{^{-1}} \overset{I}{v}\big) S\big(\overset{I}{T}\big) b_i. \]
It follows that
\begin{align*}
\Psi_{1,0} (z_{v^{-1}AB^{-1}} ) &= \sum_I \mathrm{tr}\big( \Lambda_I \overset{I}{S^2(a_i)} S(\overset{I}{T}) \overset{I}{S(a_j)} b_i b_j \big) = \mathcal{D}^{-1}(z)\big( S^2(a_i) S(?) S(a_j) \big)b_ib_j\\
&= S\big(\mathcal{D}^{-1}(z)\big) (S(a_i) a_j ? )b_ib_j = S\big(\mathcal{D}^{-1}(z)\big) \in \mathcal{O}(H) \otimes 1.
\end{align*}
A similar computation shows that $\Psi_{1,0} z_{vB^{-1}A} ) = S\big(\mathcal{D}^{-1}(z)\big)$. Hence $z_{v^{-1}AB^{-1}} = z_{vB^{-1}A}$. Applying the morphism $\alpha$ to this equality, we find
\[ \alpha (z_{v^{-1}AB^{-1}} ) = z_{B^{-1}} = \alpha (z_{vB^{-1}A} ) = z_{v^2A^{-1}B^{-1}A} \]
as desired.
\end{proof}

Consider $\widehat{\omega} = (v_A v_B v_A)^{-1} \in \mathcal{L}_{1,0}(H)$, which implements the automorphism $\omega = \alpha\beta\alpha$: $\omega(x) = \widehat{\omega} x \widehat{\omega}^{-1}$. The key observation is the following lemma.

\begin{Lemma}\label{keyLemmeOmega}For all $\psi \in \SLF(H)$:
\[ \widehat{\omega}^2 \triangleright \psi = \frac{\mu^l\big(v^{-1}\big)}{\mu^l(v)}S(\psi). \]
\end{Lemma}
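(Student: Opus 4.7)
The plan is to directly compute $\widehat{\omega}^{2}\triangleright\psi$ by iterating the action formulas of Proposition~\ref{actionAB}, and then to collapse the resulting expression using integral and antipode identities from Proposition~\ref{propVIntegrale}.

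First I would apply the three factors of $\widehat{\omega} = v_A^{-1} v_B^{-1} v_A^{-1}$ in succession. Proposition~\ref{actionAB} gives $v_A^{-1}\triangleright\chi = \chi^{v^{-1}}$ and $v_B^{-1}\triangleright\chi = (\varphi_{v^{-1}}\chi^{v})^{v^{-1}}$, using $\mathcal{D}^{-1}(v^{-1}) = \varphi_{v^{-1}}$ from Proposition~\ref{propVIntegrale}. Since $(\chi^{v^{-1}})^{v} = \chi$ and $(\chi^{a})^{b} = \chi^{ba}$ for central $a,b$, a short telescoping yields
\[
\widehat{\omega}\triangleright\psi = (\varphi_{v^{-1}}\psi)^{v^{-2}},
\]
which still lies in $\SLF(H)$ because $v^{-2}$ is central. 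Applying $\widehat{\omega}$ once more gives
\[
\widehat{\omega}^{2}\triangleright\psi = \bigl(\varphi_{v^{-1}}\cdot(\varphi_{v^{-1}}\psi)^{v^{-2}}\bigr)^{v^{-2}}.
\]

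As a sanity check, specialize to $\psi=\varepsilon$: the inner factor becomes $\varphi_{v^{-1}}^{v^{-2}}$ and $\widehat{\omega}^{2}\triangleright\varepsilon = (\varphi_{v^{-1}}\varphi_{v^{-1}}^{v^{-2}})^{v^{-2}} = \lambda\varepsilon^{v^{-2}} = \lambda\varepsilon$ by equation~\eqref{phiVcarre}, matching $\lambda S(\varepsilon) = \lambda\varepsilon$. This already reveals that the scalar $\lambda = \mu^{l}(v^{-1})/\mu^{l}(v)$ emerges precisely from the identity $\varphi_{v^{-1}}\varphi_{v^{-1}}^{v^{-2}} = \lambda\varepsilon$ of equation~\eqref{phiVcarre}.

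For general $\psi$, I would evaluate the above displayed expression at $x\in H$ and unfold it in Sweedler notation, using $\Delta(v^{-1}) = (v^{-1}\otimes v^{-1})R'R$ to expand the two coproducts $\Delta(v^{-2}x)$ and $\Delta(v^{-2}(v^{-2}x)_{(2)})$. Substituting the explicit formula $\varphi_{v^{-1}}(y) = \mu^{l}(v)^{-1}\mu^{l}(g^{-1}vy)$ converts the expression into a sum of products of two integrals and one evaluation of~$\psi$, coupled through three Sweedler components of~$x$ and the $R$-matrix factors coming from $\Delta(v^{-2})^{2} = (R'R)^{2}(v^{-2}\otimes v^{-2})^{2}$ (centrality of $v$). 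The simplification then proceeds by using the left-integral identity $\mu^{l}(h')h'' = \mu^{l}(h)\cdot 1$ to contract one of the two integrals, and by using the Yang--Baxter equation~\eqref{YangBaxter} together with $(S\otimes\mathrm{id})(R) = R^{-1}$ from~\eqref{propSR} to collapse the remaining pairs of $R$-matrix factors. The antipode-integral identities from Proposition~\ref{propVIntegrale}, namely $\mu^{l}(ab) = \mu^{l}(b S^{2}(a))$ together with $S(v)=v$, $S^{2}(g)=g$, the centrality of $v$ and the definition $g = uv^{-1}$ of the canonical pivot, then let one push every leftover algebra element onto the argument of $\psi$; the symmetry of $\psi$ is used to cyclically permute arguments. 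After this reorganisation, what remains is exactly $\lambda\,\psi(S(x))$.

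The main obstacle is precisely this bookkeeping: tracking the four $R$-matrix factors that arise from $\Delta(v^{-2})$ appearing twice, and showing that they reorganize via Yang--Baxter and integral invariance so as to produce exactly one antipode $S$ acting on the argument $x$, with overall scalar $\lambda$. An alternative conceptual route would be to recognise $\widehat{\omega}^{2}$ as implementing a lift of the central element $-I\in\mathrm{SL}_{2}(\mathbb{Z})$ (the hyperelliptic involution), whose natural action on characters of finite-dimensional $H$-modules is given by the antipode — but the combinatorial verification described above is the one I expect to carry out in full detail.
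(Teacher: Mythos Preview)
Your computation of $\widehat{\omega}\triangleright\psi = (\varphi_{v^{-1}}\psi)^{v^{-2}}$ and of $\widehat{\omega}^{2}\triangleright\psi$ is correct, and your check at $\psi=\varepsilon$ matches the paper's first step. The divergence is in how the general case is handled.

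The paper does \emph{not} attempt the direct Sweedler/$R$-matrix unwinding you outline. Instead it argues structurally: since $\omega^{2}$ is an automorphism sending $\overset{I}{B}\mapsto\overset{I}{v}{}^{2}\overset{I}{A}{}^{-1}\overset{I}{B}{}^{-1}\overset{I}{A}$, one has $\omega^{2}(z_B)=z_{v^{2}A^{-1}B^{-1}A}$ for every central $z$, and Lemma~\ref{lemmeTechniquePourSimplifierOmegaSurLesInvariants} shows $z_{v^{2}A^{-1}B^{-1}A}=z_{B^{-1}}$. Hence $\widehat{\omega}^{2}z_B=z_{B^{-1}}\widehat{\omega}^{2}$. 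Since every $\psi\in\SLF(H)$ is obtained from $\varepsilon$ as $\psi=\mathcal{D}(\psi^{v})_{B}\triangleright\varepsilon$ (Proposition~\ref{actionAB}), one writes $\widehat{\omega}^{2}\triangleright\psi=\mathcal{D}(\psi^{v})_{B^{-1}}\triangleright(\widehat{\omega}^{2}\triangleright\varepsilon)$ and invokes Lemma~\ref{lemmeBMoinsUn}, which already packages the single antipode $S$ you are looking for. No $R$-matrix bookkeeping is needed beyond what is already encapsulated in those two lemmas.

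Your direct route is not wrong in principle, but the part you label ``the main obstacle'' is precisely the entire content of the proof: you assert that four $R'R$ factors, two integrals, the pivot, and the symmetry of $\psi$ collapse to a single $S$ on the argument, without exhibiting the cancellation. That is a substantial computation (roughly equivalent to re-proving Lemmas~\ref{lemmeBMoinsUn} and~\ref{lemmeTechniquePourSimplifierOmegaSurLesInvariants} simultaneously and by hand), and as written it is a sketch rather than a proof. The paper's approach buys you a clean reduction to $\psi=\varepsilon$ and hides the $R$-matrix manipulations inside two short preparatory lemmas; your approach would, if completed, give a self-contained verification but at the cost of the very bookkeeping you flag as unresolved.
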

\begin{proof}First, we show the formula for $\psi = \varepsilon$:
\begin{align*} \widehat{\omega}^2 \triangleright \varepsilon & = \big(v_A^{-1} v_B^{-1} v_A^{-1}\big)^2 \triangleright \varepsilon = v_A^{-1} v_B^{-1} v_A^{-1} \triangleright \varphi_{v^{-1}}^{v^{-2}} = \bigl( \varphi_{v^{-1}} \varphi_{v^{-1}}^{v^{-2}} \bigr)^{v^{-2}} \\
& = \frac{\mu^l\big(v^{-1}\big)}{\mu^l(v)}\varepsilon^{v^{-2}} = \frac{\mu^l\big(v^{-1}\big)}{\mu^l(v)}\varepsilon,
\end{align*}
where we used \eqref{actionsV} and \eqref{phiVcarre}. Second, note that $\omega(\overset{I}{A}) = \overset{I}{v}{^2} \overset{I}{A}{^{-1}}\overset{I}{B}{^{-1}}\overset{I}{A}$ and $\omega(\overset{I}{B}) = \overset{I}{A}$. Then, thanks to Lemma~\ref{lemmeTechniquePourSimplifierOmegaSurLesInvariants}, it follows that for every $z \in \mathcal{Z}(H) = \mathcal{L}^{\mathrm{inv}}_{0,1}(H)$,
\[ \omega^2(z_B) = \omega(z_A) = z_{v^2A^{-1}B^{-1}A} = z_{B^{-1}}. \]
Hence $\widehat{\omega}^2 z_B =z_{B^{-1}}\widehat{\omega}^2$.
Finally, observe that by Proposition~\ref{actionAB}
\[ \forall\, \psi \in \SLF(H), \qquad \mathcal{D} \big(\psi^v \big)_B \triangleright \varepsilon = \psi. \]
These three facts together with Lemma~\ref{lemmeBMoinsUn} yield
\begin{align*}
\widehat{\omega}^2 \triangleright \psi &= \widehat{\omega}^2 \mathcal{D}\big(\psi^v\big)_B \triangleright \varepsilon = \mathcal{D}\big(\psi^v\big)_{B^{-1}} \widehat{\omega}^2 \triangleright \varepsilon = \frac{\mu^l\big(v^{-1}\big)}{\mu^l(v)} \mathcal{D}\big(\psi^v\big)_{B^{-1}} \triangleright \varepsilon\\
& = \frac{\mu^l\big(v^{-1}\big)}{\mu^l(v)} S\big(\psi^v\big)^{v^{-1}} = \frac{\mu^l\big(v^{-1}\big)}{\mu^l(v)}S(\psi)
\end{align*}
as desired.
\end{proof}

Recall that $\mathrm{PSL}_2(\mathbb{Z}) = \mathrm{SL}_2(\mathbb{Z})/\{\pm \mathbb{I}_2\}$ admits the following presentations
\[ \mathrm{PSL}_2(\mathbb{Z}) = \big\langle \tau_a, \tau_b \,|\, \tau_a\tau_b\tau_a = \tau_b\tau_a\tau_b, \, (\tau_a\tau_b)^3=1 \big\rangle = \big\langle s, t \,|\, (st)^3=1, \, s^2=1 \big\rangle. \]

\begin{Theorem}\label{mainResult} Recall that we assume that $H$ is a finite-dimensional factorizable ribbon Hopf algebra. The following assignment defines a projective representation of $\mathrm{MCG}(\Sigma_{1,0}) = \mathrm{SL}_2(\mathbb{Z})$ on $\SLF(H)$:
\[ \tau_a \mapsto \rho_{\mathrm{SLF}}\big(v_A^{-1}\big), \qquad \tau_b \mapsto \rho_{\mathrm{SLF}}\big(v_B^{-1}\big). \]
If moreover $S(\psi) = \psi$ for all $\psi \in \SLF(H)$, then this defines actually a projective representation of $\mathrm{PSL}_2(\mathbb{Z})$.
\end{Theorem}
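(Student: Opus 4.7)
The strategy is to verify the defining relations of $\mathrm{SL}_2(\mathbb{Z})$ (in the Dehn-twist presentation $\langle \tau_a,\tau_b \mid \tau_a\tau_b\tau_a=\tau_b\tau_a\tau_b,\,(\tau_a\tau_b)^6=1\rangle$) up to scalar under the given assignment. The first relation is essentially free: by Proposition~\ref{braidV}, the braid relation $v_A^{-1}v_B^{-1}v_A^{-1} = v_B^{-1}v_A^{-1}v_B^{-1}$ already holds on the nose in $\mathcal{L}_{1,0}(H)$, so it passes through any representation, in particular through $\rho_{\mathrm{SLF}}$.

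For the relation $(\tau_a\tau_b)^6 = 1$, the plan is to relate $(v_A^{-1}v_B^{-1})^6$ to the element $\widehat{\omega} = v_A^{-1}v_B^{-1}v_A^{-1}$ of Lemma~\ref{keyLemmeOmega}. Using Proposition~\ref{braidV}, $\widehat{\omega} = v_A^{-1}v_B^{-1}v_A^{-1} = v_B^{-1}v_A^{-1}v_B^{-1}$, whence
\[ \widehat{\omega}^2 \;=\; (v_A^{-1}v_B^{-1}v_A^{-1})(v_B^{-1}v_A^{-1}v_B^{-1}) \;=\; (v_A^{-1}v_B^{-1})^3, \]
so $(v_A^{-1}v_B^{-1})^6 = \widehat{\omega}^4$. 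By Lemma~\ref{keyLemmeOmega}, $\widehat{\omega}^2$ acts on $\SLF(H)$ as $\psi \mapsto c\,S(\psi)$, where $c = \mu^l(v^{-1})/\mu^l(v)$. Iterating once more, $\widehat{\omega}^4$ acts as $c^2 S^2$, and it remains to note that $S^2$ is the identity on $\SLF(H)$: for any $\psi \in \SLF(H)$ and $x \in H$, the pivotal identity~\eqref{pivot} gives $\psi(S^2(x)) = \psi(gxg^{-1}) = \psi(x)$. Hence $\rho_{\mathrm{SLF}}(v_A^{-1}v_B^{-1})^6 = c^2 \, \mathrm{id}_{\SLF(H)}$, which is the desired projective identity.

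Combining these two points yields the projective representation of $\mathrm{SL}_2(\mathbb{Z})$. For the refinement to $\mathrm{PSL}_2(\mathbb{Z})$, recall that $\mathrm{PSL}_2(\mathbb{Z}) = \mathrm{SL}_2(\mathbb{Z})/\{\pm \mathbb{I}_2\}$ is obtained by adjoining the relation $s^2=1$ in the $(s,t)$-presentation, where $s = \tau_a^{-1}\tau_b^{-1}\tau_a^{-1}$. Under our assignment, $s$ maps precisely to $\rho_{\mathrm{SLF}}(\widehat{\omega})$, and Lemma~\ref{keyLemmeOmega} tells us that $\widehat{\omega}^2$ acts as $c\,S$ on $\SLF(H)$. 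Thus $\rho_{\mathrm{SLF}}(\widehat{\omega})^2$ is a scalar multiple of the identity precisely when $S$ is the identity on $\SLF(H)$, which is the extra hypothesis stated. Under this hypothesis the projective representation factors through $\mathrm{PSL}_2(\mathbb{Z})$.

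The main non-obstacle here is that all the heavy lifting has already been done: the braid relation is Proposition~\ref{braidV}, and the sixth-power relation is packaged in Lemma~\ref{keyLemmeOmega} together with the $S^2$-invariance of symmetric linear forms. The only real care needed is the algebraic manipulation identifying $(v_A^{-1}v_B^{-1})^6$ with $\widehat{\omega}^4$ via the braid relation, and matching the two presentations of $\mathrm{SL}_2(\mathbb{Z})$ when passing to the $\mathrm{PSL}_2(\mathbb{Z})$ statement.
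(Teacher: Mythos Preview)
Your proof is correct and follows essentially the same route as the paper: invoke Proposition~\ref{braidV} for the braid relation, identify $(v_A^{-1}v_B^{-1})^3$ with $\widehat{\omega}^2$, apply Lemma~\ref{keyLemmeOmega}, and use $S^2(\psi)=\psi(g?g^{-1})=\psi$ on $\SLF(H)$. One cosmetic slip: under the assignment $\tau_a\mapsto v_A^{-1}$, $\tau_b\mapsto v_B^{-1}$, the element $s=\tau_a^{-1}\tau_b^{-1}\tau_a^{-1}$ is sent to $\rho_{\mathrm{SLF}}(v_Av_Bv_A)=\rho_{\mathrm{SLF}}(\widehat{\omega}^{-1})$, not $\rho_{\mathrm{SLF}}(\widehat{\omega})$; this does not affect the argument, since $\widehat{\omega}^{-2}$ acts by a scalar iff $\widehat{\omega}^2$ does (and the paper itself phrases the $\mathrm{PSL}_2(\mathbb{Z})$ step via the equivalent relation $(\tau_a\tau_b)^3=1$ rather than $s^2=1$).
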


We denote this projective representation by $\theta_{\mathrm{SLF}}$.

\begin{proof}By Proposition~\ref{braidV}, we know that the braid relation is satisfied in $\mathcal{L}_{1,0}(H)$. By Lem\-ma~\ref{keyLemmeOmega}, we have
\[ (v_A^{-1}v_B^{-1})^3 \triangleright \psi = \widehat{\omega}^2 \triangleright \psi = \frac{\mu^l\big(v^{-1}\big)}{\mu^l(v)}S(\psi). \]
If $S_{\vert \mathrm{SLF}(H)} = \mathrm{id}$, then
\[ \rho_{\mathrm{SLF}}\big(v_A^{-1} v_B^{-1}\big)^3 = \frac{\mu^l\big(v^{-1}\big)}{\mu^l(v)}\mathrm{id}. \]
Otherwise,
\begin{align*} \big(v_A^{-1}v_B^{-1}\big)^6 \triangleright \psi &= \frac{\mu^l\big(v^{-1}\big)}{\mu^l(v)}\widehat{\omega}^2 \triangleright S(\psi) = \frac{\mu^l\big(v^{-1}\big)^2}{\mu^l(v)^2}S^2(\psi) \\
& = \frac{\mu^l\big(v^{-1}\big)^2}{\mu^l(v)^2}\psi\big(g ? g^{-1}\big) = \frac{\mu^l\big(v^{-1}\big)^2}{\mu^l(v)^2}\psi. \tag*{\qed} \end{align*}\renewcommand{\qed}{}
\end{proof}

Observe that the quantity $\frac{\mu^l(v^{-1})}{\mu^l(v)}$ does not depend on the choice up to scalar of $\mu^l$.

\subsection{Equivalence with the Lyubashenko--Majid representation}\label{equivalenceAvecLM}
Recall that $H$ is a finite-dimensional factorizable ribbon Hopf algebra. Under this assumption, two operators $\mathcal{S}, \mathcal{T}\colon H \to H$ are defined in~\cite{LM}:
\[ \mathcal{S}(x) = \big(\mathrm{id} \otimes \mu^l\big)\big(R^{-1}(1 \otimes x)R'^{-1}\big), \qquad \mathcal{T}(x) = v^{-1}x. \]
It is shown that they are invertible and satisfy $(\mathcal{S}\mathcal{T})^3 = \lambda\mathcal{S}^2$, $\mathcal{S}^2 = S^{-1}$, with $\lambda \in \mathbb{C} {\setminus} \{0\}$.
We warn the reader that in~\cite{LM}, they consider the {\em inverse} of the ribbon element (see the bottom of the third page of their paper). That is why there is $v^{-1}$ in the formula for~$\mathcal{T}$.

Now we introduce two maps. The first is
\[ \fonc{\boldsymbol{\chi}}{H^*}{H,}{\beta}{(\beta \otimes \mathrm{id})(R'R),} \]
while the second is
\[ \fonc{\boldsymbol{\gamma}}{H}{H^*,}{x}{\mu^r(S(x) ?).} \]
The map $\boldsymbol{\chi}$ is another variant of the map $\Psi$ of Section~\ref{braidedHopfAlgebras} and is called Drinfeld morphism in~\cite{FGST}. The map $\boldsymbol{\gamma}$ is denoted $\widehat{\boldsymbol{\phi}}{^{-1}}$ in~\cite{FGST}. Under our assumptions, the inverse of $\boldsymbol{\gamma}$ exists (this is a result of Radford~\cite{radford}), but we do not use it. Consider the space of left $q$-characters
\[ \mathrm{Ch}^l(H) = \big\{ \beta \in H^* \,|\,\forall\, x,y \in H, \, \beta(xy) = \beta\big(S^2(y)x\big)\big\}. \]
These maps satisfy the following restrictions
\[ \boldsymbol{\chi} \colon \ \mathrm{Ch}^l(H) \longrightarrow \mathcal{Z}(H), \qquad \boldsymbol{\gamma} \colon \ \mathcal{Z}(H) \longrightarrow \mathrm{Ch}^l(H). \]
This is due to the fact that they intertwine the adjoint and the coadjoint actions (for the first the computation is analogous to that of the proof of Proposition~\ref{reshetikhin}, while the second is immediate by Proposition~\ref{propVIntegrale}).

It is not too difficult to show (see, e.g., \cite[Remark IV.1.2]{ibanez}) that
\[ \forall\, z \in \mathcal{Z}(H), \qquad \mathcal{S}(z) = \boldsymbol{\chi} \circ \boldsymbol{\gamma}(z). \]
It follows that $\mathcal{Z}(H)$ is stable under $\mathcal{S}$ and~$\mathcal{T}$. But since $S^2$ is inner, we have $\mathcal{S}^4(z) = S^{-2}(z) = z$ for each $z \in \mathcal{Z}(H)$. Thus there exists a projective representation $\theta_{\mathrm{LM}}$ of $\mathrm{SL}_2(\mathbb{Z})$ on $\mathcal{Z}(H)$, defined by
\[ \theta_{\mathrm{LM}}(s) = \mathcal{S}_{\vert \mathcal{Z}(H)}, \qquad \theta_{\mathrm{LM}}(t) = \mathcal{T}_{\vert \mathcal{Z}(H)}. \]

We will need the following relation between left and right integrals.
\begin{Lemma}\label{lemmeUnimodUnibalance}
Under our assumptions $H$ is unibalanced, which means that $\mu^l = \mu^r\big(g^2 ?\big)$.
\end{Lemma}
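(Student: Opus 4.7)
The plan is to reduce the statement $\mu^l = \mu^r(g^2 \cdot)$ to Radford's classical formula for the action of the antipode on the left integral of a unimodular Hopf algebra, and then to identify the distinguished grouplike element of $H$ with $g^2$ via Drinfeld's description.

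First, using the paper's convention $\mu^r = \mu^l \circ S^{-1}$ from \eqref{muR} together with the trace identity $\mu^l(xy) = \mu^l(yS^2(x))$ from Proposition~\ref{propVIntegrale}, one rewrites
\[
\mu^r(g^2 x) = \mu^l\bigl(S^{-1}(g^2 x)\bigr) = \mu^l\bigl(S^{-1}(x)g^{-2}\bigr) = \mu^l\bigl(g^{-2}S(x)\bigr),
\]
so the target is equivalent to $\mu^l \circ S = \mu^l(g^2 \cdot )$. This is precisely Radford's formula $\mu^l \circ S = \mu^l(a\,\cdot\,)$ for the action of the antipode on a left integral of a unimodular finite-dimensional Hopf algebra, where $a \in H$ denotes the distinguished grouplike element. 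Since factorizability implies unimodularity (so the modular element $\alpha \in H^*$ equals $\varepsilon$), no additional correction appears.

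It then remains to identify $a = g^2$. By Drinfeld, in any quasi-triangular Hopf algebra the element $uS(u)^{-1}$ is grouplike, implements $S^4$ by conjugation, and in the factorizable case coincides with the distinguished grouplike. Combining the ribbon relation $v^2 = uS(u)$ from \eqref{ribbon} with the centrality of $v$ yields
\[
uS(u)^{-1} = u \cdot (v^2 u^{-1})^{-1} = u^2 v^{-2} = (uv^{-1})^2 = g^2,
\]
so $a = g^2$, and the unibalanced identity follows.

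The main ingredient to invoke is the precise form of Radford's integral formula in the unimodular case, together with Drinfeld's characterization of the distinguished grouplike element; these are classical but nontrivial results from outside this paper (see \cite{radford} and \cite{drinfeld}). The scalar that would a priori appear in Radford's formula is forced to equal $1$ by the normalization $\mu^r = \mu^l \circ S^{-1}$ together with $\mu^l \circ S^2 = \mu^l$, the latter obtained by applying the trace identity of Proposition~\ref{propVIntegrale} with $x = 1$.
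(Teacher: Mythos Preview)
Your overall strategy coincides with the paper's: use that factorizability implies unimodularity, invoke Drinfeld's identity $uS(u)^{-1} = a$ (with $a$ the distinguished grouplike, the $(\mathfrak a\otimes\mathrm{id})(R)$ correction vanishing by unimodularity), compute $uS(u)^{-1}=g^2$ from the ribbon relations, and then plug into a Radford-type formula relating $S$ and the integrals. The paper does exactly this, citing \cite[Proposition~8.10.10]{EGNO}, \cite[Proposition~10.1.14]{Mon} and \cite[Proposition~4.7]{BBG} for the three external inputs.

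There is, however, a genuine slip in your reformulation step. Your computation
\[
\mu^r(g^2x)=\mu^l\bigl(S^{-1}(g^2x)\bigr)=\mu^l\bigl(S^{-1}(x)g^{-2}\bigr)=\mu^l\bigl(g^{-2}S(x)\bigr)
\]
is correct, but the conclusion you draw from it is not. It shows that $\mu^l=\mu^r(g^2\,\cdot\,)$ is equivalent to $\mu^l(x)=\mu^l\bigl(g^{-2}S(x)\bigr)$, i.e.\ to $\mu^l\circ S^{-1}=\mu^l(g^{-2}\,\cdot\,)$, or equivalently to $\mu^r\circ S=\mu^r(g^{2}\,\cdot\,)$. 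It is \emph{not} equivalent to your stated $\mu^l\circ S=\mu^l(g^2\,\cdot\,)$: the two differ precisely by a factor $g^4$, and nothing in the hypotheses forces $g^4=1$. So you must be careful which integral and which power of $g$ appear in the Radford formula you invoke, and check that it is the \emph{same} distinguished grouplike (and not its inverse) that Drinfeld's result identifies with $uS(u)^{-1}$. The paper handles this by defining $a$ explicitly as the comodulus of $\mu^r$ (so that $\psi\mu^r=\psi(a)\mu^r$) and then citing the precise forms $\mu^r\circ S=\mu^r(a\,\cdot\,)$ and $uS(u)^{-1}=a$ with those conventions; once you line up the conventions, your argument goes through.
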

\begin{proof}The terminology ``unibalanced'' is picked from \cite{BBG}, where some facts about integrals and cointegrals are recalled. Recall (see, e.g., \cite[Proposition~8.10.10]{EGNO}) that a finite-dimensional factorizable Hopf algebra is unimodular, which means that there exists $c \in H$, called two-sided cointegral, such that $xc = cx = \varepsilon(x)c$ for all $x \in H$. Let $a \in H$ be the comodulus of $\mu^r$: $\psi\mu^r = \psi(a)\mu^r$ for all $\psi \in \mathcal{O}(H)$ (see, e.g., \cite[equation~(4.9)]{BBG}). By a result of Drinfeld (see \cite[Proposition~10.1.14]{Mon}, but be aware that in this book the notations and conventions for $a$ and $g$ are different from those we use), we know that
\[ uS(u)^{-1} = a(\mathfrak{a} \otimes \mathrm{id})(R), \]
where $\mathfrak{a} \in H^*$ is the modulus of the left cointegral $c^l$ of $H$. Here, since $c = c^l$ is two-sided, we have $\mathfrak{a} = \varepsilon$. Thus $g^2 = u^2v^{-2} = uS(u)^{-1} = a$ thanks to~\eqref{ribbon} and~\eqref{pivotCan}. We deduce that
\[ \mu^l = \mu^r \circ S = \mu^r(a?) = \mu^r\big(g^2?\big), \]
where the second equality is \cite[Proposition~4.7]{BBG}.
\end{proof}

The left $q$-characters are just shifted symmetric linear forms. More precisely, we have an isomorphism of algebras:
\[ \fonc{\big(g^{-1}\big)^*}{\SLF(H)}{\mathrm{Ch}^l(H),}{\psi}{\psi\big(g^{-1}?\big).} \]
Let us define shifted versions of $\boldsymbol{\chi}$ and of $\boldsymbol{\gamma}$:
\[ \boldsymbol{\chi}_{g^{-1}} = \boldsymbol{\chi} \circ \left(g^{-1}\right)^* \colon \ \SLF(H) \overset{\cong}{\longrightarrow} \mathcal{Z}(H), \qquad \boldsymbol{\gamma}_g = g^* \circ \boldsymbol{\gamma} \colon \ \mathcal{Z}(H) \overset{\cong}{\longrightarrow} \SLF(H). \]
The equality $\mathcal{S} = \boldsymbol{\chi}_{g^{-1}} \circ \boldsymbol{\gamma}_g$ still holds, but we have now $\SLF(H)$ instead of~$\mathrm{Ch}^l(H)$.

In order to show the equivalence of $\theta_{\mathrm{SLF}}$ and $\theta_{\mathrm{LM}}$, we begin with two technical lemmas.
\begin{Lemma}\label{lemmeTechniquePhiMu}\quad
\begin{enumerate}\itemsep=0pt
\item[$1.$] $\big(v^{-1}\big)' \otimes S^{-1}\big(\big(v^{-1}\big)''\big) = S\big(\big(v^{-1}\big)''\big) \otimes \big(v^{-1}\big)'$.
\item[$2.$] $\forall\, \psi \in \mathcal{O}(H)$, $\forall\, h \in H$, $\mu^r(h ?)\psi = \mu^r(h' ?) \psi\big(S^{-1}(h'')\big)$.
\end{enumerate}
\end{Lemma}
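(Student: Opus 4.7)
For part~1, the ribbon axiom \eqref{ribbon} gives $S(v)=v$, hence $S(v^{-1})=v^{-1}$. I will apply $\mathrm{id}\otimes S$ to both sides of the claim: the left-hand side becomes $(v^{-1})'\otimes(v^{-1})''=\Delta(v^{-1})$, while the right-hand side becomes $S((v^{-1})'')\otimes S((v^{-1})')=(S\otimes S)\circ\tau\circ\Delta(v^{-1})=\Delta(S(v^{-1}))=\Delta(v^{-1})$, using the universal Hopf identity $\Delta\circ S=\tau\circ(S\otimes S)\circ\Delta$ (with $\tau$ the flip) together with $S(v^{-1})=v^{-1}$. Since $S$ is bijective, applying $(\mathrm{id}\otimes S)^{-1}$ recovers the original identity.

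For part~2, the plan is to evaluate both sides at an arbitrary $x\in H$; since $\psi\in\mathcal{O}(H)$ is arbitrary and the evaluation pairing is nondegenerate, the equality of linear forms reduces to the following equality in $H$:
\[
\mu^r(hx_{(1)})\,x_{(2)} \;=\; \mu^r(h_{(1)}x)\,S^{-1}(h_{(2)}).
\]
Both sides agree when $h=1$ (by the right-integral property $\mu^r(x_{(1)})x_{(2)}=\mu^r(x)\cdot 1$) and when $x=1$ (using $S^{-1}(\mu^r(h_{(1)})h_{(2)})=S^{-1}(\mu^r(h)\cdot 1)=\mu^r(h)$). For the general case I will apply the right-integral property $(\mu^r\otimes\mathrm{id})\Delta(y)=\mu^r(y)\cdot 1$ to $y=h_{(1)}x$; after expansion into the $3$-fold Sweedler of $h$ this gives
\[
\mu^r(h_{(1)}x_{(1)})\,h_{(2)}x_{(2)} \;=\; \mu^r(h_{(1)\text{-}2\text{fold}}\,x)\cdot 1,
\]
and multiplying on the right by $S^{-1}(h_{(3)})$ produces the target right-hand side. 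The remaining task is to recognize the resulting left-hand side as $\mu^r(hx_{(1)})\,x_{(2)}$; for this I plan to use the antipode relation $h_{(3)}\,S^{-1}(h_{(2)})=\varepsilon(h_{(2)\text{-}2\text{fold}})\cdot 1$ (an instance of $k_{(2)}\,S^{-1}(k_{(1)})=\varepsilon(k)\cdot 1$ applied to the element occupying the middle slot of the $3$-fold coproduct of $h$), combined with Proposition~\ref{propVIntegrale}(3) in the form $\mu^r(ab)=\mu^r(S^2(b)a)$ to swap factors inside~$\mu^r$ and collapse the Sweedler indices.

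The main obstacle will be the delicate Sweedler bookkeeping across the iterated coproduct of~$h$: because $S^2\neq\mathrm{id}$ in general, one cannot treat the sandwich $h_{(2)}(\cdot)\,S^{-1}(h_{(3)})$ as an adjoint-type action that trivially collapses on $x_{(2)}$, so the rearrangement must carefully exploit Proposition~\ref{propVIntegrale}(3) at the right step. Indeed, the non-involutivity of $S$ is precisely what forces the $S^{-1}(h'')$—rather than $S(h'')$—to appear on the right-hand side, and tracking this through the calculation is the delicate point of the argument.
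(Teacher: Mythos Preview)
Your argument for part~1 is correct and is exactly the paper's one-line justification ``follows from $S(v^{-1})=v^{-1}$'' spelled out in full.

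For part~2 there is a genuine gap. After applying the right-integral property to $y=h_{(1)}x$ and tensoring with the leftover $h_{(2)}$, you obtain (in $3$-fold Sweedler notation on the left, $2$-fold on the right)
\[
\mu^r(h_{(1)}x_{(1)})\,h_{(2)}x_{(2)}\otimes h_{(3)} \;=\; \mu^r(h'x)\cdot 1\otimes h''.
\]
You then multiply \emph{on the right} by $S^{-1}$ of the extra factor, which correctly produces the target right-hand side $\mu^r(h'x)\,S^{-1}(h'')$, but on the left you are stuck with
\[
\mu^r(h_{(1)}x_{(1)})\,h_{(2)}\,x_{(2)}\,S^{-1}(h_{(3)}).
\]
The antipode identity you cite, $k_{(2)}S^{-1}(k_{(1)})=\varepsilon(k)\,1$, has the indices in the \emph{wrong order} for this expression: what appears is $k_{(1)}(\cdot)S^{-1}(k_{(2)})$ with $k=h_{(2)}\otimes h_{(3)}$, and $k'S^{-1}(k'')$ is not $\varepsilon(k)$ in general. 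Moreover $x_{(2)}$ is sandwiched in the middle, and Proposition~\ref{propVIntegrale}(3) only lets you cycle factors \emph{inside} the argument of $\mu^r$, so it cannot touch $h_{(2)}x_{(2)}S^{-1}(h_{(3)})$. Concretely, for $h$ grouplike your left-hand side reads $\mu^r(hx')\,hx''h^{-1}$, and you would need $hx''h^{-1}=x''$, which is false.

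The fix is tiny: multiply on the \emph{left} instead. Then the left-hand side becomes
\[
\mu^r(h_{(1)}x_{(1)})\,S^{-1}(h_{(3)})\,h_{(2)}\,x_{(2)},
\]
and now the correct antipode identity $h_{(1)}\otimes S^{-1}(h_{(3)})h_{(2)}=h\otimes 1$ (which is $S^{-1}(k'')k'=\varepsilon(k)$ applied to the last two legs) collapses it to $\mu^r(hx_{(1)})x_{(2)}$, exactly the target left-hand side. This is precisely the paper's proof read in reverse order: the paper first inserts the unit $h\otimes 1=h'\otimes S^{-1}(h''')h''$ into $\mu^r(hx')\psi(x'')$ and then applies the integral property. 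No appeal to Proposition~\ref{propVIntegrale}(3) is needed.
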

\begin{proof}The first equality follows from $S\big(v^{-1}\big) = v^{-1}$. For the second one
\begin{align*}
\big\langle \mu^r(h ?)\psi, x \big\rangle &= \mu^r(hx')\psi(x'') = \mu^r(h'x')\psi\big(S^{-1}(h''')h''x''\big) = \psi\big(S^{-1}(h''')\mu^r(h'x')h''x''\big)\\
&= \psi\big(S^{-1}(h'') \mu^r ((h'x)' )(h'x)''\big) = \mu^r(h'x) \psi\big(S^{-1}(h'')\big),
\end{align*}
where we simply used the defining property \eqref{integrale} of~$\mu^r$.
\end{proof}

We will employ an immediate consequence of Lemma~\ref{lemmeUnimodUnibalance}, namely (see Proposition~\ref{propVIntegrale})
\begin{gather}\label{nouveauPhiV}
\varphi_v = \mu^l\big(v^{-1}\big)^{-1}\mu^r\big(gv^{-1} ?\big).
\end{gather}

\begin{Lemma}\label{lemmePourEquivalence}
It holds
\[ \rho_{\mathrm{SLF}}\big(v_A^2 v_B\big) = \mu^l\big(v^{-1}\big)^{-1} \boldsymbol{\gamma}_g \circ \boldsymbol{\chi}_{g^{-1}}. \]
\end{Lemma}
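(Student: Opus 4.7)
The plan is to evaluate both sides on an arbitrary $\psi \in \SLF(H)$, evaluate the resulting symmetric linear forms at an arbitrary $x \in H$, and match the two scalars. For the left-hand side, I would begin with Proposition~\ref{actionAB}, which gives $v_B \triangleright \psi = (\mathcal{D}^{-1}(v)\psi^v)^{v^{-1}} = (\varphi_v \psi^v)^{v^{-1}}$; combining with $v_A^2 \triangleright \eta = \eta(v^2\,?)$, this reduces $(v_A^2 v_B \triangleright \psi)(x)$ to $(\varphi_v \psi^v)(vx) = \varphi_v((vx)')\,\psi(v(vx)'')$. Then I would substitute $\varphi_v = \mu^l(v^{-1})^{-1}\mu^r(gv^{-1}\,?)$ from~\eqref{nouveauPhiV}, expand $\Delta(vx) = (R'R)^{-1}(v \otimes v)\Delta(x)$, and invoke Lemma~\ref{lemmeTechniquePhiMu}(2) together with centrality of $v$ and symmetry of $\psi$ to recast the left-hand side in the form
\[
\mu^l(v^{-1})^{-1}\,\mu^r(g\,b_i a_j\,x)\,\psi\bigl(g^{-1} S^{-1}(a_i b_j)\bigr),
\]
with the summation convention and the convention $R'R = b_i a_j \otimes a_i b_j$.

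For the right-hand side I would unpack the definitions: $\boldsymbol{\chi}_{g^{-1}}(\psi) = (\psi(g^{-1}\,?) \otimes \mathrm{id})(R'R) = \psi(g^{-1}b_i a_j)\,a_i b_j$ and $\boldsymbol{\gamma}_g(z)(x) = \mu^r(S(z)\,g\,x)$, yielding
\[
\bigl(\boldsymbol{\gamma}_g \circ \boldsymbol{\chi}_{g^{-1}}(\psi)\bigr)(x) = \psi(g^{-1}b_i a_j)\,\mu^r\bigl(S(a_i b_j)\,g\,x\bigr).
\]
The desired equality then reduces to the purely algebraic identity
\[
\mu^r(g\,b_i a_j\,x)\,\psi\bigl(g^{-1}S^{-1}(a_i b_j)\bigr) \;=\; \psi(g^{-1}b_i a_j)\,\mu^r\bigl(S(a_i b_j)\,g\,x\bigr),
\]
which I would establish by combining: (i) the flip-symmetry $(S \otimes S)(R'R) = RR'$, itself a consequence of $(S \otimes S)(R) = R$ in~\eqref{propSR} and of $S \otimes S$ being an anti-homomorphism of $H \otimes H$; (ii) the cyclicity $\mu^r(ab) = \mu^r(S^2(b)a)$ from Proposition~\ref{propVIntegrale}(3); and (iii) the symmetry of $\psi$ combined with $S^2(x) = gxg^{-1}$ from~\eqref{pivot}, which converts $S^{-1}$ into $S$ on the $R$-matrix factors at the cost of a conjugation by $g$ that is harmlessly absorbed by $\psi$.

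The main obstacle is the Sweedler bookkeeping: after expanding $\Delta(vx) = (R'R)^{-1}(v \otimes v)\Delta(x)$ and applying Lemma~\ref{lemmeTechniquePhiMu}(2), one must verify that all factors of $v$ actually cancel and that the surviving prefactor is exactly $\mu^l(v^{-1})^{-1}$. A useful sanity check, both for the structure of the computation and for the normalization, is the case $\psi = \varepsilon$: the left-hand side gives $v_A^2 v_B \triangleright \varepsilon = \varphi_v^v = \mu^l(v^{-1})^{-1}\mu^r(g\,?)$ directly from~\eqref{nouveauPhiV}, while $\boldsymbol{\chi}_{g^{-1}}(\varepsilon) = (\varepsilon \otimes \mathrm{id})(R'R) = 1$ and $\boldsymbol{\gamma}_g(1)(x) = \mu^r(g x)$, confirming that the two sides agree and pinning down the scalar.
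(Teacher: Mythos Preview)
Your proposal is correct and follows essentially the same route as the paper: compute $v_A^2 v_B \triangleright \psi = (\varphi_v\psi^v)^v$ using \eqref{nouveauPhiV} and Lemma~\ref{lemmeTechniquePhiMu}(2), unpack $\boldsymbol{\gamma}_g \circ \boldsymbol{\chi}_{g^{-1}}$ from the definitions, and match the two expressions via an antipode identity on $R'R$. The only difference is in the final matching step: the paper stays in Sweedler notation throughout, writing $R'R = (v^{-1})'v \otimes (v^{-1})''v$, and then invokes Lemma~\ref{lemmeTechniquePhiMu}(1) (a one-line consequence of $S(v^{-1})=v^{-1}$) to swap the components, which makes the two sides coincide immediately; you instead translate to explicit $R$-matrix components and combine $(S\otimes S)(R'R)=RR'$ with Proposition~\ref{propVIntegrale}(3) and the symmetry of $\psi$ to reach the same conclusion. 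Both arguments are short and valid; the paper's is marginally cleaner. One small remark: your mention of expanding $\Delta(vx)=(R'R)^{-1}(v\otimes v)\Delta(x)$ is unnecessary (and would introduce $(R'R)^{-1}$ rather than $R'R$), since Lemma~\ref{lemmeTechniquePhiMu}(2) with $h=gv^{-1}$ already does the job by expanding $\Delta(gv^{-1})$ instead.
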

\begin{proof}
We compute each side of the equality. On the one hand:
\[ \boldsymbol{\gamma}_g \circ \boldsymbol{\chi}_{g^{-1}}(\psi) = \boldsymbol{\gamma}_g\big((\psi \otimes \mathrm{id})\big(g^{-1}\big(v^{-1}\big)'v \otimes \big(v^{-1}\big)''v\big)\big) = \psi\big(g^{-1}\big(v^{-1}\big)'v\big) \mu^r\big(gS\big(\big(v^{-1}\big)''\big)v ?\big), \]
whereas on the other hand
\begin{align*}
v_A^2 v_B \triangleright \psi &= \bigl(\varphi_v\psi^v\bigr)^v = \mu^l\big(v^{-1}\big)^{-1} \big( \mu^r\big(gv^{-1} ?\big)\psi^v \big)^v\\
&= \mu^l\big(v^{-1}\big)^{-1}\big[\mu^r\big(g\big(v^{-1}\big)' ?\big)\psi\big( vg^{-1}S^{-1}\big(\big(v^{-1}\big)''\big) \big)\big]^v\\
&= \mu^l\big(v^{-1}\big)^{-1}\mu^r\big(gvS\big(\big(v^{-1}\big)''\big) ?\big)\psi\big( vg^{-1}\big(v^{-1}\big)' \big)
\end{align*}
as desired. We used that $v_A \triangleright \psi = \psi^v$, $v_B \triangleright \psi = \big(\varphi_v \psi^v\big)^{v^{-1}}$, \eqref{nouveauPhiV} and Lemma~\ref{lemmeTechniquePhiMu}.
\end{proof}

The link between the two presentations of $\mathrm{SL}_2(\mathbb{Z})$ is $s = \tau_a^{-1}\tau_b^{-1}\tau_a^{-1}$, $t = \tau_a$. Hence we define two operators $\mathcal{S}', \mathcal{T}' \colon \SLF(H) \to \SLF(H)$ by
\[ \mathcal{S}' = \theta_{\mathrm{SLF}}\big(\tau_a^{-1}\tau_b^{-1}\tau_a^{-1}\big) = \rho_{\mathrm{SLF}}(v_Av_Bv_A), \qquad \mathcal{T}' = \theta_{\mathrm{SLF}}(\tau_a) = \rho_{\mathrm{SLF}}\big(v_A^{-1}\big). \]

\begin{Theorem}\label{EquivalenceLMandSLF}Recall that we assume that $H$ is a finite-dimensional factorizable ribbon Hopf algebra. Then the projective representation $\theta_{\mathrm{SLF}}$ of Theorem~{\rm \ref{mainResult}} is equivalent to $\theta_{\mathrm{LM}}$.
\end{Theorem}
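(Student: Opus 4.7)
The plan is to construct an explicit linear isomorphism $\Phi\colon \mathcal{Z}(H) \to \SLF(H)$ such that $\Phi \circ \theta_{\mathrm{LM}}(g) \circ \Phi^{-1} = c(g)\, \theta_{\mathrm{SLF}}(g)$ for scalars $c(s), c(t) \in \mathbb{C}^*$; since both $\theta_{\mathrm{LM}}$ and $\theta_{\mathrm{SLF}}$ are group homomorphisms into the projective linear group, intertwining on the two generators $s,t$ propagates automatically to all of $\mathrm{SL}_2(\mathbb{Z})$. My candidate intertwiner is
\[ \Phi = \mathcal{T}' \circ \boldsymbol{\gamma}_g, \]
which is a linear isomorphism since $\mathcal{T}' = \rho_{\mathrm{SLF}}(v_A^{-1})$ is invertible and $\boldsymbol{\gamma}_g = g^* \circ \boldsymbol{\gamma}$ is a bijection by Radford's theorem (as recalled right before the definition of $\boldsymbol{\chi}_{g^{-1}}$).

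First I would verify the exact commutation $\boldsymbol{\gamma}_g \circ \mathcal{T} = \mathcal{T}' \circ \boldsymbol{\gamma}_g$. Using $\boldsymbol{\gamma}_g(z)(x) = \mu^r(S(z) g x)$, centrality of $v$ and $S(v^{-1}) = v^{-1}$ from~\eqref{ribbon}, one computes
\[ \boldsymbol{\gamma}_g\bigl(v^{-1}z\bigr)(x) = \mu^r\bigl(S(z)\, v^{-1} g\, x\bigr) = \boldsymbol{\gamma}_g(z)\bigl(v^{-1}x\bigr) = \mathcal{T}'\bigl(\boldsymbol{\gamma}_g(z)\bigr)(x) \]
for all $z \in \mathcal{Z}(H)$ and $x \in H$. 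Consequently $\Phi \circ \mathcal{T} = \mathcal{T}' \boldsymbol{\gamma}_g \mathcal{T} = (\mathcal{T}')^{2} \boldsymbol{\gamma}_g = \mathcal{T}' \circ \Phi$, giving the intertwining for $t$ with $c(t) = 1$.

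For the generator $s$, I would invoke Lemma~\ref{lemmePourEquivalence}, which rearranges to $\boldsymbol{\gamma}_g \circ \boldsymbol{\chi}_{g^{-1}} = \mu^l(v^{-1})\, \rho_{\mathrm{SLF}}(v_A^2 v_B)$. Combined with the identity $\mathcal{S} = \boldsymbol{\chi}_{g^{-1}} \circ \boldsymbol{\gamma}_g$ on $\mathcal{Z}(H)$, this yields
\[ \boldsymbol{\gamma}_g \circ \mathcal{S} \circ \boldsymbol{\gamma}_g^{-1} = \boldsymbol{\gamma}_g \circ \boldsymbol{\chi}_{g^{-1}} = \mu^l\bigl(v^{-1}\bigr)\, \rho_{\mathrm{SLF}}\bigl(v_A^2 v_B\bigr). \]
Conjugating further by $\mathcal{T}'$ and exploiting the exact algebraic identity $v_A^{-1}(v_A^2 v_B)v_A = v_A v_B v_A$ in $\mathcal{L}_{1,0}(H)$, I obtain
\[ \Phi \circ \mathcal{S} \circ \Phi^{-1} = \mu^l\bigl(v^{-1}\bigr)\, \mathcal{T}'\, \rho_{\mathrm{SLF}}\bigl(v_A^2 v_B\bigr)\, (\mathcal{T}')^{-1} = \mu^l\bigl(v^{-1}\bigr)\, \rho_{\mathrm{SLF}}\bigl(v_A v_B v_A\bigr) = \mu^l\bigl(v^{-1}\bigr)\, \mathcal{S}', \]
i.e., the intertwining for $s$ with $c(s) = \mu^l(v^{-1})$, which completes the proof of equivalence.

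Almost all of the analytic work is already packaged inside Lemma~\ref{lemmePourEquivalence}; the only remaining obstacle is conceptual, namely, recognizing the correct intertwiner. Neither $\boldsymbol{\gamma}_g$ nor $\boldsymbol{\chi}_{g^{-1}}$ alone suffices, because Lemma~\ref{lemmePourEquivalence} computes $\boldsymbol{\gamma}_g\boldsymbol{\chi}_{g^{-1}}$ while $\mathcal{S} = \boldsymbol{\chi}_{g^{-1}}\boldsymbol{\gamma}_g$ has the opposite composition order. It is exactly the additional $\mathcal{T}'$ factor in $\Phi = \mathcal{T}'\circ\boldsymbol{\gamma}_g$ that absorbs this mismatch via the conjugation $v_A^{-1}(v_A^2 v_B)v_A = v_A v_B v_A$, turning the ``wrong'' element $\rho_{\mathrm{SLF}}(v_A^2 v_B)$ into the desired $\mathcal{S}' = \rho_{\mathrm{SLF}}(v_A v_B v_A)$.
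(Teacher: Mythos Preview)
Your proof is correct and essentially identical to the paper's own argument: your intertwiner $\Phi = \mathcal{T}' \circ \boldsymbol{\gamma}_g = \rho_{\mathrm{SLF}}(v_A^{-1}) \circ \boldsymbol{\gamma}_g$ is exactly the map the paper calls $f$, and your two verifications (for $t$ via $\boldsymbol{\gamma}_g \circ \mathcal{T} = \mathcal{T}' \circ \boldsymbol{\gamma}_g$, for $s$ via Lemma~\ref{lemmePourEquivalence} combined with $\mathcal{S} = \boldsymbol{\chi}_{g^{-1}} \circ \boldsymbol{\gamma}_g$ and the conjugation $v_A^{-1}(v_A^2 v_B)v_A = v_A v_B v_A$) mirror the paper's computations line for line, just phrased as $\Phi\, \mathcal{S}\, \Phi^{-1} = \mu^l(v^{-1})\mathcal{S}'$ rather than $f \circ \mathcal{S} = \mu^l(v^{-1})\, \mathcal{S}' \circ f$.
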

\begin{proof}
Consider the following isomorphism of vector spaces
\[ \fonc{f = \rho_{\mathrm{SLF}}\big(v_A^{-1}\big) \circ \boldsymbol{\gamma}_g}{\mathcal{Z}(H)}{\SLF(H),}{z}{\boldsymbol{\gamma}_g(z)^{v^{-1}} = \mu^r\big(gv^{-1}S(z) ?\big).} \]
By Lemma~\ref{lemmePourEquivalence},
\[ \mathcal{S}' = \mu^l\big(v^{-1}\big)^{-1}\rho_{\mathrm{SLF}}\big(v_A^{-1}\big) \circ \boldsymbol{\gamma}_g \circ \boldsymbol{\chi}_{g^{-1}} \circ \rho_{\mathrm{SLF}}(v_A). \]
Thus
\[ f \circ \mathcal{S} = \rho_{\mathrm{SLF}}\big(v_A^{-1}\big) \circ \boldsymbol{\gamma}_g \circ \boldsymbol{\chi}_{g^{-1}} \circ \boldsymbol{\gamma}_g = \mu^l\big(v^{-1}\big)\mathcal{S}' \circ f. \]
Next,
\[ f \circ \mathcal{T}(z) = f\big(v^{-1}z\big) = \boldsymbol{\gamma}_g(z)^{v^{-2}} = \rho_{\mathrm{SLF}}(v_A^{-1})\big(\boldsymbol{\gamma}_g(z)^{v^{-1}}\big) = \mathcal{T}' \circ f(z). \]
Then $f$ is an intertwiner of projective representations.
\end{proof}

\section[The example of $H = \overline{U}_q(\mathfrak{sl}(2))$]{The example of $\boldsymbol{H = \overline{U}_q(\mathfrak{sl}(2))}$}\label{exempleUq}

Let $q$ be a primitive root of unity of order $2p$, with $p \geq 2$. We now work in some detail the case of $H = \overline{U}_q(\mathfrak{sl}(2))$, the restricted quantum group associated to $\mathfrak{sl}(2, \mathbb{C})$, which will be denoted~$\overline{U}_q$ in the sequel. For the definitions and main properties about~$\overline{U}_q$, $\mathcal{Z}\big(\overline{U}_q\big)$, $\SLF\big(\overline{U}_q\big)$, their canonical bases and the $\overline{U}_q$-modules, see \cite{arike,F, FGST, GT, KS, suter}. Here we take back the notations and conventions of~\cite{F}.

To explicitly describe the representation of $\mathrm{SL}_2(\mathbb{Z})$, we use the GTA basis of $\SLF\big(\overline{U}_q\big)$ which is studied in detail in \cite{F}, and which has been introduced in~\cite{GT} and~\cite{arike}.

Even though $\overline{U}_q$ is not braided, we consider this example because this Hopf algebra is well-studied and is related to certain models in logarithmic conformal field theory~\cite{FGST}. As we shall recall below, it is very close from being braided (it suffices to add a square root of the generator $K$) and its important properties for our purposes are that it is factorizable (by abuse of terminology since it is not braided) and that it contains a ribbon element. It will follow that the defining relations of $\mathcal{L}_{0,1}(H)$ and $\mathcal{L}_{1,0}(H)$ still make sense in this context and that all the previous results remain true.

\subsection[The braided extension of $\overline{U}_q$]{The braided extension of $\boldsymbol{\overline{U}_q}$}\label{braidedExtension}
Recall that $\overline{U}_q$ is not braided \cite[Proposition~3.7.3]{KS}. But its extension by a square root of $K$ is braided, as shown in \cite{FGST}. Let $\overline{U}_q{^{\!1/2}}$ be this extension and $R \in \overline{U}_q{^{\!1/2}} \otimes \overline{U}_q{^{\!1/2}}$ be the universal $R$-matrix, given by
\[ R = q^{H \otimes H/2} \sum_{m=0}^{p-1}\frac{\hat q^m}{[m]!}q^{m(m-1)/2} E^m \otimes F^m,
\]
with
\[ q^{H \otimes H/2} = \frac{1}{4p}\sum_{n,j=0}^{4p-1}q^{-nj/2} K^{n/2} \otimes K^{j/2}, \]
where $q^{1/2}$ is a fixed square root of $q$. We use the notation $q^{H \otimes H/2}$ because $q^{H \otimes H/2} v \otimes w = q^{ab/2}$ if $K^{1/2}v = q^{a/2} v$ and $K^{1/2}w = q^{b/2}w$; also recall the notation $\hat q = q - q^{-1}$. There is a ribbon element $v$ associated to this $R$-matrix (we choose $g = K^{p+1}$ as pivotal element, and by \eqref{pivotCan} this fixes the choice of $v$). The formulas for $RR'$ and $v$ are in~\cite{FGST}; it is important to note that~$K^{1/2}$ does not appear in the expression of these elements
\[ RR' \in \overline{U}_q \otimes \overline{U}_q \qquad \text{and} \qquad v \in \overline{U}_q. \]
Moreover, thanks to the formula for $RR'$, it is obvious that the map
\[ \fonc{\Psi}{\overline{U}_q^*}{\overline{U}_q,}{\beta}{(\beta \otimes \mathrm{id})(RR')} \]
is an isomorphism of vector spaces. Hence, even if $\overline{U}_q$ is not braided, it can be thought of as a~factorizable Hopf algebra.

Let $I$ be a $\overline{U}_q{^{\!1/2}}$-module. Since $\overline{U}_q \subset \overline{U}_q{^{\!1/2}}$, $I$ determines a $\overline{U}_q$-module, which we denote $I_{\vert \overline{U}_q}$. We say that a $\overline{U}_q$-module $J$ is {\em liftable} if there exists a $\overline{U}_q{^{\!1/2}}$-module $\widetilde{J}$ such that $\widetilde{J}_{\vert \overline{U}_q} = J$. Not every $\overline{U}_q$-module is liftable, see~\cite{KS}. But the simple modules and the PIMs are liftable, which is enough for us. Indeed, it suffices to define the action of $K^{1/2}$ on these modules. Take back the notations of~\cite{F} for the canonical basis of modules. For the simple module $\mathcal{X}^{\epsilon}(s)$ ($\epsilon \in \{\pm\})$, there are two choices for $\epsilon^{1/2}$, and so the two possible liftings are defined by
\[ K^{1/2}v_j = \epsilon^{1/2}q^{(s-1-2j)/2}v_j \]
and the action of $E$ and $F$ is unchanged. The two possible liftings of the PIM $\mathcal{P}^{\epsilon}(s)$ are defined by
\begin{alignat*}{3}
& K^{1/2}b_0 = \epsilon^{1/2}q^{(s-1)/2}b_0, \qquad & & K^{1/2}x_0 = \big(\epsilon^{1/2}q^{p/2}\big)q^{(p-s-1)/2}x_0, & \\
& K^{1/2}y_0 = \big({-}\epsilon^{1/2}q^{p/2}\big)q^{(p-s-1)/2}y_0, \qquad && K^{1/2}a_0 = \epsilon^{1/2}q^{(s-1)/2}a_0&
\end{alignat*}
and the action of $E$ and $F$ is unchanged.

Let $\widetilde{\mathbb{C}}^-$ be the $1$-dimensional $\overline{U}_q{^{\!1/2}}$-module with basis $v$ defined by $Ev = Fv = 0$, $K^{1/2}v=-v$ (which is a lifting of $\mathcal{X}^+(1) = \mathbb{C}$). If $\widetilde{I}$ is a lifting of a simple module or a PIM~$I$, then we have seen that the only possible liftings of $I$ are $\widetilde{I}^+ = \widetilde{I}$ and $\widetilde{I}^- = \widetilde{I} \otimes \widetilde{\mathbb{C}}^-$. Moreover, using~\eqref{deltaR}, we get equalities which will be used in the next section
\begin{alignat}{3}
& \bigl(\overset{\widetilde{I}^- \widetilde{J}}{R} \bigr)_{\! 12} = \bigl(\overset{\widetilde{I}^+ \widetilde{J}}{R}\bigr)_{\! 12} \bigl(\overset{\widetilde{J}}{K^p}\bigr)_2, \qquad && \bigl(\overset{\widetilde{I} \widetilde{J}^-}{\!R\:}\bigr)_{12} = \bigl(\overset{\widetilde{I} \widetilde{J}^+}{\!R\:}\bigr)_{\! 12} \bigl(\overset{\widetilde{I}}{K^p}\bigr)_1, &\nonumber\\
& \bigl(\overset{\widetilde{I}^- \widetilde{J}}{R'}\bigr)_{\! 12} = \bigl(\overset{\widetilde{I}^+ \widetilde{J}}{R'}\bigr)_{\! 12} \bigl(\overset{\widetilde{J}}{K^p}\bigr)_2, \qquad && \bigl(\overset{\widetilde{I} \widetilde{J}^-}{R'}\bigr)_{\! 12} = \bigl(\overset{\widetilde{I}\widetilde{J}^+}{R'}\bigr)_{\! 12} \bigl(\overset{\widetilde{I}}{K^p}\bigr)_1.& \label{LiftR}
\end{alignat}

\subsection[$\mathcal{L}_{0,1}\big(\overline{U}_q\big)$ and $\mathcal{L}_{1,0}\big(\overline{U}_q\big)$]{$\boldsymbol{\mathcal{L}_{0,1}\big(\overline{U}_q\big)}$ and $\boldsymbol{\mathcal{L}_{1,0}\big(\overline{U}_q\big)}$}
We define $\mathcal{L}_{0,1}\big(\overline{U}_q\big)$ as the quotient of $\mathrm{T}(\overline{U}_q{^{\! *}})$ by the fusion relation
\[ \overset{I \otimes J}{M}\!_{12} = \overset{I}{M}_1\overset{\widetilde{I}\widetilde{J}}{(R')}_{12}\overset{J}{M}_2\overset{\widetilde{I}\widetilde{J}}{\big(R'^{-1}\big)}_{12}, \]
where $I$, $J$ are simple modules or PIMs and $\widetilde{I}$, $\widetilde{J}$ are liftings of $I$ and $J$. From \eqref{LiftR} and the fact that $K^p$ is central, we see that this does not depend on the choice of $\widetilde{I}$ and $\widetilde{J}$. As we saw in Section~\ref{dualAlg}, the matrix coefficients of the PIMs linearly span $\mathcal{L}_{0,1}(H)$, thus we can restrict to them in the definition. However, the simple modules are added for convenience. All the results of Section~\ref{sectionLoopL01} remain true for $\mathcal{L}_{0,1}\big(\overline{U}_q\big)$. In particular, $\Psi_{0,1}$ is an isomorphism since~$\overline{U}_q$ is factorizable (in the generalized sense explained in Section~\ref{braidedExtension}).

We now describe $\mathcal{L}_{0,1}\big(\overline{U}_q\big)$ by generators and relations. Let
\[ M = \overset{\mathcal{X}^+(2)}{M} =
\begin{pmatrix}
a&b\\
c&d
\end{pmatrix}
\qquad \text{and} \qquad \widetilde{R} = \overset{\widetilde{\mathcal{X}}^+(2)\widetilde{\mathcal{X}}^+(2)}{R} = q^{-1/2}
\begin{pmatrix}
q & 0 & 0 & 0\\
0 & 1 & \hat q & 0\\
0 & 0 & 1 & 0\\
0 & 0 & 0 & q
\end{pmatrix}, \]
where $\widetilde{\mathcal{X}}^+(2)$ is the lifting of $\mathcal{X}^+(2)$ defined by $K^{1/2}v_0 = q^{1/2}v_0$. By the decomposition rules of tensor products (see \cite{suter} and also \cite{KS}), every PIM (and every simple module) is a direct summand of some tensor power $\mathcal{X}^+(2)^{\otimes n}$. Thus every matrix coefficient of a PIM is a matrix coefficient of some $\mathcal{X}^+(2)^{\otimes n}$ (with $n \geq p$). It follows from the fusion relation that $a$, $b$, $c$,~$d$ generate $\mathcal{L}_{0,1}\big(\overline{U}_q\big)$. Let us determine relations between these elements. First, we have the reflection equation:
\[ \widetilde{R}_{12} M_1 \widetilde{R}_{21} M_2 = M_2 \widetilde{R}_{12} M_1 \widetilde{R}_{21}. \]
This equation is equivalent to the following exchange relations
 \begin{gather*}
da=ad, \qquad db=q^2bd, \qquad dc=q^{-2}cd, \\
ba = ab+q^{-1}\hat qbd, \qquad cb = bc+q^{-1}\hat q\big(da-d^2\big), \qquad ca = ac-q^{-1}\hat q dc.
\end{gather*}
Second, since $\mathcal{X}^+(2)^{\otimes 2} \cong \mathcal{X}^+(1) \oplus \mathcal{X}^+(3)$,\footnote{This decomposition does not hold if $p=2$: in that case, we have $\mathcal{X}^+(2)^{\otimes 2} \cong \mathcal{P}^+(1)$. The morphism $\Phi$ remains valid and corresponds to sending $\mathbb{C} = \mathcal{X}^+(1)$ in $\mathrm{Soc}\bigl(\mathcal{P}^+(1)\bigr)$.} there exists a unique (up to scalar) morphism $\Phi\colon \mathcal{X}^+(1) \to \mathcal{X}^+(2)^{\otimes 2}$. It is easily computed
\[ \Phi(1) = q v_0 \otimes v_1 - v_1 \otimes v_0. \]
By fusion, we have
\[ \overset{\mathcal{X}^+(2)^{\otimes 2}}{M}\!\!\!\!\!\!_{12}\Phi = M_1\widetilde{R}_{21} M_2 \widetilde{R}_{21}^{-1}\Phi = \Phi. \]
This gives just one new relation, which is the analogue of the quantum determinant
\[ ad-q^2bc=1. \]
Finally, let us compute the RSD isomorphism on $M$
 \begin{align*}
 \Psi_{0,1}
\begin{pmatrix}
a & b\\
c & d
\end{pmatrix}
&= \overset{\widetilde{\mathcal{X}}^+(2)}{L^{(+)}}\overset{\widetilde{\mathcal{X}}^+(2)}{L^{(-)-1}} =
\begin{pmatrix}
K^{1/2} & \hat q K^{1/2}F\\
0 & K^{-1/2}
\end{pmatrix}
\begin{pmatrix}
K^{1/2} & 0\\
\hat q K^{-1/2}E & K^{-1/2}
\end{pmatrix}\\
&=
\begin{pmatrix}
K+q^{-1}\hat q^2FE & q^{-1}\hat q F\\
\hat q K^{-1}E & K^{-1}
\end{pmatrix}.
\end{align*}
We deduce the relations $b^p = c^p = 0$ and $d^{2p}=1$ from the defining relations of $\overline{U}_q$.

\begin{Theorem}$\mathcal{L}_{0,1}\big(\overline{U}_q\big)$ admits the following presentation
\[ \left\langle
a,b,c,d\,
\Bigg|\,
\begin{matrix}
da=ad, & db=q^2bd, & dc=q^{-2}cd \\
ba = ab+q^{-1}\hat qbd, & cb = bc+q^{-1}\hat q\big(da-d^2\big), & ca = ac-q^{-1}\hat q dc\\
ad-q^2bc = 1, & b^p=c^p=0, & d^{2p}=1
\end{matrix}
\right\rangle. \]
A basis is given by the monomials $b^ic^jd^k$ with $0 \leq i,j \leq p-1$, $0 \leq k \leq 2p-1$.
\end{Theorem}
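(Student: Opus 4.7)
The plan is to identify the abstract algebra $A$ defined by these generators and relations with $\mathcal{L}_{0,1}(\overline{U}_q)$ via a dimension count. The discussion immediately preceding the theorem already establishes two facts that I would use as inputs: (i) the four matrix coefficients $a,b,c,d$ of $\overset{\mathcal{X}^+(2)}{M}$ generate $\mathcal{L}_{0,1}(\overline{U}_q)$, because every PIM is a direct summand of some $\mathcal{X}^+(2)^{\otimes n}$ (by Suter's decomposition rules) and hence its matrix coefficients are polynomials in $a,b,c,d$ via iterated fusion; and (ii) every relation listed holds in $\mathcal{L}_{0,1}(\overline{U}_q)$ — the six exchange relations come from the reflection equation, the quantum determinant $ad-q^2bc=1$ from the morphism $\Phi\colon\mathcal{X}^+(1)\to\mathcal{X}^+(2)^{\otimes 2}$, and $b^p=c^p=0$, $d^{2p}=1$ from applying the RSD isomorphism $\Psi_{0,1}$ to $M$ and invoking the defining relations of $\overline{U}_q$. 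Together these yield a surjective algebra morphism $\pi\colon A\twoheadrightarrow\mathcal{L}_{0,1}(\overline{U}_q)$.

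Next I would show that the set $\mathcal{B}=\{b^ic^jd^k\mid 0\leq i,j\leq p-1,\ 0\leq k\leq 2p-1\}$ linearly spans $A$. Since $d^{2p}=1$ the element $d$ is invertible in $A$ with $d^{-1}=d^{2p-1}$, so the quantum determinant reads $a=(1+q^2bc)d^{2p-1}$, which eliminates $a$ from every word. In the subalgebra generated by $b,c,d$ the identity $ad=1+q^2bc$ combined with $da=ad$ simplifies the reflection relation $cb=bc+q^{-1}\hat q(da-d^2)$ to the clean straightening rule
\[
cb \;=\; q^2\, bc + q^{-1}\hat q\,(1-d^2),
\]
(using $1+q\hat q=q^2$). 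Together with $db=q^2 bd$ and $dc=q^{-2}cd$, this is a Bergman-type rewriting system: it pushes every $b$ to the left of every $c$, every $d$ to the right, and reduces exponents of $b$, $c$ modulo $p$ and of $d$ modulo $2p$. A lexicographic-order / length induction shows the procedure terminates in a linear combination of elements of $\mathcal{B}$. Hence $\dim A\leq p\cdot p\cdot 2p=2p^3$.

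Finally, Theorem \ref{IsoRSD} gives $\dim\mathcal{L}_{0,1}(\overline{U}_q)=\dim\overline{U}_q=2p^3$, so the chain $2p^3=\dim\mathcal{L}_{0,1}(\overline{U}_q)\leq\dim A\leq 2p^3$ forces equality, $\pi$ is an isomorphism, and $\mathcal{B}$ is a basis. The main obstacle is the spanning step: one has to confirm that the normal-ordering procedure terminates and produces only monomials in $\mathcal{B}$ — in principle this calls for a confluence (diamond-lemma) check on the rewriting system $\{db\to q^2bd,\ dc\to q^{-2}cd,\ cb\to q^2bc+q^{-1}\hat q(1-d^2)\}$ together with the truncation rules $b^p=c^p=0$, $d^{2p}=1$. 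However, once surjectivity and spanning are established, linear independence of $\mathcal{B}$ is automatic from the dimension equality, so one does not need to rule out additional hidden relations by hand.
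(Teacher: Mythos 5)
Your proposal is correct and follows essentially the same route as the paper: both establish a surjection $A\twoheadrightarrow\mathcal{L}_{0,1}\big(\overline{U}_q\big)$ from the relations verified before the theorem, bound $\dim A\leq 2p^3$ by eliminating $a$ via the quantum determinant and normal-ordering monomials in $b,c,d$, and conclude by comparing with $\dim\mathcal{L}_{0,1}\big(\overline{U}_q\big)=\dim\overline{U}_q=2p^3$ from Theorem~\ref{IsoRSD}. The paper merely states the spanning step as ``readily seen,'' whereas you spell out the straightening rule $cb=q^2bc+q^{-1}\hat q\big(1-d^2\big)$ and correctly observe that no confluence check is needed since linear independence is automatic from the dimension equality.
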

\begin{proof}Let $A$ be the algebra defined by this presentation. It is readily seen that $a = d^{-1} + q^2bcd^{-1}$ and that the monomials $b^ic^jd^k$ with $0 \leq i,j \leq p-1$, $0 \leq k \leq 2p-1$ linearly span~$A$. Thus $\dim(A) \leq 2p^3$. But we know that $2p^3 = \dim\big(\overline{U}_q\big) = \dim \big(\mathcal{L}_{0,1}\big(\overline{U}_q\big) \big)$ since the monomials $E^iF^jK^{\ell}$ with $0 \leq i,j \leq p-1$, $0 \leq k \leq 2p-1$ form the PBW basis of $\overline{U}_q$. It follows that $\dim(A) \leq \dim \big(\mathcal{L}_{0,1}\big(\overline{U}_q\big) \big)$. Since these relations are satisfied in $\mathcal{L}_{0,1}\big(\overline{U}_q\big)$, there exists a surjection $p\colon A \to \mathcal{L}_{0,1}\big(\overline{U}_q\big)$. Thus $\dim(A) \geq \dim \big(\mathcal{L}_{0,1}\big(\overline{U}_q\big) \big)$, and the theorem is proved.
\end{proof}

\begin{Remark}A consequence of this theorem is that $\mathcal{L}_{0,1}\big(\overline{U}_q\big)$ is a restricted version (i.e., a~finite-dimensional quotient by monomial central elements) of $\mathcal{L}_{0,1}(U_q)^{\mathrm{spe}}$, the specialization at our root of unity $q$ of the algebra~$\mathcal{L}_{0,1}(U_q)$. A complete study of the algebra $\mathcal{L}_{0,1}(U_q)^{\mathrm{spe}}$ will appear in~\cite{BaR}.
\end{Remark}

Applying the isomorphism of algebras $\mathcal{D}$ defined in \eqref{morphismeDrinfeld} to the GTA basis of $\SLF\big(\overline{U}_q\big)$, we get a basis of $\mathcal{Z}\big(\overline{U}_q\big)$. We introduce notations for these basis elements
\begin{gather}\label{baseCentreGTA}
\overset{\mathcal{X}^{\epsilon}(s)}{W} = \mathcal{D} ( \chi^{\epsilon}_s ), \qquad H^{s'} = \mathcal{D} ( G_{s'} )
\end{gather}
with $1 \leq s \leq p$, $\epsilon \in \{\pm\}$ and $1 \leq s' \leq p-1$. They satisfy the same multiplication rules than the elements of the GTA basis, see \cite[Section~5]{F} or~\cite{GT} (the elements $\boldsymbol{\chi}(s)$ defined in~\cite{GT} correspond to $[s]H^s$ here). Let us mention that under the identification $\mathcal{L}_{0,1}\big(\overline{U}_q\big) = \overline{U}_q$ \textit{via}~$\Psi_{0,1}$, it holds by definition $\overset{\mathcal{X}^{\epsilon}(s)}{W} = \mathrm{tr}(\overset{\mathcal{X}^{\epsilon}(s)}{K^{p+1}}\overset{\mathcal{X}^{\epsilon}(s)}{M})$, since we choose $K^{p+1}$ as pivotal element. In particular,
\begin{gather}\label{WCasimir}
\overset{\mathcal{X}^+(2)}{W} = -qa - q^{-1}d = -\hat q^2FE - qK - q^{-1}K^{-1} = -\hat q^2 C,
\end{gather}
where $C$ is the standard Casimir element of $\overline{U}_q$.

Similarly, we define $\mathcal{L}_{1,0}\big(\overline{U}_q\big)$ as the quotient of $\mathcal{L}_{0,1}\big(\overline{U}_q\big) \ast \mathcal{L}_{0,1}\big(\overline{U}_q\big)$ by the exchange relations
\begin{gather*} \overset{\widetilde{I}\widetilde{J}}{R}_{12}\overset{I}{B}_1\overset{\widetilde{I}\widetilde{J}}{(R')}_{12}\overset{J}{A}_2 = \overset{J}{A}_2\overset{\widetilde{I}\widetilde{J}}{R}_{12}\overset{I}{B}_1\overset{\widetilde{I}\widetilde{J}}{(R^{-1})}_{12}, \end{gather*}
where $I$, $J$ are simple modules or PIMs and $\widetilde{I}$, $\widetilde{J}$ are liftings of $I$ and $J$. From~\eqref{LiftR}, we see again that this does not depend on the choice of $\widetilde{I}$ and $\widetilde{J}$. The coefficients of $\overset{\mathcal{X}^+(2)}{A}$ and of $\overset{\mathcal{X}^+(2)}{B}$ generate $\mathcal{L}_{1,0}\big(\overline{U}_q\big)$. The morphism $\Psi_{1,0} \colon \mathcal{L}_{1,0}\big(\overline{U}_q\big) \to \mathcal{H}\big(\mathcal{O}\big(\overline{U}_q\big)\big)$ is well-defined (the square root of $K$ does not appear). Indeed, the matrix $\Psi_{1,0}\bigl( \overset{\mathcal{X}^+(2)}{A} \bigr)$ belongs to $1 \otimes \overline{U}_q \subset \mathcal{H}\big(\mathcal{O}\big(\overline{U}_q\big)\big)$ and is the same as the image of the matrix $M$ under the morphism $\Psi_{0,1}$ above; moreover, thanks to the commutation relations of the Heisenberg double we get
\begin{gather*}
 \Psi_{1,0}\bigl( \overset{\mathcal{X}^+(2)}{B} \bigr) = \overset{\widetilde{\mathcal{X}}^+(2)}{L^{(+)}} \overset{\widetilde{\mathcal{X}}^+(2)}{T} \overset{\widetilde{\mathcal{X}}^+(2)}{L^{(-)-1}} =
\begin{pmatrix}
K^{1/2} & \hat q K^{1/2}F\\
0 & K^{-1/2}
\end{pmatrix}
\begin{pmatrix}
\alpha & \beta\\
\gamma & \delta
\end{pmatrix}
\begin{pmatrix}
K^{1/2} & 0\\
\hat q K^{-1/2}E & K^{-1/2}
\end{pmatrix}\\
=
\begin{pmatrix}
q^{1/2}\alpha K + q^{-1/2}\hat q \delta K + q^{1/2}\hat q \gamma KF + q^{-1/2}\hat q \beta E + q^{-1/2} \hat q^2\delta FE & q^{-1/2}\beta + q^{-1/2}\hat q \delta F\\
q^{-1/2}\gamma + q^{1/2}\hat q \delta K^{-1}E & q^{1/2}\delta K^{-1}
\end{pmatrix}.
\end{gather*}
From these formulas we see that $\Psi_{1,0}$ is surjective: indeed, thanks to the coefficients of $\Psi_{1,0}\bigl(\! \overset{\mathcal{X}^+(2)}{A} \!\bigr)$ we obtain $E$, $F$, $K$ and then thanks to the coefficients of $\Psi_{1,0}\bigl( \overset{\mathcal{X}^+(2)}{B} \bigr)$ we obtain $\alpha$, $\beta$, $\gamma$, $\delta$. Moreover, by the exchange relation in $\mathcal{L}_{1,0}\big(\overline{U}_q\big)$ and the fact that $\mathcal{L}_{0,1}\big(\overline{U}_q\big) \cong \overline{U}_q$, we know that $\dim\bigl( \mathcal{L}_{1,0}\big(\overline{U}_q\big) \bigr) \leq \dim\big(\overline{U}_q\big)^2= \dim\bigl( \mathcal{H}\big(\mathcal{O}\big(\overline{U}_q\big)\big) \bigr)$, and hence $\Psi_{1,0}$ is an isomorphism of algebras. In order to get a presentation of $\mathcal{L}_{1,0}\big(\overline{U}_q\big)$, one can again restrict to $I = J = \mathcal{X}^+(2)$ and write down the corresponding exchange relations. We do not give this presentation of $\mathcal{L}_{1,0}\big(\overline{U}_q\big)$ since we will not use it in this work.

With these definitions of $\mathcal{L}_{0,1}\big(\overline{U}_q\big)$ and $\mathcal{L}_{1,0}\big(\overline{U}_q\big)$, all the computations made under the general assumptions before remain valid with $\bar U_q$ and we have a projective $\mathrm{SL}_2(\mathbb{Z})$-representation on $\mathrm{SLF}\big(\overline{U}_q\big)$.

\subsection[Explicit description of the $\mathrm{SL}_2(\mathbb{Z})$-projective representation]{Explicit description of the $\boldsymbol{\mathrm{SL}_2(\mathbb{Z})}$-projective representation}\label{SL2ZUq}

Note that it can be shown directly that $\overline{U}_q$ is unimodular and unibalanced, see for instance \cite[Corollary~II.2.8]{ibanez} (also note that in~\cite{BBG} it is shown that all the simply laced restricted quantum groups at roots of unity are unibalanced).

\begin{Proposition}\label{antipodeSLF}
For all $z \in \mathcal{Z}\big(\overline{U}_q\big)$, $S(z) =z$ and for all $\psi \in \SLF\big(\overline{U}_q\big)$, $S(\psi) = \psi$. It follows that in the case of $\overline{U}_q$, $\theta_{\SLF}$ is in fact a projective representation of $\mathrm{PSL}_2(\mathbb{Z})$.
\end{Proposition}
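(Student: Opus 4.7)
The plan is to verify $S = \mathrm{id}$ on the GTA bases of $\SLF(\overline{U}_q)$ and $\mathcal{Z}(\overline{U}_q)$, and then invoke Theorem~\ref{mainResult} for the $\mathrm{PSL}_2(\mathbb{Z})$ consequence. For $\SLF(\overline{U}_q)$, I would check $\psi \circ S = \psi$ on the GTA basis $\{\chi_s^\epsilon,\, G_{s'}\}_{1\leq s\leq p,\,1\leq s'\leq p-1,\,\epsilon\in\{\pm\}}$. For the irreducible characters $\chi_s^\epsilon = \chi^{\mathcal{X}^\epsilon(s)}$, the standard identity $\chi^V \circ S = \chi^{V^*}$ reduces the matter to $\mathcal{X}^\epsilon(s)^* \cong \mathcal{X}^\epsilon(s)$, which is immediate from the $K$-weights $\{\epsilon q^{s-1-2j} : 0\leq j\leq s-1\}$ being stable under inversion (via $j \leftrightarrow s-1-j$): the two simples have identical characters and are hence isomorphic. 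For the pseudo-traces $G_{s'}$, I would work from the explicit characterization of the GTA basis in \cite{F} together with the multiplication formulas of \cite[Section~5]{F} and \cite{GT}, showing that $G_{s'}\circ S$ still satisfies those defining relations and hence equals $G_{s'}$; an equivalent route exploits self-duality of the PIMs $\mathcal{P}^\epsilon(s')$ (coming from self-duality of their simple heads combined with unibalancedness, cf.\ Lemma~\ref{lemmeUnimodUnibalance}) and reinterprets $G_{s'}$ via a Frobenius pairing on $\mathcal{P}^\epsilon(s')$.

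For the center, the basis~\eqref{baseCentreGTA} consists of $\overset{\mathcal{X}^\epsilon(s)}{W}$ and $H^{s'}$. The element $\overset{\mathcal{X}^+(2)}{W} = -\hat q^2 C$ from~\eqref{WCasimir} is $S$-fixed because the antipode formulas $S(E) = -K^{-1}E$, $S(F) = -FK$, $S(K) = K^{-1}$ combined with $[E,F] = (K-K^{-1})/\hat q$ yield the well-known identity $S(C) = C$ by a direct manipulation. The other $\overset{\mathcal{X}^\epsilon(s)}{W}$ lie in the subalgebra of $\mathcal{Z}(\overline{U}_q)$ generated by $C$ and the $S$-invariant central (block) idempotents, hence are also $S$-fixed; the $H^{s'}$ can be handled by transferring the pseudo-trace argument above through the isomorphism of algebras $\mathcal{D}\colon \SLF(\overline{U}_q)\to\mathcal{Z}(\overline{U}_q)$, possibly after tracking a $g$/$g^{-1}$ twist coming from $(S\otimes S)(R) = R$ and $S(g)=g^{-1}$, which is harmless because of unibalancedness.

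The hard part will be the pseudo-trace case (the $G_{s'}$ in $\SLF$ and equivalently the $H^{s'}$ in $\mathcal{Z}$): the character and Casimir parts are essentially formal once self-duality of simples is recorded, but verifying $S$-invariance of the pseudo-trace forms requires careful bookkeeping with the specific normalization used to fix $G_{s'}$ in the GTA basis. Once both invariance statements are established, the moreover-part of Theorem~\ref{mainResult} yields the $\mathrm{PSL}_2(\mathbb{Z})$ claim directly.
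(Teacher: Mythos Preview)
Your strategy is sound and would work, but it is considerably more laborious than the paper's argument, and the order of deduction is reversed. The paper does \emph{not} touch the GTA basis for this proposition. Instead it proves the center statement first, using the canonical basis $\bigl(e_s,\,w^{\pm}_s\bigr)$ of \cite[Appendix~D]{FGST}: these elements are written explicitly as $e_s = P_s(C)$ and $w^{\pm}_s = \pi^{\pm}_s\,Q_s(C)$ with $P_s,Q_s$ polynomials and $\pi^{\pm}_s$ discrete Fourier transforms in the powers of~$K$, so $S$-invariance reduces to the elementary checks $S(C)=C$ and $S(\pi^{\pm}_s)=\pi^{\pm}_s$. Then the $\SLF$ statement is \emph{deduced} from the center statement in one line via the isomorphism $\boldsymbol{\gamma}_g\colon \mathcal{Z}\bigl(\overline{U}_q\bigr)\to\SLF\bigl(\overline{U}_q\bigr)$: writing $\psi = \mu^r(gS(z)\,?)=\mu^r(gz\,?)$ for $z$ central, one computes
\[
S(\psi)=\mu^r\circ S\bigl(?\,zg^{-1}\bigr)=\mu^l\bigl(g^{-1}z\,?\bigr)=\mu^r(gz\,?)=\psi,
\]
using $S(z)=z$, Proposition~\ref{propVIntegrale}, and unibalancedness (Lemma~\ref{lemmeUnimodUnibalance}).

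The upshot is that the pseudo-trace case you flag as the hard part simply never arises: by choosing the right basis of the center and then transporting the result to $\SLF$ through the integral, the paper bypasses any direct computation with the $G_{s'}$ or $H^{s'}$. Your route through self-duality of simples and PIMs is conceptually appealing and would also succeed, but it buys nothing extra here and costs significantly more bookkeeping, exactly where you anticipated.
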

\begin{proof}By \cite[Appendix~D]{FGST}, the canonical central elements are expressed as $e_s = P_s(C)$, $w^{\pm}_s = \pi^{\pm}_sQ_s(C)$ where $P_s$ and $Q_s$ are polynomials, $C$ is the Casimir element and $\pi_s^{\pm}$ are discrete Fourier transforms of $\big(K^j\big)_{0 \leq j \leq 2p-1}$. It is easy to check that $S(C) = C$ and that $S\big(\pi^{\pm}_s\big) = \pi^{\pm}_s$, thus $S(e_s) = e_s$ and $S\big(w^{\pm}_s\big) = w^{\pm}_s$. Next, let $\psi \in \SLF\big(\overline{U}_q\big)$. Since $\boldsymbol{\gamma}_g$ is an isomorphism, we can write $\psi = \boldsymbol{\gamma}_g(z) = \mu^r(gS(z) ?)$ with $z \in \mathcal{Z}\big(\overline{U}_q\big)$. Then{\samepage
\[ S(\psi) = S\big(\mu^r(gz ?)\big) = \mu^r \circ S\big(?zg^{-1}\big) = \mu^l\big(g^{-1}z?\big) = \mu^r(gz?) = \psi. \]
We used that $S(z)=z$, Proposition~\ref{propVIntegrale} and the fact that $\overline{U}_q$ is unibalanced.}
\end{proof}

Recall that the action of the elements $v_A^{-1}$, $v_B^{-1}$ on $\mathrm{SLF}(H)$ implements the $\mathrm{SL}_2(\mathbb{Z})$-repre\-sen\-ta\-tion. We now determine this action on the GTA basis. Some preliminaries are in order; first recall (see~\cite{FGST}) that the expression of the ribbon element $v$ in the canonical basis $\big(e_s, w_s^{\pm}\big)$ of~$\mathcal{Z}\big(\overline{U}_q\big)$ is
\begin{gather}\label{rubanCentre}
v = \sum_{s=0}^p v_{\mathcal{X}^+(s)} e_s + \hat q \sum_{s=1}^{p-1} v_{\mathcal{X}^+(s)}\left( \frac{p-s}{[s]}w^+_s - \frac{s}{[s]}w^-_s \right)
\end{gather}
with $\hat q = q-q^{-1}$ and
\[ v_{\mathcal{X}^+(s)} = v_{\mathcal{X}^-(p-s)} = (-1)^{s-1}q^{\frac{-(s^2-1)}{2}}. \]
Note that $\overset{\mathcal{X}^{\epsilon}(s)}{v} = v_{\mathcal{X}^{\epsilon}(s)}\mathrm{id}$ and $v_{\mathcal{X}^+(0)}$ is just a notation for $v_{\mathcal{X}^-(p)}$ used to unify the formula. Expressing $v^{-1}$ in this basis is obvious. Second, it is easy to see that the action of $\mathcal{Z}\big(\overline{U}_q\big)$ on $\SLF\big(\overline{U}_q\big)$ is
\begin{alignat}{5}
&\big(\chi^+_s\big)^{e_t} = \delta_{s,t}\chi^+_s, \qquad && \big(\chi^-_s\big)^{e_t} = \delta_{p-s,t}\chi^-_s, \qquad && G_{s}^{ e_t} = \delta_{s,t}G_{s} &&& \nonumber\\
& \big(\chi^+_s\big)^{w^{\pm}_{t}} = 0, \qquad && \big(\chi^-_s\big)^{w^{\pm}_{t}} = 0, \qquad && G_{s}^{ w^+_{t}} = \delta_{s,t}\chi^+_{s}, \qquad && G_{s}^{ w^-_{t}} = \delta_{s,t}\chi^-_{p-s},&\label{actionCentreSLF}
\end{alignat}
where $\varphi^z = \varphi(z?)$. Finally, we have the following lemma.
\begin{Lemma}\label{actionBmoinsUnA}Let $z \in \mathcal{L}_{0,1}^{\mathrm{inv}}(H) = \mathcal{Z}(H)$ and let $\psi \in \SLF(H)$. Then
\[ z_{vB^{-1}A} \triangleright \psi = S\big(\mathcal{D}^{-1}(z)\big) \psi. \]
\end{Lemma}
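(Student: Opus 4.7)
The plan is to reduce the lemma to a calculation already performed in the proof of Lemma~\ref{lemmeTechniquePourSimplifierOmegaSurLesInvariants}. There, for any $z \in \mathcal{Z}(H)$, the image of $z_{v^{-1}AB^{-1}}$ under the isomorphism $\Psi_{1,0}$ was computed explicitly and shown to equal $S(\mathcal{D}^{-1}(z))$, viewed as an element of $\mathcal{O}(H) \otimes 1 \subset \mathcal{H}(\mathcal{O}(H))$; the parallel claim $\Psi_{1,0}(z_{vB^{-1}A}) = S(\mathcal{D}^{-1}(z))$ was stated there as an analogous computation. Granting this, the lemma follows almost immediately: by definition of $\triangleright$ on $\mathcal{L}_{1,0}(H)$ as the pullback along $\Psi_{1,0}$ of the faithful representation of $\mathcal{H}(\mathcal{O}(H))$ on $\mathcal{O}(H)$, and since by \eqref{repHeisenberg} the subalgebra $\mathcal{O}(H) \subset \mathcal{H}(\mathcal{O}(H))$ acts on $\mathcal{O}(H)$ by left multiplication, one obtains
\[ z_{vB^{-1}A} \triangleright \psi \;=\; \Psi_{1,0}(z_{vB^{-1}A}) \triangleright \psi \;=\; S(\mathcal{D}^{-1}(z)) \triangleright \psi \;=\; S(\mathcal{D}^{-1}(z))\,\psi. \]

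The only non-trivial step is therefore verifying the identity $\Psi_{1,0}(z_{vB^{-1}A}) = S(\mathcal{D}^{-1}(z))$. I would do this by expanding along the lines of the analogous calculation in Lemma~\ref{lemmeTechniquePourSimplifierOmegaSurLesInvariants}: write $z = \sum_I \mathrm{tr}\bigl(\Lambda_I \overset{I}{g}\overset{I}{M}\bigr)$ with $\mathcal{D}^{-1}(z) = \sum_I \mathrm{tr}\bigl(\Lambda_I \overset{I}{T}\bigr) \in \SLF(H)$ (Lemma~\ref{writingInvariants}), substitute $\Psi_{1,0}(\overset{I}{B}) = \overset{I}{L}{}^{(+)}\overset{I}{T}\overset{I}{L}{}^{(-)-1}$ and $\Psi_{1,0}(\overset{I}{A}) = \overset{I}{L}{}^{(+)}\overset{I}{L}{}^{(-)-1}$ together with $\Psi_{1,0}(\overset{I}{B}{}^{-1}) = \overset{I}{L}{}^{(-)}S(\overset{I}{T})\overset{I}{L}{}^{(+)-1}$, and push the ribbon factor $\overset{I}{v}$ through. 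Using the Heisenberg exchange rules of Lemma~\ref{echangeHeisenberg}, the relations \eqref{antipodeT}, \eqref{propSR}, \eqref{deltaR}, together with the Drinfeld-element identities \eqref{elementDrinfeld}, the pivotal formula \eqref{pivotCan} and the ribbon relation $v^2 = uS(u)$ from \eqref{ribbon}, all the $L^{(\pm)}$ factors cancel and what remains is precisely $S(\mathcal{D}^{-1}(z))\otimes 1$.

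There is no real obstacle: the lemma is a direct repackaging, at the level of the representation $\rho_{\mathrm{SLF}}$ on $\SLF(H)$, of the Heisenberg-double identity for $z_{vB^{-1}A}$ proved en route to Lemma~\ref{lemmeTechniquePourSimplifierOmegaSurLesInvariants}. The mild bookkeeping point to double-check is that the factor $\overset{I}{v}$ really lands on the correct side so as to produce $S(\mathcal{D}^{-1}(z))$ rather than, say, $S(\mathcal{D}^{-1}(z))^{v}$ or $S(\mathcal{D}^{-1}(z))^{v^{-1}}$; this is guaranteed by the cancellation $vS^2(a_i a_k)g^{-1}\otimes b_ib_k \cdots$ that already appeared in the proof of Lemma~\ref{lemmeBMoinsUn}, combined with centrality of $v$.
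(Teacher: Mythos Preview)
Your argument is correct and takes a genuinely different (and cleaner) route from the one the paper has in mind. The paper says the proof is ``analogous to those of the two similar results in Section~\ref{sectionRepInvariants}'', meaning one is expected to compute $\overset{I}{v}\overset{I}{B}{}^{-1}\overset{I}{A}_1 \triangleright \overset{J}{T}_2$ directly, in the style of Proposition~\ref{actionAB} and Lemma~\ref{lemmeBMoinsUn}, and then take traces. You instead recycle the Heisenberg-double identity $\Psi_{1,0}(z_{vB^{-1}A}) = S(\mathcal{D}^{-1}(z)) \in \mathcal{O}(H)\otimes 1$ already established in the proof of Lemma~\ref{lemmeTechniquePourSimplifierOmegaSurLesInvariants}, and then invoke \eqref{repHeisenberg} to conclude that this element acts by left multiplication. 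This is entirely legitimate since Lemma~\ref{lemmeTechniquePourSimplifierOmegaSurLesInvariants} precedes the present lemma; it also makes the structure of the result transparent and shows, incidentally, that the formula holds for \emph{all} $\psi \in \mathcal{O}(H)$, not just symmetric ones. The trade-off is that your argument relies on a step the paper only sketched (``A similar computation shows that $\Psi_{1,0}(z_{vB^{-1}A}) = S(\mathcal{D}^{-1}(z))$''), so your outline of how to verify that identity is well placed; the paper's intended direct computation would be self-contained but longer.
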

\begin{proof}The proof is analogous to those of the two similar results in Section~\ref{sectionRepInvariants} and is thus left to the reader. Note that this lemma is not specific to the case of $\overline{U}_q$.
\end{proof}

\begin{Theorem}\label{actionSL2ZArike}The actions of $v_A^{-1}$ and $v_B^{-1}$ on the GTA basis are given by
\[ v_A^{-1} \triangleright \chi^{\epsilon}_s = v^{-1}_{\mathcal{X}^{\epsilon}(s)}\chi^{\epsilon}_s, \qquad v_A^{-1} \triangleright G_{s'} = v^{-1}_{\mathcal{X}^+(s')}G_{s'} - v_{\mathcal{X}^+(s')}^{-1}\hat q\left( \frac{p-s'}{[s']}\chi^+_{s'} - \frac{s'}{[s']}\chi^-_{p-s'} \right) \]
and
\begin{gather*}
v_B^{-1} \triangleright \chi^{\epsilon}_s = \xi \epsilon(-\epsilon)^{p-1}s q^{-(s^2-1)} \left(\sum_{\ell=1}^{p-1}(-1)^s(-\epsilon)^{p-\ell}\big(q^{\ell s} + q^{-\ell s}\big)\big(\chi^+_{\ell} + \chi^-_{p-\ell}\big) + \chi^+_p \right.\\
 \left.\hphantom{v_B^{-1} \triangleright \chi^{\epsilon}_s =}{} + (-\epsilon)^p(-1)^s\chi^-_p \vphantom{\sum_{\ell=1}^{p-1}}\right) +\xi\epsilon (-1)^sq^{-(s^2-1)}\sum_{j=1}^{p-1} (-\epsilon)^{j+1}[j][js]G_j,\\
v_B^{-1} \triangleright G_{s'} = \xi (-1)^{s'}q^{-(s'^2-1)}\frac{\hat q p}{[s']}\sum_{j=1}^{p-1}(-1)^{j+1}[j][js']\left(2G_j - \hat q \frac{p-j}{[j]}\chi^+_j + \hat q \frac{j}{[j]}\chi^-_{p-j}\right),
\end{gather*}
with $\epsilon \in \{\pm\}$, $0 \leq s \leq p$, $1 \leq s' \leq p-1$ and $\xi^{-1} = \frac{1-i}{2\sqrt{p}} \frac{\hat q^{p-1}}{[p-1]!} (-1)^p q^{-(p-3)/2}$.
\end{Theorem}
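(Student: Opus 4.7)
The strategy is to apply the formulas $v_A^{-1}\triangleright\psi=\psi^{v^{-1}}$ and $v_B^{-1}\triangleright\psi=(\varphi_{v^{-1}}\psi^v)^{v^{-1}}$ from~\eqref{actionsV} to the GTA basis, by first expressing $v^{\pm 1}$ in the canonical basis $\{e_s,w^\pm_s\}$ of $\mathcal{Z}(\overline{U}_q)$. Formula~\eqref{rubanCentre} already gives $v$; moreover, in each non-simple block $e_s$ (with $1\le s\le p-1$) one has $v=v_{\mathcal{X}^+(s)}(e_s+N_s)$ where $N_s=\hat q\bigl(\frac{p-s}{[s]}w^+_s-\frac{s}{[s]}w^-_s\bigr)$ satisfies $N_s^2=0$, hence
\[ v^{-1}=\sum_{s=0}^p v_{\mathcal{X}^+(s)}^{-1}e_s -\hat q\sum_{s=1}^{p-1}v_{\mathcal{X}^+(s)}^{-1}\Bigl(\frac{p-s}{[s]}w^+_s-\frac{s}{[s]}w^-_s\Bigr). \]
The first part of the theorem (action of $v_A^{-1}$) then follows immediately by plugging this into $\psi^{v^{-1}}$ and applying~\eqref{actionCentreSLF}: the $w^\pm_t$ annihilate the $\chi^\epsilon_s$, while $G_{s'}^{e_{s'}}=G_{s'}$, $G_{s'}^{w^+_{s'}}=\chi^+_{s'}$ and $G_{s'}^{w^-_{s'}}=\chi^-_{p-s'}$ yield the correction terms.

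For $v_B^{-1}$, the same block argument first produces $(\chi^\epsilon_s)^v=v_{\mathcal{X}^\epsilon(s)}\chi^\epsilon_s$ and $(G_{s'})^v=v_{\mathcal{X}^+(s')}\bigl[G_{s'}+\hat q\bigl(\frac{p-s'}{[s']}\chi^+_{s'}-\frac{s'}{[s']}\chi^-_{p-s'}\bigr)\bigr]$. The heart of the computation is then the product $\varphi_{v^{-1}}\cdot\psi^v$ in the GTA basis. Since $\mathcal{D}\colon\SLF(\overline{U}_q)\to\mathcal{Z}(\overline{U}_q)$ is an algebra isomorphism with $\mathcal{D}(\varphi_{v^{-1}})=v^{-1}$ by Proposition~\ref{propVIntegrale}, the block expansion of $v^{-1}$ above translates into an expansion of $\varphi_{v^{-1}}$ along the preimages $\mathcal{D}^{-1}(e_s)$, $\mathcal{D}^{-1}(w^\pm_s)$. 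Expressing these preimages in the GTA basis is a discrete-Fourier inversion carried out in \cite[Section~5]{F} and~\cite{GT}; this is precisely where the factors $q^{\ell s}+q^{-\ell s}$, $[j][js]$ and the various signs $(-1)^\ast$ from the statement appear, while the normalizing scalar $\mu^l(v^{-1})^{-1}$ hidden in $\varphi_{v^{-1}}$ gives rise to the overall constant $\xi$. The product $\varphi_{v^{-1}}\cdot\psi^v$ is then read off from the multiplication tables of the GTA basis in \cite[Section~5]{F} (see also~\cite{GT}), and a last application of~$^{v^{-1}}$, handled exactly as in the $v_A^{-1}$ case, produces both the combination $2G_j-\hat q\frac{p-j}{[j]}\chi^+_j+\hat q\frac{j}{[j]}\chi^-_{p-j}$ in the formula for $v_B^{-1}\triangleright G_{s'}$ and the $\chi^+_p,\chi^-_p$ contributions (coming from the simple blocks $e_p,e_0$) in the formula for $v_B^{-1}\triangleright\chi^\epsilon_s$.

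The main obstacle is this bookkeeping: the product $\varphi_{v^{-1}}\cdot\psi^v$ is a priori a long sum, and only the interplay between the Fourier-type expression of $\mathcal{D}^{-1}(e_s)$ and $\mathcal{D}^{-1}(w^\pm_s)$ on the one hand, and the multiplication rules for $\chi^\epsilon_s\chi^{\epsilon'}_{s'}$, $\chi^\epsilon_s G_{s'}$ and $G_sG_{s'}$ from \cite[Section~5]{F},~\cite{GT} on the other, collapses it to the closed form displayed in the theorem. Everything else reduces to substituting $v_{\mathcal{X}^+(s)}=(-1)^{s-1}q^{-(s^2-1)/2}$, regrouping by basis vector, and checking the overall normalization against the explicit value of the left integral $\mu^l$ on $\overline{U}_q$.
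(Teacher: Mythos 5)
Your treatment of $v_A^{-1}$ coincides with the paper's and is fine. For $v_B^{-1}$ you take a genuinely different route. You propose to evaluate $(\varphi_{v^{-1}}\psi^v)^{v^{-1}}$ head-on, by expanding $\varphi_{v^{-1}}=\mathcal{D}^{-1}\big(v^{-1}\big)$ in the GTA basis through the Fourier-type change of basis between $\big\{\mathcal{D}^{-1}(e_s),\mathcal{D}^{-1}\big(w^{\pm}_s\big)\big\}$ and $\big\{\chi^{\epsilon}_s,G_{s'}\big\}$, and then multiplying out with the full GTA multiplication table. The paper instead exploits the commutation relations $v_B^{-1}z_A=z_{vB^{-1}A}v_B^{-1}$ and $v_B^{-1}z_B=z_Bv_B^{-1}$ (consequences of Theorem~\ref{valueHatAlphaBeta}, see~\eqref{commBeta}), applied with $z=\overset{\mathcal{X}^+(2)}{W}$ and $z=H^1$: writing $v_B^{-1}\triangleright\chi^{\epsilon}_s$ with unknown coefficients, these relations produce recurrences first in the output index $\ell$ and then in $s$, so that the only explicit external input is the single seed value $v_B^{-1}\triangleright\chi^+_1=\mu^l(v)^{-1}\mu^r\big(K^{p+1}?\big)$ (whose GTA expansion is quoted from \cite[Section~4.3]{F}) together with products by $\chi^+_2$, $\chi^-_1$ and $G_1$ only --- see the Remark following the theorem. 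Your route buys directness, but it needs strictly more data: all products $\chi^{\epsilon}_{\ell}\chi^{\epsilon'}_{s}$, $\chi^{\epsilon}_{\ell}G_j$, $G_jG_{s'}$, and the full $S$-matrix-type expansion of the central idempotents in the GTA basis; the paper's route replaces that by a small recurrence argument.

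As written, however, your proposal stops short of a proof: the entire content of the theorem is the explicit coefficients, and the step where the Fourier data and the multiplication rules ``collapse'' the sum to the displayed closed form is exactly the assertion to be established. You neither write down the GTA expansion of $\varphi_{v^{-1}}$ (equivalently of $\mu^r\big(K^{p+1}?\big)$) nor set up the sums that are supposed to produce the factors $q^{\ell s}+q^{-\ell s}$ and $[j][js]$; to make this a proof you would have to quote those expansions explicitly and carry the products through, index by index. Two further points to fix: the normalization hidden in $\varphi_{v^{-1}}$ is $\mu^l(v)^{-1}$, not $\mu^l\big(v^{-1}\big)^{-1}$ (Proposition~\ref{propVIntegrale} and~\eqref{relXi}); and pinning down $\xi$ requires the explicit value of $\mu^r(v)=\mu^l(v)$ computed from the PBW formulas of \cite{FGST}, which you mention only in passing. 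The block computations you do carry out --- $v^{-1}$ in the canonical central basis, $\big(\chi^{\epsilon}_s\big)^v$ and $(G_{s'})^v$ --- are correct.
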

\begin{proof}The formulas for $v_A^{-1}$ are easily deduced from Proposition~\ref{actionAB}, \eqref{rubanCentre} and \eqref{actionCentreSLF}. Computing the action of $v_B^{-1}$ is more difficult. We will use the commutation relations of $v_B^{-1}$ with the $A,B$-matrices (which follow from Theorem~\ref{valueHatAlphaBeta}), namely
\begin{gather}\label{commBeta}
v_B^{-1} \overset{I}{A} = \overset{I}{v} \overset{I}{B}{^{-1}}\overset{I}{A}v_B^{-1}, \qquad v_B^{-1}\overset{I}{B} = \overset{I}{B}v_B^{-1}
\end{gather}
to compute the action of $v_B^{-1}$ by induction. The multiplication rules of the GTA basis (see \cite[Section 5]{F}) will be used several times. Let us denote
\[ v_B^{-1} \triangleright \chi^{\epsilon}_s = \sum_{\sigma \in \{\pm\}}\sum_{\ell=1}^p \lambda^{\sigma}_{\ell}(\epsilon, s) \chi^{\sigma}_{\ell} + \sum_{j=1}^{p-1}\delta_j(\epsilon, s)G_j. \]
Relation \eqref{commBeta} implies $ v_B^{-1} \overset{\mathcal{X}^+(2)}{W}\!\!\!\!\!_A = \overset{\mathcal{X}^+(2)}{W}\!\!\!\!_{vB{^{-1}}A} v_B^{-1}$ (recall~\eqref{baseCentreGTA}). On the one hand, we obtain by~\eqref{WCasimir}
\begin{align*} v_B^{-1} \overset{\mathcal{X}^+(2)}{W}\!\!\!\!\!_A \triangleright \chi^{\epsilon}_s & = v_B^{-1} \triangleright \chi^{\epsilon}_s\big({-}\hat q^2C ?\big)\\
& = \sum_{\substack{\ell=1\\ \sigma \in \{\pm\}}}^p -\epsilon \big(q^s+q^{-s}\big) \lambda^{\sigma}_{\ell}(\epsilon, s) \chi^{\sigma}_{\ell} + \sum_{j=1}^{p-1}-\epsilon\big(q^s + q^{-s}\big)\delta_j(\epsilon, s)G_j. \end{align*}
On the other hand, we use Lemma~\ref{actionBmoinsUnA} and the multiplication rules
\begin{gather*}
 \overset{\mathcal{X}^+(2)}{W}\!\!\!\!_{vB{^{-1}}A}v_B^{-1} \triangleright \chi^{\epsilon}_s = \sum_{\sigma \in \{\pm\}}\sum_{\ell=1}^p \lambda^{\sigma}_{\ell}(\epsilon, s) \chi^+_2\chi^{\sigma}_{\ell} + \sum_{j=1}^{p-1}\delta_j(\epsilon, s)\chi^+_2G_j\\
\hphantom{\overset{\mathcal{X}^+(2)}{W}\!\!\!\!_{vB{^{-1}}A}v_B^{-1} \triangleright \chi^{\epsilon}_s}{}
= \sum_{\sigma \in \{\pm\}} \big(\lambda^{\sigma}_2(\epsilon,s) + 2\lambda^{-\sigma}_p(\epsilon,s)\big) \chi^{\sigma}_1 + \sum_{\ell=2}^{p-2} \big(\lambda^{\sigma}_{s-1}(\epsilon,s) + \lambda^{\sigma}_{s+1}(\epsilon,s)\big) \chi^{\sigma}_{\ell}\\
\hphantom{\overset{\mathcal{X}^+(2)}{W}\!\!\!\!_{vB{^{-1}}A}v_B^{-1} \triangleright \chi^{\epsilon}_s =}{}
 + \big(\lambda^{\sigma}_{p-2}(\epsilon,s) + 2\lambda^{\sigma}_p(\epsilon,s)\big) \chi^{\sigma}_{p-1} + \lambda^{\sigma}_{p-1}(\epsilon,s)\chi^{\sigma}_p + \frac{\delta_2(\epsilon, s)}{[2]}G_1\\
\hphantom{\overset{\mathcal{X}^+(2)}{W}\!\!\!\!_{vB{^{-1}}A}v_B^{-1} \triangleright \chi^{\epsilon}_s =}{}
 + \sum_{j=2}^{p-2}[j]\left(\frac{\delta_{j-1}(\epsilon, s)}{[j-1]} + \frac{\delta_{j+1}(\epsilon, s)}{[j+1]}\right)\!G_j + \frac{\delta_{p-2}(\epsilon, s)}{[2]}G_{p-1}.
\end{gather*}
This gives recurrence equations between the coefficients which are easily solved
\begin{gather*}
v_B^{-1} \triangleright \chi^{\epsilon}_s = \lambda(\epsilon, s) \left( \sum_{\ell=1}^{p-1}(-1)^s(-\epsilon)^{p-\ell}\big(q^{\ell s} + q^{-\ell s}\big)\big(\chi^+_{\ell} + \chi^-_{p-\ell}\big) + \chi^+_p + (-\epsilon)^p(-1)^s\chi^-_p \right)\\
\hphantom{v_B^{-1} \triangleright \chi^{\epsilon}_s =}{} +\delta(\epsilon,s)\sum_{j=1}^{p-1} (-\epsilon)^{j+1}\frac{[j][js]}{[s]}G_j.
\end{gather*}
The coefficients $\lambda(\epsilon, s) = \lambda^+_p(\epsilon, s)$ and $\delta(\epsilon,s) = \delta_1(\epsilon,s)$ are still unknown. In order to compute them by induction, we use the relation $v_B^{-1} \overset{\mathcal{X}^+(2)}{W}\!\!\!\!\!_B = \overset{\mathcal{X}^+(2)}{W}\!\!\!\!\!_B v_B^{-1}$, which is another consequence of~\eqref{commBeta}. Before, note that
\[ \overset{\mathcal{X}^+(2)}{W}\!\!\!\!\!_B \triangleright \chi^{\epsilon}_s = \big(\chi^+_2(\chi^{\epsilon}_s)^v\big)^{v^{-1}} = \frac{v_{\mathcal{X}^{\epsilon}(s)}}{v_{\mathcal{X}^{\epsilon}(s-1)}}\chi^{\epsilon}_{s-1} + \frac{v_{\mathcal{X}^{\epsilon}(s)}}{v_{\mathcal{X}^{\epsilon}(s+1)}}\chi^{\epsilon}_{s+1} = -\epsilon q^{-s+\frac{1}{2}}\chi^{\epsilon}_{s-1} - \epsilon q^{s+\frac{1}{2}}\chi^{\epsilon}_{s+1}, \]
with $1 \leq s \leq p-1$ and the convention that $\chi^{\pm}_0=0$. It follows that
\begin{gather}\label{eqInductionBeta}
v_B^{-1} \triangleright \chi^{\epsilon}_{s+1} = -\epsilon q^{-s-\frac{1}{2}}\overset{\mathcal{X}^+(2)}{W}\!\!\!\!\!_B \triangleright \big(v_B^{-1} \triangleright \chi^{\epsilon}_{s}\big) - q^{-2s}v_B^{-1} \triangleright \chi^{\epsilon}_{s-1}.
\end{gather}
Due to \eqref{rubanCentre}, \eqref{actionCentreSLF} and the multiplication rules, we have
 \begin{align*}
\overset{\mathcal{X}^+(2)}{W}\!\!\!\!\!_B \triangleright \big(v_B^{-1} \triangleright \chi^{\epsilon}_{s}\big) &= \big( \chi^+_2 \big(v_B^{-1} \triangleright \chi^{\epsilon}_{s}\big)^v\big)^{v^{-1}}\\
&= \frac{v_{\mathcal{X}^+(p-1)}}{v_{\mathcal{X}^+(p)}}\big( \lambda^+_{p-1}(\epsilon,s) + \hat q \delta_{p-1}(\epsilon, s)\big) \chi^+_{p} + \frac{v_{\mathcal{X}^+(2)}}{[2]}\delta_2(\epsilon, s)G_1 + \cdots,
\end{align*}
where the dots ($\cdots$) mean the remaining of the linear combination in the GTA basis. After replacement by the values found previously and insertion in relation \eqref{eqInductionBeta}, we obtain
\begin{gather*}
 \lambda(\epsilon,s+1)\chi^+_p + \delta(\epsilon, s+1)G_1 + \cdots \\
 \qquad {}= \big( q^{-(s+1)}\big(q^s+q^{-s}\big)\lambda(\epsilon,s) - q^{-2s}\lambda(\epsilon,s-1) + (-\epsilon)^{p-1}(-1)^{s-1}\hat qq^{-(s+1)}\delta(\epsilon,s) \big) \chi^+_p\\
\qquad \quad{} + \big( {-}q^{-(s+2)}\big(q^s + q^{-s}\big)\delta(\epsilon,s) - q^{-2s}\delta(\epsilon,s-1) \big) G_1 +\cdots.
\end{gather*}
These are recurrence equations. It just remains to determine the first values $\lambda(\epsilon, 1)$, $\delta(\epsilon,1)$. Observe that, since $\overline{U}_q$ is unibalanced:
\begin{gather}\label{relXi}
v_B^{-1} \triangleright \chi^+_1 = \big(\varphi_{v^{-1}} \big(\chi^+_1\big)^v\big)^{v^{-1}} = \mu^l(v)^{-1}\mu^l\big(K^{p-1}v ?\big)^{v^{-1}} = \mu^l(v)^{-1}\mu^r\big(K^{p+1}?\big).
\end{gather}
In \cite[Section 4.3]{F} the decomposition of $\mu^r(K^{p+1}?)$ in the GTA basis has been found (when $\mu^r$ is suitably normalized). Thanks to this, we obtain
\[ v_B^{-1} \triangleright \chi^+_1 = \lambda(+, 1)\chi^+_p + \delta(+,1)G_1 + \dots = \xi(-1)^{p-1}\chi^+_p - \xi G_1 + \cdots \]
and
 \begin{align*}
v_B^{-1} \triangleright \chi^-_1 &= v_{\mathcal{X}^-(1)} v_B^{-1}\overset{\mathcal{X}^-(1)}{W}\!\!\!\!\!_B \triangleright \chi^+_1 = v_{\mathcal{X}^-(1)} \overset{\mathcal{X}^-(1)}{W}\!\!\!\!\!_B\triangleright v_B^{-1} \triangleright \chi^+_1 = v_{\mathcal{X}^-(1)}\big( \chi^-_1 \big(v_B^{-1} \triangleright \chi^+_1\big)^v \big)^{v^{-1}}\\
&= -\xi\chi^+_p + \xi G_1 + \dots = \lambda(-,1)\chi^+_p + \delta(-,1)G_1 + \cdots.
\end{align*}
The scalar $\xi$ does not depend on the choice of $\mu^r$ thanks to the factor $\mu^l(v)^{-1} = \mu^l \circ S(v)^{-1} = \mu^r(v)^{-1}$ in~\eqref{relXi}. Thanks to the formulas \cite{FGST} for $\mu^r$ and $v$ in the PBW basis we compute $\mu^r(v)$ and this gives the value of $\xi$. We are now in position to solve the recurrence equations. It is easy to check that the solutions are
\[ \delta(\epsilon, s) = \xi\epsilon (-1)^sq^{-(s^2-1)}[s], \qquad \lambda(\epsilon, s) = \xi\epsilon(-\epsilon)^{p-1}s q^{-(s^2-1)}. \]
We now proceed with the proof of the formula for $G_{s'}$. Relation \eqref{commBeta} implies $v_B^{-1}H_B^1 = H_B^1v_B^{-1}$ (recall~\eqref{baseCentreGTA}). By~\eqref{rubanCentre}, \eqref{actionCentreSLF} and the multiplication rules, we have on the one hand
\[ v_B^{-1}H_B^1 \triangleright \chi^+_s = v_B^{-1} \triangleright \big(G_1\big(\chi^+_s\big)^v \big)^{v^{-1}} = [s] v_B^{-1} \triangleright G_s - \hat q(p-s) v_B^{-1} \triangleright \chi^+_s + \hat q s v_B^{-1} \triangleright \chi^-_{p-s}, \]
whereas on the other hand
\[ H_B^1v_B^{-1} \triangleright \chi^+_s = \big(G_1 \big(v_B^{-1} \triangleright \chi^+_s\big)^v\big)^{v^{-1}} = \hat q p \sum_{j=1}^{p-1} \delta_j(+,s) \left(G_j - \hat q \frac{p-j}{[j]} \chi^+_j + \hat q\frac{j}{[j]}\chi^-_{p-j}\right). \]
Equalizing \looseness=1 both sides and inserting the previously found values, we obtain the desired for\-mula.
\end{proof}

\begin{Remark}The guiding principle of the previous computations was that the mutiplication of two symmetric linear forms in the GTA basis is easy when one of them is $\chi^+_2$, $\chi^-_1$ or $G_1$ (see~\cite[Section~5]{F}), and that all the formulas can be derived from $v_B^{-1} \triangleright \chi^+_1$ using only such products.
\end{Remark}

Recall that the standard representation $\mathbb{C}^2$ of $\mathrm{SL}_2(\mathbb{Z}) = \mathrm{MCG}(\Sigma_{1,0})$ is defined by
\[ \tau_a \mapsto
\begin{pmatrix}
1 & 0\\
-1 & 1
\end{pmatrix}, \qquad
\tau_b \mapsto
\begin{pmatrix}
1 & 1\\
0 & 1
\end{pmatrix}.\]

\begin{Lemma}\label{lemmePropRepSL2Z}Let $V$ be a (projective) representation of $\mathrm{SL}_2(\mathbb{Z})$ which admits a basis $\left(x_s, y_s\right)$ such that
 \begin{alignat*}{3}
& \tau_a x_s = \sum_{\ell} a_{\ell}(s)x_{\ell}, \qquad && \tau_b x_s = \sum_{\ell} b_{\ell}(s)(x_{\ell} + y_{\ell}), & \\
& \tau_a y_s = \sum_{\ell} a_{\ell}(s)(y_{\ell} - x_{\ell}), \qquad && \tau_b y_s = \sum_{\ell} b_{\ell}(s)y_{\ell}.&
\end{alignat*}
Then there exists a $($projective$)$ representation $W$ of $\mathrm{SL}_2(\mathbb{Z})$ such that $V \cong \mathbb{C}^2 \otimes W$. More precisely, $W$ admits a basis $(w_s)$ such that
\[
\tau_a w_s = \sum_{\ell} a_{\ell}(s)w_{\ell}, \qquad \tau_b w_s = \sum_{\ell} b_{\ell}(s) w_{\ell}.
 \]
\end{Lemma}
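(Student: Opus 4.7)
The natural guess is to set $y_s \leftrightarrow e_1 \otimes w_s$ and $x_s \leftrightarrow e_2 \otimes w_s$, where $(e_1,e_2)$ is the canonical basis of $\mathbb{C}^2$. Writing out $\tau_a(e_i\otimes w_s)$ and $\tau_b(e_i\otimes w_s)$ using the matrices
\[
\rho_{\mathbb{C}^2}(\tau_a)=\begin{pmatrix}1&0\\-1&1\end{pmatrix},\qquad \rho_{\mathbb{C}^2}(\tau_b)=\begin{pmatrix}1&1\\0&1\end{pmatrix}
\]
and the would-be action of $\tau_a,\tau_b$ on $(w_\ell)$ reproduces exactly the four formulas given for $(x_s,y_s)$; this is what the statement is designed to match.

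Concretely, first I would define the vector space $W$ with formal basis $(w_s)$ and declare the operators $\widetilde{\tau}_a,\widetilde{\tau}_b\in \mathrm{End}(W)$ by the formulas
\[
\widetilde{\tau}_a w_s=\sum_\ell a_\ell(s)w_\ell,\qquad \widetilde{\tau}_b w_s=\sum_\ell b_\ell(s)w_\ell.
\]
Then I would introduce the linear isomorphism
\[
\phi\colon \mathbb{C}^2\otimes W\longrightarrow V,\qquad e_1\otimes w_s\longmapsto y_s,\quad e_2\otimes w_s\longmapsto x_s.
\]
A direct check on the four types of basis vectors $e_i\otimes w_s$ shows that
\[
\phi\circ\bigl(\rho_{\mathbb{C}^2}(\tau_a)\otimes\widetilde{\tau}_a\bigr)=\rho_V(\tau_a)\circ\phi,\qquad \phi\circ\bigl(\rho_{\mathbb{C}^2}(\tau_b)\otimes\widetilde{\tau}_b\bigr)=\rho_V(\tau_b)\circ\phi,
\]
so $\phi$ intertwines the two candidate projective representations.

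It remains to see that $\widetilde{\tau}_a,\widetilde{\tau}_b$ really define a (projective) representation of $\mathrm{SL}_2(\mathbb{Z})$ on $W$. Here I would use two facts: (i) the standard representation $\mathbb{C}^2$ is a \emph{genuine} representation of $\mathrm{SL}_2(\mathbb{Z})$, as one checks directly that $\rho_{\mathbb{C}^2}(\tau_a\tau_b\tau_a)=\rho_{\mathbb{C}^2}(\tau_b\tau_a\tau_b)$ and that $\rho_{\mathbb{C}^2}(\tau_a\tau_b)$ has order $6$ (its characteristic polynomial is $t^2-t+1$); (ii) the relations in $V$ hold up to scalars, say $\rho_V(\tau_a\tau_b\tau_a)=\lambda\,\rho_V(\tau_b\tau_a\tau_b)$ and $\rho_V(\tau_a\tau_b)^6=\mu\,\mathrm{id}_V$. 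Transporting these through $\phi$ and using $(X_1\otimes Y_1)(X_2\otimes Y_2)=X_1X_2\otimes Y_1Y_2$ gives
\[
\rho_{\mathbb{C}^2}(\tau_a\tau_b\tau_a)\otimes\widetilde{\tau}_a\widetilde{\tau}_b\widetilde{\tau}_a=\lambda\,\rho_{\mathbb{C}^2}(\tau_b\tau_a\tau_b)\otimes\widetilde{\tau}_b\widetilde{\tau}_a\widetilde{\tau}_b,
\]
\[
\rho_{\mathbb{C}^2}(\tau_a\tau_b)^6\otimes(\widetilde{\tau}_a\widetilde{\tau}_b)^6=\mu\,\mathrm{id}_{\mathbb{C}^2}\otimes\mathrm{id}_W.
\]
Since the $\mathbb{C}^2$-factors are equal and invertible (the common value is a non-zero matrix on the left, the identity on the right), a standard ``cancellation'' in $\mathrm{End}(\mathbb{C}^2)\otimes\mathrm{End}(W)$—pick any non-zero entry of the fixed $\mathbb{C}^2$-operator—yields $\widetilde{\tau}_a\widetilde{\tau}_b\widetilde{\tau}_a=\lambda\,\widetilde{\tau}_b\widetilde{\tau}_a\widetilde{\tau}_b$ and $(\widetilde{\tau}_a\widetilde{\tau}_b)^6=\mu\,\mathrm{id}_W$. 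Hence $W$ is a projective representation of $\mathrm{SL}_2(\mathbb{Z})$ (with the same cocycle as $V$), and $\phi$ realizes the desired isomorphism $V\cong \mathbb{C}^2\otimes W$.

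The argument is essentially a matrix verification followed by a cancellation, so no real obstacle arises; the only subtlety worth flagging is that one must use the genuine (not merely projective) validity of the $\mathrm{SL}_2(\mathbb{Z})$-relations on $\mathbb{C}^2$ in order to transfer the \emph{same} projective scalars $\lambda,\mu$ from $V$ to $W$.
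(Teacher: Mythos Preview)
Your proof is correct and follows essentially the same approach as the paper: the intertwiner $\phi$ you write down, sending $e_1\otimes w_s\mapsto y_s$ and $e_2\otimes w_s\mapsto x_s$, is exactly the map the paper uses. The only difference is that where the paper simply asserts ``it is easy to check that the formulas for $\tau_a w_s$ and $\tau_b w_s$ indeed define a $\mathrm{SL}_2(\mathbb{Z})$-representation on $W$'', you make this explicit via the cancellation argument in $\mathrm{End}(\mathbb{C}^2)\otimes\mathrm{End}(W)$, correctly noting that the genuine (not merely projective) validity of the relations on $\mathbb{C}^2$ is what allows the projective scalars $\lambda,\mu$ to transfer unchanged from $V$ to $W$.
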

\begin{proof}It is easy to check that the formulas for $\tau_a w_s$ and $\tau_b w_s$ indeed define a $\mathrm{SL}_2(\mathbb{Z})$-re\-pre\-sen\-ta\-tion on~$W$. Let $(e_1, e_2)$ be the canonical basis of $\mathbb{C}^2$. Then
\[ e_1 \otimes w_s \mapsto y_s, \qquad e_2 \otimes w_s \mapsto x_s \]
is an isomorphism which intertwines the $\mathrm{SL}_2(\mathbb{Z})$-action.
\end{proof}

\begin{Theorem}\label{thDecRep}The $(p+1)$-dimensional subspace $\mathcal{V} = \vect \big(\chi^+_s + \chi^-_{p-s}, \chi^{\pm}_p \big)_{1 \leq s \leq p-1}$ is stable under the $\mathrm{SL}_2(\mathbb{Z})$-action of Theorem~{\rm \ref{actionSL2ZArike}}. Moreover, there exists a $(p-1)$-dimensional projective representation~$\mathcal{W}$ of $\mathrm{SL}_2(\mathbb{Z})$ such that
\[ \SLF\big(\overline{U}_q\big) = \mathcal{V} \oplus \big(\mathbb{C}^2 \otimes \mathcal{W}\big). \]
\end{Theorem}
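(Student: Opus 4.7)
The strategy is to identify an explicit basis of a complement to $\mathcal{V}$ on which $\mathrm{SL}_2(\mathbb{Z})$ acts according to the pattern of Lemma \ref{lemmePropRepSL2Z}, and then to quote that lemma.

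\textbf{Stability of $\mathcal{V}$.} From Theorem \ref{actionSL2ZArike} combined with the identity $v_{\mathcal{X}^+(s)}=v_{\mathcal{X}^-(p-s)}$, $v_A^{-1}$ acts diagonally on each of the generators $\chi^+_s+\chi^-_{p-s}$ and $\chi^{\pm}_p$ of $\mathcal{V}$. For $v_B^{-1}$, summing the two formulas for $v_B^{-1}\triangleright\chi^+_s$ and $v_B^{-1}\triangleright\chi^-_{p-s}$ and applying the elementary identities $q^{\ell p}=(-1)^{\ell}$, $q^{-p^2}=(-1)^p$, $q^{\ell(p-s)}+q^{-\ell(p-s)}=(-1)^{\ell}(q^{\ell s}+q^{-\ell s})$ and $[j(p-s)]=(-1)^{j+1}[js]$ shows that the $G_j$-coefficients cancel and that the remaining $\chi$-contributions combine into a linear combination of $\chi^+_\ell+\chi^-_{p-\ell}$ and $\chi^{\pm}_p$. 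For $\chi^{\pm}_p$ alone, $v_B^{-1}\triangleright\chi^{\pm}_p$ carries no $G_j$-terms at all since $[jp]=0$, and the surviving terms clearly lie in $\mathcal{V}$.

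\textbf{The complementary basis.} For $s=1,\dots,p-1$ I set
\[ x_s = \hat q\tfrac{p-s}{[s]}\chi^+_s - \hat q\tfrac{s}{[s]}\chi^-_{p-s}, \qquad y_s = G_s - x_s. \]
The shift $y_s = G_s - x_s$ is the central non-obvious ingredient: taking $y_s = G_s + \alpha x_s$ and demanding that $v_B^{-1}\triangleright y_s$ involve only $y_\ell$'s produces the quadratic $(\alpha+1)^2=0$, forcing $\alpha=-1$. The identities of Lemma \ref{lemmePropRepSL2Z} are then checked with $a_\ell(s)=v_{\mathcal{X}^+(s)}^{-1}\delta_{\ell,s}$ and $b_\ell(s)=K_s(-1)^{\ell+1}[\ell][\ell s]$, where $K_s=\xi(-1)^s q^{-(s^2-1)}\hat q p/[s]$. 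The $\tau_a$ part is immediate: Theorem \ref{actionSL2ZArike} rewrites as $v_A^{-1}\triangleright G_s = v_{\mathcal{X}^+(s)}^{-1}(G_s-x_s) = v_{\mathcal{X}^+(s)}^{-1}y_s$, giving $v_A^{-1}\triangleright y_s=v_{\mathcal{X}^+(s)}^{-1}(y_s-x_s)$. For $\tau_b$, the core calculation is that $v_B^{-1}\triangleright x_s = K_s\sum_j (-1)^{j+1}[j][js]\,G_j$: combining $\hat q\tfrac{p-s}{[s]}v_B^{-1}\triangleright\chi^+_s$ with $-\hat q\tfrac{s}{[s]}v_B^{-1}\triangleright\chi^-_{p-s}$ and using the same identities as above, one checks that all $\chi$-contributions cancel because the coefficients of $\chi^+_\ell+\chi^-_{p-\ell}$ are proportional to $-s+s=0$ and those of $\chi^{\pm}_p$ to $(-1)^{p-1}+(-1)^p=0$. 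Substituting $G_j=y_j+x_j$ produces $v_B^{-1}\triangleright x_s=K_s\sum_j(-1)^{j+1}[j][js](x_j+y_j)$, while the formula for $v_B^{-1}\triangleright G_s$ in Theorem \ref{actionSL2ZArike}, rewritten as $K_s\sum_j(-1)^{j+1}[j][js](2G_j-x_j)$, together with the same substitution yields $v_B^{-1}\triangleright y_s=K_s\sum_j(-1)^{j+1}[j][js]y_j$.

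\textbf{Direct sum and conclusion.} Lemma \ref{lemmePropRepSL2Z} produces a $(p-1)$-dimensional projective $\mathrm{SL}_2(\mathbb{Z})$-representation $\mathcal{W}$ with $\mathrm{span}(x_s,y_s)_{s=1}^{p-1}\cong\mathbb{C}^2\otimes\mathcal{W}$. An element $\sum\alpha_s x_s+\sum\beta_s y_s$ lying in $\mathcal{V}$ must have zero $G_s$-coefficient for every $s$, forcing $\beta_s=0$; then $\sum\alpha_s x_s\in\mathcal{V}$ forces the pair $\bigl(\tfrac{p-s}{[s]},-\tfrac{s}{[s]}\bigr)$ to be proportional to $(1,1)$ in each $s$-slot, which is impossible unless $\alpha_s=0$. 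Hence $\mathcal{V}\cap\mathrm{span}(x_s,y_s)=0$; since $(p+1)+2(p-1)=3p-1=\dim\mathrm{SLF}\bigl(\overline{U}_q\bigr)$, the sum is direct and exhausts the space. The main obstacle is isolating the correct shift $\alpha=-1$; once the pair $(x_s,y_s)$ is in hand, every verification reduces to routine bookkeeping in Theorem \ref{actionSL2ZArike}.
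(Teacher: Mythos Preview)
Your proof is correct and follows the same approach as the paper: define $x_s = \hat q\tfrac{p-s}{[s]}\chi^+_s - \hat q\tfrac{s}{[s]}\chi^-_{p-s}$ and $y_s = G_s - x_s$, verify that $v_A^{-1}, v_B^{-1}$ act on these exactly in the pattern of Lemma~\ref{lemmePropRepSL2Z}, and invoke that lemma. The only notable difference is that the paper establishes stability of $\mathcal{V}$ without computation, observing that $\mathcal{V}$ is an ideal of $\SLF\big(\overline{U}_q\big)$ (by \cite[Corollary~5.1]{F}) which is also stable under the $\mathcal{Z}\big(\overline{U}_q\big)$-action \eqref{actionCentreSLF}, so that stability under $v_A^{-1}$ and $v_B^{-1}$ follows immediately from \eqref{actionsV}; your direct verification via the explicit formulas is a valid alternative.
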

\begin{proof} By \cite[Corollary~5.1]{F}, $\mathcal{V}$ is an ideal of $\SLF\big(\overline{U}_q\big)$. It is easy to see that $\mathcal{V}$ is moreover stable under the action \eqref{actionCentreSLF} of $\mathcal{Z}\big(\overline{U}_q\big)$. Thus we deduce without any computation that $\mathcal{V}$ is $\mathrm{SL}_2(\mathbb{Z})$-stable. Next, in view of the formulas in Theorem~\ref{actionSL2ZArike}, it is natural to define
\[ x_s = \hat q \frac{p-s}{[s]}\chi^+_s - \hat q \frac{s}{[s]}\chi^-_{p-s}, \qquad y_s = G_s - x_s. \]
Then
\begin{alignat*}{3}
& v_A^{-1} \triangleright x_s = v_{\mathcal{X}^+(s)}^{-1} x_s, \qquad && v_B^{-1} \triangleright x_s = \xi (-1)^{s}q^{-(s^2-1)}\frac{\hat q p}{[s]}\sum_{j=1}^{p-1}(-1)^{j+1}[j][js]\!\left(x_j + y_j\right), &\\
& v_A^{-1} \triangleright y_s = v_{\mathcal{X}^+(s)}^{-1}(y_s - x_s) , \qquad && v_B^{-1} \triangleright y_s = \xi (-1)^{s}q^{-(s^2-1)}\frac{\hat q p}{[s]}\sum_{j=1}^{p-1}(-1)^{j+1}[j][js] y_j.&
\end{alignat*}
The result follows from Lemma~\ref{lemmePropRepSL2Z}.
\end{proof}

The structure of the Lyubashenko--Majid representation on $\mathcal{Z}\big(\overline{U}_q\big)$ has been described in~\cite{FGST}. By Theorem~\ref{EquivalenceLMandSLF}, this projective representation is equivalent to the one constructed here and Theorem~\ref{thDecRep} is in perfect agreement with the decomposition given in~\cite{FGST}.

Note that the subspace $\mathcal{V}$ is generated by the characters of the finite-dimensional projective $\overline{U}_q$-modules: $\mathcal{V} = \mathrm{vect}\big( \chi^P \big)_{P \in \mathrm{Proj}\big(\overline{U}_q\big)}$. We precise that, explicitly, the projective representa\-tion~$\mathcal{W}$ has a basis $(w_s)_{1 \leq s \leq p-1}$ such that the action of $\tau_a$, $\tau_b$ is
\[ \tau_a w_s = v_{\mathcal{X}^+(s)}^{-1} w_s, \qquad \tau_b w_s = \xi (-1)^{s}q^{-(s^2-1)}\frac{\hat q p}{[s]}\sum_{j=1}^{p-1}(-1)^{j+1}[j][js] w_j. \]

\subsection[A conjecture about the representation of $\mathcal{L}_{1,0}^{\mathrm{inv}}\big(\overline{U}_q\big)$ on $\SLF\big(\overline{U}_q\big)$]{A conjecture about the representation of $\boldsymbol{\mathcal{L}_{1,0}^{\mathrm{inv}}\big(\overline{U}_q\big)}$ on $\boldsymbol{\SLF\big(\overline{U}_q\big)}$}\label{sectionConjecture}

Another natural (but harder) question is to determine the structure of $\SLF\big(\overline{U}_q\big)$ under the action of $\mathcal{L}_{1,0}^{\mathrm{inv}}\big(\overline{U}_q\big)$. As mentioned in the proof of Theorem~\ref{thDecRep}, the subspace
\[ \mathcal{V}= \vect\big(\chi^+_s + \chi^-_{p-s}, \chi^{\pm}_p\big)_{1 \leq s \leq p-1}\] is quite ``stable''. We propose the following conjecture.
\begin{Conjecture}
$\mathcal{V}$ is a $\mathcal{L}_{1,0}^{\mathrm{inv}}\big(\overline{U}_q\big)$-submodule of $\SLF\big(\overline{U}_q\big)$.
\end{Conjecture}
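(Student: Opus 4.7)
The strategy is to reduce the stability of $\mathcal{V}$ under $\mathcal{L}_{1,0}^{\mathrm{inv}}(\overline{U}_q)$ to two properties already established and used in the proof of Theorem~\ref{thDecRep}: (i)~$\mathcal{V}$ is an ideal of the commutative algebra $\SLF(\overline{U}_q)$ (\cite[Corollary~5.1]{F}); and (ii)~$\mathcal{V}$ is stable under the multiplicative action $\psi\mapsto\psi^z$ of every $z\in\mathcal{Z}(\overline{U}_q)$, which is immediate from the formulas~\eqref{actionCentreSLF}. In combination, (i)--(ii) show that $\mathcal{V}$ is preserved by any finite composition of $\SLF$-multiplications and $v^{\pm 1}$-twists, since $v\in\mathcal{Z}(\overline{U}_q)$ and $\mathcal{D}^{-1}(\mathcal{Z}(\overline{U}_q))=\SLF(\overline{U}_q)$.

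\textbf{Elementary invariants.} Proposition~\ref{actionAB} and Lemma~\ref{lemmeBMoinsUn} express $z_A\triangleright\psi=\psi^z$, $z_B\triangleright\psi=(\mathcal{D}^{-1}(z)\psi^v)^{v^{-1}}$, and $z_{B^{-1}}\triangleright\psi=(S(\mathcal{D}^{-1}(z))\psi^v)^{v^{-1}}$ as such compositions (note that $S(\mathcal{D}^{-1}(z))\in\SLF(\overline{U}_q)$ by Proposition~\ref{antipodeSLF}). By the reduction above, the invariants $z_A$, $z_B$, $z_{B^{-1}}$ all stabilize $\mathcal{V}$ for every $z\in\mathcal{Z}(\overline{U}_q)$; this already contains the generators $v_A^{-1}$ and $v_B^{-1}$ of the $\mathrm{SL}_2(\mathbb{Z})$-action.

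\textbf{Word-type invariants.} For a general word $w$ in $A^{\pm 1},B^{\pm 1}$ and $z\in\mathcal{Z}(\overline{U}_q)$, one extends the previous calculations by induction on word length, using the exchange relation~\eqref{echangeL10}, the inverse fusion relation~\eqref{eqFusionInverse}, and the Heisenberg-double isomorphism $\Psi_{1,0}$ of Theorem~\ref{isoL10Heisenberg}. The expected outcome is a formula for $(z_w)\triangleright\psi$ as a finite composition of multiplications in $\SLF(\overline{U}_q)$ and twists by $v^{\pm 1}$. If this template holds for every word, the reduction of the first paragraph shows that all word-type invariants preserve $\mathcal{V}$.

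\textbf{Main obstacle.} The crucial and hardest step is to show that the word-type invariants of the previous paragraph generate $\mathcal{L}_{1,0}^{\mathrm{inv}}(\overline{U}_q)$ as an algebra. The general invariants take the form $\mathrm{tr}_{1\cdots k}\bigl(\overset{I_1\cdots I_k}{g}\!\!_{1\cdots k}\,\Phi\cdot P\bigr)$ with $P$ a monomial in the matrices $\overset{I_i}{A}$, $\overset{I_j}{B}$ and $R$-matrices and $\Phi\in\End_{\overline{U}_q}(I_1\otimes\cdots\otimes I_k)$, as in~\eqref{FamilleInvL10}. Non-semisimplicity makes these endomorphism spaces substantially richer than in the modular case (they contain for instance the idempotents projecting onto the various indecomposable summands of $\mathcal{X}^+(2)^{\otimes n}$, including the PIMs $\mathcal{P}^{\pm}(s)$). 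Two reasonable strategies to bridge the gap are: (a)~a direct algebraic argument combined with a dimension count, using the isomorphism $\mathcal{L}_{1,0}(\overline{U}_q)\cong\mathcal{H}(\mathcal{O}(\overline{U}_q))$ to show that the word-type invariants already span the image of $\mathcal{L}_{1,0}^{\mathrm{inv}}(\overline{U}_q)$ in $\End_{\mathbb{C}}(\SLF(\overline{U}_q))$; or (b)~exploit the skein-theoretic interpretation mentioned in the introduction, in which Wilson-loop type operators should preserve the ideal $\mathcal{V}=\mathrm{vect}(\chi^P)_{P\in\mathrm{Proj}(\overline{U}_q)}$ because $P\otimes M$ is projective whenever $P$ is. Making either strategy precise in the present algebraic formalism is the core of the problem and the reason the statement is left as a conjecture.
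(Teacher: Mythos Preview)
The statement is a \emph{Conjecture}; the paper does not prove it, and neither does your proposal --- you correctly frame it as a plan and name the main obstruction at the end. Your overall strategy (find a generating family of invariants, then check stability of $\mathcal{V}$ under each) is exactly the one the paper outlines in the paragraph following the Conjecture, and your reduction to ``$\mathcal{V}$ is an ideal of $\SLF(\overline{U}_q)$ stable under $\psi\mapsto\psi^z$'' matches the paper's remark that $\mathcal{V}$ is stable under $z_A$, $z_B$ and $z_{vB^{-1}A}$.

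There is, however, an additional gap you underplay. Your ``word-type invariants'' paragraph asserts that for every word $w$ in $A^{\pm 1},B^{\pm 1}$ and $z\in\mathcal{Z}(\overline{U}_q)$, the operator $z_w$ acts on $\SLF$ as a composition of $\SLF$-multiplications and $v^{\pm 1}$-twists, to be proved ``by induction on word length''. This is not established anywhere in the paper and is far from automatic: the explicit formulas available (Proposition~\ref{actionAB}, Lemma~\ref{lemmeBMoinsUn}, Lemma~\ref{actionBmoinsUnA}) cover only the very special words $A$, $B$, $B^{-1}$, $vB^{-1}A$, and each required a separate Heisenberg-double computation with nontrivial use of the Yang--Baxter equation and ribbon identities. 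For a general word (say $z_{BA}$ or $z_{ABA^{-1}}$) no such reduction is known, and there is no obvious inductive mechanism that absorbs the extra $R$-matrix insertions into $\SLF$-products and $v$-twists alone. So even before the ``main obstacle'' of showing that word-type invariants generate $\mathcal{L}_{1,0}^{\mathrm{inv}}(\overline{U}_q)$, the intermediate step is itself open.

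The paper's own discussion goes in a somewhat different direction for supporting evidence: rather than word-type invariants, it analyzes the trace-type invariants~\eqref{FamilleInvL10} with an intertwiner $\Phi\in\End_{\overline{U}_q}(I\otimes J)$, computes their action on characters via a deformed $S$-matrix $s_{IJ,K}(\Phi)$, and verifies the required symmetry between $s_{IJ,\mathcal{X}^+(s)}$ and $s_{IJ,\mathcal{X}^-(p-s)}$ for $\Phi=\mathrm{id}_{I\otimes J}$ (all simple $I,J$) and for $I=J=\mathcal{X}^+(2)$ with arbitrary $\Phi$. This is complementary to your approach and suggests that your strategy~(b) --- the projectivity argument ``$P\otimes M$ is projective whenever $P$ is'' --- is morally the right idea, but translating it into a statement about the operators $s_{IJ,K}(\Phi)$ for non-identity $\Phi$ is precisely where the difficulty lies.
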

In order to prove this conjecture one needs to find a basis or a generating set of $\mathcal{L}_{1,0}^{\mathrm{inv}}\big(\overline{U}_q\big)$, and then to show that $\mathcal{V}$ is stable under the action of the basis elements (or of the generating elements). Both tasks are difficult.

Let us mention that since $\mathcal{V}$ is an ideal of $\SLF\big(\overline{U}_q\big)$, it is stable under the action of $z_{A}$, $z_{B}$ and $z_{B^{-1}A}$ for all $z \in \mathcal{Z}\big(\overline{U}_q\big) = \mathcal{L}^{\mathrm{inv}}_{0,1}\big(\overline{U}_q\big)$ (see Proposition~\ref{actionAB} and Lemma~\ref{actionBmoinsUnA}). Also recall the wide family of invariants given in \eqref{FamilleInvL10}; we can try to test the conjecture with them. A long computation (which is not specific to $\overline{U}_q$) shows that
\[ \mathrm{tr}_{12}\Big(\overset{I \otimes J}{g}\!\!\!_{12}\Phi_{12}\overset{I}{A}_1\overset{IJ}{(R')}_{12}\overset{J}{B}_2\overset{IJ}{R}_{12}\Big) \triangleright \chi^K = v_J \mathrm{tr}_{13}\Big( \overset{I \otimes K}{T}\!\!\!\!_{13} \overset{I \otimes K}{v}{^{-1}}\!\!\!\!\!\!\!\!\!_{13} \:\:\: s_{IJ,K}(\Phi)_{13} \Big), \]
where $\chi^K$ is the character of $K$, $\overset{J}{v} = v_J\mathrm{id}$ (note that we may assume that $I$, $J$, $K$ are simple modules) and
\[ s_{IJ,K}(\Phi) = \mathrm{tr}_{2}\Big( \overset{J}{g}_2 \overset{JK}{R}_{23}\Phi_{12}\overset{JK}{(R')}_{23} \Big). \]
Proving that $\mathcal{V}$ is stable under the action of these invariants amounts to show symmetry pro\-perties between $s_{IJ, \mathcal{X}^{+}(s)}$ and $s_{IJ, \mathcal{X}^{-}(p-s)}$ for all simple $\overline{U}_q$-modules~$I$,~$J$. We have checked that it is true if $\Phi = \mathrm{id}_{I \otimes J}$ (in this case $ s_{IJ,K}(\mathrm{id}_{I \otimes J}) = s_{J,K} \mathrm{id}_{I \otimes K}$, where $s_{J,K}$ is the usual $S$-matrix) for all simple modules $I$,~$J$, and also that it holds for $I = J = \mathcal{X}^+(2)$ with every~$\Phi$.

\begin{Proposition}\label{propStructureSLFSousL10inv}\quad
\begin{enumerate}\itemsep=0pt
\item[$1)$] $\SLF\big(\overline{U}_q\big)$ is indecomposable as a $\mathcal{L}_{1,0}^{\mathrm{inv}}\big(\overline{U}_q\big)$-module.
\item[$2)$] Assume that the Conjecture holds. Then the $\mathcal{L}_{1,0}^{\mathrm{inv}}\big(\overline{U}_q\big)$-modules $\mathcal{V}$ and $\SLF\big(\overline{U}_q\big) / \mathcal{V}$ are simple. It follows that $\SLF\big(\overline{U}_q\big)$ has length $2$ as a $\mathcal{L}_{1,0}^{\mathrm{inv}}\big(\overline{U}_q\big)$-module.
\end{enumerate}
\end{Proposition}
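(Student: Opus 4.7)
The plan is to exploit the $\mathcal{Z}(\overline{U}_q)$-module structure that $\SLF(\overline{U}_q)$ inherits from the subalgebra $j_A\bigl(\mathcal{L}_{0,1}^{\mathrm{inv}}(\overline{U}_q)\bigr) \subset \mathcal{L}_{1,0}^{\mathrm{inv}}(\overline{U}_q)$, combined with the explicit formulas of Theorem~\ref{actionSL2ZArike}. By \eqref{actionCentreSLF}, the primitive idempotents of $\mathcal{Z}(\overline{U}_q)$ produce the $\mathcal{Z}$-module decomposition
\[
\SLF\bigl(\overline{U}_q\bigr) = \bigoplus_{s=1}^{p-1} V_s \oplus \mathbb{C}\chi^+_p \oplus \mathbb{C}\chi^-_p, \qquad V_s := \vect\bigl(\chi^+_s,\chi^-_{p-s},G_s\bigr),
\]
and a direct check shows each $V_s$ is $\mathcal{Z}$-indecomposable: $\End_{\mathcal{Z}}(V_s) = \mathbb{C}\,\mathrm{id} \oplus \mathbb{C}e^+ \oplus \mathbb{C}e^-$ with $e^+,e^-$ nilpotent and $e^{\pm}e^{\pm} = e^+e^- = e^-e^+ = 0$, so its only idempotents are $0$ and $\mathrm{id}$. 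Consequently any $\mathcal{L}_{1,0}^{\mathrm{inv}}$-module direct-sum decomposition of $\SLF(\overline{U}_q)$ respects this block structure and is a union of full blocks taken from $\{V_1,\ldots,V_{p-1},\mathbb{C}\chi^+_p,\mathbb{C}\chi^-_p\}$; the same restriction applies to $\mathcal{Z}$-stable submodules of $\mathcal{V}$ and $\SLF/\mathcal{V}$.

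For (1), I would combine two explicit transport mechanisms. First, Theorem~\ref{actionSL2ZArike} yields
\[
v_B^{-1}\triangleright\chi^+_p = \xi(-1)^{p-1}p\,q^{-(p^2-1)}\Bigl[2\sum_{\ell=1}^{p-1}(\chi^+_\ell+\chi^-_{p-\ell}) + \chi^+_p + \chi^-_p\Bigr],
\]
whose $e_\ell$-projection onto $V_\ell$ is nonzero for every $\ell$ and whose $\chi^-_p$-component is also nonzero. Second, starting from $V_{s_0}\subseteq M_1$, the action of $W^{\mathcal{X}^+(2)}_B = j_B\bigl(\mathcal{D}(\chi^+_2)\bigr)$ on $\chi^+_{s_0}$ equals $\bigl(\chi^+_2(\chi^+_{s_0})^v\bigr)^{v^{-1}}$ and, by the multiplication rules of the GTA basis, has nonzero components on $\chi^+_{s_0\pm 1}$ (including $\chi^+_p$ when $s_0=p-1$). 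Chaining these operations forces any nonzero $M_1$ to contain every block, hence $M_1 = \SLF(\overline{U}_q)$, and indecomposability follows.

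For (2), assume the Conjecture. As a $\mathcal{Z}$-module $\mathcal{V}$ is the direct sum of the $p+1$ pairwise non-isomorphic simple lines $\mathbb{C}(\chi^+_s+\chi^-_{p-s})$ and $\mathbb{C}\chi^\pm_p$, so every nonzero $\mathcal{L}_{1,0}^{\mathrm{inv}}$-submodule $N\subseteq\mathcal{V}$ is a sum of some of these lines. If $\chi^\pm_p\in N$ then the formula above for $v_B^{-1}\triangleright\chi^+_p$ (which lies in $\mathcal{V}$ by the Conjecture) and the $e_\ell$-projections immediately yield $N=\mathcal{V}$. Otherwise some $\chi^+_{s_0}+\chi^-_{p-s_0}\in N$, and a direct computation using Theorem~\ref{actionSL2ZArike} (with the relation $q^{-p^2} = (-1)^p$) shows that the $\chi^+_p$-coefficient of $v_B^{-1}\triangleright(\chi^+_{s_0}+\chi^-_{p-s_0})$ equals $(-1)^{p-1}p\,\xi\,q^{-(s_0^2-1)}\neq 0$ (the $G_j$-contributions cancel since the output lies in $\mathcal{V}$ by the Conjecture), so projecting with the idempotent for $\chi^+_p$ reduces to the previous case. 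For $\SLF/\mathcal{V}$, the relations $[\chi^-_{p-s}]=-[\chi^+_s]$ and $[\chi^\pm_p]=0$ give $\SLF/\mathcal{V} = \bigoplus_{s=1}^{p-1}[V_s]$ with $[V_s] = V_s/\mathbb{C}(\chi^+_s+\chi^-_{p-s})$ a uniserial length-$2$ $\mathcal{Z}$-module, socle $\mathbb{C}[\chi^+_s]$, top $\mathbb{C}[G_s]$. The key computation is
\[
[v_B^{-1}\triangleright\chi^+_{s_0}] = \xi(-1)^{s_0}q^{-(s_0^2-1)}\sum_{j=1}^{p-1}(-1)^{j+1}[j][js_0]\,[G_j] \pmod{\mathcal{V}},
\]
which shows that lifting any socle element $[\chi^+_{s_0}]$ to a submodule $\bar N$ produces $[G_{j_0}]\in\bar N$ for some $j_0$ with $[j_0s_0]\neq 0$; the analogous computation of $v_B^{-1}\triangleright G_{s_0}\pmod{\mathcal{V}}$ then spreads $[V_j]$ across all remaining indices $j$, after which $\bar N = \SLF/\mathcal{V}$.

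The main obstacle will be controlling the vanishing loci of the quantum integers $[js_0]$: when $\gcd(s_0,p)>1$ some couplings produced by $v_B^{-1}$ vanish, and one must iterate, first using the $j_B$-action of $W^{\mathcal{X}^+(2)}$ (which shifts $s_0\mapsto s_0\pm 1$ by the GTA multiplication rules) to reach an index coprime to $p$ before reapplying $v_B^{-1}$.
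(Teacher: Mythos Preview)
Your argument is essentially correct, but it takes a genuinely different and heavier route than the paper's. The paper does not use the $v_B^{-1}$-formulas of Theorem~\ref{actionSL2ZArike} at all; instead the key tool is Lemma~\ref{actionBmoinsUnA}, which says $z_{vB^{-1}A}\triangleright\psi = S(\mathcal{D}^{-1}(z))\psi = \mathcal{D}^{-1}(z)\psi$ (the last equality by Proposition~\ref{antipodeSLF}). This single observation makes \emph{multiplication by an arbitrary element of $\SLF(\overline{U}_q)$} available as an $\mathcal{L}_{1,0}^{\mathrm{inv}}$-action. With this in hand, Part~1 becomes almost immediate: $\SLF(\overline{U}_q)$ is cyclic on $\chi^+_1=\varepsilon$, and from any putative summand one isolates $\chi^+_1$ by a single application of $(w_1^+)_A$. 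For Part~2 the paper uses $\overset{\mathcal{X}^+(2)}{W}_{\!\!\!vB^{-1}A}$, i.e., multiplication by $\chi^+_2$, which shifts indices $s\mapsto s\pm 1$ with \emph{unit} coefficients, followed by the idempotent projections $(e_{s\pm 1})_A$; the analogous step in the quotient uses $H^1_{vB^{-1}A}$ (multiplication by $G_1$). No quantum-integer coefficients enter, so the ``$[js_0]=0$'' obstacle you worry about never arises.

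What your approach buys is a proof that works entirely inside the subalgebra generated by $j_A(\mathcal{Z})$ together with the $\mathrm{SL}_2(\mathbb{Z})$-generators $v_A^{\pm 1}, v_B^{\pm 1}$, which is conceptually pleasant; the cost is the case analysis you acknowledge at the end (iterating $\overset{\mathcal{X}^+(2)}{W}_{\!\!B}$ to escape indices with $\gcd(s_0,p)>1$ before reapplying $v_B^{-1}$). If you want to streamline, notice that your shift operator $\overset{\mathcal{X}^+(2)}{W}_{\!\!B}$ can be replaced by $\overset{\mathcal{X}^+(2)}{W}_{\!\!\!vB^{-1}A}$, which by Lemma~\ref{actionBmoinsUnA} acts simply as $\psi\mapsto\chi^+_2\psi$ and already removes the need for $v_B^{-1}$ altogether.
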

\begin{proof}These are basically consequences of \eqref{actionCentreSLF} and of the multiplication rules in the GTA basis \cite[Section 5]{F}. To avoid particular cases, let $\chi^{\epsilon}_0 = 0$, $\chi^{\epsilon}_{p+1} = \chi^{-\epsilon}_1$, $\chi^{\epsilon}_{-1} = \chi^{-\epsilon}_{p-1}$ and $e_{-1} = e_{p+1} = 0$.

1) Observe that $\SLF\big(\overline{U}_q\big)$ is generated by $\chi^+_1 = \varepsilon$ as a $\mathcal{L}_{1,0}^{\mathrm{inv}}\big(\overline{U}_q\big)$-module: this is a general fact which follows immediately from Lemma~\ref{actionBmoinsUnA}. Explicitly (see \eqref{valueHatAlphaBeta})
\[ \overset{\mathcal{X}^{\epsilon}(s)}{W}\!\!\!\!_{vB^{-1}A} \triangleright \chi^+_1 = \chi^{\epsilon}_s\chi^+_1 = \chi^{\epsilon}_s, \qquad H^s_{vB^{-1}A} \triangleright \chi^+_1 = G_s \chi^+_1 = G_s. \]
Write $\SLF\big(\overline{U}_q\big) = U_1 \oplus U_2$. At least one of the two subspaces $U_1, U_2$ necessarily contains an element of the form $\varphi = G_1 + \sum_{i \neq 1} \lambda_i G_i + \sum_{j, \epsilon} \eta^{\epsilon}_j \chi^{\epsilon}_j$; assume that it is $U_1$. Then $\big(w_1^+\big)_A \triangleright \varphi = \varphi^{w_1^+} = \chi^+_1 \in U_1$ thanks to~\eqref{actionCentreSLF}. It follows that $U_1 = \SLF\big(\overline{U}_q\big)$ and $U_2 = \{0\}$, as desired.

2) Let $0 \neq U \subset \mathcal{V}$ be a submodule, and let $\psi = \sum_{j=0}^p \lambda_{j}\big(\chi^+_j + \chi^-_{p-j}\big) \in U$ with $\lambda_s \neq 0$ for some~$s$. Then using Proposition~\ref{actionAB} and~\eqref{actionCentreSLF}, we get $(e_s)_A \triangleright \psi = \psi^{e_s} = \lambda_{s}\big(\chi^+_s + \chi^-_{p-s}\big)$, and thus $\chi^+_s + \chi^-_{p-s} \in U$. Apply $\overset{\mathcal{X}^+(2)}{W}\!\!\!\!_{vB^{-1}A}$ (we use Lemma~\ref{actionBmoinsUnA} and Proposition~\ref{antipodeSLF})
\[ \overset{\mathcal{X}^+(2)}{W}\!\!\!\!_{vB^{-1}A} \triangleright \big(\chi^+_s + \chi^-_{p-s}\big) = \chi^+_2\big(\chi^+_s + \chi^-_{p-s}\big) = \big(\chi^+_{s-1} + \chi^-_{p-s+1}\big) + \big(\chi^+_{s+1} + \chi^-_{p-s-1}\big). \]
Hence
\begin{gather*}
 (e_{s-1})_A\overset{\mathcal{X}^+(2)}{W}\!\!\!\!_{vB^{-1}A} \triangleright \big(\chi^+_s + \chi^-_{p-s}\big) = \chi^+_{s-1} + \chi^-_{p-s+1},\\
 (e_{s+1})_A\overset{\mathcal{X}^+(2)}{W}\!\!\!\!_{vB^{-1}A} \triangleright \big(\chi^+_s + \chi^-_{p-s}\big) = \chi^+_{s+1} + \chi^-_{p-s-1}.
\end{gather*}
It follows that $\chi^+_{s-1} + \chi^-_{p-s+1}, \chi^+_{s+1} + \chi^-_{p-s-1} \in U$. Continuing like this, one gets step by step that all the basis vectors belong to~$U$, hence $U = \mathcal{V}$.

Next, let $\overline{G}_s$ and $\overline{\chi}^+_s$ be the classes of $G_s$ and $\chi^+_s$ modulo $\mathcal{V}$ (with $\overline{\chi}^+_0 = \overline{\chi}^+_p = 0$). Let $0 \neq U \subset \SLF\big(\overline{U}_q\big) / \mathcal{V}$ be a submodule and $\omega = \sum_{j = 1}^{p-1} \nu_{j}\overline{G}_{j} + \sigma_{j}\overline{\chi}^+_{j} \in U$ be non-zero. If all the $\nu_j$ are $0$, then there exists $\sigma_s \neq 0$ and $(e_s)_A \triangleright \omega = \sigma_s\overline{\chi}^+_s \in U$. If one of the $\nu_j$, say $\nu_s$, is non-zero, then $(w^+_s)_A \triangleright \omega = \nu_s \overline{\chi}^+_s \in U$. In both cases we get $\overline{\chi}^+_s \in U$. Now we proceed as previously
\[ (e_{s-1})_A\overset{\mathcal{X}^+(2)}{W}\!\!\!\!_{vB^{-1}A} \triangleright \overline{\chi}^+_s = \overline{\chi}^+_{s-1}, \qquad (e_{s+1})_A\overset{\mathcal{X}^+(2)}{W}\!\!\!\!_{vB^{-1}A} \triangleright \overline{\chi}^+_s = \overline{\chi}^+_{s+1}. \]
Thus we get step by step that $\overline{\chi}^+_{j} \in U$ for all $j$. Apply $H^1_{vB^{-1}A}$
\[ H_{vB^{-1}A}^1 \triangleright \overline{\chi}^+_{j} = G_1\chi^+_{j} + \mathcal{V} = [j]G_j + \mathcal{P}. \]
It follows that $\overline{G}_j \in U$ for all~$j$, and thus $U = \SLF\big(\overline{U}_q\big) / \mathcal{V}$ as desired.
\end{proof}

In order to determine the structure of $\SLF\big(\overline{U}_q\big)$ if the Conjecture is true, it will remain to determine whether $\mathcal{V}$ is a direct summand of $\SLF\big(\overline{U}_q\big)$ or not.

\subsection*{Acknowledgements}
I am grateful to my advisors, St\'ephane Baseilhac and Philippe Roche, for their regular support and their useful remarks. I thank the referees for carefully reading the manuscript and for many valuable comments which improved the paper.

\pdfbookmark[1]{References}{ref}
\LastPageEnding

\end{document}